\documentclass[10pt]{amsart}

\parskip=2.5pt

\usepackage{amssymb, amsmath, amsthm, amsfonts}
\usepackage{mathrsfs,comment}
\usepackage{graphicx}
\usepackage{placeins}
\usepackage{todonotes}
\usepackage{rotating}
\usepackage{tikz}
\usepackage{float}
\usepackage{hvfloat}
\usepackage{caption}
\usepackage{pdflscape}
\usepackage{url}
\usepackage[all,arc,2cell]{xy}
\UseAllTwocells
\usepackage{enumerate}
\usepackage{color}

\usepackage{sseq}


\usepackage{hyperref}
  \definecolor{dark-red}{rgb}{0.6,0.15,0.15}
   \definecolor{dark-blue}{rgb}{0.15,0.15,0.6}
   \definecolor{medium-blue}{rgb}{0,0,0.5}

\setcounter{secnumdepth}{2}
\setcounter{tocdepth}{2}
\hypersetup{
    colorlinks, 
    linkcolor=dark-red,
    citecolor=dark-blue, urlcolor=medium-blue
}

\usepackage[nameinlink,capitalise,noabbrev]{cleveref}

\numberwithin{equation}{subsection}

\newtheorem{thm}{Theorem}[subsection]
\newtheorem{cor}{Corollary}[subsection]

\newtheorem{prop}{Proposition}[subsection]
\newtheorem{lem}{Lemma}[subsection]

\theoremstyle{definition}
\newtheorem{defn}{Definition}[subsection]

\newtheorem{notation}{Notation}[subsection]

\newtheorem{exmp}{Example}[subsection]

\newtheorem{rem}{Remark}[subsection]
\newtheorem{warn}{Warning}[subsection]

\newtheorem{thmintro}{Theorem}[subsection]
\newtheorem{conjintro}{Conjecture}[subsection]

\makeatletter
\let\c@equation=\c@thm
\let\c@lem=\c@thm
\let\c@cor=\c@thm
\let\c@conj=\c@thm
\let\c@prop=\c@thm
\let\c@lem=\c@thm
\let\c@defn=\c@thm
\let\c@notation=\c@thm
\let\c@note=\c@thm
\let\c@exmp=\c@thm
\let\c@ex=\c@thm
\let\c@exmps=\c@thm
\let\c@rem=\c@thm
\let\c@warn=\c@thm
\let\c@claim=\c@thm
\let\c@convention=\c@thm
\let\c@conventions=\c@thm
\let\c@quest=\c@thm
\let\c@thmintro=\c@thm
\let\c@conjintro=\c@thm
\let\c@thmbig=\c@thm
\let\c@conbig=\c@thm
\let\c@facts=\c@thm
\makeatother

\def\quickop#1{\expandafter\newcommand\csname #1\endcsname{\operatorname{#1}}}
\quickop{Hom} \quickop{End} \quickop{Aut} \quickop{Tel} \quickop{Mic} \quickop{map}
\quickop{Ext} \quickop{Tor} \quickop{Cotor} \quickop{Id} \quickop{Coker} \quickop{Ker}
\quickop{Lim} \quickop{Colim} \quickop{Holim} \quickop{Hocolim}
\quickop{id} \quickop{tel} \quickop{mic} \quickop{coker}
\quickop{colim} \quickop{holim} \quickop{hocolim} \quickop{im}

%
\numberwithin{equation}{subsection}
\newcommand{\co}{\mskip0.5mu\colon\thinspace}   

\newcommand{\F}{\mathbb{F}}
\newcommand{\R}{\mathbb{R}}
\newcommand{\bS}{\mathbb{S}}
\newcommand{\Sn}{\mathbb{S}}
\newcommand{\G}{\mathbb{G}}
\newcommand{\W}{\mathbb{W}}
\newcommand{\Q}{\mathbb{Q}}
\newcommand{\smsh}{\wedge}
\newcommand{\ra}{\rightarrow}
\newcommand{\xra}{\xrightarrow}

\newcommand{\ZZ}{\mathbb{Z}}
\newcommand{\FF}{\mathbb{F}}

\DeclareFontFamily{OMS}{rsfs}{\skewchar\font'60}
\DeclareFontShape{OMS}{rsfs}{m}{n}{<-5>rsfs5 <5-7>rsfs7 <7->rsfs10 }{}
\DeclareSymbolFont{rsfs}{OMS}{rsfs}{m}{n}
\DeclareSymbolFontAlphabet{\scr}{rsfs}

\newcommand{\sE}{\scr{E}}

\def\makeop#1{\expandafter\def\csname #1\endcsname{\mathop{\mathrm{#1}}\nolimits}}

\makeop{Gal}
\makeop{id}
\makeop{Mod}
\makeop{Tot}
\makeop{gr}
\makeop{Out}
\makeop{Hom}
\makeop{Ext}
\makeop{End}
\makeop{Aut}
\makeop{Tor}
\makeop{ev}
%

\def\FF{\mathbb{F}}
\def\QQ{\mathbb{Q}}
\def\GG{\mathbb{G}}
\def\SS{\mathbb{S}}
\def\WW{{{\mathbb{W}}}}
\def\ZZ{{{\mathbb{Z}}}}
\def\Z{{{\mathbb{Z}}}}
\def\Ext{\mathrm{Ext}}
\def\longr{{{\longrightarrow\ }}}

\def\myzeta{{{\zeta}}}
\def\myoneb{{{\chi}}}
\def\mytwob{{\widetilde{\chi}}}
\def\myonebK{{{\chi}}}
\def\myzetaK{{{\zeta}}}
\def\inproofchisquare{{{\gamma}}}
\def\inproofz{{{\varepsilon}}}
\def\inproofw{{{\omega}}}

\definecolor{darkspringgreen}{rgb}{0.09, 0.45, 0.27}




\newcommand{\assa}{\Delta_0} 
\newcommand{\assb}{b_0}  
\newcommand{\assc}{\overline{b}_0}  
\newcommand{\assd}{\overline{\Delta}_0}  

\newcommand{\la}{\Delta}  
\newcommand{\lb}{b} 
\newcommand{\lc}{\overline{b}} 
\newcommand{\ld}{\overline{\Delta}} 



\newcommand{\bock}[1]{\delta^{(#1)}}
\newcommand{\bockn}{\delta}

\newcommand{\ravclass}{\chi}


\DeclareRobustCommand\circled[1]{\tikz[baseline=(char.base)]{
   \node[shape=circle,draw,inner sep=0pt] (char) {#1};}}

\def\masseyp{{\langle \mytwob,2,\eta \rangle}}

\title[Chromatic splitting for the $K(2)$-local sphere at $p=2$]{Chromatic splitting for the 
$K(2)$-local sphere at $p=2$}
\date{\today}

\author[A. Beaudry]{Agn\`es Beaudry}
\address{Department of Mathematics\\ University of Colorado Boulder \\ 
\newline Campus Box 395 \\ Boulder \\ Colorado \\ 80309}

\author[P. G. Goerss]{Paul G. Goerss}
\address{Department of Mathematics\\ Northwestern University \\ \newline 2033 Sheridan Road 
\\ Evanston \\ Illinois \\ 60208 }

\author[H.-W. Henn]{Hans-Werner Henn}
\address{Institut de Recherche Math\'ematique Avanc\'ee, C.N.R.S. et 
Universit\'e de Strasbourg \\ \newline 7, rue Ren\'e Descartes \\ 67084 Strasbourg Cedex \\ France}

\thanks{This material is based upon work supported by ANR-16-CE40-0003 ChroK and by 
the National Science Foundation under grants No. DMS-1308916 and DMS-1612020/1725563}

\begin{document}
\maketitle

\begin{abstract}
We calculate the homotopy type of $L_1L_{K(2)}S^0$ and $L_{K(1)}L_{K(2)}S^0$ at the prime 2, 
where $L_{K(n)}$ is localization with respect to Morava $K$-theory and $L_1$ localization 
with respect to $2$-local $K$ theory. In  $L_1L_{K(2)}S^0$ we find all the summands predicted by the 
Chromatic Splitting Conjecture,
but we find some extra summands as well. An essential ingredient in our approach is the analysis of the
continuous group cohomology $H^\ast_c(\GG_2,E_0)$ where $\GG_2$ is the Morava stabilizer group and
$E_0 = \WW[[u_1]]$ is the ring of functions on the height $2$ Lubin-Tate space. We show that the inclusion
of the constants $\WW \to E_0$ induces an isomorphism on group cohomology, a radical simplification.
\end{abstract}
\setcounter{tocdepth}{1}
\tableofcontents


\section{Introduction}

\setcounter{subsection}{1}

The problem of understanding the homotopy groups of spheres has always been central to algebraic 
topology. A period of calculation beginning with Serre's computation of the cohomology of Eilenberg-
MacLane spaces  and Toda's work with the EHP sequence culminated, in the late 1970s, with the work of
Miller, Ravenel, and Wilson on periodic  phenomena in the homotopy groups of spheres and Ravenel's 
nilpotence conjectures. The solutions to most of these conjectures 
in the middle 1980s established the primacy of the chromatic point of view, which uses the algebraic 
geometry of smooth $1$-parameter formal groups to organize the search for large scale phenomena in 
stable homotopy theory. 

This has been remarkably successful. Much of what we know about stable homotopy theory can be 
motivated and conjectured by analyzing the moduli stack of formal groups and its quasi-coherent sheaves. 
See, for example, the table in Section 2 of \cite{HopkinsGross}. 
In particular, this stack has a stratification by height and working along this stratification highlights two 
distinct lines of research. First, we'd like to discover all that can be learned by working at a single height; 
or, put another way, we make calculations in $K(n)$-local homotopy theory. Second, we need to assemble 
the information from different heights. This is the chromatic assembly problem.
  
In this paper, we give an analysis of the Chromatic Splitting Conjecture of Hopkins at $p=2$ and $n=2$. 
At first glance, this is a chromatic assembly question, but as we proceed here we need 
extensive information from height 2 calculations. Thus, the questions of single-height calculations and
chromatic assembly remain closely related. 

The Chromatic Splitting Conjecture predicts a splitting of $L_1L_{K(2)}S^0$. 
We do get a splitting, and it contains the expected summands,  but it contains other summands as well. 
That there has to be more was already proved in \cite{Paper5}, which in turn builds on the papers
\cite{Paper1} and \cite{Paper2}, all by the first author. There are hints at extra summands in
the work of Shimomura and Wang \cite{MR1702307, MR1935487} as well. This is discussed in \Cref{rem:shimwang}.

Fix a prime $p$ and let $K(n)$ be the $n$th Morava $K$-theory spectrum; by convention,
$K(0)=H\QQ$, the rational  homology spectrum. Let $L_n$ be localization with respect to the homology
theory represented by $K(0) \vee K(1) \vee \cdots \vee K(n)$. This is the same as localization
with respect to the Johnson-Wilson theory $E(n)$. Then for all spectra $X$ there is a
natural localization map $L_nX \to L_{n-1}X \simeq L_{n-1}L_nX$. The Hopkins-Ravenel Chromatic Convergence Theorem of \S 8.6 of
\cite{RavNil} then says that if $X$ is a finite CW spectrum the induced map
\[
X \longr \holim L_nX
\]
is localization at the homology theory $H_\ast(-,\ZZ_{(p)})$. 

Next, let $L_{K(n)}X$ be localization with respect to $K(n)$. 
Then the $K(n)$-localization of $L_nX$ is a map $L_nX \to L_{K(n)}X$  and, for any spectrum $X$, the natural map $L_nX \to L_{n-1}X$ can be 
recovered from the {\it chromatic  fracture square}
\begin{equation}\label{eq:intro-chrom-sq}
\xymatrix{
L_nX \ar[r]\ar[d] & L_{K(n)}X\ar[d]\\
L_{n-1}X \ar[r] & L_{n-1}L_{K(n)}X.
}
\end{equation}  
That this square is a homotopy pull-back can be found in Theorem 6.19 of \cite{HovStrick}, 
but is implicit in \cite{Rav84} as well.

The chromatic fracture square and chromatic convergence together imply that we can recover the 
homotopy type of a finite CW spectrum from $L_{K(n)}X$ for all $n$ and all $p$, provided we can complete 
the assembly process of \eqref{eq:intro-chrom-sq}.

Calculations of $L_{K(n)}X$ usually come down to analysis of the $K(n)$-local Adams-Novikov
Spectral Sequence. The cohomology theory $K(n)^\ast$ is complex orientable and the associated formal 
group $\Gamma_n$ is of height $n$. Let $\GG_n$ be the automorphisms of the pair 
$(\FF_{p^n},\Gamma_n)$ and let $E_n$ be the Morava $E$-theory associated to the pair 
$(\FF_{p^n},\Gamma_n)$. By the Goerss-Hopkins-Miller Theorem the profinite group $\GG_n$ acts on $E_n$, 
and hence on $(E_n)_\ast X := \pi_\ast L_{K(n)}(E_n \wedge X)$. If $X$ is a finite CW spectrum, we then
have  a spectral sequence 
\begin{equation}\label{eq:ANSS-intro}
H_c^s(\GG_n,(E_n)_tX) \Longrightarrow \pi_{t-s}L_{K(n)}X
\end{equation}
where cohomology is continuous group cohomology. Much of $K(n)$-local  homotopy theory 
comes down to the analysis of the group $\GG_n$ and the action of $\GG_n$ on $(E_n)_\ast$. 
We give some more details in \Cref{sec:prelims} and add references to the large literature 
on the subject there. 

Even assuming we can master the $K(n)$-local calculations, there remains the assembly question.
Let $M_nX$ be the fiber of $L_nX \to L_{n-1}X$; then the key result needed to establish the
chromatic fracture square of \eqref{eq:intro-chrom-sq} is that the induced map $M_nX \to M_nL_{K(n)}X$ is an
equivalence. Crucial to assembly is the other fiber; that is, the fiber of 
$L_{n-1}X \to L_{n-1}L_{K(n)}X$. This is the main subject of this paper. In fact,
we investigate the homotopy type of the map $L_{n-1}X \to L_{n-1}L_{K(n)}X$. 
The Chromatic Splitting Conjecture, due to Hopkins, is a very specific conjecture about this map.
We will get into this below; before that, however, 
we will state our main results. 

Let $p=2$ and write $E=E_2$ for our choice of Morava $E$-theory at height $2$. Let $\WW$ be the Witt
vectors on $\FF_4$. Then there is an isomorphism $E_\ast \cong \WW[[u_1]][u^{\pm 1}]$
where the power series ring is  in degree $0$ and $u$ has degree $-2$.
Our first result is a $K(2)$-local result. See \Cref{thm:consts21} and \Cref{thm:const} 

\begin{thmintro}\label{thm:D} The inclusion of constants into the power series ring induce
isomorphisms in group cohomology
\[
H_c^*(\GG_2,\FF_4) \to H_c^*(\GG_2, E_0/2)
\]
and
\[
H_c^*(\GG_2,\WW) \to H_c^*(\GG_2, E_0).
\]
\end{thmintro}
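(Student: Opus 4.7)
My plan is to reduce the integral statement to the mod-$2$ statement via a Bockstein comparison, and then treat the mod-$2$ statement using the $u_1$-adic filtration of $E_0/2 \cong \FF_4[[u_1]]$. For the reduction, the short exact sequences $0 \to \WW \xrightarrow{2} \WW \to \FF_4 \to 0$ and $0 \to E_0 \xrightarrow{2} E_0 \to E_0/2 \to 0$ sit in a map of long exact sequences in continuous $\GG_2$-cohomology; a five-lemma argument, combined with $2$-adic completeness or a Milnor $\lim^1$ identification, lifts the mod-$2$ isomorphism to the integral one.

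For the mod-$2$ case, the $u_1$-adic filtration of $\FF_4[[u_1]]$ is $\GG_2$-stable: since the maximal ideal $\mathfrak{m} = (2, u_1) \subset E_0$ is $\GG_2$-stable, $g \cdot u_1$ reduces mod $2$ into $(u_1)$. The associated graded is $\bigoplus_{k \ge 0} \FF_4\{u_1^k\}$, and $\GG_2$ acts on the $k$-th piece through $\chi^k$, where $\chi \co \GG_2 \to \FF_4^\times$ is defined by $g \mapsto g \cdot u_1 / u_1 \bmod (2, u_1)$. Since $|\FF_4^\times| = 3$, the twist only depends on $k \bmod 3$, and one wants the filtration spectral sequence to collapse onto the weight-$0$ column. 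For $k \not\equiv 0 \pmod 3$, restrict to the subgroup $C_3 \subset \SS_2$ generated by the Teichm\"uller lift of a primitive cube root of unity $\zeta \in \WW^\times$: this element acts on $u_1$ by $\zeta^{p-1} = \zeta$, so non-trivially on $\FF_4(\chi^k)$, and since $|C_3| = 3$ is coprime to $2$ we obtain $H^*(C_3, \FF_4(\chi^k)) = 0$. A restriction--transfer argument, or a Hochschild--Serre analysis using a subgroup of $\GG_2$ in which $C_3$ is normal, then propagates the vanishing to $H^*(\GG_2, \FF_4(\chi^k)) = 0$.

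The main obstacle is the case $k \ge 1$ with $3 \mid k$: here $C_3$ acts trivially, $H^*(\GG_2, \FF_4) \ne 0$, and the weight-$k$ column of $E_1$ is non-zero, so vanishing must come from differentials rather than $E_1$-level character arguments. I expect these differentials to originate from the integral-vs-mod-$2$ discrepancy of the $\GG_2$-action: concretely, $g \cdot u_1 \equiv 2 \alpha(g) + \chi(g) u_1 \pmod{u_1^2}$ with $\alpha(g) \in \WW$ generically non-zero, producing a Bockstein-type operator raising the $u_1$-weight. Organizing this precisely likely requires a double filtration tracking both $u_1$-valuation and $2$-adic depth, or alternatively running the argument directly on $E_0$ via the $\GG_2$-stable $\mathfrak{m}$-adic filtration and showing each $\mathfrak{m}^k/\mathfrak{m}^{k+1}$ contributes trivially to $\GG_2$-cohomology outside weight $0$. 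Verifying this vanishing is the crux of the proof, after which assembly into the stated isomorphisms is routine.
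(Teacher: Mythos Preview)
Your reduction of the integral statement to the mod-$2$ one via the Bockstein long exact sequence, five-lemma, and inverse limits is correct and matches what the paper does. The mod-$2$ argument, however, has a genuine gap, and not only where you flag it. The restriction--transfer step from $C_3$ to $\GG_2$ does not go through: for $\mathrm{cor}\circ\mathrm{res}$ to be multiplication by a unit you would need $[\GG_2:C_3]$ invertible in $\FF_4$, but this index is a pro-$2$ supernatural number. The Hochschild--Serre variant has the same defect once you try to pass from the chosen subgroup back up to $\GG_2$. In fact the $E_1$-vanishing you claim is false: writing $\SS_2\cong S_2\rtimes C_3$ with $S_2$ the pro-$2$-Sylow, one has $H^*(\SS_2,\FF_4(\chi^k))\cong\big(H^*(S_2,\FF_4)\otimes\chi^k\big)^{C_3}$, which is the $\chi^{-k}$-isotypic piece of $H^*(S_2,\FF_4)$; this is already nonzero in degree~$1$, since conjugation by $\omega$ on the first filtration quotient $1+aT\mapsto a$ of $S_2$ is multiplication by a nontrivial cube root of unity. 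So your $u_1$-adic spectral sequence does not collapse at $E_1$ for \emph{any} $k\geq 1$, and the entire argument comes down to controlling the differentials---precisely the ``crux'' you isolate for $3\mid k$, but now for every positive weight. The $\mathfrak m$-adic variant you mention at the end has exactly the same problem.

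The paper's proof avoids filtering $E_0$ altogether. It first establishes the isomorphism for the subgroup $\SS_2^1$ by comparing the algebraic duality spectral sequences $H^q(F_p,M)\Rightarrow H^{p+q}(\SS_2^1,M)$ for $M=\FF_4$ and $M=E_0/2$; here the $F_p$ are the finite subgroups $G_{24},C_6,C_6,G_{24}'$ coming from a length-$3$ resolution of $\ZZ_2$ by permutation modules over $\ZZ_2[[\SS_2^1]]$. Injectivity of the comparison is immediate from the retraction $E_0/2\to E_0/(2,u_1)\cong\FF_4$. Surjectivity is the hard part: it is read off from an explicit computation of the $E_2$-term on the $E_0/2$ side carried out in \cite{Paper2}, which turns out to be free over $\FF_4[k]$ on the same number of generators in each bidegree as the $\FF_4$ side. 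The integral statement for $\SS_2^1$ then follows by your Bockstein argument, and the extension to any closed $G$ with $\SS_2^1\trianglelefteq G\subseteq\GG_2$ (in particular $G=\GG_2$) is a one-line application of the Lyndon--Hochschild--Serre spectral sequence for $\SS_2^1\trianglelefteq G$.
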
 

This is a remarkable simplification. We conjecture that the analogous result is true at all heights and all
primes. 
\begin{conjintro}[Chromatic Vanishing Conjecture]\label{conj:chrom-vanish}
For any prime $p$ and height $n$, the inclusion of constants into the power series ring induces 
isomorphisms in group cohomology
\[
H_c^*(\GG_n,\FF_{p^n}) \to H_c^*(\GG_n, \pi_0E_n/p)
\]
and
\[
H_c^*(\GG_n,\WW) \to H_c^*(\GG_n, \pi_0E_n)
\]
\end{conjintro}
\noindent
We call this the Chromatic Vanishing Conjecture as it implies and is implied by the vanishing of the cohomology groups $H_c^*(\GG_n, (\pi_0E_n/p)/\F_{p^n})$  and $H_c^*(\GG_n,(\pi_0E_n)/\W)$ in all degrees.

This conjecture is true wherever it has been checked; that is, for $n \leq 2$ and all primes. If $n=1$ this is a 
tautology. For $n=2$ and $p> 3$, it can be deduced from \cite{shimyab} 
(see also Corollaire 4.5 of \cite{Lader}). This basic case was also proved
later in \cite{Kohlhaase} using different techniques. 
For $n=2$ and $p=3$ it can be deduced from \cite{HKM}; 
\cite{GoerssSplit}. The primes $2$ and $3$ are harder, as the
group $\GG_2$ contains $p$-torsion subgroups. 

The next step is to calculate differentials in the Adams-Novikov spectral sequence 
\eqref{eq:ANSS-intro} for $X=S^0$. By an old result of Lazard, applied by Morava in our case \cite{lazard_analyticgroups,morava}, we know that for all $n$ 
and $p$ the cohomology ring $H_c^*(\G_n, \WW)\otimes \QQ$ is an exterior algebra on $n$ generators 
of degree $2i-1$ for $1\leq i\leq n$; then 
\Cref{thm:D} implies $\pi_i L_{K(2)}S^0$ has a torsion-free generator when $i=0$, $-1$, $-3$, or $-4$. To
get further, we need to get some control on the torsion. 

For all $n$ and $p$, the group $\GG_n$ comes equipped with a  determinant map
\[
\det \colon \GG_n \to \ZZ_p^\times
\]
to the units in the $p$-adic integers. If $p=2$, there is an isomorphism 
$\ZZ_2^\times \cong \ZZ_2 \times C_2$,
where $C_2 = \{ \pm 1\}$ is the cyclic group of order $2$. We thus get a map
\[
E(\zeta_2) \otimes \FF_2[\myoneb] \cong H_c^\ast (\ZZ_2 \times C_2,\FF_2) \to 
H_c^\ast(\GG_2,\FF_2).
\]
Here $E(-)$  denotes the exterior algebra  
over $\FF_2$. These cohomology classes will be  discussed in \Cref{rem:hone}.  
We will show in \Cref{prop:b0cubedbig} (but see also Theorem 6.3.24 of
\cite{ravgreen}), that this map induces an injection
\[
\xymatrix{
E(\zeta_2) \otimes \FF_2[\myoneb]/(\myoneb^3) \ar[r]^-{\subseteq} &H_c^\ast(\GG_2,\FF_2).
}
\] 
Here is one place when the prime $2$ is fundamentally different. 
At odd primes, $\ZZ_p^\times \cong \ZZ_p \times C_{p-1}$, where $C_{p-1}$ is a cyclic group
of order $p-1$. Thus, there is an isomorphism $H_c^\ast(\ZZ_p^\times,\FF_p) \cong E(\zeta_2)$. The class
$\myoneb$ only appears at $p=2$.

The class $\zeta_2$ is the reduction of a class of infinite order in $H_c^1(\GG_2,\ZZ_2)$; that this class is
a permanent cycle in the Adams-Novikov Spectral Sequence is well understood. In a standard abuse
of notation we also write $\zeta_2 \in \pi_{-1}L_{K(2)}S^0$ for a particular homotopy class
detected by the cohomology class $\zeta_2$. See \cite{DH} and \Cref{prop:zeta-permanent} for details.
We will show that the Bockstein on the class  $\myoneb$ is also a permanent cycle 
in $H_c^2(\GG_2,\ZZ_2)$ and detects a class $x \in \pi_{-2}L_{K(2)}S^0$ of order $2$. 
We will then show that the Bockstein on
$\zeta_2\myoneb$ detects the class $\zeta_2x\in \pi_{-3}L_{K(2)}S^0$, also of order $2$.
 
Let $V(0)$ be the mod $2$ Moore spectrum and let $\iota:S^0 \to L_{K(2)}S^0$ be the unit.
The classes $\iota$, $\zeta_2$, $x$ and $\zeta_2x$ together with choices for the torsion free 
generators of $\pi_iL_{K(2)}S^0$ for $i=-3$ and $i=-4$ can be used to define a map
\begin{equation}\label{eq:the-map-f-intro}
f: S^0 \vee S^{-1} \vee S^{-3} \vee  S^{-4} \vee \Sigma^{-2}V(0)\vee \Sigma^{-3}V(0) \longr L_{K(2)}S^0
\end{equation}

Our main result then describes chromatic splitting at $n=p=2$. Let $S^n_2$ denote the $2$-completed
$n$-sphere.

\begin{thmintro}\label{thm:main} The map $f$ defines a weak equivalence
\[
L_1(S_2^0 \vee S_2^{-1}) \vee L_0(S_2^{-3} \vee S_2^{-4}) \vee L_1(\Sigma^{-2}V(0)\vee \Sigma^{-3}V(0))
\simeq L_1L_{K(2)}S^0.
\] 
In particular, $L_1S_2^0 \ra L_1L_{K(2)}S^0$ is a split inclusion.
\end{thmintro}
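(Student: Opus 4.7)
The plan is to verify that $L_1 f$ is an equivalence by exploiting the arithmetic fracture square
\[
\xymatrix{
L_1 Y \ar[r] \ar[d] & L_{K(1)} Y \ar[d] \\
L_0 Y \ar[r] & L_0 L_{K(1)} Y
}
\]
as a homotopy pullback, applied both with $Y = L_{K(2)}S^0$ and with $Y$ equal to the source of $f$. It then suffices to show that $L_0 f$, $L_{K(1)} f$, and the induced map $L_0 L_{K(1)} f$ are equivalences, together with the evident compatibility of the two resulting squares through $f$.

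The rational check is short. Combining \fullref{thm:D} with the Morava--Lazard computation
\[
H^*(\GG_2, \WW) \otimes \QQ \cong \Lambda_{\QQ_2}(\zeta_1, \zeta_2), \qquad |\zeta_i| = 2i-1,
\]
one obtains that $\pi_* L_{K(2)}S^0 \otimes \QQ$ is a copy of $\QQ_2$ in each of the degrees $0, -1, -3, -4$. The four torsion-free generators used to define $f$ are chosen to lift these classes, and the Moore-spectrum summands contribute nothing because $L_0 V(0) \simeq \ast$. Hence $L_0 f$ is an equivalence, and the same argument, applied after further $L_{K(1)}$-localization, handles the induced map on $L_0 L_{K(1)}$.

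The main obstacle is the $K(1)$-local check. One needs an explicit computation of $\pi_* L_{K(1)}L_{K(2)}S^0$ via the descent spectral sequence built from the $\GG_2$-action on $L_{K(1)} E_2$. \fullref{thm:D} is the crucial input: it replaces $H^*(\GG_2, E_0)$ by the much simpler $H^*(\GG_2, \WW)$, opening the door to a complete calculation organized around $\zeta_2$, $\myoneb$, $\zeta_2 \myoneb$ and their Bocksteins. Since $x$ and $\zeta_2 x$ are $2$-torsion in $\pi_* L_{K(2)}S^0$, they extend over the mod-$2$ Moore spectrum to give the components of $f$ on $\Sigma^{-2}V(0)$ and $\Sigma^{-3}V(0)$; one must then verify that after $K(1)$-localization these assemblies split off as wedge summands and account precisely for the $2$-torsion in $\pi_* L_{K(1)}L_{K(2)}S^0$ not already seen rationally. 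These are the ``extra'' summands beyond what the Chromatic Splitting Conjecture predicts, and they arise exactly because $\ZZ_2^\times$ carries an extra $C_2$-factor at $p=2$ that produces the class $\myoneb$.

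The hard step is unambiguously the $K(1)$-local one: beyond the computation of $\pi_* L_{K(1)} L_{K(2)}S^0$, one must control enough secondary structure to split off the two Moore-spectrum summands as wedge factors rather than only as families of homotopy classes. Once this splitting is established, compatibility of the two fracture squares under $f$ is automatic by naturality, and the pullback property then identifies $L_1 L_{K(2)}S^0$ with the claimed wedge decomposition.
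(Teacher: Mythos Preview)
Your overall architecture---fracture square, rational check via \fullref{thm:D} plus Morava--Lazard, and the $K(1)$-local computation as the hard step---matches the paper's route (which defers to Theorem~5.11 of \cite{GoerssSplit}). But your precise framing has a gap.

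You propose to apply the fracture square to both the source of $f$ and to $L_{K(2)}S^0$, and then argue that $L_0 f$, $L_{K(1)} f$, and $L_0 L_{K(1)} f$ are all equivalences. The problem is that $L_{K(1)} f$ is \emph{not} an equivalence from the full source: the source of $f$ contains $S^{-3}\vee S^{-4}$, and $L_{K(1)}(S^{-3}\vee S^{-4})$ is highly nontrivial, whereas by \fullref{thm:C} the target $L_{K(1)}L_{K(2)}S^0$ is already exhausted by the restriction of $f$ to $S^0\vee S^{-1}\vee\Sigma^{-2}V(0)\vee\Sigma^{-3}V(0)$. Correspondingly, the left-hand side of \fullref{thm:main} is not $L_1$ of the source of $f$: the spheres $S^{-3}$ and $S^{-4}$ appear only through $L_0$, not $L_1$. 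So the statement you would prove by your method is not the statement in \fullref{thm:main}.

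The fix is what the paper actually does: work only with the fracture square for $L_1L_{K(2)}S^0$. Identify its three corners via \fullref{thm:B} and \fullref{thm:C} (and $L_0$ of the latter), trace the maps, and compute the pullback directly. The $S^{-3}$ and $S^{-4}$ summands in $L_0L_{K(2)}S^0$ map to zero in $L_0L_{K(1)}L_{K(2)}S^0 \simeq L_0(S_2^0\vee S_2^{-1})$, so in the pullback they contribute only $L_0$-summands; the remaining summands assemble to the $L_1$-pieces. This is the content of the cited argument from \cite{GoerssSplit}, and it is where the asymmetry between $L_1$ and $L_0$ in the displayed wedge arises.
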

Proving that the restriction of $f$ to $S^{-3} \vee  S^{-4}$ factors through $L_0(S_2^{-3} \vee  S_2^{-4})$ is part of the work necessary to prove the result. The spherical summands are predicted by the Chromatic Splitting Conjecture, 
but the Moore spectrum summands are the new phenomenon. 

\begin{rem}\label{rem:shimwang}
Shimomura  \cite{MR1702307} computes the Adams-Novikov $E_2$-term for $L_2V(0)$ and, with Wang \cite{MR1935487}, for $L_2S^0$ using the chromatic spectral sequence. For this reason, we cannot give a precise dictionary between our results. However, in Theorem 2.7 of \cite{MR1702307}, the class $\zeta_1$\footnote{In our paper, $\zeta_1$ has a different meaning. We do not refer to Shimomura and Wang's computations again so there will be no source for confusion.} corresponds to the class we call $\chi$ later in this paper and gives rise to the Moore spectrum summands that are not predicted by the Strong Chromatic Splitting Conjecture. However, Shimomura and Wang do not compute the differentials in theses spectral sequences and their work settles neither the Strong nor Weak Chromatic Splitting Conjecture.
\end{rem}

The strategy for proving \Cref{thm:main} is to use the chromatic fracture square 
\eqref{eq:intro-chrom-sq} to deduce the homotopy type $L_1L_{K(2)}S^0$. More precisely, 
we prove the following rational result. See \Cref{prop:ratcohE} and \Cref{thm:mainrat}.

\begin{thmintro}\label{thm:B} Let $S^n_2$ be the $2$-complete $n$-sphere. Then the map $f$
of \eqref{eq:the-map-f-intro} induces a weak equivalence
\[
L_0(S^0_2 \vee S^{-1}_2 \vee S^{-3}_2 \vee S^{-4}_2) \simeq L_{0}L_{K(2)}S^0.
\]
\end{thmintro}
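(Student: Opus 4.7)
The plan is to reduce to a rational homotopy calculation. Since $L_0$ kills the torsion summands $\Sigma^{-2}V(0)$ and $\Sigma^{-3}V(0)$ in the domain of $f$, the map $L_0 f$ is equivalent to its restriction
\[
L_0 f \co L_0\bigl(S_2^0 \vee S_2^{-1} \vee S_2^{-3} \vee S_2^{-4}\bigr) \longrightarrow L_0 L_{K(2)}S^0
\]
to the spherical summands. Both sides are rational spectra, hence wedges of shifted Eilenberg--MacLane spectra, so the map is an equivalence if and only if it induces an isomorphism on $\pi_\ast \otimes \QQ$. The left-hand side contributes a copy of $\QQ_2$ in each of the stems $0, -1, -3, -4$ and nothing elsewhere, so the task reduces to computing $\pi_\ast L_{K(2)}S^0 \otimes \QQ$ and verifying that $f$ detects the generators in these four stems.

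The computation is via the $K(2)$-local Adams--Novikov spectral sequence \eqref{eq:ANSS-intro} for $X = S^0$, which rationally collapses with no extension problems. The $t = 0$ column of the $E_2$-page is $H^\ast(\GG_2, E_0) \otimes \QQ$; by \fullref{thm:D} the inclusion $\WW \hookrightarrow E_0$ induces an isomorphism onto $H^\ast(\GG_2, \WW) \otimes \QQ$, and the theorem of Morava and Lazard identifies the latter with an exterior algebra on two generators $\zeta$ in cohomological degree $1$ and $\rho$ in cohomological degree $3$. Thus the $t = 0$ column of the rational $E_2$-page is one-dimensional in bidegrees $(s,t) = (0,0), (1,0), (3,0), (4,0)$ and zero otherwise, detecting classes in stems $0, -1, -3, -4$ of the abutment.

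For the columns $t \neq 0$ (all even, since $(E_2)_t = 0$ for $t$ odd), I would show $H^\ast(\GG_2, (E_2)_t \otimes \QQ) = 0$. The action of $\GG_2$ on $u \in (E_2)_{-2}$ restricts on a central copy of $\ZZ_2^\times \subset \GG_2$ to the tautological character, so on $(E_2)_t \otimes \QQ$ for $t \neq 0$ this central subgroup acts with infinite image through a nontrivial power of that character. A Hochschild--Serre argument against this central subgroup then kills the cohomology rationally. Combined with the $t = 0$ case, the rational $E_\infty$-page consists of exactly four one-dimensional $\QQ_2$-summands, so $\pi_\ast L_{K(2)}S^0 \otimes \QQ$ is $\QQ_2$ in stems $0, -1, -3, -4$ and zero elsewhere.

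It remains to check that these generators are hit by $f$. By construction, $\iota$ produces the generator of stem $0$, the class $\zeta_2$ the generator of stem $-1$ (detected by the cohomology class $\zeta$), and the chosen torsion-free classes in stems $-3$ and $-4$ give the remaining two generators (detected by $\rho$ and $\zeta\rho$ respectively). Hence $L_0 f$ restricted to the spherical summands is an equivalence, proving the theorem. The main obstacle I anticipate is the rational vanishing for $t \neq 0$: this step extends the radical simplification of \fullref{thm:D} from $E_0$ to the full graded object $(E_2)_\ast$, and requires care with the Lubin--Tate character and its interplay with the Hochschild--Serre filtration.
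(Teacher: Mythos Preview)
Your proposal is correct and follows essentially the same route as the paper. The paper's proof of \fullref{thm:mainrat} likewise runs the rational Adams--Novikov spectral sequence, invokes \fullref{prop:ratcohE} (which is your combination of \fullref{thm:D} plus the exterior-algebra identification for $t=0$, together with exactly the central-$\ZZ_2$ Hochschild--Serre argument you sketch for $t\neq 0$), observes collapse for degree reasons, and then reads off the four generators $1$, $\zeta_2$, $e$, $\zeta_2 e$. The only points the paper makes explicit that you leave implicit are the strong convergence of the localized spectral sequence (handled via the horizontal vanishing line of \fullref{thm:en-vanishing-ex-1}) and the precise identification of the degree-$3$ generator $e$ via the duality resolution rather than by citing Morava--Lazard; neither affects the validity of your argument.
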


Furthermore, we prove the following $K(1)$-local result. See \Cref{thm:the-big-one}.

\begin{thmintro}\label{thm:C} The restriction of the map $f$ given in \eqref{eq:the-map-f-intro} to the wedge summand
$S^0\vee S^{-1} \vee \Sigma^{-2}V(0) \vee \Sigma^{-3}V(0)$ induces a weak equivalence
\[
L_{K(1)}\big(S^0 \vee S^{-1}  \vee \Sigma^{-2}V(0) \vee \Sigma^{-3}V(0)\big)
\simeq L_{K(1)}L_{K(2)}S^0.
\]
\end{thmintro}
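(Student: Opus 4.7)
The plan is to show that $L_{K(1)}(f)$ induces an isomorphism on homotopy groups by computing both sides and matching them along $f$. At $p=2$, the homotopy of the source is classical: $\pi_* L_{K(1)} S^0$ is the image-of-$J$ pattern together with $\ZZ_2$ summands in degrees $0$ and $-1$, and $\pi_* L_{K(1)} V(0)$ is computed from the cofiber sequence defining $V(0)$. Thus the work lies in computing $\pi_* L_{K(1)} L_{K(2)} S^0$ and locating its generators in the image of $L_{K(1)}(f)$.

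To handle the target I would use the descent spectral sequence
\[
E_2^{s,t} = H^s(\GG_2, \pi_t L_{K(1)} E) \Longrightarrow \pi_{t-s} L_{K(1)} L_{K(2)} S^0,
\]
where $\pi_* L_{K(1)} E$ is obtained from $E_* = \WW[[u_1]][u^{\pm 1}]$ by inverting $u_1$ and $2$-completing. Filtering the coefficients by powers of the ideal $(2, u_1)$, the associated graded is a sum of shifts of $E_0/(2, u_1) \cong \FF_4$, and by \fullref{thm:D} the contribution of the constants to the $E_2$-page reduces to the much smaller object $H^*(\GG_2, \FF_4)$. This is the point where the work of the paper leading up to \fullref{thm:D} pays the biggest dividend: the input to the descent spectral sequence becomes tractable.

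Next I would identify the four classes in the target produced by $f$ and match them with generators on the $E_\infty$-page. The unit carries $\pi_* L_{K(1)} S^0$ into $\pi_* L_{K(1)} L_{K(2)} S^0$, the class $\zeta_2 \in H^1(\GG_2, \WW)$ is a permanent cycle in the $K(2)$-local ANSS (by the argument of \cite{DH} recorded in \fullref{prop:zeta-permanent}), and the Bocksteins of $\myoneb$ and $\zeta_2 \myoneb$ (which inject into $H^*(\GG_2, \FF_2)$ by \fullref{prop:b0cubedbig}) detect $x$ and $\zeta_2 x$, providing the maps out of the two suspended Moore spectra. After $K(1)$-localization these four families become periodic and visibly realize the homotopy of the source wedge.

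The main obstacle is showing that no further classes survive after descent and $K(1)$-localization, so that the four families just enumerated exhaust $\pi_* L_{K(1)} L_{K(2)} S^0$. Concretely, one must verify that in the $K(1)$-localized $E_2$-page the higher powers $\myoneb^k$ for $k \geq 3$ and the contributions from non-constant terms in $u_1$ either violate a horizontal vanishing line forced by \fullref{thm:D} or are cut off by differentials. I expect the requisite differentials to be shadows of those already identified in the purely $K(2)$-local computations of \cite{Paper1, Paper2, Paper5}, and the counting to be a direct comparison of Poincaré series between the simplified $E_2$-page and the known homotopy of the source wedge. This bookkeeping step, while not conceptually subtle once \fullref{thm:D} is in hand, is where most of the technical work will sit.
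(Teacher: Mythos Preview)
Your proposal has several genuine gaps. First, the spectral sequence you set up is not the one the paper uses and is not obviously well-behaved: rather than working with $H^*(\GG_2,\pi_*L_{K(1)}E)$, the paper runs the $v_1$-localized $K(2)$-local Adams--Novikov spectral sequence
\[
v_1^{-1}H^s(\GG_2,E_tZ)\Longrightarrow \pi_{t-s}L_{K(1)}L_{K(2)}Z
\]
for a type~$1$ complex $Z$, and then invokes a thick subcategory argument (\fullref{lem:type1detects}) to say it suffices to check the equivalence after smashing with one such $Z$. Your filtration of the coefficients by powers of $(2,u_1)$ does not survive $v_1$-localization, since $v_1=u_1u^{-1}$ forces $u_1$ to be a unit; there is no ``non-constant in $u_1$'' part to kill. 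More importantly, Theorem~\ref{thm:D} concerns $H^*(\GG_2,E_0)$ and does not by itself compute the graded, $v_1$-inverted object $v_1^{-1}H^*(\GG_2,E_*/2)$; that computation (\fullref{thm:cohG2V0}) goes through the algebraic duality spectral sequence and the delicate identification of $\sigma$-products (\fullref{thm:what-we-hope-to-prove}), which is what converts the six generators of \fullref{rthm:freeoverv1eta} into the shape $\FF_2[v_1^{\pm1},\eta]\otimes E(\sigma,\zeta_2)\otimes\FF_2[\myoneb]/(\myoneb^3)$.

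Second, you are missing two key maneuvers that make the argument close. One is the passage to $Y=V(0)\wedge C(\eta)$: for $Y$ the localized spectral sequence collapses at $E_2$ with \emph{no} additive extensions (because $2=0$ on $L_{K(1)}Y$ by \fullref{prop:2Y}), so the matching of generators becomes a clean module-basis check (\fullref{thm:k1k2-y}). Working directly with $S^0$ would force you to resolve the additive extensions in \fullref{prop:chiperm}, which is much harder. The other is establishing that $\myoneb$ is a $d_3$-cycle (\fullref{prop:b0d3cycle}); this is not a shadow of prior differentials but requires a new comparison with the homotopy fixed point spectral sequence for the trivial $C_2$-action on $S^0_2$ (the idea credited to Hopkins in \fullref{whatsophere}). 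Without this, your step ``no further classes survive'' cannot be completed: it is precisely this $d_3$-cycle statement, together with $\myoneb^3=0$ from \fullref{prop:b0cubedbig}, that pins down all the differentials.
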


Much of the work in this paper goes into this last theorem. \Cref{thm:main} then follows exactly as in
the prime $3$ case; see \cite{GoerssSplit} and \Cref{thm:mainrat}. 

We conclude this introduction by revisiting the Chromatic Splitting Conjecture. This is due to Hopkins
and can be found in the literature in \cite{cschov}. As we mentioned above, there is a result of 
Morava and Lazard
that the cohomology ring $H_c^*(\G_n, \WW)\otimes \QQ$ is an exterior algebra on $n$ generators 
of degree $2i-1$ for $1\leq i\leq n$. Part of the Chromatic Splitting Conjecture is that, for some choice of 
generators $x_i \in H_c^{2i-1}(\GG_n,\WW)$, the exterior algebra $E_{\ZZ_p}(x_1, \ldots, x_n)$ over $\ZZ_p$
maps non-trivially to permanent cycles in $H_c^*(\GG_n, (E_n)_*)$. This would give a map out of a wedge of 
spheres to $L_{K(n)}S^0$ indexed on the monomial basis of the exterior algebra. The following
would completely describe the gluing data \eqref{eq:intro-chrom-sq}.

\begin{conjintro}[Strong Chromatic Splitting Conjecture]\label{conj:strong} 
This map out of the wedge of spheres induces an equivalence
\[
L_{n-1}S_p^0 \vee \bigvee_{1\leq i_1 <\ldots < i_j\leq n} L_{n-i_j}S_p^{-2\left(\sum i_k \right)+j}
\simeq L_{n-1}L_{K(n)}S^0\ .
\]
\end{conjintro}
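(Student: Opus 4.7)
The natural plan is to proceed by induction on the height $n$. The case $n=1$ is classical, and the inductive step splits into building the map from the wedge of spheres explicitly, then verifying it is an equivalence via the chromatic fracture square. For the construction, start from Morava-Lazard, which identifies $H^*(\GG_n, \WW)\otimes \QQ$ as an exterior algebra on generators $x_i$ in degree $2i-1$. One would seek an extension of \fullref{thm:D} valid at all heights, showing that these generators map injectively to the Adams-Novikov $E_2$-term $H^*(\GG_n, (E_n)_0)$. The class $x_1 = \zeta_n$ is a permanent cycle by work of Devinatz-Hopkins; one would argue similarly that each $x_i$ is a permanent cycle detecting a homotopy class $\tilde{x}_i \in \pi_{-(2i-1)} L_{K(n)} S^0$, so that by multiplicativity the products $\tilde{x}_{i_1}\cdots \tilde{x}_{i_j}$ define the desired map out of $\bigvee L_{n-i_j} S_p^{-2(\sum i_k)+j}$. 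The localization factor $L_{n-i_j}$ appears because the product is only as chromatically rigid as its highest-index factor.

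To verify that this map is an equivalence after applying $L_{n-1}$, apply the chromatic fracture square \eqref{eq:intro-chrom-sq} iteratively, comparing the two sides $K(m)$-locally for each $0 \le m \le n-1$. Rationally, Morava-Lazard forces the comparison, so both sides have the right $\QQ$-homotopy. At each intermediate height the hope is that the input from lower-height localizations, combined with the permanent-cycle structure coming from the exterior algebra, accounts completely for $L_m L_{K(n)} S^0$. The inductive hypothesis, together with \fullref{thm:B} and \fullref{thm:C} as models for how the rational and $K(1)$-local pieces behave at $n=2$, provides the template.

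The main obstacle -- and the reason \fullref{conj:strong} is delicate -- is precisely the last step: controlling all $p$-torsion permanent cycles at each height. The results of the present paper, in particular the extra Moore spectrum summands $L_1(\Sigma^{-2}V(0)\vee \Sigma^{-3}V(0))$ in \fullref{thm:main} coming from the class $\myoneb \in H^1(\GG_2,\FF_2)$ that exists only at $p=2$, show that the exterior algebra on $x_1,x_2$ fails to capture everything at $n=p=2$: the $\myoneb$-family contributes genuinely new $2$-torsion permanent cycles. Any proof of the conjecture in its stated form would therefore have to rule out analogous small-prime phenomena uniformly in $n$ and $p$, and this paper strongly suggests that \fullref{conj:strong} in fact requires modification at small primes to include additional summands beyond those indexed by the exterior monomials; the strategy outlined above can at best prove a corrected version whose statement explicitly incorporates such torsion contributions.
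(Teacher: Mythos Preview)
The statement \fullref{conj:strong} is a \emph{conjecture}, not a theorem, and the paper does not attempt to prove it. On the contrary, one of the main points of the paper is that \fullref{conj:strong} is \emph{false} at $n=p=2$: \fullref{thm:main} exhibits the extra summands $L_1(\Sigma^{-2}V(0)\vee\Sigma^{-3}V(0))$ in $L_1L_{K(2)}S^0$ that are not predicted by the exterior-algebra wedge, and the paragraph immediately following \fullref{conj:strong} says so explicitly. There is therefore no proof in the paper to compare your proposal against.

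Your write-up is not really a proof proposal but a heuristic sketch of why one might believe the conjecture, followed by an explanation of why it fails; you yourself conclude that the conjecture ``requires modification at small primes.'' That conclusion is correct and matches the paper's findings, but the framing is off: you should not present this as a proof attempt. If the assignment was to supply a proof, the right answer is that no proof exists because the statement is false as written (disproved here at $n=p=2$), and any argument along the lines you sketch necessarily breaks down at the step where one must show that the exterior monomials in the $x_i$ account for \emph{all} relevant permanent cycles---precisely where the class $\myoneb_2$ intrudes at $p=2$.
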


The conjecture is known to hold for $n=1$ if $p\geq 2$ and for $n=2$ if $p\geq 3$
(\cite{behse2}, \cite{GoerssSplit}, \cite{shimyab}). However, the results of \cite{Paper5} 
already implies that \Cref{conj:strong} 
does not hold when $n=p=2$ and \Cref{thm:main} makes this completely precise.
We note that although \Cref{conj:strong} does not hold at $p=2$, there is no evidence that it should fail at odd primes, 
or at least when $p$ is large with respect to $n$.
When $p=2$, our computations at $n=2$ beckon a reformulation of the Strong Chromatic Splitting Conjecture. We refer the reader to \cite[Section 5.6]{BarthelBeaudry} for a discussion on this topic.

We can also write down a weaker form of the conjecture, which holds for all $p$ and $n\leq 2$; 
that is, for all cases where we've been able to check. 

\begin{conjintro}[Weak Chromatic Splitting Conjecture]\label{conj:weak}
If $X$ is the $p$-completion of a finite spectrum, the map $L_{n-1}X \to L_{n-1}L_{K(n)}X$ is a split inclusion.
\end{conjintro}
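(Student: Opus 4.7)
The plan is to reduce \fullref{conj:weak} to the case $X=S^0_p$ via a smashing argument, and then invoke the appropriate sphere-level splitting result for each $(n,p)$ where the conjecture is known: at $n=p=2$ the sphere-level splitting is supplied by \fullref{thm:main}, at $(n,p)=(2,p\geq 3)$ it follows from \cite{GoerssSplit} and \cite{shimyab}, and at $n=1$ it is classical.

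For the reduction, let $X$ be the $p$-completion of a finite spectrum. Since $L_{n-1}$ is smashing, $L_{n-1}Y\simeq L_{n-1}S^0_p\wedge Y$ for all $Y$. Because $X$ is strongly dualizable, the functor $X\wedge(-)$ preserves $K(n)$-equivalences and sends $K(n)$-local spectra to $K(n)$-local spectra; the latter because if $W$ is $K(n)$-acyclic then $DX\wedge W$ is $K(n)$-acyclic, and therefore $[W, X\wedge Z]\cong [DX\wedge W, Z]$ vanishes for $Z$ $K(n)$-local. Consequently $L_{K(n)}(X\wedge Y)\simeq X\wedge L_{K(n)}Y$, and applying $L_{n-1}$ yields $L_{n-1}L_{K(n)}X\simeq X\wedge L_{n-1}L_{K(n)}S^0$. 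Under these identifications the canonical map $L_{n-1}X\to L_{n-1}L_{K(n)}X$ becomes $X\wedge\iota$, where $\iota\colon L_{n-1}S^0_p\to L_{n-1}L_{K(n)}S^0$ is the corresponding map for the sphere.

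Given a retraction $r\colon L_{n-1}L_{K(n)}S^0\to L_{n-1}S^0_p$ of $\iota$, the smashed map $X\wedge r$ retracts $X\wedge\iota$, proving the weak splitting for $X$. At $n=p=2$, such an $r$ is immediate from \fullref{thm:main}: the wedge decomposition of $L_1 L_{K(2)}S^0$ provides a projection onto the $L_1 S^0_2$ summand, which is precisely the required retraction of the unit map $L_1 S^0_2\to L_1 L_{K(2)}S^0$.

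The main obstacle is not the smashing bootstrap, which is essentially formal once one has verified that $L_{K(n)}$ commutes with smashing by a finite spectrum. The substantive difficulty lies entirely in producing $r$ at the sphere level; at $n=p=2$ this is the content of \fullref{thm:main} and demands the full machinery developed in this paper, including the cohomological simplification of \fullref{thm:D}, the analysis of Bocksteins and Adams--Novikov differentials detecting the classes $x$ and $\zeta_2 x$, and the rational and $K(1)$-local reductions of \fullref{thm:B} and \fullref{thm:C}.
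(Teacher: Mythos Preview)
The statement you are addressing is a \emph{conjecture}, not a theorem; the paper does not offer a proof. It only asserts, without argument, that the conjecture holds for all $p$ and $n\leq 2$, citing the sphere-level results (\fullref{thm:main} here, \cite{GoerssSplit} and \cite{shimyab} at odd primes, and the classical $n=1$ case) and leaving the passage from $S^0_p$ to a general $p$-completed finite spectrum implicit.

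Your reduction supplies exactly that missing step and is correct. One minor sharpening: the sentence ``$L_{n-1}Y\simeq L_{n-1}S^0_p\wedge Y$ for all $Y$'' is not what smashing says; the smashing identity is $L_{n-1}Y\simeq L_{n-1}S^0\wedge Y$. The $p$-completion enters because $X\simeq X'\wedge S^0_p$ with $X'$ finite, so $L_{n-1}X\simeq X'\wedge L_{n-1}S^0_p$ and, using dualizability of $X'$ together with the fact that $L_{K(n)}$ already $p$-completes, $L_{n-1}L_{K(n)}X\simeq X'\wedge L_{n-1}L_{K(n)}S^0$. With this cosmetic fix your argument goes through unchanged: the map in question is $X'\wedge\iota$, and smashing a retraction of $\iota$ with $X'$ gives the desired splitting.
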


This second conjecture would imply 
that there are maps $L_{K(n)}S^0 \ra L_{K(n-1)}S^0$ such that $S^0 \simeq \holim_n L_{K(n)}S^0$; that is,
$S^0$ can be recovered from its Morava $K$-theory localizations. 

\subsection*{Organization of the paper}

In \Cref{sec:prelims}, we review some of the background from chromatic homotopy theory, including some
of the more technical techniques at the prime $p=2$. In \Cref{sec:recollections}, 
we recall some classical results  from homotopy theory; most of this can be summarized in the remark that
extra care is needed  because the order of the identity of the mod $2$ Moore spectrum is not equal to two.
Then in \Cref{sec:k1-local-calcs} we give a detailed review of  $K(1)$-local computations at the prime $p=2$
which are used in later sections. In \Cref{sec:height2}, we begin  our analysis of the case $n=p=2$. We describe
the cohomology  of various subgroups $G$ of $ \GG_2$. This is preliminary to \Cref{sec:c-v-section},
where we give the proof of \Cref{thm:D} in \Cref{thm:const}. \Cref{sec:d3cycle} is dedicated 
to one of the key technical results of the paper: the class $\myoneb$ is a $d_3$-cycle in the  
$K(2)$-local Adams-Novikov Spectral Sequence for the Moore spectrum.  \Cref{sec:k1k2s} 
contains the proof of \Cref{thm:C}. This theorem is shown by studying the $v_1$-localized  Adams-Novikov
Spectral Sequences  computing $L_{K(1)}L_{K(2)}V(0)$ and especially  $L_{K(1)}L_{K(2)}Y$ where
$Y  = V(0) \smsh C(\eta)$ for $C(\eta)$ the cofiber of $\eta \colon S^1 \to S^0$. The spectrum $Y$ was used in Mahowald's proof of the Telescope Conjecture 
at $p=2$ and $n=1$. See \cite {MahImJ}. We also deduce  \Cref{thm:main} and \Cref{thm:B} 
at the end of \Cref{sec:k1k2s}.

We have been slightly disingenuous in our presentation above, as we actually prove \Cref{thm:C}
before constructing the map $f$ of \eqref{eq:the-map-f-intro}; that is, we only construct the
$K(1)$-localization of the map $f$ in \Cref{sec:k1k2s}. In \Cref{sec:K2locy}
we complete the difficult task of constructing $f$.

\subsection*{Acknowledgements} This project had its genesis in conversations  with Mark Mahowald,
who was trying to come to terms with calculations of Shimomura and Wang \cite{MR1935487}. Specifically,
Mark thought that those authors had identified $v_1$-torsion-free summands in the $E_2$-term of the
Adams-Novikov Spectral Sequence for $\pi_\ast L_{K(2)}V(0)$ which could not be explained by the 
Chromatic Splitting Conjecture. In some sense this entire paper, as well as \cite{Paper5} and 
\cite{Paper2}, are an attempt to ratify and explain this insight. 

Careful readers of \Cref{sec:d3cycle} below will recognize that the techniques and ideas there are 
completely different from the rest of the paper. This lateral move arises from an insight of 
Mike Hopkins: namely, that the isomorphism of \Cref{thm:D} could be extended to a homomorphism
of homotopy fixed point spectral sequences. See the beginning of \Cref{sec:d3cycle} for more details.
We don't completely prove that, but we do get enough information from 
this idea to prove our key \Cref{prop:b0d3cycle}. We extend heartfelt thanks to Hopkins for sharing this idea.  

We also thank Mark Behrens, Irina Bobkova, Lars Hesselholt, Peter May and
Zhouli Xu for useful conversations along the way, as well as two anonymous referees for their
helpful comments.

Finally, this work has taken place over a number years and at a number of places. We thank 
the Hausdorff Institute of Mathematics, the Universit\'e de Strasbourg, and the University of Colorado Boulder 
all for providing such wonderful places to work. 


\section{Preliminaries}\label{sec:prelims}

We begin by introducing the $K(n)$-local category, Morava $E$-theory, the Morava stabilizer group,
and general convergence results for the $K(n)$-local Adams-Novikov Spectral Sequence.
We then get specific at $n=2$ and $p=2$, discussing the role of formal groups from supersingular
elliptic curves. We close the section with some background 
on algebraic and topological duality resolutions.

\subsection{The $K(n)$-local category}

Fix a prime $p$. Let $\Gamma_n$ be a formal group of height $n$ over the finite field $\F_p$ 
of $p$ elements and let $\End(\Gamma_n/\FF_p)$ be the endomorphism ring of $\Gamma_n$ over
$\FF_p$. This ring is in fact the maximal order in a central division algebra over $\Q_p$ of Hasse invariant $1/n$ (see for example Appendix A.2 of \cite{ravgreen}).
 The unique map of rings $\ZZ_p \to \End(\Gamma_n/\FF_p)$ is an inclusion into the center. Because
$\Gamma_n$ is defined over $\FF_p$ the Frobenius map $\xi(x) = x^p$ also defines an endomorphism of $\Gamma_n$. 
We will assume the endomorphism 
$\xi^n(x) = x^{p^n}$ satisfies an equation
\begin{equation}\label{eq:frob-assumption}
\xi^n (x)= [ap](x) \in \End_{\FF_{p^n}}(\Gamma_n/\FF_p)
\end{equation}
where $a \in \ZZ_p^\times$ is a unit and $[\ell](x)$ denotes the $\ell$-series for $\ell \in \Z_p$. The Honda formal group of height $n$ satisfies these criteria: 
this has a formal group law which is  $p$-typical and with $p$-series $[p](x)=x^{p^n} = \xi^n(x)$. 
However, if $n=2$ and $p=2$, then the formal group of a 
supersingular elliptic curve defined over $\F_2$ will also do, and this will be our preferred choice if $p=2$;
see \Cref{sec:ellback}.

Let $i \colon \F_p \hookrightarrow k$ be  any extension of $\F_p$ and let 
$\Aut(\Gamma_n/k)$ be the group 
of automorphisms of $i^\ast \Gamma_n$ over $k$. Our
assumption \eqref{eq:frob-assumption} implies that for any extension 
$\F_{p^n} \subseteq k$ there is an isomorphism
\[
\Aut(\Gamma_n/\F_{p^n}) \mathop{\longrightarrow}^{\cong} \Aut(\Gamma_n/k).
\]
To shorten notation we define
\begin{equation}\label{morava-stab}
\Sn_n = \Aut(\Gamma_n/\F_{p^n}).
\end{equation}
If we choose a coordinate for $\Gamma_n$, then every element of $\Sn_n$ can be expressed as a power series
$\phi(x) \in x\F_{p^n}[[x]]$ invertible under composition. The map $\phi(x) \mapsto \phi'(0)$ defines a surjective map
\[
\Sn_n \longrightarrow\F_{p^n}^\times.
\]
We define $S_n$ to be the kernel of this map; this is the $p$-Sylow subgroup of the profinite
group $\Sn_n$.\footnote{Here, there is a small clash in the notation as $S_p$ sometimes also denotes the $p$-completed sphere spectrum. However, both the notation $S_n$ for the $p$-Sylow subgroup of the Morava Stabilizer group and $S_p$ for the $p$-completed sphere are well established and we have decided not to change either. We do not think this will cause any confusion as what we mean is clear from context.}
The Teichm\"uller lift $\F_{p^n}^{\times} \to \WW^{\times} \subseteq \mathbb{S}_n$ defines a section of this map, giving an isomorphism $S_n \rtimes \F_{p^n}^\times \cong \Sn_n$.

Define the  extended Morava stabilizer group $\G_n$ as the automorphism group of the pair 
$(\F_{p^n},\Gamma_n)$. Elements of $\G_n$ are pairs $(f,\phi)$ where $f \in \Aut(\F_{p^n}/\F_p)$ 
and $\phi\colon \Gamma_n \to f^*\Gamma_n$ is an isomorphism of formal groups.
Since $\Gamma_n$ is defined over $\F_p$, there is an isomorphism
\begin{equation}\label{big-morava-stab}
\G_n \cong \Aut(\Gamma_n/\F_{p^n}) \rtimes \Gal(\F_{p^n}/\F_p) = \Sn_n \rtimes \Gal(\F_{p^n}/\F_p).
\end{equation}

We next define Morava $K$-theory; there are many variants, all of which have the same Bousfield
class and define the same localization. To be specific, let
$K(n) = K(\F_{p^n},\Gamma_n)$ be the $2$-periodic ring spectrum with homotopy groups
\[
K(n)_\ast = \F_{p^n}[u^{\pm 1}]
\]
and with associated formal group $\Gamma_n$. Here the class $u$ is in degree $-2$. This slightly
unclassical choice of $K(n)$ has the property that it receives a map $E_n \to K(n)$ from Morava
$E$-theory defined below in (\ref{lubin-tate}). 

We will spend a great deal of time working in the $K(n)$-local category and, when doing so, all our
spectra will implicitly be localized. In particular, we emphasize that we will
write $X \wedge Y$ for $L_{K(n)}(X \wedge Y)$, as this is the smash product
internal to the  $K(n)$-local category. 

We now define the Lubin-Tate spectrum $E=E_n=E(\F_{p^n},\Gamma_n)$. This is a complex
oriented, Landweber exact, $2$-periodic, $E_\infty$-ring spectrum with
\begin{equation}\label{lubin-tate}
E_\ast = (E_n)_\ast \cong \W[[u_1,\cdots,u_{n-1}]][u^{\pm1 }]
\end{equation}
with $u_i$ in degree $0$ and $u$ in degree $-2$. Here $\W = W(\F_{p^n})$ is the ring of Witt
vectors on $\F_{p^n}$. Note that
$E_0$ is a complete local ring with residue field $\F_{p^n}$;
the formal group over $E_0$ is a choice of universal deformation
of the formal group $\Gamma_n$ over $\F_{p^n}$. (We will be specific about this choice
at $n=p=2$ below in \Cref{sec:ellback}.) 
By the Goerss-Hopkins-Miller theorem, the group $\G_n =    \Aut(\Gamma_n) \rtimes \Gal(\F_{p^n}/\F_p)$
acts on $E$ by maps of $E_{\infty}$-ring spectra \cite{rezk_hm, gh_modulien}.

\begin{defn}
 If $X$ is a spectrum we let 
\[
E_\ast X= \pi_\ast L_{K(n)}(E \wedge X).
\]
\end{defn}

\begin{warn}
Despite the notation, the functor $E_\ast(-)$ is not a homology theory, as it does not take wedges 
to sums in general. In \cite{HovStrick}, this is denoted by $E_\ast^{\vee}(-)$. Since it is our most important algebraic invariant intrinsic 
to the $K(n)$-local category, we use the simpler notation $E_\ast(-)$. Note that $ E \wedge X \simeq L_{K(n)}(E \wedge X)$ whenever $X$ is a finite spectrum. However, $E_*E$ which we have defined to be $\pi_*L_{K(n)}(E \wedge E)$ is not isomorphic to $\pi_*(E\wedge E)$.
\end{warn}

The $E_\ast$-module $E_\ast X$ is equipped with the $\mathfrak{m}$-adic topology 
where $\mathfrak{m}$ is the maximal ideal in $E_0$.
This topology
is always complete, but need not be separated in general. However, all the
$E_\ast$-modules we consider in this paper will be complete and separated. For instance, sufficient conditions for the topology on $E_*X$ to be complete and separated are
\begin{enumerate}[(a)]
\item if $E_\ast X$ is finitely generated as an $E_*$-module, or 
\item if $K(n)_*X$ is concentrated in even degrees (Proposition 2.2 of \cite{ghmr}).
\end{enumerate}
For all the spectra $X$ we consider in this paper, one of these two condition holds.

The action of $\GG_n$ on $E_n$ determines a continuous action of $\GG_n$ on $(E_n)_\ast X$.
If $n=1$, we can choose $E_\ast = K_\ast$, $p$-complete $K$-theory, and $\GG_1 = \ZZ_p^\times$, 
the units in the $p$-adics. The action is then through the Adams operations and in that case
we might write $\psi^\ell$ for the action of $\ell \in \ZZ_p^\times$; for example, as in \eqref{eq:adamsop}.
Not withstanding this, as general rule we will simply write $g_\ast$ for the action of $g \in \GG_n$
on $E_\ast X$. 

This  action is twisted in the sense that
if $g \in \GG_n$, $a \in E_\ast$ and $x \in E_\ast X$, then $g_\ast(ax) = g_\ast(a)g_\ast(x)$.
We will call such modules either \emph{Morava modules} or \emph{twisted $E_*$-$\G_n$-modules}. 
When we consider closed subgroups $H$ of $\GG_n$ and $E_*$-modules with an action of $H$ 
satisfying the analogous formula for $h\in H$ then we call such modules 
\emph{twisted $E_*$-$H$-modules}. See Section 1.3 \cite{BobkovaGoerss} for a more lengthy discussion on Morava modules.

The $E_\ast$-algebra $E_\ast E$ has a $\GG_n$-action on both the left 
and right factor. The action of the left
factor defines the Morava module structure. Using the action on the right factor we get a composite map
\[
\xymatrix{
E_\ast E \times \GG_n \ar[r] & E_\ast E \ar[r]^-m & E_\ast
}
\]
where $m$ is induced by the multiplication $E \wedge E \to E$. The adjoint to this map is an isomorphism 
\begin{equation}\label{eq:EhomofE}
E_\ast E \cong \map_{cts}(\G_n,E_\ast)
\end{equation}
of Morava modules. Here $\map_{cts}(-,-)$ denotes the set of continuous maps. On the right hand side of this
equation, $E_\ast$ acts on the target and the $\G_n$-action is the diagonal action
given by $(g_\ast \phi)(x) = g_\ast \phi(g_\ast^{-1}x)$.

Caution is needed here. The isomorphism \eqref{eq:EhomofE} need not hold for the Lubin-Tate homology
theory $E(k,\Gamma)$ for an arbitrary height $n$ formal group $\Gamma$ over a field $k \subseteq \overline{\FF}_p$. 
In the literature \eqref{eq:EhomofE}  is proved for the 
Honda formal group; see, for example, Theorem 12 of \cite{StrickGrossHop, DH, hovey_ops}. In examining the proof there
we see that what is needed is our assumption from \eqref{eq:frob-assumption}. The details needed to then produce the
isomorphism of \eqref{eq:EhomofE},
and more information as well, can be found in \S 5 of \cite{HWHennCentAt2}.

Now suppose $X$ is a finite $p$-local spectrum. From \eqref{eq:EhomofE} it follows (again see \cite{StrickGrossHop})
that the $K(n)$-local $E$-based Adams Spectral Sequence for $X$ has the form
\begin{equation}\label{ANSS-gen}
H_c^s(\GG_n,E_tX) \Longrightarrow \pi_{t-s}L_{K(n)}X.
\end{equation}
Group cohomology here is {\it continuous} group cohomology. There is extensive discussion
of this spectral sequence in \cite{DH}. 

Complex orientations define maps of ring spectra
\[
\xymatrix{
BP & \ar[l] MU \ar[r] & E\ .
}
\]
If we localize at a prime and if $X$ is a finite $p$-local spectrum, we get a diagram of spectral sequences 
where the upward arrows are isomorphisms
\begin{equation}\label{eq:compare-ANSS}
\xymatrix@R=15pt{
\Ext^{\ast,\ast}_{BP_\ast BP}(BP_\ast,BP_\ast X) \ar@{=>}[r] & \pi_\ast X\\
\Ext^{\ast,\ast}_{MU_\ast MU}(MU_\ast,MU_\ast X) \ar[u]^\cong\ar[d] \ar@{=>}[r] 
& \pi_\ast X \ar[u]_\cong\ar[d]\\
H_c^\ast(\GG_n,E_\ast X) \ar@{=>}[r] & \pi_\ast L_{K(n)}X\ .
}
\end{equation}

For example, let $F \subseteq \G_n$ be a closed subgroup. In \cite{DH} Devinatz and 
Hopkins defined and studied a homotopy fixed point spectrum $E^{hF}$ with the property that 
the isomorphism of
\eqref{eq:EhomofE} descends to an isomorphism of Morava modules
\begin{equation}\label{eq:EhomofF}
E_\ast E^{hF} \cong \map_{cts}(\G_n/F,E_\ast)
\end{equation}
and a spectral sequence for $X$ a finite CW spectrum
\begin{equation}\label{ANSS}
H_c^s(F,E_tX) \Longrightarrow \pi_{t-s}(E^{hF}\wedge X)\ .
\end{equation}
If $F = \GG_n$ itself this spectral sequence is the $K(n)$-local
Adams-Novikov Spectral Sequence for $L_{K(n)}S^0$ and, if $F$ is finite, this spectral sequence
is the homotopy fixed point spectral sequence for the action of $F$ on $E$.
The Devinatz-Hopkins
construction is natural in $F$ and, in particular, if $F_1 \subseteq F_2$ are two closed subgroups, we have
a commutative diagram of spectral sequences
\begin{equation}\label{eq:more-compare-ANSS}
\xymatrix{
H_c^s(F_2,E_t X)\ar[d] \ar@{=>}[r] & \pi_{t-s}(E^{hF_2} \wedge X)\ar[d] \\
H_c^s(F_1,E_t X)\ar@{=>}[r] & \pi_{t-s}(E^{hF_1} \wedge X)
}
\end{equation}
where the vertical arrows are the natural maps induced by the inclusion of $F_1$ into $F_2$.

Again, some caution is needed here, as the Devinatz-Hopkins paper works only with
the Lubin-Tate theory defined for the Honda formal group. However, a close reading of \cite{DH}
shows that they need not have been so specific. They use only the  Goerss-Hopkins-Miller theorem \cite{gh_modulien}, 
which states that the space of $E_\infty$-self maps of $E(k,\Gamma_n)$ is homotopically discrete, 
as well as the isomorphism
of \eqref{eq:EhomofE}. The Goerss-Hopkins-Miller theorem holds for any Lubin-Tate spectrum
$E(k,\Gamma)$ where $k \subseteq \overline{\FF}_p$ and $\Gamma$ is of finite height. In addition,
\eqref{eq:EhomofE} holds for the formal group of \Cref{sec:ellback}. This caution will come up
again below, but we won't repeat this comment.

\subsection{Subgroups of $\G_n$}\label{subgroupsection} 
Various subgroups of $\G_n$ will play a role in this paper; we discuss some here. Some finite
subgroups will appear later, in \Cref{sec:ellback}, after we have introduced our choice of formal group. 

The right action of $\SS_n = \Aut(\Gamma_n/\FF_{p^n})$ on $\End(\Gamma_n/\FF_{p^n})$ 
defines a determinant map $\det\colon \SS_n \to \Z_p^\times$ which extends 
to a determinant map
\begin{equation}\label{det-defined}
\xymatrix@C=35pt{
\G_n \cong  \Sn_n \rtimes \Gal(\F_{p^n}/\F_p)
\ar[r]^-{\det \times 1} 
&  \Z_{p}^\times \times \Gal(\F_{p^n}/\F_p)
\ar[r]^-{p_1} & \Z_p^\times
}
\end{equation}
where $p_1$ is the projection onto the first factor of the product.
Define the {\it reduced determinant} $N$ to be the composition
\begin{equation}\label{norm-defined}
\xymatrix{
\G_n \ar@/_1pc/[rr]_N \ar[r]^-{\det} & \Z_p^\times \ar[r] & \Z_p^\times/C \cong \Z_p
}
\end{equation}
where $C \subseteq \Z_p^\times$ is the maximal finite subgroup. 
For example, $C = \{\pm 1\}$ if $p=2$.
There are isomorphisms $\Z_p^\times/C \cong \Z_p$. We choose one for now, although we will
be much more specific in \Cref{rem:many-alphas} below.
Write $\G_n^1$ for the kernel of $N$, $\Sn_n^1 = \Sn_n \cap \G_n^1$, and
$S_n^1 = S_n \cap \G_n^1$. The map $N\colon S_n \to \Z_p$ is split surjective and we have
semi-direct product decompositions for all of $\G_n$, $\Sn_n$, and $S_n$; for example,
there is an isomorphism
\[
\Sn_n^1 \rtimes \Z_p \cong \Sn_n.
\]
If $n$ is prime to $p$, we can choose a central splitting and the semi-direct product is actually
a product, but that is not the case of interest here.

The surjective homomorphism $N\colon \GG_n \to \ZZ_p$ defines a non-zero cohomology class 
$\zeta_n \in H_c^1(\GG_n,\ZZ_p)$. Also write  
\begin{equation}\label{def-zeta-n}
\zeta_n \in H_c^1(\GG_n,E_0)
\end{equation}
for the image of this class under the map induced by the unique continuous homomorphism of rings
$\ZZ_p \to E_0$. 

The class $\zeta_n$ detects a homotopy class in $\pi_{-1}L_{K(n)}S^0$. Let $\pi \in \GG_n$
be any element so that $N(\pi) \in \ZZ_p$ is a topological generator. Then we have a fibration sequence
\begin{equation}\label{eq:an-easy-reduction}
\xymatrix{
L_{K(n)}S^0 \simeq E^{h\GG_n} \ar[r] & E^{h\GG_n^1} \ar[r]^-{\pi -1} & E^{h\GG_n^1} \ .
}
\end{equation}
The following result can be found in Proposition 8.2 of \cite{DH}.

\begin{prop}\label{prop:zeta-permanent} The class $\zeta_n$ is a non-zero permanent cycle in the
Adams-Novikov Spectral Sequence
\[
H_c^s(\GG_n,E_t) \Longrightarrow \pi_{t-s}L_{K(n)}S^0
\]
detecting the image of the unit in $\pi_0E^{h\GG_n^1}$ under the boundary map
\[
\pi_0E^{h\GG_n^1} \longr \pi_{-1}L_{K(n)}S^0.
\]
\end{prop}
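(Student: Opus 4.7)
The plan is to identify $\zeta_n$ with the image of the unit under the connecting homomorphism of the fiber sequence \eqref{eq:an-easy-reduction}, so that both the permanent cycle property and the detection statement follow from interpreting that sequence as a $\ZZ_p$-homotopy fixed point construction.

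First, I would use that \eqref{eq:an-easy-reduction} presents $L_{K(n)}S^0$ as $(E^{h\GG_n^1})^{h\ZZ_p}$, where $\ZZ_p = \GG_n/\GG_n^1$ acts with topological generator the image of $\pi$. The associated homotopy fixed point spectral sequence
\[
E_2^{s,t} = H^s(\ZZ_p, \pi_t E^{h\GG_n^1}) \Longrightarrow \pi_{t-s}L_{K(n)}S^0
\]
is concentrated in columns $s = 0, 1$ because $\ZZ_p$ has continuous cohomological dimension one. Hence all differentials vanish, and every class in $E_2^{1,0} = (\pi_0 E^{h\GG_n^1})_{\ZZ_p}$ is automatically a permanent cycle.

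Next, since $\pi$ fixes the unit, $(\pi-1)_\ast(1) = 0$ in $\pi_0 E^{h\GG_n^1}$, so $\partial(1) \in \pi_{-1}L_{K(n)}S^0$ is well-defined. By the standard identification of the connecting map of $X^{h\ZZ_p} \to X \xrightarrow{\pi-1} X$ with the coinvariant quotient (valid because $\ZZ_p$ has cohomological dimension one), $\partial(1)$ is detected in filtration one of the above HFPSS by the image $[1]$ of the unit in $(\pi_0 E^{h\GG_n^1})_{\ZZ_p}$. This image is nonzero because $\ZZ_p \cdot 1 \subseteq \pi_0 E^{h\GG_n^1}$ is fixed by $\pi$ and therefore injects into the coinvariants. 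To identify $[1]$ with $\zeta_n$ inside the full $\GG_n$-HFPSS of \eqref{ANSS-gen}, I would invoke the Lyndon--Hochschild--Serre spectral sequence
\[
H^s(\ZZ_p, H^t(\GG_n^1, E_\ast)) \Longrightarrow H^{s+t}(\GG_n, E_\ast),
\]
which refines the $\GG_n$-HFPSS and topologically realizes $L_{K(n)}S^0 \simeq (E^{h\GG_n^1})^{h\ZZ_p}$. Under this comparison, $[1] \in H^1(\ZZ_p, H^0(\GG_n^1, E_0))$ corresponds to the image in $H^1(\GG_n, E_0)$ of the canonical generator of $H^1(\ZZ_p, \ZZ_p)$ under inflation along $N \colon \GG_n \to \ZZ_p$, which by \eqref{def-zeta-n} is precisely $\zeta_n$.

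The main obstacle is verifying the two compatibility statements---that $\partial$ agrees at $E_\infty$ with the coinvariant projection, and that the iterated $\ZZ_p$-HFPSS matches the $\GG_n$-HFPSS through LHS---with enough naturality that identifications on low-filtration classes commute with connecting maps. Both are consequences of the Devinatz--Hopkins construction of the homotopy fixed point spectra \cite{DH} and of the naturality built into their spectral sequences, and in fact this is essentially how Proposition 8.2 of that paper is proved.
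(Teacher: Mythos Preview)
The paper does not give its own proof of this proposition; it simply cites Proposition 8.2 of \cite{DH}. Your sketch is a faithful outline of the Devinatz--Hopkins argument, and you correctly identify at the end that the compatibility statements you need are exactly what their construction provides, so your proposal is in line with the paper's approach.
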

\noindent
In an abuse of notation we will call this homotopy class $\zeta_n$ as well.

For many purposes the action of the Galois group
$\Gal = \Gal(\FF_{p^n}/\FF_p) \subseteq \GG_n$ is harmless. 
This can be made precise in the 
following two results, which are from Section 1 of \cite{BobkovaGoerss}.

\begin{lem}\label{coh-decomp-galois} Let $F \subseteq \GG_n$ be a closed subgroup and let
$F_0 = F \cap \SS_n$. Suppose the canonical map
\[
F/F_0 \longr \GG_n/\SS_n \cong \Gal
\]
is an isomorphism. Then for any twisted $\GG_n$-module $M$ we have isomorphisms
\begin{align*}
H_c^\ast(F,M)  &\cong H_c^\ast(F_0,M)^{\Gal}\\
H_c^\ast(F_0,M) &\cong \WW \otimes_{\ZZ_p} H_c^\ast(F,M)\ .
\end{align*}
\end{lem}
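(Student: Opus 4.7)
The plan is to exploit the fact that $\ZZ_p \to \WW$ is a Galois extension with group $\Gal = \Gal(\FF_{p^n}/\FF_p)$, together with the standard Lyndon–Hochschild–Serre (LHS) spectral sequence.

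First, I note that $F_0 = F \cap \SS_n$ is a closed normal subgroup of $F$ because $\SS_n$ is normal in $\GG_n$ (via $\GG_n \cong \SS_n \rtimes \Gal$), and the hypothesis $F/F_0 \xrightarrow{\cong} \Gal$ gives a short exact sequence of profinite groups $1 \to F_0 \to F \to \Gal \to 1$. The LHS spectral sequence for this extension is
\[
E_2^{p,q} = H^p\bigl(\Gal,\, H^q(F_0, M)\bigr) \Longrightarrow H^{p+q}(F, M).
\]
The first claim will follow once I show this spectral sequence collapses with only the $p=0$ column nonzero, and the second claim will follow from a descent computation of the $p=0$ column.

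Second, I observe that $H^q(F_0,M)$ is naturally a $\WW$-module with semilinear $\Gal$-action. Indeed, the twisted $\GG_n$-module structure on $M$ makes $M$ an $E_\ast$-module, hence a $\WW$-module via the canonical inclusion $\WW \hookrightarrow E_0$. The key input is that elements of $\SS_n$ act trivially on $\WW$ (the $\GG_n$-action on the Witt vectors $\WW \subset E_0$ factors through the quotient $\GG_n \to \Gal$), so the $\WW$-module structure descends to each $H^q(F_0,M)$. The residual action of $F/F_0 \cong \Gal$ on $H^q(F_0,M)$ then twists $\WW$ by the Galois action, giving a semilinear structure.

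Third, I would apply Galois descent for the (faithfully flat, even étale) extension $\ZZ_p \to \WW$. The standard fact, whose proof uses the isomorphism $\WW \otimes_{\ZZ_p} \WW \cong \prod_{\sigma \in \Gal} \WW$ and a Shapiro-style argument, is that for any $\WW$-module $N$ with semilinear $\Gal$-action one has
\[
H^p(\Gal, N) = 0 \quad \text{for } p > 0, \qquad \text{and} \qquad N \cong \WW \otimes_{\ZZ_p} N^{\Gal}.
\]
Applied to $N = H^q(F_0,M)$, the vanishing collapses the LHS spectral sequence onto its $p=0$ row, yielding $H^\ast(F,M) \cong H^\ast(F_0,M)^{\Gal}$. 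The second identity of the descent statement, combined with this isomorphism, then gives $H^\ast(F_0,M) \cong \WW \otimes_{\ZZ_p} H^\ast(F,M)$.

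The main (small) obstacle will be verifying cleanly that the $\WW$-module structure on $H^q(F_0,M)$ is genuinely semilinear for the $F/F_0$-action — i.e., checking the compatibility $\sigma_*(a \cdot x) = \sigma(a) \cdot \sigma_*(x)$ for $a \in \WW$, $x \in H^q(F_0,M)$, $\sigma \in F/F_0$. This reduces to the twisted-module identity for $M$ together with the observation that $\SS_n$ fixes $\WW$ pointwise, so the action on $\WW$ depends only on the class of $\sigma$ in $\Gal$; once this is in hand, the rest is formal.
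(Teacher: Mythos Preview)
Your approach is correct and is exactly the standard argument. The paper does not give its own proof here; it cites Section~1 of \cite{BobkovaGoerss} and only records the ``key observation'' that $\ZZ_p \to \WW$ is Galois with group $\Gal$, which is precisely the input you use for the descent step. Your LHS-plus-Galois-descent argument is the intended one.
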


This has the following topological analog. If $X$ is a space, let $X_+$ denote $X$ with a disjoint basepoint.

\begin{lem}\label{more-split} Let $F \subseteq \GG_n$ be a closed subgroup and let $F_0 = F \cap \SS_n$.
Suppose the canonical map
\[
F/F_0 \longr \GG_n/\SS_n \cong \Gal
\]
is an isomorphism. Then there is a $\Gal$-equivariant equivalence
\[
\Gal_+ \wedge E^{hF} \to E^{hF_0}.
\]
\end{lem}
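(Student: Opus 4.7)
The plan is to construct the map explicitly, then verify it is an equivalence by computing $E_\ast$-homology on both sides and invoking the Morava-module description \eqref{eq:EhomofF}.

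First I would observe that the hypothesis forces $F_0$ to be normal in $F$, since $F/F_0 \cong \Gal$ is a group. Hence $\Gal \cong F/F_0$ acts on the homotopy fixed point spectrum $E^{hF_0}$ by the residual action of $F$ on $E^{hF_0}$. Combining this with the restriction map $i \co E^{hF} \to E^{hF_0}$ coming from the inclusion $F_0 \subseteq F$, I define
\[
\varphi \co \Gal_+ \wedge E^{hF} \longr E^{hF_0}, \qquad (g,x) \longmapsto g \cdot i(x).
\]
The $\Gal$-equivariance is immediate: on the source $\Gal$ acts by left translation on the first factor, on the target via the identification $\Gal \cong F/F_0$, and $\varphi$ intertwines these by construction.

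Next I would show that $\varphi$ is a $K(n)$-local equivalence by checking it is an isomorphism on $E_\ast$. By \eqref{eq:EhomofF},
\[
E_\ast E^{hF} \cong \map_{cts}(\G_n/F,E_\ast), \qquad E_\ast E^{hF_0} \cong \map_{cts}(\G_n/F_0,E_\ast),
\]
and because $\Gal$ is finite, $E_\ast(\Gal_+ \wedge E^{hF}) \cong \map_{cts}(\G_n/F \times \Gal, E_\ast)$. Thus it suffices to show that the canonical map of $\G_n$-sets
\[
\G_n/F_0 \longr \G_n/F \times \Gal, \qquad g F_0 \longmapsto (gF,\, g\Sn_n)
\]
is a bijection, since under \eqref{eq:EhomofF} this is precisely the map induced on $E_\ast$ by $\varphi$.

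The set-theoretic bijection is the only place where the hypothesis is used. It is well defined because $F_0 \subseteq F \cap \SS_n$. For injectivity, if $g_1F_0$ and $g_2F_0$ have the same image then $g_1^{-1}g_2 \in F \cap \SS_n = F_0$. For surjectivity, given $(gF,h)$, one uses the assumption that $F/F_0 \to \G_n/\Sn_n = \Gal$ is an isomorphism: the coset $g^{-1}h\Sn_n \in \Gal$ is represented by a unique element $fF_0 \in F/F_0$, so $gfF_0$ lifts the pair $(gF,h)$. This is the analogue, at the level of coset spaces, of the cohomological splitting of \fullref{coh-decomp-galois}.

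The main (and only substantive) obstacle is the bookkeeping showing that the bijection of $\G_n$-sets really is the map on $E_\ast$-homology induced by $\varphi$, and that the resulting $E_\ast$-iso propagates to a weak equivalence in the $K(n)$-local category; the latter is standard since both $E^{hF_0}$ and the finite wedge $\Gal_+ \wedge E^{hF}$ are of Devinatz--Hopkins type and a map of such spectra inducing an isomorphism on $E_\ast$ is necessarily a $K(n)$-local equivalence via the spectral sequence \eqref{ANSS}.
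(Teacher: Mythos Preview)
Your approach is correct and is essentially the argument in \cite{BobkovaGoerss}, which the paper cites rather than reproducing a proof. One small caveat: the explicit bijection $gF_0 \mapsto (gF, g\SS_n)$ you write down is $\G_n$-equivariant only for the diagonal action on $\G_n/F \times \Gal$, whereas the Morava module $E_\ast(\Gal_+ \wedge E^{hF})$ carries the trivial $\G_n$-action on the $\Gal$ factor; the actual map $E_\ast\varphi$ corresponds to the bijection $gF_0 \mapsto (\gamma, g\gamma F)$ where $\gamma \in F/F_0$ is the unique class with $g\gamma \in \SS_n$ (coming from the right translation action of $F/F_0$ on $\G_n/F_0$). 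This is exactly the bookkeeping you flag, and once straightened out the rest of your argument goes through unchanged.
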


If $X$ is a finite spectrum $X$ and $F$ is a  closed subgroup of $\GG_n$, then the natural map
$E^{hF}\smsh X\to (E \smsh X)^{hF}$ is an equivalence. Combining \Cref{coh-decomp-galois} and
\Cref{more-split} thus yields an isomorphism of spectral sequences
\[
\xymatrix{
\WW \otimes_{\ZZ_p} H_c^\ast(F,E_\ast X) \ar@{=>}[r] \ar[d]_\cong &
\WW \otimes_{\ZZ_p} \pi_\ast (E\smsh X)^{hF}\ar[d]^\cong \\
H_c^\ast(F_0,E_\ast X) \ar@{=>}[r]  & \pi_\ast (E \smsh X)^{hF_0}
}
\]
where the differentials on the top line are the $\WW$-linear differentials extended
from the spectral sequence for $F$.

\subsection{Strong vanishing lines}\label{sec:van-line}

The $K(n)$-local Adams-Novikov Spectral Sequence satisfies a very strong horizontal vanishing line 
condition; see \Cref{def:horiz-van}, \Cref{lem:two-cond-van-line}, and \Cref{thm:en-vanishing} below.
We will need this throughout \Cref{sec:k1k2s}. 
These results are in the literature, but a bit hard to pull together in complete detail, 
so we provide a summary here. The main  reference is \S 5 \cite{DH}, but there are also very similar ideas
in the proof of Corollary 15 of \cite{StrickGrossHop}.

Let us write
\[
\xymatrix@C=10pt@R=12.5pt{
X \ar[dr] && \ar@{-->}[ll]Z\\
&Y\ar[ur]
}
\]
for a cofiber sequence (i.e., a triangle) $X \to Y \to Z \to \Sigma X$. Suppose we have a
diagram, where each of the triangles is a cofiber sequence. 

\begin{equation}\label{eq:exact-couple-diag}
\xymatrix@C=10pt@R=12.5pt{
X \ar[dr]&& \ar@{-->}[ll] X_1\ar[dr] && \ar@{-->}[ll] X_2\ar[dr] 
&&  \ar@{-->}[ll] X_3\ar[dr]&&\ar@{-->}[ll] \cdots \\
&F_0\ar[ur]&&F_1\ar[ur]&&F_2\ar[ur]&&F_3\ar[ur]
}
\end{equation}
From \eqref{eq:exact-couple-diag} we obtain a spectral sequence (with $s \geq 0$)
\begin{equation}\label{eq:exact-couple-diag-ss}
E_1^{s,t} = \pi_tF_s \Longrightarrow \pi_{t-s}X.
\end{equation}

Recall that the spectral sequence \eqref{eq:exact-couple-diag-ss} is strongly convergent if 
\begin{equation}\label{def:strong-converge}
\mathrm{lim}_s\ \pi_{t+s}X_s = 0 = \mathrm{lim}^1_s\ \pi_{t+s}X_s
\end{equation}
for all $t$. 

\begin{defn}\label{def:horiz-van} The spectral sequence of \eqref{eq:exact-couple-diag-ss} 
has a {\it strong horizontal vanishing line} at $N$ if for all $s$ and $t$ the map 
$\pi_tX_s \to \pi_{t-N}X_{s-N}$ is zero. 
\end{defn} 

The following result  justifies this nomenclature. The proof is a diagram chase. 

\begin{lem}\label{lem:two-cond-van-line} Suppose the spectral sequence of \eqref{eq:exact-couple-diag-ss}
has a strong horizontal vanishing line at $N$. Then
\begin{enumerate}

\item  the spectral sequence is strongly convergent; 

\item for all $s$ and $t$, $E_N^{s,t} \cong E_\infty ^{s,t}$; and,

\item for all $s \geq N$, $E_\infty^{s,t} = 0$.
\end{enumerate}
\end{lem}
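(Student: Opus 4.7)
The plan is to deduce all three claims from a single diagram chase in the exact couple associated to the tower of $X_s$'s, exploiting the hypothesis that the iterated boundary $\partial^N: \pi_t X_s \to \pi_{t-N} X_{s-N}$ vanishes (where $\partial$ is the connecting map of $X_s \to F_s \to X_{s+1} \to \Sigma X_s$). The three parts are largely independent, and I would address them in increasing order of subtlety.

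For part (3), the filtration on the abutment is $F^s \pi_n X = \mathrm{im}(\partial^s: \pi_{n+s} X_s \to \pi_n X_0)$. For $s \geq N$, factor $\partial^s = \partial^{s-N} \circ \partial^N$; the inner factor vanishes by hypothesis, so $F^s = 0$, and hence $E_\infty^{s,t} = F^s/F^{s+1} = 0$. For part (1), the inverse system $\{\pi_{t+s} X_s\}_s$ has transition maps $\partial$, and by hypothesis any $N$-fold composition of transition maps is zero. This gives $\lim = 0$ immediately (any coherent sequence $(a_s)$ satisfies $a_s = \partial^N(a_{s+N}) = 0$); for $\lim^1 = 0$, given $(b_s)$, the finite sum $a_s := -\sum_{k=0}^{N-1}\partial^k(b_{s+k})$ is a well-defined preimage under the canonical map $(a_s) \mapsto (a_s - \partial(a_{s+1}))$, since the tail is killed by $\partial^N$.

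For part (2), trace through the derived couple. A class $[e] \in E_r^{s,t}$ is represented by $e \in \pi_t F_s$ whose image $k(e) \in \pi_t X_{s+1}$ admits a lift $\beta \in \pi_{t+r-1} X_{s+r}$ under $\partial^{r-1}$, with $d_r[e] = [j(\beta)] \in E_r^{s+r, t+r-1}$. Once $r-1 \geq N$, the map $\partial^{r-1}$ factors through $\partial^N = 0$; the cycle condition $\partial^{r-1}(\beta) = k(e)$ then forces $k(e) = 0$, and one may take $\beta = 0$ to conclude $d_r[e] = 0$. Thus the differentials vanish past the threshold and the spectral sequence collapses to $E_\infty$. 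The main obstacle here is bookkeeping — matching the precise stabilization page obtained from this chase with the page $N$ specified in the statement — which amounts to careful attention to the indexing conventions used for the derived couple.
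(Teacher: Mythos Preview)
Your argument is exactly the diagram chase the paper intends; since the paper gives no details beyond that phrase, there is nothing further to compare at the level of approach.

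Two small remarks. In the $\lim^1$ computation there is a sign slip: with your choice $a_s = -\sum_{k=0}^{N-1}\partial^k(b_{s+k})$ one finds $a_s - \partial(a_{s+1}) = -b_s$, so drop the leading minus sign. More interestingly, your caution about part~(2) is warranted and is not merely a matter of indexing conventions. Your chase correctly yields $d_r = 0$ only for $r \geq N+1$, and in fact $d_N$ can be nonzero under the stated hypothesis. A small counterexample with $N=2$: take $X_0 = \ast$, $X_s = \ast$ for $s \geq 3$, and $X_1$, $X_2$ each with a single nonzero homotopy group $\pi_0 X_1 \cong \pi_1 X_2 \cong k$, arranged so that the boundary $\partial \colon \pi_1 X_2 \to \pi_0 X_1$ is an isomorphism (equivalently $F_0 \simeq X_1$, $F_1 \simeq \ast$, $F_2 \simeq X_2$). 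Then $\partial^2 = 0$ on every $\pi_t X_s$ with $s\geq 2$, yet $d_2 \colon E_2^{0,0} \to E_2^{2,1}$ is an isomorphism of one-dimensional $k$-vector spaces. So the sharp version of (2) is $E_{N+1}^{s,t} \cong E_\infty^{s,t}$. This off-by-one is harmless for every application in the paper, which only uses strong convergence together with the existence of some uniform finite collapse and vanishing line.
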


Recall that we write $Y \wedge Z$ for $L_{K(n)}(Y \wedge Z)$ when we are in the $K(n)$-local category
and we write $E=E_n$ for our chosen Morava $E$-theory at $n$.
Then the $K(n)$-local Adams-Novikov Spectral Sequence is obtained from the standard
exact couple diagram constructed from the $E$-based cobar complex in the
 $K(n)$-local category:
\begin{equation}\label{eq:exact-couple-diag-e}
\xymatrix@C=15pt@R=12.5pt{
L_{K(n)}S^0 \ar[dr]&& \ar@{-->}[ll] \overline{E} \ar[dr] && 
\ar@{-->}[ll] \overline{E} \wedge \overline{E}\ar[dr] &&\ar@{-->}
[ll] \cdots \\
&E\ar[ur]&&E \wedge \overline{E}\ar[ur]&&E \wedge \overline{E}\wedge \overline{E}\ar[ur]
}
\end{equation}
Given a spectrum $X$, the $K(n)$-local Adams-Novikov Spectral Sequence
has $E_2$-term isomorphic to
\[
E_2^{s,t} \cong \pi^s\pi_tL_{K(n)}(E^{\wedge \bullet} \wedge X).
\]
If $Z$ is a finite spectrum and $G \subseteq \GG_n$ is closed, then we can set $X = E^{hG} \wedge Z$. 
Then by Proposition 6.6 of \cite{DH} the $E_2$-term
becomes
\[
E_2^{s,t} \cong H_c^s(G,E_tZ).
\]

We write $i_1 = i\colon \overline{E} \to \Sigma L_{K(n)}S^0$ for the boundary map in the first triangle. 
Then the later boundary maps can be written as
\[
i_s = i\wedge \overline{E}^{\wedge (s-1)} \colon \overline{E}^{\wedge s} 
\to \Sigma \overline{E}^{\wedge (s-1)}. 
\]
Furthermore we have identifications
\[
i_1 \circ \cdots \circ i_s = i^{\wedge s} \colon \overline{E}^{\wedge s} \to \Sigma^sL_{K(n)}S^0
\]
and, more generally,
\begin{equation}\label{eq:decompose-i}
i_{r+1} \circ \cdots \circ i_s =
\overline{E}^{\wedge r}\wedge i^{\wedge (s-r)} \colon \overline{E}^{\wedge s} 
\to \Sigma^{s-r} \overline{E}^{\wedge r}.
\end{equation}

The next result is proved using Lemma 5.11, Lemma 5.12, and the argument given for the proof of Theorem 5.3 in \cite{DH}. 
See also the proof of Corollary 15 of \cite{StrickGrossHop}. 

\begin{thm}\label{lem:basic-van-line} There is an integer $N=N_{S^0}$ so that 
\[
i^N: \overline{E}^{\wedge N} \longr  \Sigma^N L_{K(n)}S^0
\]
is null-homotopic.
\end{thm}

\begin{proof} Here is a sketch. Consider the class of $K(n)$-local 
spectra $Z$ for which there exists an integer $N_Z$ depending only on $Z$
so that 
\[
i^{\wedge s} \wedge Z: \overline{E}^{\wedge s} \wedge Z \to \Sigma^s Z
\]
is null-homotopic for $s \geq N_Z$. We first observe this is a thick subcategory of the homotopy category of $K(n)$-local spectra. To see this we note that if we have a cofiber sequence $Z_1 \to Z_2 \to Z_3$
and the result holds for $Z_1$ and $Z_3$   with integers $N_1$ and $N_3$ respectively, then the result holds for $Z_2$ 
with integer $N_1 + N_3$, using \eqref{eq:decompose-i}. Additionally, if $Z_0$ is a retract
of $Z$ and the result holds for $Z$ with integer $N_Z$, it also holds for $Z_0$ with integer $N_Z$.

Using this observation, we need only find a $K(n)$-local finite spectrum for which the result holds and which generates
the same thick subcategory as the sphere. By a result of Hopkins and Ravenel (see \S 8.3 of \cite{RavNil}) 
there is a torsion-free finite CW spectrum $X$ and an 
integer $M$ so that
\[
H^s_c(\GG_n,E_\ast X) = 0, \quad s > M.
\]
An application of Lemma 5.11 of \cite{DH} finishes the argument. 
\end{proof}

This result immediately has the following consequences. 

\begin{thm}\label{thm:en-vanishing} For all spectra $X$, 
the $K(n)$-local Adams-Novikov Spectral Sequence has a strong horizontal vanishing line 
at an integer $N_X \leq N_{S^0}$.
\end{thm}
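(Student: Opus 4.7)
The plan is to deduce \fullref{thm:en-vanishing} as an essentially formal consequence of \fullref{cor:en-vanishing-2}, by smashing the null-homotopy from that corollary with $X$ and the appropriate power of $\overline{E}$, and then invoking the decomposition \eqref{eq:decompose-i} to identify the resulting null map with the relevant composition of boundary maps in the exact couple.

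First, I would set $N = N_{S^0}$, so that by \fullref{cor:en-vanishing-2} the composite
\[
i^{\wedge N} \co \overline{E}^{\wedge N} \longr \Sigma^N L_{K(n)}S^0
\]
is null-homotopic. Smashing with any spectrum preserves null-homotopies, so for every $r \geq 0$ and every spectrum $X$ the map
\[
\overline{E}^{\wedge r} \wedge i^{\wedge N} \wedge X \co \overline{E}^{\wedge (r+N)} \wedge X \longr \Sigma^N \overline{E}^{\wedge r} \wedge X
\]
is null-homotopic.

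Next, for a given spectrum $X$, the $K(n)$-local Adams-Novikov spectral sequence is obtained from the exact couple produced by smashing the cobar tower \eqref{eq:exact-couple-diag-e} with $X$; its filtration pieces are $X_s = \overline{E}^{\wedge s} \wedge X$, and the boundary map $X_s \to \Sigma X_{s-1}$ is $i_s \wedge X$. For any $s$, taking $r = s - N$ and applying \eqref{eq:decompose-i} we have
\[
(i_{s-N+1} \circ \cdots \circ i_s) \wedge X = \overline{E}^{\wedge (s-N)} \wedge i^{\wedge N} \wedge X,
\]
which is null-homotopic by the previous paragraph. Applying $\pi_t$ yields that the induced map $\pi_t X_s \to \pi_{t-N} X_{s-N}$ is zero for all $s$ and $t$, verifying the condition of \fullref{def:horiz-van}.

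There is no genuine obstacle here: the bound $N \leq N_{S^0}$ is immediate since one could always take $N = N_{S^0}$ itself, and all the substantive content has been absorbed into \fullref{lem:basic-van-line} and the Hovey--Strickland result that $L_{K(n)}S^0$ lies in the thick subcategory generated by $E$. The only point to keep straight is the bookkeeping: that a single null-homotopy for $i^{\wedge N}$ on the sphere propagates, via the functoriality of $(-) \wedge \overline{E}^{\wedge r} \wedge X$ in the stable homotopy category, to a uniform null-homotopy for every $X$ and every starting filtration level $s$.
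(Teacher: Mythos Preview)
Your proposal is correct and matches the paper's approach exactly: the paper simply states that \fullref{thm:en-vanishing} follows immediately from \fullref{cor:en-vanishing-2}, and you have spelled out the routine details---smashing the null-homotopy with $\overline{E}^{\wedge r}\wedge X$ and invoking \eqref{eq:decompose-i}---that make this immediate.
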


\begin{cor}\label{thm:en-vanishing-ex} For all $K(n)$-local finite spectra $Z$ 
and all closed subgroups $G\subseteq \GG_n$, the $K(n)$-local Adams-Novikov Spectral Sequence
\[
H_c^s(G,E_tZ) \Longrightarrow \pi_{t-s}(E^{hG} \wedge Z)
\]
has a strong horizontal vanishing line at an integer $N \leq N_{S^0}$.
\end{cor}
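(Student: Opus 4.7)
The plan is to deduce this directly from \fullref{thm:en-vanishing} (or equivalently \fullref{cor:en-vanishing-2}) applied to the spectrum $X = E^{hG}\wedge Z$. The only real content beyond what has already been established is the identification of the $E_2$-term of the $K(n)$-local Adams--Novikov spectral sequence computing $\pi_\ast(E^{hG} \wedge Z)$ with the continuous group cohomology $H^s(G, E_t Z)$.

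First, I would smash the exact couple \eqref{eq:exact-couple-diag-e} with $E^{hG} \wedge Z$, obtaining the tower with $X_s = \overline{E}^{\wedge s} \wedge E^{hG} \wedge Z$ and associated graded $F_s = E \wedge \overline{E}^{\wedge s} \wedge E^{hG} \wedge Z$. The resulting spectral sequence has $E_2$-term $\pi^s \pi_t L_{K(n)}(E^{\wedge \bullet} \wedge E^{hG} \wedge Z)$. To rewrite this as $H^s(G, E_t Z)$, I would invoke Proposition~6.6 of \cite{DH}, which identifies the cobar complex for $E^{hG} \wedge Z$ with the continuous cochain complex for $G$ acting on $E_\ast Z$; the key input is the Morava module isomorphism $E_\ast E^{hG} \cong \map_{cts}(\GG_n/G, E_\ast)$ from \eqref{eq:EhomofF}, together with the fact that $Z$ finite implies $E_\ast(E^{hG} \wedge Z) \cong E_\ast E^{hG} \otimes_{E_\ast} E_\ast Z$ is complete and separated.

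Next, to verify the strong horizontal vanishing line at $N_{S^0}$, I would use the decomposition \eqref{eq:decompose-i}: the composition of $N_{S^0}$ consecutive boundary maps from $X_s$ to $\Sigma^{N_{S^0}} X_{s - N_{S^0}}$ is, up to suspension, the map
\[
\overline{E}^{\wedge (s - N_{S^0})} \wedge i^{\wedge N_{S^0}} \wedge E^{hG} \wedge Z.
\]
By \fullref{cor:en-vanishing-2} the map $i^{\wedge N_{S^0}}$ is null-homotopic, so smashing with anything yields a null-homotopic map. In particular the induced map $\pi_t X_s \to \pi_{t - N_{S^0}} X_{s - N_{S^0}}$ vanishes, which is precisely the strong horizontal vanishing line condition of \fullref{def:horiz-van}.

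There is no serious obstacle here: the hard work was done in \fullref{lem:basic-van-line} and \fullref{thm:en-vanishing}. The only small subtlety to take care of is the identification of the $E_2$-term, which requires invoking the Devinatz--Hopkins computation for $E_\ast E^{hG}$ rather than the bare Adams spectral sequence. Once that identification is in hand, an application of \fullref{lem:two-cond-van-line} yields strong convergence and the stated vanishing line simultaneously.
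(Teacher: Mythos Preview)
Your proposal is correct and follows the same approach the paper intends: the corollary is stated without proof because it is meant to follow immediately from \fullref{thm:en-vanishing} by taking $X = E^{hG}\wedge Z$, with the identification of the $E_2$-term as $H^s(G,E_tZ)$ already recorded just before \fullref{thm:en-vanishing-ex} (via Proposition~6.6 of \cite{DH} and \eqref{eq:EhomofF}). Your explicit unwinding of the vanishing-line argument using \eqref{eq:decompose-i} and \fullref{cor:en-vanishing-2} is exactly what underlies \fullref{thm:en-vanishing}, so there is no genuine difference in method.
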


\begin{cor}\label{thm:en-vanishing-ex-1} Let $Z$ be a finite complex with a self map $f:\Sigma^k Z \to Z$.
Then for all closed subgroups $G \subseteq \GG_n$, the localized $K(n)$-local Adams-Novikov 
Spectral Sequence
\[
f_\ast^{-1}H_c^s(G,E_tZ) \Longrightarrow f_\ast^{-1}\pi_{t-s}(E^{hG} \wedge Z)
\]
has a strong horizontal vanishing line at an integer $N \leq N_{S^0}$ and converges to the $f$-inverted homotopy groups.
\end{cor}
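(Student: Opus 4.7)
The plan is to deduce this directly from Corollary \ref{thm:en-vanishing-ex} by invoking the exactness of $f_\ast$-localization on abelian groups. The localized spectral sequence should arise by applying the filtered-colimit functor $f_\ast^{-1}(-)$ to the exact couple producing the unlocalized $K(n)$-local Adams--Novikov spectral sequence for $E^{hG}\wedge Z$, namely the one built from the tower with terms $X_s = \overline{E}^{\wedge s}\wedge E^{hG}\wedge Z$ obtained by smashing \eqref{eq:exact-couple-diag-e} with $E^{hG}\wedge Z$. Since $f\colon \Sigma^k Z\to Z$ is a self-map of $Z$, it induces compatible self-maps of every $X_s$ (and every $F_s$) commuting with all the structure maps of the diagram. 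Localizing each homotopy group at $f_\ast$ therefore yields a well-defined exact couple, whose associated spectral sequence has $E_2$-term $f_\ast^{-1}H^s(G,E_tZ)$ and converges to $f_\ast^{-1}\pi_{t-s}(E^{hG}\wedge Z)$.

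Next, I would transport the vanishing from the unlocalized case. By Corollary \ref{thm:en-vanishing-ex}, the unlocalized spectral sequence has a strong horizontal vanishing line at some integer $N \leq N_{S^0}$, which in the language of \fullref{def:horiz-van} means that the map $\pi_t X_s \to \pi_{t-N} X_{s-N}$ is zero for every $s$ and $t$. In fact this is witnessed on the spectrum level: by Corollary \ref{cor:en-vanishing-2} together with \eqref{eq:decompose-i}, smashed with $E^{hG}\wedge Z$, the underlying map of spectra $\overline{E}^{\wedge s}\wedge E^{hG}\wedge Z \to \Sigma^N \overline{E}^{\wedge(s-N)}\wedge E^{hG}\wedge Z$ is itself null-homotopic. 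Since the localization of a zero homomorphism is zero, the corresponding map in the $f_\ast$-localized exact couple also vanishes, and so the localized spectral sequence inherits a strong horizontal vanishing line at the same $N \leq N_{S^0}$.

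There is no real obstacle here: the corollary is a formal consequence of exactness. The only thing one needs to check is the compatibility of $f_\ast$-localization with the construction of the exact couple, and this is immediate from the fact that filtered colimits of abelian groups are exact and that $f$ acts compatibly on the entire tower.
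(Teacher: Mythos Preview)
Your proposal is correct and is precisely the argument the paper has in mind: the corollary is stated without proof, as an immediate consequence of the preceding results. Your observation that the vanishing is witnessed at the spectrum level (via \fullref{cor:en-vanishing-2} and \eqref{eq:decompose-i}), and hence survives applying the exact functor $f_\ast^{-1}(-)$ to the exact couple, is exactly the point.
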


At various points in this paper we use the following version of the Geometric Boundary 
Theorem to name elements in an Adams-Novikov Spectral Sequence. 

\begin{thm}[{\bf Geometric Boundary Theorem}]\label{rem:whats-up-GBT}
Let  $X \longr Y \longr Z$ be a cofiber sequence of spectra. Let
\begin{enumerate}[(1)]
\item $R$ be a connected homotopy commutative ring spectrum, or
\item $R = E_n$ for some $n$.
\end{enumerate}
In the latter case, let  $R_\ast X = \pi_\ast L_{K(n)}(E_n \wedge X)$.
Suppose that $R_\ast(-)$ applied to the cofiber sequence above gives a short exact sequence
\[
0 \longr R_\ast X \longr R_\ast Y \longr R_\ast Z \longr 0\,
\]
and thus a long exact sequence on the $E_2$-term of the (1) $R$-based, or (2) $E_n$-based $K(n)$-local
Adams-Novikov Spectral Sequence. 

If $x \in \pi_{t-s} Z$ is 
detected by a class $a \in E_2^{s,t}Z$,
then the image of 
$x$ in $\pi_{t-s-1}X$ is detected by the image of $a$ under the connecting homomorphism
\[
\delta\colon E_2^{s,t}Z \to E_2^{s+1,t}X.
\]
\end{thm}
\begin{proof} For the case (1) see Theorem 2.3.4 of \cite{ravgreen}. For the case (2) see Proposition A.10 of \cite{DH}.
\end{proof}

\subsection{Elliptic curves and subgroups of $\GG_2$ at $p=2$}\label{sec:ellback}

Here we spell out what we need from the theory of elliptic curves at $p=2$; this will give us
a preferred formal group and a preferred universal deformation. Choose $\Gamma_2$ 
to be the formal group obtained from the elliptic curve $C_0$ over $\F_2$ defined by the 
Weierstrass equation
\begin{equation}\label{ss-curve}
y^2+y=x^3.
\end{equation}
This is a standard representative for the unique isomorphism class of supersingular curves
over $\overline{\F}_2$; see \cite{Silverman}, Appendix A. As a result $\Gamma_2$ has height
$2$, as the notation indicates. In addition, if $\xi$ is the Frobenius, we have $\xi^2 = [-2]$
in the endomorphism ring of $\Gamma_2$ over $\FF_2$. Thus this formal group satisfied our
assumption from \eqref{eq:frob-assumption}. See Lemma 3.1.1 of \cite{Paper2}. 

Following Strickland (see \cite{strickl2018level} and \cite[Section 2]{Paper2}), 
let $C$ be the elliptic curve over $\W[[u_1]]$ defined by the Weierstrass equation
\begin{equation}\label{ss-curve-def}
y^2+3u_1xy + (u_1^3-1)y=x^3.
\end{equation}

\begin{rem}\label{rem:v1-plus-ss}  
The curve $C$ reduces to $C_0$ modulo $\mathfrak{m} = (2,u_1)$. 
In the equation \eqref{ss-curve-def} note that the coefficient of $xy$ is congruent
to $u_1$ modulo $2$; hence $v_1 = u^{-1}u_1$ for the formal group of $C$ over $\FF_4[[u_1]]$.
See Proposition 6.1.1 of \cite{Paper2}. From this it follows that the formal group $C$ over $\WW[[u_1]]$
is a choice of the universal deformation of $\Gamma_2$. 
\end{rem}

In \cite{Paper2}, $C_0$ was called $\mathcal{C}$ and $C$ was called $\mathcal{C}_U$. The current 
notation is closer to what we find in the number theory literature. 

Again turning to \cite{Silverman}, Appendix A we have
\begin{equation}\label{ss-curve1} 
\Aut(C_0/\F_4) \cong Q_8 \rtimes \F_4^\times
\end{equation}
where the action of $\F_4^\times \cong C_3$ on $Q_8$ is determined by a cyclic permutation of generators
$i$, $j$ and their product $k=ij$. We will denote this group by $G_{24}$.
For a choice of generator $\omega \in C_3$, we can choose them to satisfy
\begin{align}\label{eq:ijkreln} \omega i \omega^2 =j, \ \ \ \omega j \omega^2 = k, \ \ \ \omega k \omega^2 = i.\end{align}

 Define 
\begin{equation}\label{ss-curve2}
G_{48} = \Aut(\F_4,C_0) \cong \Aut(C_0/\F_4) \rtimes \Gal(\F_4/\F_2).
\end{equation} 
Since any automorphism of the pair $(\F_4,C_0)$ induces an automorphism of  the
pair $(\F_4,\Gamma_2)$ we get a map $G_{48} \to \G_2$. This map is an injection 
and we identify $G_{48}$ with its image. 

\begin{rem}\label{rem:levelstructures} Let  $C_0[3]$ be the subgroup scheme of $C_0$ consisting of points of order $3$;  over 
$\FF_4$, this becomes abstractly isomorphic to $\Z/3 \times \Z/3$.  The group $G_{48}$ acts 
linearly on $C_0[3]$ and choosing  a basis for the $\FF_4$-points of $C_0[3]$ determines an isomorphism 
$G_{48} \cong GL_2(\Z/3)$.
\end{rem}

We will be interested in various subgroups of $G_{48}$. 
The following subgroups will play an important role in this
paper.  
\begin{enumerate}

\item $C_2 = \{\pm 1\} \subseteq Q_8$;

\item $C_6 = C_2 \times \F_4^\times$;

\item $G_{24}$ and $G_{48}$ themselves.
\end{enumerate}

 For the formal group $\Gamma_2$ of $C_0$, the group
$\SS_2 =\Aut(\Gamma_2/\FF_2)$ has a very concrete description. See \S 3 of \cite{Paper2} for a proof of the following result.
\begin{prop}\label{rem:where-did-t-arise}
Let $\WW = W(\FF_4)$ be the
Witt vectors on $\FF_4$. The endomorphism ring of $\Gamma_2$ is the non-commutative extension
of $\WW$ 
\[
\WW\langle T \rangle/(T^2=-2,a^{\sigma}T=Ta)
\]
where $a \in \W$ and $(-)^\sigma$ is the action of the Frobenius on $\WW$.  Consequently, the group $\SS_2$ is the group of units in this
ring. 
\end{prop}

Up to conjugacy in $\SS_2$, the subgroup $Q_8 \subseteq \SS_2$ is the unique
maximal finite 
$2$-primary subgroup. Similarly, up to conjugacy in $\SS_2$, $G_{24}$ is the unique maximal finite subgroup. 
See \cite{bujard} for a further discussion on these topics as well as Lemma 2.4.3 of \cite{Paper1} for an explicit embedding of $G_{24}$ in $\mathbb{S}_2$.
However, for $\SS^1_2$, neither of these
statements remains true. 
More precisely, any finite subgroup of $\SS_2$ is necessarily in $\SS_2^1$, but if $\gamma \in \SS_2$ 
is any element such that $N(\gamma)$ is a topological generator of $\Z_2^{\times}/(\pm 1)$ for $N$ as defined in \eqref{norm-defined}, then $\gamma G_{24}\gamma^{-1}$ 
is not conjugate to $G_{24}$ in $\SS_2^1$. Specifically, we define the following elements of $\WW^{\times} \subseteq \mathbb{S}_2$ that will play a key role:
\begin{defn}\label{rem:g24prime}
For $\omega \in \WW$ a primitive cube root of unity. Let
\[\pi = 1 +2\omega \in \WW^{\times}\] 
be as in (2.3.3)  of \cite{Paper1} and
\[\alpha = \frac{1-2\omega}{\sqrt{-7}} \in \WW^{\times}\]
be as in (2.3.4) of \cite{Paper1}.
Here $\sqrt{-7}$ is the unique element of $\WW^{\times}$ which squares to $-7$ and is congruent to $1$ modulo $4$.
\end{defn}

The element  $\pi$ of \Cref{rem:g24prime} satisfies $\det(\pi) =3$ and so 
\[N(\pi)\equiv 3 \in \Z_2^{\times}/(\pm 1).\] 
We use this specific element to define
\begin{equation}\label{eq:g24prime}
G'_{24} := \pi G_{24}\pi^{-1}
\end{equation}
The subgroup  $G'_{24} \subseteq \mathbb{S}_2^1$ is a representative of this other conjugacy class.

\begin{rem}\label{rem:alpha}
The element $\alpha$ also arises at various points which is why we included it in \Cref{rem:g24prime}. It satisfies $\det(\alpha)=-1$ and so $N(\alpha)\equiv 1  \in \Z_2^{\times}/(\pm 1) $. Furthermore, from the definition of $\alpha$ and of $\mathbb{S}_2$ in \Cref{rem:where-did-t-arise} above, we have that 
\[
\alpha \equiv 1+\omega T^2 + T^4\qquad \mathrm{modulo}\ (T^6).
\]
\end{rem}

We have been discussing $G_{24}$ as a subgroup of $\SS_2$,
but it can also be thought of as a quotient as well. The following result combines facts that were shown in Section 2.5 of \cite{Paper1}:

\begin{prop} \label{rem:all-the-splittings}
The group $\SS_2$ has a normal torsion-free
pro-$2$-subgroup  $K$ so that:
\begin{enumerate}[(a)]
\item The group  $K$ is a Poincar\'e duality group of dimension $4$. 
\item For $\mathbb{S}_2^1$ as defined in \Cref{subgroupsection}, the group $K^1 = \SS_2^1 \cap K$ is a Poincar\'e duality group of dimension $3$.
\item The composition 
\[
G_{24} \longr \SS_2 \longr \SS_2/K
\]
is an isomorphism, giving rise decompositions $K \rtimes G_{24} \cong \SS_2$ and 
$K^1 \rtimes G_{24} \cong \SS_2^1$. 
\item The composition
\[
G_{24}' \longr \SS_2 \longr \SS_2/K
\]
remains an isomorphism and defines an alternate splitting. 
\end{enumerate}
\end{prop}
\noindent
The subgroups $K$ and $K^1$
play a key technical role in this paper, and we will supply more information when we need it
below.

\begin{rem}\label{input-for-the-DSS} In order to calculate the algebraic duality spectral sequences
or topological duality spectral sequences defined below in \Cref{sec.dualityres}
we will need some information on $H^\ast(F,E_\ast)$ and $\pi_\ast E^{hF}$ for the finite subgroups
$F=C_6$ and $F=G_{24}$. There is a detailed summary of literature
in \S 2 of \cite{BobkovaGoerss} and here we make only  a few remarks. The spectral sequence
\[
H^s(C_6,E_t) \Longrightarrow \pi_{t-s}E^{hC_6}
\]
is relatively simple and can be deduced from \cite{MR}. The spectral sequence
\[
H^s(G_{24},E_t) \Longrightarrow \pi_{t-s}E^{hG_{24}}
\]
is much harder, but extensively studied; see \cite{tbauer} or \cite{tmfbook}. For now we record that
there are classes $c_4$, $c_6$, $\Delta$, and $j$ in $H^0(G_{24},E_\ast)$, of degrees
$8$, $12$, $24$, and $0$ respectively. These are the modular forms of the curve \eqref{ss-curve-def},
hence necessarily invariant under the automorphisms of the curve. Then there is an isomorphism
\[
\WW[[j]][c_4,c_6,\Delta^{\pm 1}]/I \cong H^0(G_{24},E_\ast)
\]
where $I$ is the ideal generated by
\[
c_4^3 - c_6^2 = (12)^3 \Delta\qquad\mathrm{and}\qquad j\Delta = c_4^3.
\]
If we quotient out by $2$, there is also an isomorphism.
\[
\F_4[[j]][v_1, \Delta^{\pm1}]/( j\Delta = v_1^{12}) \cong H^0(G_{24},E_\ast/2) \ .
\]
We will give more information in \Cref{rem:hur} below. 
\end{rem}

\subsection{The duality resolutions}\label{sec.dualityres}

A key computational tool in this paper is the algebraic duality resolution of \cite{Paper1} and its
topological analog from \cite{BobkovaGoerss}. We begin with the algebraic version.

If $X = \lim\ X_\alpha$ is a profinite set, define $\ZZ_2[[X]] = \lim_{n,\alpha} \ZZ/2^n[X_\alpha]$, 
where $\ZZ/2^n[Y]$ denotes the free $\ZZ/2^n$-module on the set $Y$. 
If $G$ is a profinite group let $IG \subseteq \ZZ_2[[G]]$ be the augmentation ideal; that is, the kernel of the 
augmentation $\epsilon\colon \ZZ_2[[G]] \to \ZZ_2$.

The following can be found in Theorems 1.2.1 and 1.2.6  of \cite{Paper1}. 
The subgroups $G_{24}$,  $C_6$, and $G'_{24}$ have been defined in \Cref{sec:ellback}, and the subgroup $K^1$ appeared in \Cref{rem:all-the-splittings}. 

\begin{thm}[\textbf{Algebraic Duality Resolution}]\label{thm:bob}  Let $F_0 = G_{24}$, 
$F_1= F_2 = C_6$ and $F_3 =G_{24}'$.
\begin{enumerate}[(1)]
\item There is an exact sequence of complete left $\Z_2[[\mathbb{S}_2^1]]$-modules
\begin{equation*}
\xymatrix@=13pt{ 0 \ar[r] & \Z_2[[\mathbb{S}_{2}^1/F_3]]  \ar[r]^{\partial_3}   &
\Z_2[[\mathbb{S}_{2}^1/F_2]]  \ar[r]^{\partial_2}    & \Z_2[[\mathbb{S}_{2}^1/F_1]]   \ar[r]^{\partial_1}  & 
\Z_2[[\mathbb{S}_{2}^1/F_0]]  \ar[r]^-\epsilon   & \Z_2 \ar[r] &0.}
\end{equation*}
\item The maps $\partial_1$ and $\partial_3$ are trivial modulo $IK^1$. 
Furthermore, $\partial_2$ is multiplication by $2$ modulo $(8,IS_2^1)$.   
\end{enumerate}
\end{thm}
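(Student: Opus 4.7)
The starting point is the semi-direct product decomposition $\SS_2^1 \cong K^1 \rtimes G_{24}$ from \fullref{rem:all-the-splittings}, where $K^1$ is a torsion-free pro-$2$ Poincar\'e duality group of cohomological dimension $3$. This makes $\SS_2^1$ a virtual PD$^3$ group, so $\Z_2$ should admit a projective resolution over $\Z_2[[\SS_2^1]]$ of length $3$. By \fullref{rem:g24prime}, the subgroups $G_{24}$ and $G_{24}'$ are representatives of the two conjugacy classes of maximal finite subgroups in $\SS_2^1$, and one may choose these representatives so that $C_6$ is their common intersection. This tetrahedral configuration of finite subgroups is what supplies the four permutation modules $\Z_2[[\SS_2^1/F_i]]$ appearing in the resolution, and suggests building the entire sequence from the finite-group data.

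The plan is to construct each differential $\partial_i$ as left multiplication by an explicit element of $\Z_2[[\SS_2^1]]$. For $\partial_1$ I would choose a norm-type element $e_1 \in IG_{24}$ whose image in $\Z_2[[\SS_2^1/G_{24}]]$ generates $\ker(\epsilon)$ as an $\SS_2^1$-module; symmetrically, take a dual element $e_3 \in IG_{24}'$ for $\partial_3$. The middle differential $\partial_2$ is the subtle one: it must interpolate between the two maximal finite subgroups, and is built using the conjugator $\pi = 1+2\omega$ of \fullref{rem:g24prime} that carries $G_{24}$ to $G_{24}'$. The appearance of $\pi$, which lies in $\SS_2 \setminus \bigcup_F F$ for finite $F$, is precisely what makes $\partial_2$ nontrivial modulo $IK^1$ and produces the congruence $\partial_2 \equiv 2 \pmod{(8, IS_2^1)}$ via $\pi \equiv 1+2\omega \pmod{4}$.

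To prove exactness, I would reduce the sequence modulo $IK^1$, obtaining a length-$3$ complex of $\Z_2[G_{24}]$-modules whose terms are the finite permutation modules $\Z_2[G_{24}/\overline{F}_i]$ with $\overline{F}_i$ the image of $F_i$ in $\SS_2^1/K^1 \cong G_{24}$. Exactness at this reduced level is a finite representation-theoretic check using the $Q_8 \rtimes \F_4^\times$ structure of $G_{24}$; once verified, a completed Nakayama argument lifts exactness to the full sequence, since all modules in question are compact $\Z_2[[\SS_2^1]]$-modules and $K^1$ is pro-$2$. Termination at length $3$ on the left, with a single copy of $\Z_2[[\SS_2^1/F_3]]$ as the top term, reflects Poincar\'e duality for $K^1$: the dualizing module is rank one and is detected by the maximal finite subgroup $G_{24}'$.

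The congruences in part (2) for $\partial_1$ and $\partial_3$ are then automatic, since $e_1$ and $e_3$ already lie in $\Z_2[G_{24}]$ and $\Z_2[G_{24}']$ respectively and the reduction modulo $IK^1$ factors through $\SS_2^1/K^1 \cong G_{24}$. The explicit factor of $2$ in $\partial_2$ modulo $(8, IS_2^1)$ is extracted from expanding appropriate powers of $\pi = 1 + 2\omega$ in $\W^{\times}$ and collecting terms. The main obstacle I anticipate is the exactness step: reducing modulo $IK^1$ discards most of the information about $\SS_2^1$, so one must both identify the reduced permutation modules precisely and verify that the chosen $e_1, e_2, e_3$ assemble into a genuinely exact (not merely chain-complex) sequence of $\Z_2[G_{24}]$-modules. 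Balancing the freedom in choosing $e_2$ against the rigidity imposed by the congruence requirement modulo $(8, IS_2^1)$ is the technical heart of the construction.
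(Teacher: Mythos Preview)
There is a genuine gap in your exactness argument, and it is self-inflicted by part~(2) of the theorem you are trying to prove. You propose to verify exactness by reducing modulo $IK^1$ and then lifting via a completed Nakayama argument. But part~(2) asserts that $\partial_1$ and $\partial_3$ are \emph{zero} modulo $IK^1$. Hence the reduced complex has the form
\[
0 \to \ZZ_2 \xrightarrow{\ 0\ } \ZZ_2[G_{24}/C_6] \xrightarrow{\ \overline{\partial}_2\ } \ZZ_2[G_{24}/C_6] \xrightarrow{\ 0\ } \ZZ_2 \xrightarrow{\ \mathrm{id}\ } \ZZ_2 \to 0,
\]
which is visibly not exact. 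A Nakayama-style lift from a non-exact reduction gives you nothing. The information needed to establish exactness lives precisely in the $IK^1$-part that your reduction throws away.

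There is a related inconsistency in your construction of $\partial_1$. You take $e_1 \in IG_{24} \subset \ZZ_2[G_{24}]$ and define $\partial_1$ as left multiplication by $e_1$. But then $\partial_1(eC_6) = e_1 \cdot eG_{24}$, and since every $g \in G_{24}$ fixes the coset $eG_{24}$, any element of $IG_{24}$ acts as zero on $\ZZ_2[[\SS_2^1/G_{24}]]$. So your $\partial_1$ would be identically zero, not merely trivial modulo $IK^1$. The differential $\partial_1$ must be built from elements of $K^1$, not of $G_{24}$; indeed, in \cite{Paper1} one has $\partial_1(eC_6) = (e - \alpha^{-1}) \cdot eG_{24}$ with $\alpha \in K^1$ the element of \fullref{rem:honeremarks}, which is exactly why $\partial_1 \equiv 0$ modulo $IK^1$.

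The construction in \cite{Paper1} (which this paper simply cites) proceeds differently: one first passes to $P\SS_2^1 = \SS_2^1/C_2$ as in \fullref{rem:comtops21}, where the middle terms $\ZZ_2[[P\SS_2^1/C_3]]$ are genuinely projective. The differentials are written down explicitly using generators of $K^1$, and exactness is established step by step by computing successive kernels, using the filtration of $K^1$ and its Poincar\'e duality directly rather than via a single reduction. Your intuition that Poincar\'e duality for $K^1$ forces length $3$ is correct, but it enters through a careful analysis of $\ZZ_2[[K^1]]$-module structure, not through a Nakayama shortcut.
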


\begin{rem}\label{rem:bobrefined}  Note that part (2) of \Cref{thm:bob} implies that 
modulo $I\SS_2^1$ we have $\partial_1 = 0 = \partial_3$ and 
$\partial_2$ is multiplication by $2$ modulo $(8,I\SS_2^1)$.
For many arguments, this is all we will need.
\end{rem}

\begin{defn}[\textbf{Algebraic Duality Spectral Sequence}]\label{rem:algdualresSS}
If $M$ is a profinite $\ZZ_2[[\SS_2^1]]$-module,  we have a natural isomorphisms
\begin{equation}\label{eqn:schapiro1}
\Ext^q_{\ZZ_2[[\SS_2^1]]}(\ZZ_2[[\SS_2^1/F_p]],M) \cong H^q(F_p,M)
\end{equation}
and part (1) of \Cref{thm:bob} immediately gives a spectral sequence
\begin{equation*}
\xymatrix{E_1^{p,q} = H^q(F_p, M ) \ar@{=>}[r] & H_c^{p+q}( \mathbb{S}_2^1,M)}
\end{equation*}
with differentials $d_r \colon E_r^{p,q}  \to E_r^{p+r,q-r+1}$. We will call this the {\it
algebraic duality spectral sequence}, which we may abbreviate as ADSS. 
\end{defn}
The construction of this spectral sequence is provided in Section 3.2 of \cite{Paper1}. The facts needed about homological algebra over the completed group ring $\Z_p[\![G]\!]$ of a profinite groups $G$ can also be found in the Appendix of \cite{Paper1} and a more extensive reference on this topic is \cite{sw_analyticgroups}.

We can induce the exact sequence of \Cref{thm:bob} up to an exact sequence of 
complete $\Z_2[[\mathbb{G}_2]]$-modules 
\begin{align*}
0 \to \Z_2[[\mathbb{G}_{2}/F_3]]  &\mathop{\longr}^{\partial_3}  
\Z_2[[\mathbb{G}_2/F_2]]\nonumber\\
&\mathop{\longr}^{\partial_2}  \Z_2[[\mathbb{G}_{2}/F_1]]   
\mathop{\longr}^{\partial_1} \Z_2[[\mathbb{G}_{2}/F_0]]  \to \Z_2[[\GG_2/\SS_2^1]] \to 0\ .
\end{align*}
For any closed subgroup of $F$ of $\GG_2$, the isomorphism of (\ref{eq:EhomofF}) gives us
an isomorphism of twisted $\GG_2$-modules  
\[
E_*E^{hF}\cong \Hom_{\ZZ_2}(\ZZ_2[[\GG_2/F]],E_\ast) .
\]
From these observations we get the following result.
Note that since $G_{24}$ and $G'_{24}$ are conjugate in $\GG_2$, we have $E^{hG'_{24}} \simeq
E^{hG_{24}}$.

\begin{cor}\label{cor:morava-dual} There is an exact sequence of twisted $E_*$-$\GG_2$-modules
\begin{equation*}
0 \to E_\ast E^{h\SS_2^1}\to E_\ast E^{hG_{24}} \to 
E_\ast E^{hC_6} \to E_\ast E^{hC_6} \to E_\ast E^{hG_{24}}\to 0.
\end{equation*}
\end{cor}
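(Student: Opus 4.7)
The plan is to apply the contravariant functor $\Hom_{\ZZ_2}(-,E_\ast)$ to the five-term exact sequence of continuous $\ZZ_2[[\GG_2]]$-modules
\[
0 \to \ZZ_2[[\GG_2/F_3]] \to \ZZ_2[[\GG_2/F_2]] \to \ZZ_2[[\GG_2/F_1]] \to \ZZ_2[[\GG_2/F_0]] \to \ZZ_2[[\GG_2/\SS_2^1]] \to 0
\]
displayed just before the corollary, and to identify each resulting term with a Morava module via the twisted $\GG_2$-equivariant isomorphism $E_\ast E^{hF} \cong \Hom_{\ZZ_2}(\ZZ_2[[\GG_2/F]], E_\ast)$ of \eqref{eq:EhomofF}. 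Reading the terms from left to right yields $E_\ast E^{h\SS_2^1}$, $E_\ast E^{hG_{24}}$, $E_\ast E^{hC_6}$, $E_\ast E^{hC_6}$, and $E_\ast E^{hG'_{24}}$; to match the statement I replace the last term by $E_\ast E^{hG_{24}}$, using that $G_{24}$ and $G'_{24}$ are conjugate in $\GG_2$ by the element $\pi$ of \fullref{rem:g24prime} and that conjugate subgroups yield equivalent Devinatz--Hopkins fixed point spectra.

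The $\GG_2$-equivariance of the maps in the output is automatic, since the differentials in the induced resolution are $\GG_2$-equivariant by construction and \eqref{eq:EhomofF} is an isomorphism of twisted $\GG_2$-modules. No other identification is required.

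The one point that requires care, and the only plausible obstacle, is verifying that $\Hom_{\ZZ_2}(-,E_\ast)$ preserves exactness on this five-term sequence. Each term $\ZZ_2[[\GG_2/F_i]]$ (and the right-hand $\ZZ_2[[\GG_2/\SS_2^1]]$) is pro-free as a continuous $\ZZ_2$-module, being the completed $\ZZ_2$-module on a profinite coset space. Splicing the five-term sequence into three short exact sequences, the successive kernels remain pro-free over $\ZZ_2$, so each short exact sequence is $\ZZ_2$-split and therefore stays exact after applying $\Hom_{\ZZ_2}(-,E_\ast)$. Granting this routine check, the corollary is a formal consequence of \fullref{thm:bob}, the induction from $\SS_2^1$ to $\GG_2$ already carried out in the text, and \eqref{eq:EhomofF}.
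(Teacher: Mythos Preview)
Your proposal is correct and follows the same approach as the paper: apply $\Hom_{\ZZ_2}(-,E_\ast)$ to the induced resolution, invoke the identification \eqref{eq:EhomofF}, and use the conjugacy of $G_{24}$ and $G_{24}'$ in $\GG_2$. The paper states the corollary as an immediate consequence of the preceding paragraph without further argument; your added paragraph on why $\Hom_{\ZZ_2}(-,E_\ast)$ preserves exactness (via pro-freeness of the terms and the intermediate kernels) supplies detail the paper leaves implicit.
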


The following is the main theorem of \cite{BobkovaGoerss}. This is the 
{\it topological duality resolution.}
Note that it follows from \Cref{input-for-the-DSS} that  multiplication by 
$\Delta^k$ induces an isomorphism of Morava modules 
$E_\ast \Sigma^{24k}E^{hG_{24}} \cong E_\ast E^{hG_{24}}$
for all $k$, but this extends to an equivalence  $\Sigma^{24k}E^{hG_{24}} \simeq  E^{hG_{24}}$ 
if and only if $k\equiv 0$ modulo $8$, as $\Delta^k$ is a permanent cycle in the 
homotopy fixed point spectral sequence only for those $k$. See \cite{tmfbook} for history and
details. This all means that the  suspension factor on the last spectrum in the following topological 
resolution is significant. 

\begin{thm}\label{thm:top-dual-res} The algebraic resolution of \Cref{cor:morava-dual} can
be realized by a sequence of spectra
\begin{equation*}
E^{h\SS_2^1} \mathop{\longrightarrow}E^{hG_{24}} \rightarrow E^{hC_6} \rightarrow
E^{hC_6} \rightarrow \Sigma^{48}E^{hG_{24}}.
\end{equation*}
In this sequence all compositions and all Toda brackets are zero modulo indeterminacy. The
sequence can be refined to a tower of fibrations
\begin{equation}\label{duality-tower}
\xymatrix{
E^{h\SS_2^1} \ar[r] & Z_2 \ar[r] & Z_1 \ar[r]& E^{hG_{24}} \\
\Sigma^{45}E^{hG_{24}} \ar[u]
& \Sigma^{-2}E^{hC_6}  \ar[u] &
\Sigma^{-1}E^{hC_6}  \ar[u]
}
\end{equation}
\end{thm}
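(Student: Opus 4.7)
The plan is to follow the strategy of \cite{BobkovaGoerss}: lift the algebraic resolution of \fullref{cor:morava-dual} to a diagram of spectra, verify that the relevant compositions and Toda brackets vanish modulo indeterminacy, and then assemble the tower of fibrations by iteratively taking fibers.

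To realize each algebraic map $\partial_i\colon E_\ast E^{hF_i}\to E_\ast E^{hF_{i+1}}$ topologically, I would use the mapping-spectrum Adams-Novikov spectral sequence
\[
E_2^{s,t} = H^s\bigl(F_i,\, E_t E^{hF_{i+1}}\bigr) \Longrightarrow \pi_{t-s}\, F\bigl(E^{hF_i},E^{hF_{i+1}}\bigr),
\]
which on the $s=t=0$ line computes $\Hom$ of twisted Morava modules via the Devinatz--Hopkins isomorphism \eqref{eq:EhomofF} together with Shapiro's lemma. Strong convergence is guaranteed by the horizontal vanishing line of \fullref{thm:en-vanishing-ex}. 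Thus each $\partial_i$ lifts to a spectrum map once one checks that no differential kills or creates its detecting class; the lift is unique up to the higher-filtration indeterminacy coming from $H^{\geq 1}$.

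Next I would verify that consecutive compositions are null-homotopic. Exactness of the algebraic resolution forces the composition to be zero on Morava modules, so the obstruction class lies in filtration $s\geq 1$. Using the well-studied cohomology rings $H^\ast(G_{24},E_\ast)$ and $H^\ast(C_6,E_\ast)$ recalled in \fullref{input-for-the-DSS}, together with the stronger algebraic information that $\partial_1$ and $\partial_3$ are trivial modulo $I\SS_2^1$ and $\partial_2$ is multiplication by $2$ modulo $(8,I\SS_2^1)$ (\fullref{rem:bobrefined}), one shows these obstructions vanish. Analysis of the triple Toda brackets $\langle \partial_i,\partial_{i+1},\partial_{i+2}\rangle$ proceeds analogously: one locates the bracket inside the appropriate group $\pi_\ast F(E^{hF_i},E^{hF_{i+2}})$, identifies the indeterminacy, and shows the bracket lies in it. With all compositions and triple brackets trivial modulo indeterminacy, one inductively constructs the tower \eqref{duality-tower} by choosing coherent nullhomotopies and forming cofibers.

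Finally, one identifies the last piece. After three cofiber-sequence rotations the terminal term $\Sigma^{48}E^{hG_{24}}$ becomes $\Sigma^{45}E^{hG_{24}} = \Sigma^{48-3}E^{hG_{24}}$, matching the bottom-left entry of the tower. The factor $\Sigma^{48}$ is essential rather than cosmetic because $\Delta^k$ is a permanent cycle in the $G_{24}$-homotopy fixed point spectral sequence only when $k\equiv 0\pmod 8$ (see \fullref{input-for-the-DSS}), so $\Sigma^{48}E^{hG_{24}}$ and $E^{hG_{24}}$ have the same Morava module but are inequivalent as spectra. The main obstacle is the simultaneous control of compositions and Toda brackets: one needs cohomological vanishing strong enough to rule out both primary and secondary obstructions, and this is what makes the tight form of \fullref{thm:bob}(2) indispensable, since it pins down the $\partial_i$'s modulo $I\SS_2^1$ and thereby localizes the calculation to manageable cohomology groups.
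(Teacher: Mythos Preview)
The paper does not actually prove this theorem; it is quoted verbatim as ``the main theorem of \cite{BobkovaGoerss}'' and used as a black box thereafter. So there is no in-paper proof to compare against.

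Your outline is a fair high-level summary of the \cite{BobkovaGoerss} strategy, but it glosses over the step that carries most of the weight there: the identification of the top of the tower as $\Sigma^{48}E^{hG_{24}}$. After building the first three layers and taking the final cofiber, you only know its Morava module is that of $E^{hG_{24}}$; pinning down the suspension requires first showing the cofiber is equivalent to \emph{some} $\Sigma^{24k}E^{hG_{24}}$ and then computing enough of its homotopy to force $k\equiv 2\pmod 8$. Your remark that $\Delta^k$ is a permanent cycle only for $k\equiv 0\pmod 8$ explains why the suspension matters, not how the particular value $48$ is obtained. Likewise, the vanishing of the obstructions to null-homotopies and Toda brackets is not a formal consequence of \fullref{rem:bobrefined} alone; in \cite{BobkovaGoerss} this uses explicit computations in $\pi_\ast F(E^{hF_i},E^{hF_j})$ via the homotopy fixed point spectral sequences for $G_{24}$ and $C_6$, and the indeterminacy analysis is delicate. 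Your sketch names the right tools but does not engage with the actual calculations that make the argument go through.
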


\begin{defn}[{\bf Topological Duality Spectral Sequence}]\label{rem:topdualresSS} We will
write $\sE_p$ for the $p$th spectrum in the topological duality resolution of \Cref{thm:top-dual-res}.
Thus
\[
\sE_p = \begin{cases}
E^{hG_{24}},\quad p=0;\\
E^{hC_6},\quad p=1,2;\\
\Sigma^{48}E^{hG_{24}},\quad p=3.
\end{cases}
\]
Using the notation of \Cref{rem:algdualresSS}, we get spectral sequences
\[
H^s(F_p,E_t) \Longrightarrow \pi_{t-s} \sE_p.
\]
As a complement to the spectral sequence of 
\Cref{rem:algdualresSS},
the tower of (\ref{duality-tower}) gives a spectral sequence
\[
E_1^{p,q} = \pi_q\sE_p \Longrightarrow \pi_{q-p} E^{h\SS_2^1}.
\]
We refer to this spectral sequences as the {\it topological duality spectral sequence}.
We may abbreviate this as the TDSS. 
\end{defn}
\begin{rem}
There are variants we use; for example, we could 
smash the tower of \eqref{duality-tower} with a spectrum $Y$ to get a spectral sequence
converging to the homotopy groups $\pi_\ast L_{K(2)}(E^{h\SS_2^1} \wedge Y)$.
\end{rem}

Here are more details on the two duality spectral sequences.
The resolution of \Cref{thm:top-dual-res} can be refined  to a tower over $E^{h\SS_2^1}$
\begin{equation}\label{duality-cofs}
\xymatrix@C=10pt@R=12.5pt{
E^{h\SS_2^1} \ar[dr] && \ar@{-->}[ll] D_1\ar[dr] && \ar@{-->}[ll] D_2\ar[dr] && 
\ar@{-->}[ll] D_3\ar[dr]^\simeq\\
&E^{hG_{24}}\ar[ur]&&E^{hC_6}\ar[ur]&&E^{hC_6}\ar[ur]&&\Sigma^{48}E^{hG_{24}}\ .
}
\end{equation}
As before we write
\[
\xymatrix@C=10pt@R=12.5pt{
X \ar[dr] && \ar@{-->}[ll]Z\\
&Y\ar[ur]
}
\]
for a cofiber sequence $X \to Y \to Z \to \Sigma X$. The dotted arrows in \eqref{duality-cofs} have 
Adams-Novikov filtration 1.
The resulting spectral sequence is isomorphic to the topological duality spectral sequence.
For more details see Section 3 of \cite{BobkovaGoerss}.

If we apply $E_\ast (-)$ to the diagram \eqref{duality-cofs}, using the exactness of the sequence in \Cref{cor:morava-dual}, we get a sequence of short exact sequences and hence
of long exact sequences in cohomology
\begin{equation*}\label{duality-cofs-coh}
\xymatrix@C=5pt@R=12.5pt{
H_c^\ast(\GG_2,E_\ast D_{p-1})\ar[dr] && \ar@{-->}[ll] H_c^\ast(\GG_2,E_\ast D_p)\\
&H_c^\ast(\GG_2,E_\ast E^{hF_{p-1}})\ar[ur]\ .
}
\end{equation*}
Here the dotted arrow raised cohomological degree by $1$ and $D_0 = E^{h\SS_2^1}$. Spliced together, these long exact sequences give a 
spectral sequence
\[
E_1^{p,q} = H_c^q(\GG_2,E_tE^{hF_p}) \Longrightarrow H_c^{p+q}(\GG_2,E_tE^{h\SS_2^1}). 
\]
Since $E_\ast E^{hF} \cong \map_{cts}(\GG_2/F,E_\ast)$ for any closed subgroup $F$, Schapiro's Lemma implies
\[
H_c^\ast(\GG_2,E_\ast E^{hF}) \cong H_c^\ast(F,E_\ast)
\]
and this spectral sequence is isomorphic to the ADSS. 

We now have two ways of calculating $\pi_\ast E^{h\SS_2^1}$ from $H^\ast(F_p,E_\ast)$ encoded in the two ways
around the following square
\[
\xymatrix@C=40pt{
H^s(F_p,E_t) \ar@{=>}[d]_{ADSS} \ar@{=>}[r]^-{HFPSS} & \pi_{t-s}E^{hF_p} \ar@{=>}[d]^{TDSS} \\
H_c^{s+p}(\SS_2^1,E_t) \ar@{=>}[r]_-{ANSS} & \pi_{t-s-p}E^{h\SS_2^1}.
}
\]
In general these two methods have a relatively complicated relationship, but under certain hypotheses we can deduce
certain information. We won't need much. The hypotheses of the following result are crafted to ensure there are no exotic jumps of
filtration in related Adams-Novikov Spectral Sequences.

\begin{lem}\label{lem:chasing-detectors} Let $x \in \pi_{n} E^{h\SS_2^1}$. Suppose
\begin{enumerate}

\item the class $x$ is detected by $\alpha \in H^{p}(\SS_2^1,E_t)$ in the ANSS, so $n=t-p$;

\item the class $x$ is detected by $y \in \pi_{t}E^{hF_p}$ in the TDSS; and,

\item the class $y$ is detected by $\beta \in H^{0}(F_p,E_t)$. 
\end{enumerate}
Then $\alpha$ is detected by $\beta$ is the ADSS.
\end{lem} 

\begin{proof} We refer to \eqref{duality-cofs} and the remarks on the ADSS and TDSS following. By hypothesis, there is
a class $z \in \pi_\ast D_p$ which maps to $y \in \pi_\ast F_p$ and to $x \in \pi_\ast E^{h\SS_2^1}$. The hypothesis
on $\beta$ forces $z$ to be detected by a class $\gamma \in H^0(\GG_2,E_\ast D_p)$. By the Geometric 
Boundary Theorem \Cref{rem:whats-up-GBT} $x$ must be detected by the image of $\gamma$ under the map
$H^0(\GG_2,E_\ast D_p) \to H^p(\GG_2,E_\ast E^{h\SS_2^1})$. Since $x$ is detected by $\alpha$
the result follows.
\end{proof}


\section{Recollections from homotopy theory}\label{sec:recollections}

In this section we gather together some of the material we need from the homotopy
theory literature. First, we discuss some qualitative aspects of  the homotopy groups of $V(0)$, 
making explicit 
some interesting behavior which can be traced back to the fact that the order of the identity of
$V(0)$ is not $2$. Then we give some basic background on how the Hopf map $\sigma \in \pi_7S^0$ 
appears in the Adams-Novikov Spectral Sequence. Both of these topics arise repeatedly in the following
sections.

\subsection{Some basic homotopy theory of $V(0)$}\label{sec:remonV0} 
Let $\iota \colon S^0 \to V(0)$ be the inclusion of the bottom cell and $p \colon V(0) \to S^1$ the
collapse map onto the top cell so that
\begin{align}\label{eq:cofV0}
\xymatrix{S^0 \ar[r]^-{\times 2} &S^0 \ar[r]^-\iota & V(0) \ar[r]^-p &S^1}\end{align}
is the standard cofiber sequence for $V(0)$.
For a spectrum $X$,
let 
\[j=X \wedge \iota \colon X \simeq X \wedge S^0  \to X\wedge V(0)\]
and
$q = X \wedge p\colon X \wedge V(0) \to \Sigma X$.  We then have a cofiber sequence

\[
\xymatrix{
X \ar[r]^-{\times 2}& X \ar[r]^-j & X \wedge V(0) \ar[r]^-q & \Sigma X.
}
\]
We have a long exact sequence in homotopy
\[
\xymatrix{
\cdots \longr \pi_{n+1} X \wedge V(0) \ar[r]^-{q_\ast} & \pi_nX \ar[r]^-{\times 2} & \pi_nX \ar[r]^-{j_\ast} & \pi_{n} X \wedge V(0) \longr
\cdots.
}
\]

\begin{defn}\label{rem:betabock}
We let $\beta \colon \pi_n(X \wedge V(0)) \to \pi_{n-1}(X \wedge V(0))$ be the 
homomorphism induced by the composite
\begin{equation*}
\xymatrix{
X \wedge V(0) \ar[r]^-q  & \Sigma X \ar[r]^-j & \Sigma X \wedge V(0).
}
\end{equation*}
\end{defn}

\begin{rem}
Note that any element in the image of $\beta$ has order $2$ as the image of $j_* \colon \pi_*X \to \pi_*(X \wedge V(0))$ is isomorphic 
to $(\pi_*X)/2$.
\end{rem}

\begin{notation}\label{not:keep-it-str} We will write $x\alpha \in \pi_{n+k}Y$ for the composition
\[
\xymatrix{
S^{n+k} \ar[r]^-\alpha & S^n \ar[r]^-x & Y.
}
\]
If it is defined we will write $\langle x,\alpha, \beta \rangle$ for the Toda brackets of the triple of maps
\[
\xymatrix{
S^{n+j+k} \ar[r]^-\beta & S^{n+k} \ar[r]^-\alpha & S^n \ar[r]^x & Y.
}
\]
\end{notation}

We prove the following basic fact, which we use many places throughout this paper.
\begin{lem}\label{lem:twoxtildes10} Let $x \in \pi_nX$ have order $2$. Then there is a class
$y  \in \pi_{n+1} (X \wedge V(0))$  such that $q_\ast (y) = x$ and
\[
2y =  j_\ast (x)\eta  \in \pi_{n+1} (X \wedge V(0)).
\]
\end{lem}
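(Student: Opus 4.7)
The core idea is to reduce the lemma to the universal case already in hand, namely the identity $2i_1 = j_\ast(\iota)\eta$ in $\pi_1(V(0)\wedge V(0))$ from \fullref{rem:pi1V0smV0}, and transport this relation to the $X$-level by naturality. In other words, the class $y$ should come from the universal class $i_1$ by pushing forward along a suitable extension of $x$.

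First I would observe that because $2x = 0$, the cofiber sequence
\[
\xymatrix{S^n \ar[r]^-{\times 2} & S^n \ar[r]^-{\Sigma^n\iota} & \Sigma^n V(0) \ar[r]^-{\Sigma^n p} & S^{n+1}}
\]
produces an extension $\tilde{x}\co \Sigma^n V(0) \to X$ of $x$, in the sense that $\tilde{x}\circ \Sigma^n\iota = x$. Smashing with $V(0)$ yields $\tilde{x}\wedge V(0)\co \Sigma^n V(0)\wedge V(0)\to X\wedge V(0)$, and I would then define
\[
y := (\tilde{x}\wedge V(0))_\ast(\Sigma^n i_1) \in \pi_{n+1}(X\wedge V(0)).
\]

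To verify $q_\ast(y) = x$, I would use the naturality square
\[
\xymatrix{
\Sigma^n V(0)\wedge V(0) \ar[r]^-{\tilde{x}\wedge V(0)} \ar[d]_{\Sigma^n V(0)\wedge p} & X\wedge V(0) \ar[d]^{q}\\
\Sigma^{n+1} V(0) \ar[r]^-{\Sigma\tilde{x}} & \Sigma X
}
\]
together with the defining property $q_\ast(i_1)=\iota$ and the relation $\tilde{x}\circ\Sigma^n\iota = x$. To verify $2y = j_\ast(x)\eta$, I would push the universal relation $2i_1 = j_\ast(\iota)\eta$ of \eqref{hw-wanted-a-number} through $\tilde{x}\wedge V(0)$; this reduces the problem to identifying
\[
(\tilde{x}\wedge V(0))_\ast\bigl(\Sigma^n j_\ast(\iota)\bigr) = j_\ast(x),
\]
which is immediate from $j_\ast(\iota) = \iota\wedge \iota$ and the computation $(\tilde{x}\wedge V(0))\circ(\Sigma^n\iota\wedge\iota) = x\wedge\iota = j\circ x$.

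No step here is really an obstacle; the arithmetic content of the lemma is entirely concentrated in the universal identity \eqref{hw-wanted-a-number}, and the rest is naturality bookkeeping. The only subtlety worth flagging is that the extension $\tilde{x}$ is not unique—it is ambiguous up to the image of $(\Sigma^n p)^\ast\co \pi_{n+1}X \to [\Sigma^n V(0),X]$—but this ambiguity is absorbed into the choice of $y$ and has no effect on either of the two conclusions.
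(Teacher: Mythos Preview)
Your proposal is correct and follows essentially the same approach as the paper: extend $x$ to a map $\tilde{x}\colon \Sigma^n V(0)\to X$ (the paper calls it $\overline{x}$), and define $y$ as the image of the generator $i_1\in \pi_1(V(0)\wedge V(0))$ under $(\tilde{x}\wedge V(0))_\ast$, reading off both conclusions from the universal relation \eqref{hw-wanted-a-number} and the naturality diagram. You supply a bit more detail than the paper (the explicit naturality squares and the remark on the ambiguity of $\tilde{x}$), but the argument is the same.
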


\begin{proof} We begin with the basic case of $\iota \in \pi_0V(0)$. We have a short exact sequence in 
homotopy
\[
0 \to \ZZ/2 \cong \pi_1 V(0) \xrightarrow{j_*} \pi_1(V(0) \wedge V(0)) \xrightarrow{q_*} \pi_0V(0) 
\cong \ZZ/2 \to 0
\]
where $\pi_1V(0)$ is generated by $\iota\eta$ and $\pi_0V(0)$ is generated by $\iota$. The cofiber sequence
\[
\xymatrix{
V(0) \ar[r]^-j & V(0) \wedge V(0) \ar[r]^-q & \Sigma V(0).\
}
\]
cannot split, as the Steenrod operation $\mathrm{Sq}^2$ is non-zero in $H^\ast (V(0) \wedge V(0),\FF_2)$. Hence
there can be no element of order $2$ in $\pi_1(V(0) \wedge V(0))$ that maps to $\iota$ and we have
\[
\pi_1(V(0) \wedge V(0)) \cong \ZZ/4.
\]
If $y_0 \in \pi_1(V(0) \wedge V(0))$ is either of the two elements with $q_\ast(y_0) = \iota$, we have $2y_0 = \iota\eta$. 

Now consider the general case. Extend $x$ to a map $\overline{x}\colon \Sigma^n V(0) \to X$ and then contemplate
the  diagram
\[
\xymatrix{
\Sigma^n V(0) \ar[r] \ar[d]_-{\overline{x}} & \Sigma ^n V(0) \wedge V(0) \ar[d]^-{\overline{x}\wedge V(0)} 
\ar[r] & \Sigma^{n+1} V(0)\ar[d]^-{\Sigma \overline{x}}\\
X \ar[r]_-j & X \wedge V(0) \ar[r]_-q & \Sigma X\ .
}
\]
Then we can let $y$ be the image of a generator $y_0 \in \pi_1(V(0) \wedge V(0))$ under the
map $(\overline{x}\wedge V(0))_\ast$.
\end{proof} 

\begin{rem}\label{rem:basic-toda} We can refine \Cref{lem:twoxtildes10} in the case where
$x = z\eta$ and $z \in \pi_{n-1}X$ itself has order $2$. Then the Toda bracket
$\left<z,2,\eta\right> \subseteq \pi_{n}X$ 
is defined with indeterminacy
\[
z \pi_2S^0 + \pi_{n}X\eta   =\pi_{n}X\eta.
\]
It follows that $\left<z,2,\eta\right>2$ has no indeterminacy and we can shuffle
\begin{equation}\label{order4one}
\left<z,2,\eta\right>2 = z\left<2,\eta,2\right> = z\eta^2.
\end{equation}
Thus if we have $x = z \eta$ we can take any element of $y \in \left<z,2,\eta\right>$ and $2y =z \eta^2= x \eta $. 
We can turn this thought around as well: if $z$ has order $2$ and $z \eta^2 \ne 0$, then $z \eta^2$ {\it must} be 
divisible by $2$. 
\end{rem}
 
\begin{exmp}\label{rem:pi1V0smV0} Here are a few simple applications of \Cref{lem:twoxtildes10} and 
\Cref{rem:basic-toda}. We will only use \eqref{eq:the-more-basic-rel} in the sequel, but 
\eqref{eq:the-basic-rels} and the charts of \Cref{fig:V0} make for a more complete story.

Let $v_1 \in \pi_2V(0)$ be either of the two classes which map to $\eta \in \pi_1V(0)$ under the boundary
map $\pi_2V(0) \to \pi_1S^0$. Since $v_1 \in \langle \iota,2,\eta \rangle$, (\ref{order4one}) gives that
\begin{equation}\label{eq:the-more-basic-rel}
2v_1 = \eta^2\ne 0.
\end{equation}

Let $i_0 \in \pi_0(V(0) \wedge V(0)) \cong \ZZ/2$ be the generator and $i_1 \in \pi_1(V(0) \wedge V(0)) \cong \ZZ/4$ be 
a choice of generator. (We will fix a choice below in \eqref{eq:choice-ione}.)
Let us also write $v_1$ for $j_\ast (v_1) \in \pi_2(V(0)\wedge V(0))$. 
The image of $j_\ast$ has order two in $\pi_2(V(0)\smsh V(0))$, and hence $2v_1=0$. We can then 
choose an element
\[
v_1^2 \in \langle v_1, 2,  \eta  \rangle \subseteq \pi_4(V(0)\smsh V(0)),
\]
so named because of its Hurewicz image in $BP_\ast (V(0)\smsh V(0))$.
Then  \Cref{lem:twoxtildes10}
and \eqref{order4one} yield relations in $\pi_\ast (V(0) \wedge V(0))$:
\begin{align}\label{eq:the-basic-rels}
2i_1 &=  i_0\eta\\
2v_1^2 &=  v_1\eta^2\ .\nonumber
\end{align}
\end{exmp}

The basic relations of (\ref{eq:the-basic-rels}) come as exotic
extensions in any Adams-Novikov  Spectral Sequence. Using the standard calculations
of \cite{MRW} or even Table 2 of \cite{ravnovice}, it is an exercise to compute
the $BP$-based Adams-Novikov Spectral Sequence for $V(0)$ and $V(0) \wedge V(0)$
in a small range. The following two charts display the 
$E_\infty$-pages for $V(0)$ and $V(0) \wedge V(0)$ respectively. The charts
display additive extensions, $\eta$-multiplications, and $\nu$-multiplications. A vertical line
denotes an exotic multiplication by $2$. An important differential in the Adams-Novikov Spectral Sequence for $V(0)$ is
$d_3(v_1^2) = \eta^3$ (Theorem 5.13 (a) \cite{ravnovice}). This differential will appear often in this paper.

\begin{center}
\begin{figure}[H]
\begin{minipage}{0.49\textwidth}
\center
\includegraphics[width=\textwidth]{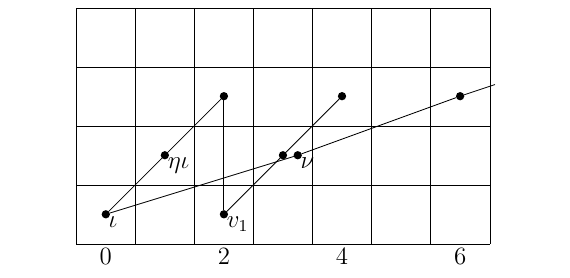}
\end{minipage}
\begin{minipage}{0.49\textwidth}
\center
\includegraphics[width=\textwidth]{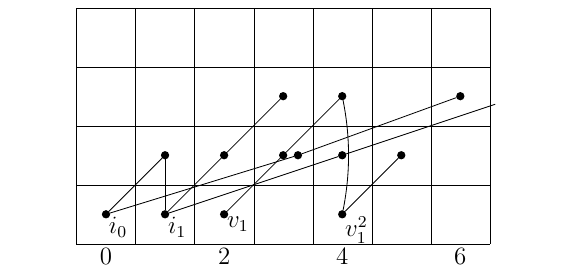}
\end{minipage}
\captionsetup{width=\textwidth}
\caption{The Adams-Novikov $E_{\infty}$-pages for $\pi_*V(0)$ (left) and $\pi_*(V(0) \smsh V(0))$ (right).}
\label{fig:V0}
\end{figure}
\end{center}

We close this subsection with a lemma about Spanier-Whitehead duality. If $Y$ is a spectrum, 
let $DY$ denote its Spanier-Whitehead dual. We have natural isomorphisms of homotopy
classes of maps
\[
 [X,Z\wedge DY] \cong [X \wedge Y,Z].
\]
Explicitly, this isomorphism sends a homotopy class $f:X \to Z \wedge DY$ to the homotopy class $g:X \wedge Y \to Z$
given by the composition
\[
\xymatrix@C=40pt{
X \wedge Y \ar[r]^-{f \wedge Y}  & Z \wedge DY \wedge Y  \ar[r]^-{ Z\wedge m} & Z \wedge S^0 = Z
}
\]
where $m \colon DY \wedge Y \to S^0$ is the duality pairing.  We call $g$
the SW-{\it adjoint} of $f$. If $X$ is a finite spectrum the natural map $X \to D^2X$ is an equivalence. 

We now specialize to the Moore spectrum. If
\[
f \colon S^0 \to V(0) \wedge DV(0) = V(0) \wedge \Sigma^{-1}V(0)
\]
is SW adjoint to the identity then 
\begin{equation}\label{eq:choice-ione}
i_1 = \Sigma f: S^1 \to V(0) \wedge V(0)
\end{equation}
is a choice of generator of $\pi_1 (V(0) \wedge V(0)) \cong \ZZ/4$.

\begin{lem}\label{what-ew-need-sw} Let  $y \colon S^n \to X \wedge V(0)$, let
$x \colon \Sigma^{n-1}V(0) \simeq \Sigma^nDV(0)\to X$ be the SW-adjoint to $y$, and 
consider the map
\[
x \wedge V(0) \colon \Sigma^{n-1}V(0) \wedge V(0) \longr X \wedge V(0).  
\]
Then
\begin{align*}
x \wedge V(0)_\ast (i_1) & = y\\
x \wedge V(0)_\ast (i_0) & = \beta(y)
\end{align*}
where $\beta \colon \pi_n(X \wedge V(0)) \to \pi_{n-1}(X \wedge V(0))$ is the homomorphism  of \Cref{rem:betabock}. 
\end{lem}

\begin{proof} The map $i_1:S^1 \to V(0) \wedge V(0)$ is the suspension of the SW adjoint to the identity
map $V(0) \to V(0)$, so the formula for $x \wedge V(0)_\ast (i_1)$ follows. We then check that $\beta(i_1) = i_0$
to get the formula for $x \wedge V(0)_\ast (i_0)$.
\end{proof}

\subsection{Finding $\sigma$ in the $K(n)$-local sphere}\label{findsigma}

A crucial actor in our proof of the decomposition of $L_{K(1)}L_{K(2)}S^0$ is the Hopf class 
$\sigma \in \pi_7S^0\cong \ZZ/16$. This class remains non-zero 
in $\pi_\ast L_{K(2)}S^0$ and also in 
$\pi_\ast E^{h\SS_2^1}$, and here we discuss how it is detected.

We must be precise about what we mean by $\sigma$. It will be the  generator of 
$\pi_7S^0$ detected in $\Ext_{BP_*BP}^{1,8}(BP_*, BP_*)$  by the Greek letter construction of 
Miller, Ravenel, and Wilson. Recall from Theorem 4.3.2 of \cite{ravgreen} that there is an isomorphism
\[
\FF_2[v_1] \cong \Ext^{0,\ast}_{BP_\ast BP}(BP_\ast,BP_\ast/2)
\]
and that the $\alpha$-family in $\Ext^{1,\ast}_{BP_\ast BP}(BP_\ast,BP_\ast)$ is defined 
by taking the image of the powers of $v_1$ under various Bockstein homomorphisms. Specifically, from
Corollary 4.23 of  \cite{MRW} we know that the class
\begin{equation}\label{eqn:whatv}
v := v_1^4-8v_1v_2 \in BP_8
\end{equation}
becomes a comodule primitive in $BP_8/16$ and in fact we have an isomorphism
\begin{equation}\label{eq:pesky-referee}
\ZZ/16 \cong \Ext_{BP_\ast BP}^{0,8}(BP_\ast, BP_\ast/16) 
\end{equation}
generated by $v$. Let $S^0/16$ be  the cofiber of the map $16\colon S^0 \to S^0$. The Adams-Novikov chart for $S^0/16$
in this range is displayed in Table 3 of \cite{ravnovice}. The class $v$ of \eqref{eqn:whatv} is clearly displayed there,
which is why we have used that notation.

We define $\alpha_{4/4}$ 
to be the image of this class under 
the connecting Bockstein homomorphism associated to the short exact sequence 
\begin{align}\label{eqn:bock16}
\xymatrix@C=25pt{
0 \ar[r] &  BP_* \ar[r]^-{\times 16} & BP_* \ar[r] & BP_*/16 \ar[r] & 0 \ .
}
\end{align}
Then $\pi_7 S^0 \to \Ext_{BP_*BP}^{1,8}(BP_*, BP_*)$ is an isomorphism 
and we define $\sigma$ to be the unique homotopy class which maps to $\alpha_{4/4}$ under this 
map. We also write  $\sigma$ for this class (that is, for $\alpha_{4/4}$) in $\Ext_{BP_*BP}^{1,8}(BP_*, BP_*)$.
This is an abuse of notation, but a standard one: we typically do this for $\eta$ and $\nu$ as well.

Using the Geometric Boundary Theorem of \Cref{rem:whats-up-GBT} and the isomorphism of \eqref{eq:pesky-referee}
it follows that the permanent cycle $v$ itself detects a class
$x \in \pi_8 (S^0/16)$ which maps to $\sigma$ under the map
$\partial\colon \pi_8 S^0/16 \to \pi_7 S^0$. 
The class $x$ is not unique, 
as the kernel of $\partial$ is $\ZZ/2 \times \ZZ/2$, 
but the elements of the kernel all have
higher Adams-Novikov filtration, so all choices of $x$ are detected by $v$. 

Let $E = E_n$ be a choice of a Lubin-Tate theory at height $n$. 
We now write down a detection 
result for $\sigma \in \pi_7E^{hH}$, where 
$H \subseteq\GG_n$ is a closed subgroup. We note  that the image of  
$v_1\in \Ext_{BP_*BP}^{*,*}(BP_*,BP_*/2)$ in $H_c^*(H,E_*/2)$ --
see diagram (\ref{eq:compare-ANSS}) -- is the element $v_1$ 
of \Cref{rem:v1-plus-ss}. 
Consider the Bockstein homomorphism  in cohomology
\[
\bock{4}=\delta^{(4)}_H \colon H_c^0(H,E_8/16) \to H_c^1(H,E_8)
\]
determined by the short exact sequence
\[
\xymatrix{
0 \ar[r] & E_8 \ar[r]^-{\times 16} & E_8 \ar[r]& E_8/16 \ar[r] & 0.
}
\]

\begin{prop}\label{prop:finding-sigma} Let $H \subseteq \GG_n$ be a closed subgroup
and let $R = H_c^0(H,E_0)$. Let $c \in E_8/16$ and assume that
\begin{enumerate}

\item $c \equiv v_1^4$ modulo $2$,  

\item $c$ is invariant under the action of $H$, and 

\item $H_c^0(H,E_8/2)$ is a cyclic $R$-module generated by $v_1^4$. 
\end{enumerate}
Then, up to multiplication by a unit in $R$, the image of $\sigma \in \pi_7E^{hH}$
is detected in the spectral sequence
\[
H_c^s(H,E_t) \Longrightarrow \pi_{t-s}E^{hH}
\]
by the class $\bock{4}(c) \in H_c^{1}(H,E_8)$.
\end{prop}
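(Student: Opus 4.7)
The approach is to compare $\bock{4}(c)$ with a canonical class coming from the Miller--Ravenel--Wilson construction of $\sigma$. Choose a complex orientation $BP \to E$, and let $c_0 \in E_8/16$ be the image of $v = v_1^4 - 8 v_1 v_2 \in BP_8$. Since $v$ is a $BP_\ast BP$-comodule primitive modulo $16$, the class $c_0$ is $\GG_n$-invariant, a fortiori $H$-invariant, and reduces to $v_1^4 \pmod 2$. Naturality of the Adams--Novikov spectral sequence along the comparison map \eqref{eq:compare-ANSS}, combined with the Geometric Boundary Theorem (\fullref{rem:whats-up-GBT}) applied to the short exact sequence \eqref{eqn:bock16}, shows that $\bock{4}(c_0) \in H^1(H, E_8)$ is a permanent cycle detecting the image of $\sigma$ in $\pi_7 E^{hH}$. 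The task thus reduces to proving that $\bock{4}(c)$ equals $u \cdot \bock{4}(c_0)$ for some $u \in R^\times$, at least modulo a correction whose Adams--Novikov filtration is strictly higher than that of $\bock{4}(c_0)$.

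The plan is to bridge $c$ and $c_0$ using all three of conditions (1)--(3) and the naturality of higher Bocksteins. A direct cobar computation, unwinding the definition of $\bock{k}$ on a chosen lift $\tilde x \in E_8$ of $x \in H^0(H, E_8/2^k)$, yields the identity
\[
\bock{1}(\bar x) \;=\; 2^{k-1}\,\bock{k}(x) \qquad \text{in } H^1(H, E_8),
\]
valid for every $k \geq 1$, where $\bar x$ denotes the mod-$2$ reduction. By (1), the difference $c - c_0$ lies in $2 \cdot E_8/16$, so $c - c_0 = 2 e$ for a unique $e \in E_8/8$; by (2) and the injectivity of $\times 2 \colon E_8/8 \to E_8/16$ on $H$-invariants, this $e$ is itself $H$-invariant. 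Naturality of the Bockstein along the evident map of short exact sequences comparing $\times 8$ with $\times 16$ gives $\bock{4}(2 e) = \bock{3}(e)$, hence $\bock{4}(c) = \bock{4}(c_0) + \bock{3}(e)$.

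Condition (3) then supplies the final ingredient. The reduction $\bar e \in H^0(H, E_8/2) = R \cdot v_1^4$ is of the form $r v_1^4$ for some lift $r \in R$; applying the identity above with $k=3$ yields $4\,\bock{3}(e) = \bock{1}(\bar e) = r\,\bock{1}(v_1^4) = 8r\,\bock{4}(c_0)$, so $\bock{3}(e) = 2r\,\bock{4}(c_0) + \epsilon$ with $4 \epsilon = 0$ in $H^1(H, E_8)$. Therefore $\bock{4}(c) = (1 + 2r)\,\bock{4}(c_0) + \epsilon$, and $1 + 2r \in R^\times$ by $2$-adic completeness of $R$. The main technical obstacle is controlling the residual $4$-torsion correction $\epsilon$: one must verify that $\epsilon$ lies in strictly higher Adams--Novikov filtration than $\bock{4}(c_0)$, or else can be absorbed by modifying the lift $r$ using condition (3) iterated to mod-$4$ and mod-$8$ coefficients. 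Granted that control, $\bock{4}(c)$ detects a unit multiple of $\sigma$ in $\pi_7 E^{hH}$.
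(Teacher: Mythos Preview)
Your approach has a genuine gap at exactly the point you flag: the $4$-torsion correction $\epsilon$ cannot be controlled the way you suggest. Both $\epsilon$ and $\bock{4}(c_0)$ live in $H^1(H,E_8)$, so they have the \emph{same} Adams--Novikov filtration $s=1$; there is no ``strictly higher filtration'' to push $\epsilon$ into. And modifying the lift $r$ does not help either, since $r$ is only determined modulo $2$ and changing it by an element of $2R$ alters $(1+2r)\bock{4}(c_0)$ by a multiple of $4\bock{4}(c_0)$, which need not hit an arbitrary $4$-torsion class in $H^1(H,E_8)$.

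The fix you gesture at --- ``iterating condition (3) to mod-$4$ and mod-$8$ coefficients'' --- is exactly the paper's argument, but carried out at the level of $H^0$ rather than $H^1$. The point is that condition (3) alone, together with the short exact sequences $0 \to E_8/2^k \xrightarrow{\,2\,} E_8/2^{k+1} \to E_8/2 \to 0$, lets you prove by induction on $k$ that $H^0(H,E_8/2^k)$ is cyclic over $R$, generated by the reduction of $c$, for $k=1,2,3,4$. (Given $x \in H^0(H,E_8/2^{k+1})$, its mod-$2$ reduction is $s v_1^4$ for some $s \in R$; then $x - s c$ lies in the image of $\times 2$ from $H^0(H,E_8/2^k) = R \cdot c$, so $x = (s+2t)c$.) In particular $H^0(H,E_8/16)$ is cyclic generated by $c$, so the image of $v$ equals $a c$ for some $a \in R/16$; since the image of $v$ also reduces to $v_1^4$ and hence generates too, $a$ is a unit. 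Applying $\bock{4}$ then gives $\bock{4}(c_0) = a\,\bock{4}(c)$ with no error term at all. The moral: compare $c$ and $c_0$ \emph{before} applying the Bockstein, where condition (3) gives you tight control, rather than after, where torsion in $H^1$ is uncontrolled.
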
 

\begin{proof} 
As above, let $x \in \pi_8(S^0/16)$ be any class which maps to $\sigma$ under the boundary map
$\pi_8(S^0/16) \to \pi_7S^0$.  We will follow this element through the following diagram of spectral 
sequences
\[
\xymatrix{\Ext^{\ast,\ast} _{BP_\ast BP}(BP_\ast,BP_\ast/16) \ar@{=>}[r]&\pi_\ast (S^0/16)\\
\ar[u]^\cong \Ext^{\ast,\ast}_{MU_\ast MU}(MU_\ast,MU_\ast/16) \ar[d] \ar@{=>}[r] &
\ar[u]_\cong \pi_\ast (S^0/16)\ar[d]\\
H_c^\ast(H,E_\ast/16) \ar@{=>}[r] & \pi_\ast (E^{hH}\wedge S^0/16)\ .
}
\]
To obtain this diagram we combine the diagram of spectral sequences in \eqref{eq:compare-ANSS}
with the diagram of spectral sequences in \eqref{eq:more-compare-ANSS} with
$F_1 = H \subseteq \GG_2 = F_2$.

The class $x$ is detected by $v$ in $\Ext^{\ast,\ast} _{BP_\ast BP}(BP_\ast,BP_\ast/16) $. 
Since $v \equiv v_1^4$ in $BP_\ast/2$, we have that $v$ maps to an  element $w \in H_c^0(H, E_{8}/16)$ congruent to
$v_1^4$ modulo $2$. Again we use the  identification of $v_1$ from \Cref{rem:v1-plus-ss}. 
It now follows that  $x$ is non-zero in $\pi_8(E^{hH}\wedge S^0/16)$. Indeed, the image $w$ of $v$ is a non-zero permanent cycle 
and cannot be hit by a differential since it is in the lowest filtration. 
By the Geometric Boundary Theorem of \Cref{rem:whats-up-GBT},  $\delta^{(4)}(w)$ detects $\sigma$. 
\end{proof}

Now, we turn to the role of $c$. The assumptions imply that the element $c$ generates $H_c^0(H, E_{8}/16)$.
It follows that $c=aw$ for some unit $a \in R/16$ and, therefore, that $c$
detects $a x$ for some unit $a \in R/16$.  Therefore, $\delta^{(4)}(c)$ detects $a\sigma$.

Note that we are not asserting that $\sigma \ne 0 \in \pi_7E^{hH}$. We will take this up at various
points later. See \Cref{sec:k1-local-calcs} and \Cref{sec:k1k2s} and, in particular,
 \Cref{prop:homotopy-v0-k1} and \Cref{thm:what-we-hope-to-prove}.

\section{A review of calculations in $K(1)$-local homotopy theory at $p=2$}\label{sec:k1-local-calcs}

In our main arguments, we will identify wedge  summands of $L_{K(1)}L_{K(2)}S^0$ equivalent to
$L_{K(1)}S^0$ and $L_{K(1)}V(0)$. As background for this, we record here
known results from $K(1)$-local homotopy theory at the prime $2$. For example,
the class $\zeta_1 \in \pi_{-1}L_{K(2)}S^0$ is closely related to the Hopf map 
$\sigma \in \pi_7S^0$  and it is important for us to make this relationship explicit.
We will also discuss $L_{K(1)}V(0)$ and 
$L_{K(1)}Y$ where $Y = V(0) \wedge C(\eta)$ is the spectrum studied by Mahowald in his proof of the 
telescope conjecture at $n=1$ and $p=2$. Here, $C(\eta)$ denotes the cofiber of $\eta \colon S^1 \to S^0$. See Section 2 of \cite{MahImJ}.

None of the material in this section is new; it can be put together 
from \cite{MahImJ}, \cite{MRW}, and \cite{ravgreen} among many sources. 

We begin with some basic calculations in the $K(1)$-local 
Adams-Novikov Spectral Sequence. As noted earlier, we can choose $2$-completed $K$-theory 
for our version of Lubin-Tate theory at height $1$, so we will write $K$ for $E_1$. 

The group $\GG_1 = \ZZ_2^\times$ acts on $K_\ast X = \pi_\ast L_{K(1)}(K \wedge X)$.
We write the action of $k \in \ZZ_2^\times$ using the Adams operations notation; that is, if $x \in 
K_\ast X$, we write $\psi^k(x)$ for the action of $k$ on $x$.

We are interested in the spectral sequence
\[
H_c^s(\ZZ_2^\times,K_tX) \Longrightarrow \pi_{t-s}L_{K(1)}X.
\]

\subsection{Calculating $\pi_\ast L_{K(1)}S^0$ and $\pi_\ast L_{K(1)}V(0)$} We begin with 
these, the most basic spectra. We take up the important auxiliary spectrum $Y = V(0) \wedge C(\eta)$
in \Cref{sec:all-about-y}.

Recall that  $K_{2n} = \ZZ_2u^{-n}$ where $u \in K_{-2}$. The operations $\psi^k$ 
are $\ZZ_2$-linear and 
\begin{equation}\label{eq:adamsop}
\psi^k(u^n) = k^nu^n.
\end{equation} 

\begin{defn}\label{rem:many-alphas}
Here are a few crossed homomorphisms 
defining elements in the first cohomology groups $H_c^1(\Z_2^{\times},K_\ast)$.\footnote{Our notation differs from that of Ravenel in Lemma 2.1 of \cite{RavCoh}.}
\begin{enumerate}

\item $\ravclass_1 \colon \ZZ_2^\times \to \ZZ/2 =K_0/2$ is given by 
\[
\ravclass_1(1 + 2k_0) = \overline{k}_0.
\]
where $\overline{k}_0$ is the mod $2$ reduction of $k_0$. 

\item $\zeta_1 \colon \ZZ_2^\times \to \ZZ_2=K_0$ is defined as the composite 
\[\xymatrix{
\ZZ_2^{\times} \ar[r] &  \ZZ_2^{\times}/ \{\pm 1\} \ar[r]^\cong & (1+4\ZZ_2)  \ar[rr]^-{\frac{1}
{4}\log(-)}_-{\cong} & & \ZZ_2}.
\]
Here the first map is the projection, the second map is the 
inverse of the isomorphism defined by the composition
\[
1+4\ZZ_2 \subseteq \ZZ_2^\times \to \ZZ_2^{\times}/ \{\pm 1\}\ , 
\]
and $\log(1+x) = \sum_{n\geq 1} (-1)^{n+1}{x^n}/{n}$.

\item If $n$ is odd, $\alpha_n\colon \ZZ_2^\times \to K_{2n}$ is given by
\[
\alpha_n(k) = \frac{1}{2}(k^{-n}-1)u^{-n}.
\]

\item If $n = 2^i(2t+1)$ with $i > 0$, then $\alpha_{n/(i+2)}\colon \ZZ_2^\times \to K_{2n}$ is given by
\[
\alpha_{n/(i+2)}(k) = \frac{1}{2^{i+2}}(k^{-n}-1)u^{-n}.
\]
\end{enumerate}
\end{defn}
\begin{rem}
We had a class $\alpha_{4/4}$ above; see after (\ref{eqn:whatv}).
\Cref{lem:some-Bocks-at-1}  below
implies that this new class $\alpha_{4/4}$ will be a unit multiple of the image of the older class, and
we won't need to distinguish between the two as we try to 
find $\sigma$ in the homotopy groups of the localizations of $V(0)$, $Y$ and $V(0)\wedge Y$. 
The class $\zeta_1$ was already discussed in \S 2. See \eqref{def-zeta-n}, and 
\Cref{prop:zeta-permanent}. 
\end{rem}

Since we are at height one, we have $v_1 = u^{-1} \in K_2/2$. 

\begin{lem}\label{lem:reduced-two} The crossed homomorphisms of \Cref{rem:many-alphas}
satisfy the following formulas:
\begin{enumerate}

\item if $n$ is odd, then $v_1^n\ravclass_1 \equiv \alpha_n$ modulo $2$;
\medskip

\item if $n$ is even, then $v_1^n\zeta_1 \equiv \alpha_{n/(i+2)}$ modulo $2$.
\end{enumerate}

\end{lem}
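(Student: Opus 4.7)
The plan is to verify both congruences at the level of crossed homomorphisms $\ZZ_2^\times \to K_{2n}/2$. Since $u^{-n}$ is invariant modulo $2$ (indeed $\psi^k(u^{-n}) = k^{-n}u^{-n} \equiv u^{-n}$ for $k$ odd), the product $v_1^n \chi_1$ is represented on $k$ by the scalar $\chi_1(k) \in \FF_2$ times $u^{-n}$, and similarly for $v_1^n \zeta_1$. So in both parts the problem reduces to a scalar identity modulo $2$ after stripping off the factor $u^{-n}$.

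For (1), I would write $k = 1 + 2k_0$ and binomially expand
\[
(1+2k_0)^{-n} = 1 - 2nk_0 + \binom{-n}{2}(2k_0)^2 + \cdots.
\]
For $j \geq 2$, each coefficient $\binom{-n}{j}2^j \in \ZZ_2$ has $2$-adic valuation at least $2$, so $k^{-n} \equiv 1 - 2nk_0 \pmod 4$, whence $\tfrac{1}{2}(k^{-n}-1) \equiv -nk_0 \equiv k_0 \pmod 2$ when $n$ is odd. This matches $\chi_1(k) = \bar{k}_0$, as desired.

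For (2), with $n = 2^i(2t+1)$ and $i \geq 1$, I would first reduce to $k \in 1 + 4\ZZ_2$: since $n$ is even, $k^{-n}$ depends only on the image of $k$ in $\ZZ_2^\times/\{\pm 1\}$, and $\zeta_1$ is by construction trivial on $\{\pm 1\}$. Writing $k = 1+4m$ and using the $\log$/$\exp$ isomorphism on $1+4\ZZ_2$, set $y := \log(k^{-n}) = -n\log(1+4m)$. A direct computation gives
\[
\frac{y}{2^{i+2}} = -(2t+1)\cdot\frac{\log(1+4m)}{4} \equiv -(2t+1)m \equiv m \equiv \zeta_1(k) \pmod 2,
\]
so the remaining task is to show that $(k^{-n}-1)/2^{i+2}$ has the same reduction modulo $2$.

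The main technical step is to check $k^{-n} - 1 \equiv y \pmod{2^{i+3}}$. From the expansion $k^{-n}-1 = e^y - 1 = y + \sum_{j\geq 2} y^j/j!$ and the estimate $v_2(y) \geq i+2$, the inequality $v_2(j!) \leq j-1$ gives
\[
v_2(y^j/j!) \geq j(i+2) - (j-1) = j(i+1)+1 \geq 2(i+1)+1 \geq i+3
\]
for all $j \geq 2$ and $i \geq 1$. Hence the tail contributes nothing modulo $2^{i+3}$. The main obstacle is simply this valuation bookkeeping for the exponential tail; once verified, combining the two displays yields $\alpha_{n/i+2}(k) \equiv m\cdot u^{-n} \equiv \zeta_1(k)\, v_1^n \pmod 2$, completing (2).
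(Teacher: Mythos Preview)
Your proof is correct and follows essentially the same approach as the paper: direct verification of the crossed-homomorphism identities modulo $2$, using the key fact $\tfrac{1}{4}\log(1+4m)\equiv m\pmod 2$. The paper streamlines the computation by observing that, since the $\ZZ_2^\times$-action on $K_{2n}/2$ is trivial, both sides are genuine homomorphisms modulo $2$, so it suffices to check on the topological generators $-1$ and $5$; this avoids your exponential-tail valuation estimate in part~(2), but your more explicit argument is entirely valid.
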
 

\begin{proof} As a topological abelian group $\ZZ_2^\times$ is generated by $-1$ and $5$; thus we
need only show that the identities hold when evaluating at these elements. The formula (1) 
is a simple calculation. For formula (2) we use 
\[\frac{1}{4}\log(1+4k) \equiv k\qquad\mathrm{modulo}\ 2. \qedhere\]
\end{proof} 

We also have the following result, which is immediate from \Cref{rem:many-alphas}.
Let $\bock{i}$ be the $i$th Bockstein
\[
\bock{i}\colon H_c^s(\ZZ_2^\times,K_\ast/2^i) \to H_c^{s+1}(\ZZ_2^\times,K_\ast)\ .
\]
Write $\bockn = \bock{1}$. 

\begin{lem}\label{lem:some-Bocks-at-1} We have formulas for the Bockstein homomorphisms:
\begin{enumerate}

\item if $n$ is odd, $\bockn(v_1^n) = \alpha_n$; 
\medskip

\item if $n = 2^i(2t+1)$ with $i > 0$, then $\bock{i+2}u^{-n} = \alpha_{n/(i+2)}$ and
$u^{-n}$ reduces to $v_1^n$ modulo $2$.
\end{enumerate}
\end{lem}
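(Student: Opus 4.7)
The plan is to verify both formulas by a direct computation with the defining cocycle of the Bockstein. Recall that the connecting homomorphism $\bock{i}\colon H^0(\ZZ_2^\times,K_\ast/2^i)\to H^1(\ZZ_2^\times,K_\ast)$ is computed as follows: given an invariant class $\bar{x}\in (K_\ast/2^i)^{\ZZ_2^\times}$, choose any lift $\tilde{x}\in K_\ast$; then for each $k\in\ZZ_2^\times$ the element $\psi^k(\tilde{x})-\tilde{x}$ is divisible by $2^i$, and $\bock{i}(\bar{x})$ is represented by the crossed homomorphism
\[
k\longmapsto \tfrac{1}{2^i}\bigl(\psi^k(\tilde{x})-\tilde{x}\bigr).
\]
Since $\psi^k(u^{-n})=k^{-n}u^{-n}$, in both cases I would take $\tilde{x}=u^{-n}$ and then read off the answer by comparison with the formulas in \fullref{rem:many-alphas}.

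For part (1), $n$ is odd and $v_1^n=u^{-n}\in K_{2n}/2$ lifts to $u^{-n}\in K_{2n}$. For any $k\in\ZZ_2^\times$ the integer $k^{-n}-1$ is even, so $\frac{1}{2}(k^{-n}-1)u^{-n}$ is a well-defined element of $K_{2n}$. This is precisely $\alpha_n(k)$, giving $\bockn(v_1^n)=\alpha_n$.

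For part (2), write $n=2^i(2t+1)$ with $i\geq 1$. The first task is to verify that $u^{-n}$ represents an invariant class in $K_{2n}/2^{i+2}$, i.e.\ that $v_2(k^{-n}-1)\geq i+2$ for every $k\in\ZZ_2^\times$. Since $n$ is even we may replace $k$ by its image in $\ZZ_2^\times/\{\pm 1\}\cong 1+4\ZZ_2$, so it suffices to treat $k=1+4m$. Using the $2$-adic logarithm isomorphism $\log\colon 1+4\ZZ_2\xrightarrow{\cong}4\ZZ_2$ we have $v_2(\log k)\geq 2$, hence $v_2(n\log k)\geq i+2$, and therefore $v_2(k^{-n}-1)=v_2(\exp(-n\log k)-1)\geq i+2$. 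Granted this, $\tfrac{1}{2^{i+2}}(k^{-n}-1)u^{-n}=\alpha_{n/i+2}(k)$, which establishes $\bock{i+2}(u^{-n})=\alpha_{n/i+2}$. The assertion that $u^{-n}$ reduces to $v_1^n$ modulo $2$ is then immediate from the definition $v_1=u^{-1}\in K_2/2$.

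The only real content is the $2$-adic valuation bound in part (2); everything else is a direct substitution into the Bockstein formula followed by matching with the defining expressions for $\alpha_n$ and $\alpha_{n/i+2}$ in \fullref{rem:many-alphas}. I do not expect any genuine obstacles, as the logarithm/exponential argument is the standard tool for such $2$-adic estimates.
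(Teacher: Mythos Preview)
Your proof is correct and follows exactly the approach the paper intends: the paper simply states that the result is ``immediate from \fullref{rem:many-alphas}'', and what you have written is precisely the unpacking of that claim via the standard cocycle description of the Bockstein together with the formula $\psi^k(u^{-n})=k^{-n}u^{-n}$. Your verification of the $2$-adic valuation bound in part (2) via the logarithm is a clean way to justify the one step the paper leaves implicit.
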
 

The next proposition gives some basic detection results. 

\begin{prop}\label{prop:eta-sigma-k1} (1) The class $\eta \in \pi_1S^0$ is non-zero in
$\pi_\ast L_{K(1)}S^0$ detected by $\alpha_1$. 

(2) The class $\sigma \in \pi_7S^0$  is non-zero in $\pi_\ast L_{K(1)}S^0$ detected by
a unit multiple of $\alpha_{4/4}$.
\end{prop}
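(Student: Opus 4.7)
The plan is to reduce both statements to the Geometric Boundary Theorem combined with the comparison of spectral sequences in \eqref{eq:compare-ANSS}. The key point is that the classical ANSS detection of $\eta$ and $\sigma$ by Greek letter elements transfers to the $K(1)$-local ANSS because the crossed homomorphisms in \fullref{rem:many-alphas} arise as Bocksteins of the mod $2^i$ reductions of $u^{-n}$, by \fullref{lem:some-Bocks-at-1}.

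For (1), I would begin with the standard fact (from \cite{MRW}, or directly from $v_1 \in \Ext^{0,2}_{BP_\ast BP}(BP_\ast, BP_\ast/2)$) that $\eta \in \pi_1 S^0$ is detected in the $BP$-based ANSS by $\alpha_1 = \delta(v_1)$, where $\delta$ is the Bockstein for $0 \to BP_\ast \xrightarrow{2} BP_\ast \to BP_\ast/2 \to 0$. Applying the comparison diagram \eqref{eq:compare-ANSS} with $n=1$, $v_1$ maps to $u^{-1} \in K_2/2$, so its Bockstein maps to $\bockn(u^{-1}) \in H^1(\ZZ_2^\times,K_2)$, which is $\alpha_1$ by \fullref{lem:some-Bocks-at-1}(1). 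Thus the image of $\eta$ in $\pi_1 L_{K(1)}S^0$ is detected by $\alpha_1$. Non-vanishing follows because the crossed homomorphism $k \mapsto \tfrac{1}{2}(k^{-1}-1)u^{-1}$ is visibly non-trivial (evaluate at $k=-1$), and there is no room for it to be hit by a differential or killed by filtration jumps in total degree $1$.

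For (2), I would apply \fullref{prop:finding-sigma} with $H = \GG_1 = \ZZ_2^\times$, $R = \ZZ_2$, and candidate class $c = u^{-4} \in K_8/16$. The three hypotheses are checked as follows. First, $u^{-4} \equiv v_1^4 \pmod 2$ by the identification $v_1 = u^{-1}$ in $K_\ast/2$. Second, for invariance, $\psi^k(u^{-4}) = k^{-4}u^{-4}$, and since $k^2 \equiv 1 \pmod 8$ for every odd $k$, we have $k^4 \equiv 1 \pmod{16}$, so $u^{-4}$ is $\ZZ_2^\times$-invariant mod $16$. Third, $K_8/2 = \FF_2\{u^{-4}\}$ has trivial $\ZZ_2^\times$-action, so $H^0(\ZZ_2^\times, K_8/2) = \FF_2\{v_1^4\}$ is cyclic over $R$. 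Then \fullref{prop:finding-sigma} identifies the detecting class of $\sigma \in \pi_7 L_{K(1)}S^0$, up to a unit in $R$, with $\bock{4}(u^{-4})$, which by \fullref{lem:some-Bocks-at-1}(2) (with $n=4$, $i=2$) equals $\alpha_{4/4}$.

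No step here is genuinely difficult: the arguments are essentially bookkeeping once \fullref{prop:finding-sigma} is in hand. The mildest point of care is verifying $k^4 \equiv 1 \pmod{16}$ for every unit $k \in \ZZ_2^\times$, which ensures that we do not need a correction term to make $u^{-4}$ invariant mod $16$ and lets us take $c = u^{-4}$ outright. Non-vanishing of $\sigma$ in $\pi_7 L_{K(1)}S^0$ then follows because $\alpha_{4/4}$ is non-trivial in $H^1(\ZZ_2^\times, K_8)$ (check on $k = 5$, where $\tfrac{1}{16}(5^{-4}-1)$ is a unit in $\ZZ_2$), and there is no differential that can kill it since the $E_2$-term contributing to total degree $7$ in this range is concentrated in filtration $1$.
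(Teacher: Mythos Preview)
Your proposal is correct and follows essentially the same approach as the paper: part (1) via the Bockstein of $v_1$ and \fullref{lem:some-Bocks-at-1}, and part (2) via \fullref{prop:finding-sigma} together with \fullref{lem:some-Bocks-at-1}. Your version is more explicit than the paper's brief proof --- in particular, you spell out the verification $k^4 \equiv 1 \pmod{16}$ needed for hypothesis (2) of \fullref{prop:finding-sigma} with $c = u^{-4}$, and you justify non-vanishing at $E_\infty$ by noting that filtration-$1$ classes cannot be hit by differentials --- but the underlying argument is the same.
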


\begin{proof} Part (1) follows from \Cref{lem:some-Bocks-at-1} and the fact
that $\eta$ is always detected by the Bockstein of $v_1$. Part (2) follows from
\Cref{prop:finding-sigma}, \Cref{lem:some-Bocks-at-1}, and the isomorphisms
\[
\FF_2[v_1^{\pm 1}] \cong K_\ast/2 \cong H_c^0 (\ZZ_2^\times,K_\ast/2)\ .\qedhere
\]
\end{proof}

\begin{rem}\label{rem:unid3} Differentials in the $K(1)$-local Adams-Novikov Spectral Sequence at 
$p=2$ are largely determined by a standard $d_3$. We expand on this observation. 
In \cite[p.430]{ravnovice}, there is a generator
\[
\alpha_3 \in \Ext_{BP_*BP}^{1,6}(BP_*,BP_*) \cong \ZZ/2
\]
which is the Bockstein on $v_1^3$. There, it is shown that
$d_3(\alpha_3) = \eta^4$. Furthermore, $\alpha_3$ reduces to
$\eta v_1^2$ in $ \Ext_{BP_*BP}^{1,6}(BP_*,BP_*V(0))$, so that $\eta d_3(v_1^2) = \eta^4$.
Since there is no $\eta$ torsion
on the $E_3$-term of the Adams-Novikov Spectral Sequence for $V(0)$ in
bidegree $(3,6)$, this forces the differential $d_3(v_1^2) = \eta^3$. 

Since $\alpha_3$ is the Bockstein on $v_1^3$, it maps to the class we named $\alpha_3$ in
\Cref{rem:many-alphas}. See \Cref{lem:some-Bocks-at-1}. 

In general, for a $2$-local $MU$-algebra spectrum $E$, the $E$-based Adams-Novikov Spectral Sequence 
for a spectrum $X$ is a module over the $BP$-based 
Adams-Novikov Spectral Sequence for the sphere. There is a universal 
$d_3$-differential
\[
d_3(\alpha_3 z) = \eta^4 z + \alpha_3 d_3(z).
\]
Further, if $2$ annihilates $E_*(X)$, this gives a universal differential
\[
d_3(\eta v_1^2 z) = \eta^4 z + \eta v_1^2 d_3(z).
\]
If there is no $\eta$-torsion on the $E_3$-term, this implies that $d_3(v_1^2 z) =
\eta^3 z +  v_1^2 d_3(z)$.
\end{rem}

\begin{warn}\label{rem:warningcopy} 
The spectrum $V(0)$ is not a ring spectrum. However, since $BP_\ast V(0) \cong BP_\ast/(2)$ is a graded
commutative comodule algebra, the $E_2$-term of the Adams-Novikov Spectral Sequence for $V(0)$ is often
a bigraded commutative ring. For this reason, we often write $E_2$-terms of Adams-Novikov spectral sequences
for $V(0)$ as graded commutative rings. It is to be implicitly understood that the $E_2$-term is the corresponding
underlying graded abelian group and that the spectral sequence is not multiplicative.
Typically the pages of the spectral sequence loose their ring structure at
$E_3$, where we have $d_3(v_1) = 0$ and $d_3(v_1^2)  =\eta^3$. See \Cref{rem:unid3}. 
These issues are classical, and we hope this doesn't cause confusion.
\end{warn}

\Cref{rem:unid3} implies the following well-known result. See \Cref{fig:LK1V0}. We use that $V(0$) has
a $v_1^4$-self map. The fact that $\zeta_1$ is a permanent cycle was
covered in \Cref{prop:zeta-permanent} and the additive extension is from
\eqref{eq:the-more-basic-rel}. 
In this result, $E(-)$ denotes the exterior algebra over $\FF_2$.

\begin{prop}\label{prop:homotopy-v0-k1} We have an isomorphism 
\[
\FF_2[v_1^{\pm 1},\eta] \otimes E(\zeta_1) \cong
\FF_2[v_1^{\pm 1},\eta] \otimes E(\sigma) \cong H_c^\ast(\ZZ_2^\times,K_\ast/2)
\]
with $v_1^4\zeta_1 = \sigma$. 
All non-zero differentials in the spectral sequence
\[
H_c^\ast(\ZZ_2^\times,K_\ast/2) \Longrightarrow \pi_\ast L_{K(1)}V(0)
\]
are determined by $v_1^4$-linearity, the facts that $v_1$, $\eta$, $\sigma$ and $\zeta_1$ 
are permanent cycles and 
\[
d_3(v_1^2x) = \eta^3 x + v_1^2 d_3(x).
\]
The spectral sequence collapses at $E_4$ and the only additive extensions are implied by
$v_1^4$-linearity, multiplication by $\zeta_1$, and 
\[
2 v_1 = \eta^2\ .
\]
\end{prop}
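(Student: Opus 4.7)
The plan is to compute the $E_2$-term, identify $d_3$ via the universal differential of \fullref{rem:unid3}, show collapse at $E_4$ by sparseness, and read off the extensions from the single relation $2v_1 = \eta^2$ of \eqref{eq:the-more-basic-rel}.

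First, I would identify the $E_2$-term. Decompose $\ZZ_2^\times \cong \ZZ_2 \times C_2$ where $C_2 = \{\pm 1\}$. Since $\psi^k(v_1^n) = k^{-n}v_1^n \equiv v_1^n$ modulo $2$, the action of $\ZZ_2^\times$ on $K_\ast/2 \cong \FF_2[v_1^{\pm 1}]$ is trivial, so
\[
H^\ast(\ZZ_2^\times, K_\ast/2) \cong H^\ast(\ZZ_2, \FF_2) \otimes H^\ast(C_2, \FF_2) \otimes \FF_2[v_1^{\pm 1}] \cong E(\zeta_1) \otimes \FF_2[\chi_1, v_1^{\pm 1}].
\]
By \fullref{lem:reduced-two}(1) combined with \fullref{prop:eta-sigma-k1}(1), the class $v_1\chi_1$ detects $\eta$, so setting $\eta := v_1\chi_1$ yields the presentation $\FF_2[v_1^{\pm 1}, \eta] \otimes E(\zeta_1)$. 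The identification $\sigma = v_1^4 \zeta_1$ follows from \fullref{lem:reduced-two}(2) (with $n=4$, $i=2$) together with \fullref{prop:eta-sigma-k1}(2); since $v_1$ is invertible, substituting $\sigma$ for $\zeta_1$ gives the alternative exterior presentation.

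Next, the classes $v_1$, $\eta$, $\zeta_1$, $\sigma$ are all permanent cycles: $v_1 \in \pi_2 V(0)$ by the $v_1$-self map on $V(0)$; $\eta$ and $\sigma$ as images from $\pi_\ast S^0$; $\zeta_1$ by \fullref{prop:zeta-permanent}. The $K(1)$-local Adams-Novikov spectral sequence for $V(0)$ is a module over the sphere's Adams-Novikov spectral sequence, and since $v_1$ is invertible on $K_\ast/2$, the $E_3$-term has no $\eta$-torsion. Hence the universal differential of \fullref{rem:unid3} yields $d_3(v_1^2 z) = \eta^3 z + v_1^2 d_3(z)$ for every $z$, and combined with $v_1^4$-linearity this determines $d_3$ on all of $E_3 = E_2$.

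A direct kernel-image computation then shows that $E_4$ is the $\FF_2$-span of monomials $v_1^{4k+r}\eta^b\zeta_1^c$ with $r \in \{0,1\}$, $b \in \{0,1,2\}$, $c \in \{0,1\}$: the classes whose $v_1$-exponent is congruent to $2$ or $3$ modulo $4$ are killed, and $\eta^m$ for $m \geq 3$ lies in the image of $d_3$. Consequently every surviving class sits in cohomological degree at most $3$, so any $d_r$ with $r \geq 4$ would have to land in cohomological degree $\geq 4$ where $E_4$ vanishes; hence $E_4 = E_\infty$. For the additive extensions, every $E_\infty$-class is $2$-torsion except that $2v_1 = \eta^2$ by \eqref{eq:the-more-basic-rel}, and multiplying this relation by $v_1^{4k}$ and by $\zeta_1^c$ propagates it to the full list of claimed extensions. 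The main subtlety — and the one step I expect to need the most care — is ruling out hidden multiplicative or exotic additive extensions not already produced by this single relation; this reduces to the narrow cohomological-degree band of $E_\infty$ together with a filtration count against the $v_1^4$-periodic families in $\pi_\ast V(0)$.
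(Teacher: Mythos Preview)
Your proposal is correct and follows exactly the approach the paper sketches: the paper does not give a separate proof but merely points to the universal $d_3$ of \fullref{rem:unid3}, the $v_1^4$-self map, \fullref{prop:zeta-permanent}, and \eqref{eq:the-more-basic-rel}, and you have carefully filled in those details. One small slip: $V(0)$ has a $v_1^4$-self map, not a $v_1$-self map, but your conclusion that $v_1 \in \pi_2 V(0)$ is a permanent cycle is still correct (see \fullref{ex:v0v0}).
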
 

\begin{figure}
\center
\includegraphics[width=\textwidth]{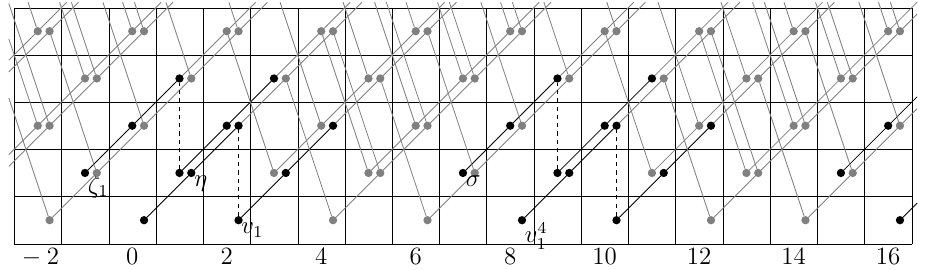}
\captionsetup{width=\textwidth}
\caption{The spectral sequence computing the homotopy groups of $L_{K(1)}V(0)$. 
A $\bullet$ denotes a copy of $\Z/2$. Dashed vertical lines denote exotic multiplication by $2$.}
\label{fig:LK1V0}
\end{figure}

Using \Cref{prop:homotopy-v0-k1} and naturality it is possible to work out
the spectral sequence for the homotopy of $L_{K(1)}S^0$. See \Cref{fig:LK1S}.
Here are the
results in brief. 
\begin{prop}\label{rem:thek1-local-sphere}  
In the spectral sequence
\[
H_c^\ast(\ZZ_2^\times,K_\ast) \Longrightarrow \pi_\ast L_{K(1)}S^0,
\]
there are non-trivial differentials 
\begin{align*}
d_3(\alpha_{4t+3}) &=\eta^3\alpha_{4t+1}\\
d_3(\alpha_{2(2^it+1)/3})&=\eta^3\alpha_{2^{i+1}t/i+3},\quad t\not\equiv 0\quad\mathrm{mod}\ 2\\
d_3(\alpha_{2/3}) &= \eta^3\zeta_1\ .
\end{align*}
\end{prop}
The last formula can be thought of as a case of the second formula with $i=\infty$. 
There are additive extensions as well. In fact, by (\ref{order4one}) we see that $\eta^2\alpha_{4t+1}$ 
must be divisible by $2$. This implies:
\begin{cor}\label{cor:4times}
For any $t\in \ZZ$  
\[
4(2\alpha_{4t+2/3}) = \eta^2 \alpha_{4t+1}. 
\]
\end{cor}

\begin{figure}
\center
\includegraphics[width=\textwidth]{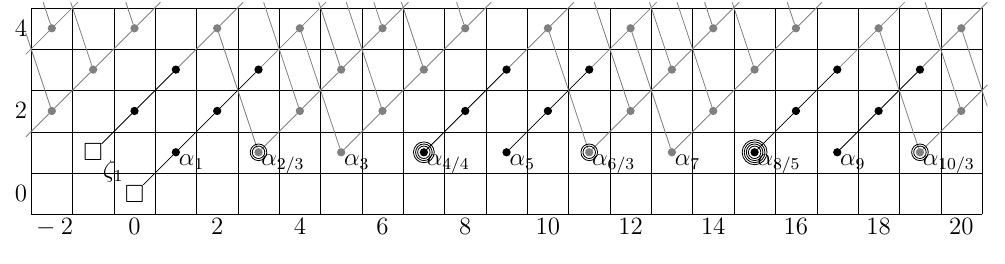}
\includegraphics[width=\textwidth]{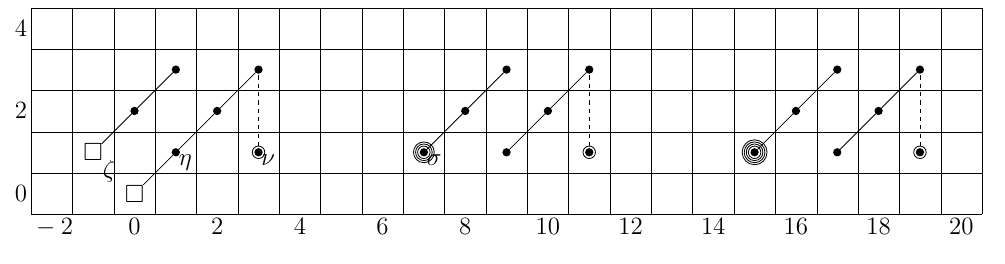}
\captionsetup{width=\textwidth}
\caption{The $E_2$ (top) and $E_{\infty}$ (bottom) pages of the spectral sequence for the homotopy 
of $L_{K(1)}S^0$. Here, a $\Box$ denotes a copy of $\Z_2$, a $\bullet$ denotes a copy of $\Z/2$, a 
\circled{$\bullet$} a copy of $\Z/4$ and so on. Dashed lines denote exotic multiplications by $2$.}
\label{fig:LK1S}
\end{figure}

\Cref{prop:homotopy-v0-k1} has the following consequence. Recall that we are writing
$\sigma$ for both the element in homotopy and the class in the $E_2$-term of the Adams-Novikov
Spectral Sequence which detects it. 

\begin{cor}\label{cor:sigma-squared-zero} (1) In $H_c^\ast(\ZZ_2^\times,K_\ast)$ we have
\[
\alpha_{4/4}^2 = 0 = \sigma^2.
\]

(2)  In $\pi_\ast L_{K(1)}S^0$ we have $\sigma^2 = 0$.
\end{cor}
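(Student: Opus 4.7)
For part (1), the plan is to work first modulo $2$. By \fullref{prop:homotopy-v0-k1},
\[
H^\ast(\ZZ_2^\times, K_\ast/2) \cong \FF_2[v_1^{\pm 1}, \eta] \otimes E(\sigma),
\]
so $\sigma^2 = 0$ from the exterior relation. Since \fullref{lem:reduced-two}(2) identifies $\alpha_{4/4}$ with $v_1^4 \zeta_1 = \sigma$ modulo $2$, the mod $2$ reduction of $\alpha_{4/4}^2$ vanishes in $H^2(\ZZ_2^\times, K_{16}/2)$.

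To lift this integrally, I would use the Bockstein long exact sequence
\[
\cdots \to H^2(\ZZ_2^\times, K_{16}) \xrightarrow{\times 2} H^2(\ZZ_2^\times, K_{16}) \longrightarrow H^2(\ZZ_2^\times, K_{16}/2) \to \cdots,
\]
in which the kernel of reduction mod $2$ coincides with the image of multiplication by $2$. The key computation is that $H^2(\ZZ_2^\times, K_{16}) \cong \ZZ/2$. I would establish this via the Lyndon-Hochschild-Serre spectral sequence for the split extension $1 \to 1+4\ZZ_2 \to \ZZ_2^\times \to \{\pm 1\} \to 1$: since $-1$ acts on $K_{16}$ via $(-1)^{-8} = 1$, the quotient $\{\pm 1\}$ acts trivially both on $K_{16}$ and on $H^\ast(1+4\ZZ_2, K_{16})$; and since $1+4\ZZ_2 \cong \ZZ_2$ with topological generator $5$ acting on $u^{-8}$ by $5^{-8}$, one computes
\[
H^1(1+4\ZZ_2, K_{16}) \cong K_{16}/(5^{-8}-1)K_{16} \cong \ZZ/32
\]
using $v_2(5^8-1)=5$, with $H^0$ and $H^i$ for $i\geq 2$ vanishing. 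The LHS spectral sequence then collapses and gives $H^2(\ZZ_2^\times, K_{16}) \cong H^1(\{\pm 1\}, \ZZ/32) \cong \ZZ/2$. Since multiplication by $2$ on $\ZZ/2$ is zero, reduction mod $2$ is injective on $H^2$, and the mod $2$ vanishing above forces $\alpha_{4/4}^2 = 0$.

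For part (2), part (1) places $\sigma^2 \in \pi_{14} L_{K(1)} S^0$ in Adams-Novikov filtration at least three. Inspection of the $E_\infty$-page in \fullref{fig:LK1S} reveals no surviving classes in stem $14$ at filtration $\geq 3$, so $\sigma^2 = 0$.

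The main obstacle is the integral lifting in part (1): the mod $2$ vanishing is immediate from the exterior structure, but ruling out hidden $4$-torsion in $H^2(\ZZ_2^\times, K_{16})$ requires the Lyndon-Hochschild-Serre computation above. One could alternatively try to exhibit a $1$-cochain $g$ with values in $K_{16}$ whose coboundary is the explicit cocycle $(k_1,k_2) \mapsto \frac{1}{256}(k_1^{-4}-1)(k_2^{-4}-1)k_1^{-4}\,u^{-8}$; the natural candidate $g(k) = -\tfrac{1}{512}(k^{-4}-1)^2 u^{-8}$ fails to land in $K_{16}$ on the nose, and the cohomological argument sidesteps this issue.
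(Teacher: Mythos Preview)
Your proof is correct and follows the same overall strategy as the paper: for (1), show the mod $2$ reduction of $\alpha_{4/4}^2$ vanishes via $\zeta_1^2=0$, then argue that reduction mod $2$ is injective on $H^2$; for (2), read the answer off the known computation. The execution differs only in economy. In part (1) the paper avoids your explicit LHSSS computation of $H^2(\ZZ_2^\times,K_{16})$ by invoking the general fact that $H^s(\ZZ_2^\times,K_\ast)\to H^s(\ZZ_2^\times,K_\ast/2)$ is injective for all $s>1$ (equivalently, $H^s(\ZZ_2^\times,K_\ast)$ is annihilated by $2$ for $s>1$); your computation is a special case of this, so the paper's route is shorter but yours is more self-contained. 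In part (2) the paper's primary argument is even more direct than your filtration inspection: \fullref{prop:homotopy-v0-k1} gives $\pi_{14}L_{K(1)}V(0)=0$, and the long exact sequence for $S^0\xrightarrow{2}S^0\to V(0)$ then forces $\pi_{14}L_{K(1)}S^0=0$ outright, so $\sigma^2=0$ without needing to track Adams--Novikov filtration. The paper does mention reading off \fullref{fig:LK1S} as an alternative, which is essentially what you do.
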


\begin{proof} For part (1), it is sufficient to prove $\alpha_{4/4}^2=0$ as $\sigma$ is a unit multiple of
$\alpha_{4/4}$. We use that
\[
H_c^s(\ZZ_2^\times,K_\ast) \to H_c^s(\ZZ_2,K_\ast/2)
\]
is an injection if $s > 1$. By part (2) of \Cref{lem:reduced-two} we have 
$\alpha_{4/4} \equiv v_1^4\zeta_1$ modulo
$2$. Since $\zeta_1^2=0$, the result follows. 

For part (2), we deduce from \Cref{prop:homotopy-v0-k1} that
$\pi_{14}L_{K(1)}V(0) = 0$. From the long exact sequence on homotopy groups, it follows that 
$\pi_{14}L_{K(1)}S^0=0$. Alternatively we could read part (2) off of \Cref{fig:LK1S}. \end{proof}

\Cref{prop:homotopy-v0-k1}  is explicit about the Hopf maps $\eta$ and $\sigma$. 
The other Hopf map $\nu$ plays a more subtle role. See \Cref{fig:LK1V0} and \Cref{fig:LK1S}.

\begin{prop}\label{prop:wither-nu} 
(1) Let $\nu \in \pi_3S^0 \cong \ZZ/8$ be a generator. Then $\nu$ is non-zero
in $\pi_\ast L_{K(1)}S^0$ and detected by a unit multiple of $2\alpha_{2/3}$.
\medskip

(2) The class $\nu$ is non-zero in $\pi_\ast L_{K(1)}V(0)$ detected by $v_1\zeta_1\eta^2$. 
\medskip

(3) In $\pi_\ast L_{K(1)}V(0)$, $\nu$ is a multiple of $\eta$.   
\end{prop}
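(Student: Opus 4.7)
For \textit{Part (1)}, the plan is to compare with the classical Adams--Novikov spectral sequence. The class $\nu$ is detected classically by an $\alpha$-family element of order $8$ in $\Ext^{1,4}_{BP_\ast BP}(BP_\ast, BP_\ast)$ (see \cite{MRW}, \cite{ravgreen}), which under the comparison of \eqref{eq:compare-ANSS} maps to $\alpha_{2/3} \in H^1(\ZZ_2^\times, K_4)$. Inspection of \fullref{rem:thek1-local-sphere} shows that the $K(1)$-local differential $d_3(\alpha_{2/3}) = \eta^3\zeta_1$ kills $\alpha_{2/3}$, so only $2\alpha_{2/3}$ survives to $E_\infty$ in filtration $1$ (the differential vanishes on $2\alpha_{2/3}$ since $\eta^3\zeta_1$ is $2$-torsion); combined with the class $\eta^3$ in filtration $3$ and the multiplicative extension $8\alpha_{2/3} = \eta^3$ of \fullref{rem:thek1-local-sphere}, the group $\pi_3 L_{K(1)}S^0$ is cyclic of order $8$ and $\nu$ is detected by $2\alpha_{2/3}$.

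For \textit{Part (2)}, since $\nu$ generates $\pi_3 L_{K(1)}S^0 \cong \ZZ/8$ by (1), it is not $2$-divisible, so the cofiber sequence \eqref{eq:cofV0} forces $j_\ast\nu \ne 0$. Because every odd $2$-adic integer $k$ satisfies $k^{2} \equiv 1 \pmod 8$, the cocycle $2\alpha_{2/3}(k) = \tfrac{1}{4}(k^{-2}-1)u^{-2}$ takes values in $2\ZZ_2 u^{-2}$, hence reduces to $0$ in $H^1(\ZZ_2^\times, K_4/2)$. Thus $j_\ast\nu$ is detected in Adams--Novikov filtration $\geq 2$ in the $V(0)$-spectral sequence. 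Since $K_t/2 = 0$ for $t$ odd, there are no classes at all in filtration $2$ of total degree $3$, so detection occurs in filtration $\geq 3$. In filtration $3$ only $\eta^3$ and $v_1\zeta_1\eta^2$ occur; the former dies via $d_3(v_1^2) = \eta^3$. In filtrations $\geq 4$ one uses the $v_1^4$-linearity of $d_3$ and the formula $d_3(v_1^2 x) = \eta^3 x + v_1^2 d_3(x)$ from \fullref{prop:homotopy-v0-k1} to show that every candidate monomial of the form $v_1^{-n}\eta^{2n+3}$ or $v_1^{-n}\zeta_1\eta^{2n+4}$ is killed (computing, e.g., $v_1^4 d_3(v_1^{-1}\eta^5) = d_3(v_1^3\eta^5) = \eta^8 v_1$, so $v_1^{-1}\eta^5$ supports a nonzero $d_3$; its target is itself in the image of $d_3$). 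The only surviving class of filtration $\geq 2$ in degree $3$ is therefore $v_1\zeta_1\eta^2$, and this must detect $j_\ast\nu$.

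For \textit{Part (3)}, by $v_1^4$-linearity applied to $d_3(v_1^4 \eta) = v_1^4 d_3(\eta) = 0$, one has $d_3(v_1\eta) = v_1^{-3} d_3(v_1^4 \eta) = 0$, whence $d_3(v_1\zeta_1\eta) = \zeta_1 d_3(v_1\eta) = 0$. So $v_1\zeta_1\eta \in H^2(\ZZ_2^\times, K_4/2)$ is a permanent cycle in total degree $2$. Choose $x \in \pi_2 L_{K(1)}V(0)$ detected by it; then $\eta x$ is detected by $v_1\zeta_1\eta^2$, the same class that detects $\nu$ by (2). Hence $\nu - \eta x$ has Adams--Novikov filtration $\geq 4$ in $\pi_3 L_{K(1)}V(0)$, and the enumeration in (2) shows there are no such surviving classes, so $\nu = \eta x$. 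The most delicate step is the filtration enumeration in (2), especially the $v_1^4$-linearity manipulations needed to dispatch the classes with negative $v_1$-powers; everything else follows cleanly from the structural results of \fullref{prop:homotopy-v0-k1} and \fullref{rem:thek1-local-sphere}.
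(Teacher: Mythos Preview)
Your overall strategy matches the paper's, but Part~(1) contains a small logical inconsistency and Part~(2) is doing far more work than necessary.

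\textbf{Part (1).} The classical $\alpha$-family element detecting $\nu$ in $\Ext^{1,4}_{BP_\ast BP}(BP_\ast,BP_\ast)$ has order $4$, not $8$; the extra factor of $2$ in $\pi_3 S^0 \cong \ZZ/8$ comes from the extension with $\eta^3$ in filtration $3$. Consequently the comparison map cannot send this class to $\alpha_{2/3}$ (which has order $8$). More seriously, your narrative has $\nu$ detected by $\alpha_{2/3}$ at $E_2$ while simultaneously asserting $d_3(\alpha_{2/3}) = \eta^3\zeta_1 \ne 0$; a permanent cycle in homotopy cannot be detected by a class supporting a nonzero differential. The paper avoids this by arguing directly from the $E_\infty$-page: $\pi_3 L_{K(1)}S^0$ is filtered with associated graded $\ZZ/2\{\eta^3\}$ and $\ZZ/4\{2\alpha_{2/3}\}$, and the relation $4\nu = \eta^3$ in $\pi_3 S^0$ forces $\nu$ to hit the generator, hence to be detected by a unit multiple of $2\alpha_{2/3}$.

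\textbf{Part (2).} Your filtration-by-filtration enumeration is unnecessary. \fullref{prop:homotopy-v0-k1} already computes the entire $E_\infty$-page for $V(0)$, so once you know $\nu$ is detected in filtration $\geq 2$ (because $2\alpha_{2/3}$ reduces to zero mod $2$), you can simply read off that $v_1\zeta_1\eta^2$ is the only nonzero $E_\infty$-class in total degree $3$ and filtration $\geq 2$. This is exactly what the paper does. Your explicit checks with $v_1^4$-linearity are correct in spirit but incomplete (you do not treat $\eta^4\zeta_1$, for instance) and the parenthetical about ``its target is itself in the image of $d_3$'' is irrelevant to the argument.

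\textbf{Part (3).} This is essentially the paper's argument: $\nu$ is detected by an $\eta$-multiple at $E_\infty$, and there is nothing in higher filtration in that stem, so $\nu$ is an $\eta$-multiple in homotopy.
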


\begin{proof} From the discussion in Remark 4.1.8 we know that $\pi_3(L_{K(1)}S^0)$
sits in a short exact sequence with kernel given by $E_4^{3,6}\cong \Z/2$
generated by $\eta^3$ and quotient given by $E_4^{1,4}\cong \Z/4$ generated by $2\alpha_{2/3}$.
The result follows because of 
\[4\nu=\eta^3\in \pi_3(S^0).\]

For the second statement, we have that $\nu \ne 0 \in \pi_\ast L_{K(1)}V(0)$ and also
that $2\alpha_{2/3} = 0 \in H_c^\ast(\ZZ_2^\times,K_\ast/2)$. It follows that
$\nu$ must be detected by a class in filtration $s=2$ or higher and, by
\Cref{prop:homotopy-v0-k1}, the only class available at $E_\infty$
is $v_1\zeta_1\eta^2$. 
 
For the third statement, we have that $\nu$ is detected by a class which is a multiple of 
$\eta$ in the $E_\infty$-page for $\pi_\ast L_{K(1)}V(0)$.
Since there are no non-zero elements of higher filtration in the $t-s=3$ stem, the claim follows. 
\end{proof} 

\subsection{The $K(1)$-local homotopy of the spectrum $Y$}\label{sec:all-about-y}

Let $C(\eta)$ be the cone on $\eta\in \pi_1S^0$ and let $Y = V(0)\wedge C(\eta)$. 
The spectrum $Y$ is a type $1$ complex with a $v_1$-self map $v_1\colon \Sigma^2 Y \to Y$. 
The map $v_1$ is not unique, but the induced
map $\pi_\ast \Sigma^2  L_{K(1)}Y \to \pi_\ast L_{K(1)}Y$ is independent of the choice. Indeed, for 
any
$k$, $\pi_k L_{K(1)}Y$ is one-dimensional over $\ZZ/2$. See \Cref{prop:k1-of-y} below. 

\begin{rem}\label{rem:Ycomodule} By the construction of $Y$, there
is a cofiber sequence
\begin{align}\label{eq:cofV0Y}
\xymatrix@C=35pt{
\Sigma V(0) \ar[r]^-{V(0)\wedge\eta} & V(0)\ar[r]^-{\jmath}  & Y \ar[r]^-{h} & \Sigma^{2}V(0)
}
\end{align}
which, since $BP_*(V(0)\wedge\eta)=0$, gives rise to a short exact sequence of $BP_\ast BP$-comodules
\[
0 \to BP_\ast V(0) \to BP_\ast Y \to BP_\ast \Sigma^2V(0)\to 0\ .
\]
This is not split, but is the non-zero element
\[
\eta \in \Ext^1_{BP_\ast BP}(\Sigma^2 BP_\ast/2,BP_\ast/2) \cong \ZZ/2.\ 
\] 
\end{rem}

\begin{prop}\label{prop:k1-of-y} We have an isomorphism of 
$\FF_2[v_1^{\pm 1}]$-modules 
\[
\FF_2[v_1^{\pm 1}] \otimes E(\zeta_1) \cong
\FF_2[v_1^{\pm 1}] \otimes E(\sigma) \cong H_c^\ast(\ZZ_2^\times,K_\ast Y)
\]
with $v_1^4\zeta_1 = \sigma$. The spectral sequence
\[
H_c^\ast(\ZZ_2^\times,K_\ast Y) \Longrightarrow \pi_\ast L_{K(1)} Y
\]
collapses. If $\iota\colon S^0 \to Y$ is the inclusion of the bottom cell, then
$\pi_*L_{K(1)}Y$ is a free module over $\F_2[v_1^{\pm 1}]$
on generators $\iota$ and $\iota\sigma$ of degrees $0$ and $7$ respectively.
\end{prop}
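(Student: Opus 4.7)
The plan is to exploit the cofiber sequence
\[
\Sigma V(0) \xrightarrow{V(0)\wedge\eta} V(0) \xrightarrow{\jmath} Y \xrightarrow{h} \Sigma^{2}V(0)
\]
from \eqref{eq:cofV0Y} and reduce everything to the $V(0)$ computation of \fullref{prop:homotopy-v0-k1}. The first step is to identify $K_\ast Y$ as a twisted $\ZZ_2^\times$-module. Smashing with $K$ yields a long exact sequence on $K_\ast$; since $K_\ast V(0)\cong \FF_4[u^{\pm 1}]$ is concentrated in even degrees while $\eta$ shifts degree by one, the map $\eta_\ast \colon K_\ast V(0) \to K_{\ast-1}V(0)$ is automatically zero. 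The long exact sequence therefore collapses into a short exact sequence
\[
0 \to K_\ast V(0) \xrightarrow{\jmath_\ast} K_\ast Y \xrightarrow{h_\ast} K_{\ast-2}V(0) \to 0
\]
of twisted $\ZZ_2^\times$-modules. Applying $H^\ast(\ZZ_2^\times,-)$ gives a long exact sequence whose connecting homomorphism is multiplication by $\eta \in H^1(\ZZ_2^\times, K_2 V(0))$. By \fullref{prop:homotopy-v0-k1}, this $\eta$ is a polynomial generator of $H^\ast(\ZZ_2^\times, K_\ast V(0)) \cong \FF_2[v_1^{\pm 1},\eta]\otimes E(\zeta_1)$, hence multiplication by $\eta$ is injective and
\[
H^\ast(\ZZ_2^\times, K_\ast Y) \cong H^\ast(\ZZ_2^\times,K_\ast V(0))/\eta \cong \FF_2[v_1^{\pm 1}]\otimes E(\zeta_1).
\]
The identification $v_1^4\zeta_1 = \sigma$ transports from $H^\ast(\ZZ_2^\times, K_\ast V(0))$ along $\jmath$ by naturality; the underlying relation $v_1^4\zeta_1\equiv \alpha_{4/4}$ modulo $2$ is \fullref{lem:reduced-two}(2), and $\alpha_{4/4}$ detects $\sigma$ by \fullref{prop:eta-sigma-k1}(2).

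Next, the spectral sequence collapses immediately: the $E_2$-term is concentrated in cohomological filtrations $s=0$ and $s=1$, so any differential $d_r \colon E_r^{s,t}\to E_r^{s+r,\,t+r-1}$ with $r\geq 2$ lands in filtration $\geq 2$ and is zero. Moreover, filtration-$0$ classes sit in even total degree $2n$ while filtration-$1$ classes sit in odd total degree $2n-1$, so no additive extensions between filtrations occur and $\pi_k L_{K(1)}Y\cong \FF_2$ for every $k$. The unit $\iota\colon S^0\to Y$ is detected by $1\in E_\infty^{0,0}$, and by naturality along $\jmath$ the composite $\iota\sigma\colon S^7\to L_{K(1)}Y$ is detected by $v_1^4\zeta_1\in E_\infty^{1,8}$; these two classes therefore generate $\pi_\ast L_{K(1)}Y$ freely as an $\FF_2[v_1^{\pm 1}]$-module.

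There is essentially no obstacle here: the argument is driven by the parity vanishing that collapses the long exact sequence and by the filtration vanishing that collapses the spectral sequence, each of which is immediate once one invokes \fullref{prop:homotopy-v0-k1}.
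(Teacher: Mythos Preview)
Your proof is correct and follows essentially the same route as the paper: both use the short exact sequence in $K_\ast$ coming from the cofiber sequence \eqref{eq:cofV0Y}, identify the connecting homomorphism in group cohomology as multiplication by $\eta$, and then invoke \fullref{prop:homotopy-v0-k1} plus degree/filtration reasons for the collapse. One tiny slip: at height~$1$ we have $K_\ast V(0)\cong \FF_2[u^{\pm 1}]$, not $\FF_4[u^{\pm 1}]$, but this does not affect the argument.
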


\begin{proof} By Landweber exactness and \Cref{rem:Ycomodule} we have 
 a short exact sequence of $\ZZ_2^\times$ modules
\[
0 \to K_\ast V(0) \to K_\ast Y \to K_\ast \Sigma^2V(0)\to 0\ .
\]
Furthermore, in the long exact sequence in group cohomology, the boundary
map is given by multiplication by $\eta$. The claim about $H_c^\ast(\ZZ_2^\times, K_\ast Y)$
now follows from \Cref{prop:homotopy-v0-k1}. 
The spectral sequence for $\pi_*L_{K(1)}Y$ must  collapse for degree reasons.
\end{proof}

Let $\iota \colon S^0 \to Y$ and $p \colon Y \to S^3$ be the inclusion of the bottom cell and 
the collapse to the top cell of $Y$ respectively, and similarly for $\iota\colon S^0 \to V(0)$ and 
$p \colon V(0) \to S^1$.

\begin{prop}\label{prop:2Y} Let $2 \colon Y \to Y$ be the degree $2$ map. Then there is a factoring
\[
\xymatrix{
Y \ar[r]^p \ar@/_1pc/[rr]_2 & S^3 \ar[r]^\nu & Y
}
\]
where $\nu$ is the composite $\xymatrix{S^3 \ar[r]^\nu & S^0 \ar[r]^\iota & Y.}$
After localization, the degree $2$ map
\[
2 \colon L_{K(1)}Y \to L_{K(1)}Y
\]
is null-homotopic.
\end{prop}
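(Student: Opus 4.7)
The plan is to establish the integral factorization $2=\iota\nu p$ as maps $Y\to Y$ and then deduce the $K(1)$-local statement from the vanishing of $\iota\nu$ in $\pi_3 L_{K(1)}Y$.

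The starting point is the classical identity $2\cdot\id_{V(0)} = \iota_{V(0)}\eta p_{V(0)}$ in $[V(0),V(0)]$, a reformulation of the relation $2i_1 = i_0\eta$ recorded in \fullref{rem:pi1V0smV0}. Smashing with $\id_{C(\eta)}$ writes $2\cdot\id_Y$ as the composite
\[
Y \xrightarrow{p_{V(0)}\wedge 1} \Sigma C(\eta) \xrightarrow{\eta\wedge 1} C(\eta) \xrightarrow{\iota_{V(0)}\wedge 1} Y.
\]
The middle factor $\eta\wedge 1\colon\Sigma C(\eta)\to C(\eta)$ vanishes on the bottom cell $S^1\subset\Sigma C(\eta)$ since $\iota_{C(\eta)}\eta=0$ by construction of $C(\eta)$, so it factors as $\alpha\circ\Sigma p_{C(\eta)}$ for some $\alpha\in\pi_3 C(\eta)$. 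A short calculation with the long exact sequence of the cofiber $S^1\xrightarrow{\eta} S^0\to C(\eta)\to S^2$ gives $\pi_3 C(\eta)\cong\ZZ/4$ generated by $\iota_{C(\eta)}\nu$, with indeterminacy $\eta\cdot\pi_2 C(\eta) = \{0,\,2\iota_{C(\eta)}\nu\}$; a Toda bracket argument (using $\langle 2,\eta,\eta\rangle\ni 2\nu$) identifies $\alpha$ with $\iota_{C(\eta)}\nu$ modulo this indeterminacy, and reassembling the composite yields $2\cdot\id_Y = \iota\nu p$.

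For the $K(1)$-local statement it suffices to show $\iota\nu\colon S^3\to L_{K(1)}Y$ is null. By \fullref{prop:wither-nu}(3), the class $\nu\in\pi_3 L_{K(1)}V(0)$ is a multiple of $\eta$. The class $\eta\in H^1(\ZZ_2^\times, K_2 V(0))$ lies in the kernel of the map $H^\ast(\ZZ_2^\times,K_\ast V(0))\to H^\ast(\ZZ_2^\times,K_\ast Y)$ induced by $\jmath$, because the short exact sequence $0\to K_\ast V(0)\to K_\ast Y\to K_\ast\Sigma^2 V(0)\to 0$ coming from the cofiber $\Sigma V(0)\xrightarrow{V(0)\wedge\eta} V(0)\xrightarrow{\jmath} Y$ has connecting homomorphism given by multiplication by $\eta$. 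Hence $\jmath_\ast\nu$ vanishes at the $E_\infty$ level; and since $\pi_3 L_{K(1)}Y$ is concentrated in a single filtration by \fullref{prop:k1-of-y}, this forces $\iota\nu=\jmath_\ast\nu=0$ in $\pi_3 L_{K(1)}Y$. The integral identity then gives $2\cdot\id_{L_{K(1)}Y}=0$.

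The main technical obstacle is pinning down $\alpha$ modulo its indeterminacy; the Toda bracket identification above is the one non-formal input. Once it is in hand, the integral factorization is just the smash of a well-known $V(0)$-identity with $\id_{C(\eta)}$, and the $K(1)$-local vanishing follows from an $E_\infty$-page comparison using only \fullref{prop:wither-nu}(3), the defining cofiber sequence for $Y$, and the single-filtration description of $\pi_3 L_{K(1)}Y$ from \fullref{prop:k1-of-y}.
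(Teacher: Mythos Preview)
Your overall strategy is close to the paper's, but there is a genuine gap at the step where you pin down $\alpha\in\pi_3 C(\eta)\cong\ZZ/4$. The Toda bracket $\langle 2,\eta,\eta\rangle$ you invoke is not defined: it would require $\eta\cdot\eta=0$, and $\eta^2\ne 0$ in $\pi_2S^0$. Even if you intended $\langle\eta,2,\eta\rangle$ (which does contain $2\nu$), you have not explained how that bracket identifies $\alpha$ with an odd multiple of $\iota_{C(\eta)}\nu$; the shuffle $\langle\iota_{C(\eta)},\eta,2\rangle\eta=\iota_{C(\eta)}\langle\eta,2,\eta\rangle$ only computes the \emph{indeterminacy} $\eta\cdot\pi_2 C(\eta)=\{0,2\iota_{C(\eta)}\nu\}$, not $\alpha$ itself.

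The paper sidesteps this by never trying to resolve $\alpha$ in $\ZZ/4$. It factors $2\cdot\id_Y$ through $\Sigma^2 V(0)$ (using $\eta=0$ in $\pi_1 Y$) and then through $S^3$ (using a $K$-theory vanishing argument), landing directly in $\pi_3 Y$. A short computation gives $\pi_3 Y\cong\ZZ/2$ generated by $\iota_Y\nu$, and a Steenrod-algebra argument on $H^\ast(Y\wedge V(0))$ shows $2\cdot\id_Y\ne 0$, so the factoring element must be $\iota_Y\nu$. Your factorization can be repaired the same way: compose $\alpha$ with $\iota_{V(0)}\wedge 1$ into $\pi_3 Y\cong\ZZ/2$, and use $2\cdot\id_Y\ne 0$ to conclude the image is $\iota_Y\nu$. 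No Toda bracket is needed.

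For the $K(1)$-local statement your argument is correct, though more elaborate than the paper's. The paper simply notes that the cofiber sequence $\Sigma V(0)\xrightarrow{\eta}V(0)\xrightarrow{\jmath}Y$ gives $\jmath\circ(\eta\cdot 1_{V(0)})=0$, so if $\nu=\eta z$ in $\pi_3 L_{K(1)}V(0)$ (\fullref{prop:wither-nu}(3)) then $\iota_Y\nu=\jmath_\ast(\eta z)=0$ immediately.
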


\begin{proof} We noted in the proof of \Cref{lem:twoxtildes10} that $\pi_\ast V(0)$ has elements of order
$4$; hence $0 \ne 2: V(0) \to V(0)$. Consider the diagram
\[
\xymatrix{
S^0 \ar[r] \ar[d]_2 & V(0) \ar[r]^-p \ar[d]^2 & S^1 \ar@{-->}[dl]\\
S^0 \ar[r] & V(0).
}
\]
Since $S^0 \xrightarrow{2} S^0 \to V(0)$ is zero, the dotted factoring exists and must be non-zero. Since
$\pi_1V(0) \cong \ZZ/2$ generated by $\eta$ we conclude there is factoring 
\[\xymatrix{
V(0) \ar[r]^-p \ar@/_1pc/[rr]_-{2}& S^1 \ar[r]^-\eta &V(0)\ .
}\]
We then have a diagram 
\[
\xymatrix@R=15pt{
V(0) \ar[r] \ar@/_1.5pc/[dd]_-{2} \ar[d]^p & Y \ar[r]^-h \ar[dd]^2 & \Sigma^2 V(0) \ar@{-->}[ddl]\\
S^1 \ar[d]^-\eta\\
V(0) \ar[r] & Y.
}
\]
Since $\eta=0$ in $\pi_\ast Y$, the dotted factoring exists and gives a factoring of $2 \colon Y \to Y$ as a map
\[
\xymatrix{
Y \ar[r]^-{h}  \ar@/_1pc/[rr]_-{2}& \Sigma^2V(0) \ar[r]^-f &Y.
}
\]
Since $K_* h \colon K_\ast  Y \to K_*  \Sigma^2 V(0)$ is onto and $2 K_*  Y = 0$, we have that
$K_*  f = 0$. Since $\pi_2 Y \to K_2 Y$ is an injection onto the summand generated
by $v_1$, this implies $f$ factors as a composition
\[
\xymatrix{
\Sigma^2V(0) \ar[r]^-{\Sigma^2p} & S^3 \ar[r]^-g &Y.
}
\]
The mod $2$ cohomology of $Y$ is cyclic over the Steenrod algebra and, hence, there can be no splitting
\[
H^*(Y \smsh V(0))\cong H^\ast Y \oplus H^\ast \Sigma Y
\]
as modules over the Steenrod algebra. Thus the order of the identity on $Y$ is not $2$ and we see
$g\ne 0$. Finally, since $\pi_3 Y \cong \Z/2$ generated by
$\nu$, the first statement follows. The second statement follows as 
$\nu = 0$ in $\pi_\ast L_{K(1)}Y$; indeed, we showed in \Cref{prop:wither-nu} that $\nu$ is divisible
by $\eta$ in $\pi_\ast L_{K(1)}V(0)$ and $Y= V(0) \wedge C(\eta)$. 
\end{proof}

Recall that classes $i_0$ and $i_1$ in $\pi_*V(0)\smsh V(0)$ of degree $0$ and $1$ 
respectively were defined in \Cref{rem:pi1V0smV0}.
We abuse notation and let $i_0=(\jmath\smsh V(0))_*i_0$ 
and $i_1=(\jmath\smsh V(0))_*i_1$ be 
the corresponding classes in $\pi_*(Y \smsh V(0))$.

\begin{cor}\label{cor:yandv0}
There is an equivalence
\[
L_{K(1)}(Y \smsh V(0)) \simeq L_{K(1)}Y \vee \Sigma L_{K(1)}Y \ .
\]
Furthermore,
$\pi_*L_{K(1)}( Y \smsh V(0))$ is a free module over $\F_2[v_1^{\pm 1}]\otimes E(\sigma)$ 
on generators $i_0$ and $i_1$ in degrees $0$ and $1$. 
\end{cor}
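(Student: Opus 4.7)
The plan is to exploit the fact, established in \fullref{prop:2Y}, that multiplication by $2$ is null-homotopic on $L_{K(1)}Y$. First, I would smash the cofiber sequence $S^0 \xra{\times 2} S^0 \xra{\iota} V(0) \xra{p} S^1$ with $Y$ to obtain
\[
Y \xra{\times 2} Y \xra{Y \smsh \iota} Y \smsh V(0) \xra{Y \smsh p} \Sigma Y,
\]
and then apply $L_{K(1)}$. By \fullref{prop:2Y} the first map becomes null-homotopic after localization, so the cofiber sequence splits, yielding
\[
L_{K(1)}(Y \smsh V(0)) \simeq L_{K(1)}Y \vee \Sigma L_{K(1)}Y,
\]
which is the first assertion.

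For the module statement, I would combine this splitting with \fullref{prop:k1-of-y}, which identifies $\pi_\ast L_{K(1)}Y$ as the free module over $\F_2[v_1^{\pm 1}] \otimes E(\sigma)$ on the single generator $\iota$ (equivalently, free of rank two over $\F_2[v_1^{\pm 1}]$ on $\iota$ and $\iota\sigma$). Since the splitting is one of $L_{K(1)}S^0$-modules, it immediately exhibits $\pi_\ast L_{K(1)}(Y \smsh V(0))$ as free of rank two over $\F_2[v_1^{\pm 1}] \otimes E(\sigma)$ on classes in degrees $0$ and $1$.

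What remains is to identify these generators with $i_0$ and $i_1$. For $i_0$, naturality of smash gives
\[
i_0 = (\jmath \smsh V(0))_\ast (V(0) \smsh \iota)_\ast (\iota) = (Y \smsh \iota)_\ast(\iota_Y),
\]
so $i_0$ is precisely the image of the unit of $L_{K(1)}Y$ under the inclusion of the bottom wedge summand. For $i_1$, the commuting square
\[
\xymatrix{V(0)\smsh V(0)\ar[r]^-{\jmath \smsh V(0)}\ar[d]_-{V(0)\smsh p} & Y\smsh V(0)\ar[d]^-{Y\smsh p}\\ \Sigma V(0)\ar[r]_-{\Sigma \jmath} & \Sigma Y}
\]
together with the defining property $(V(0)\smsh p)_\ast(i_1) = \iota$ gives $(Y \smsh p)_\ast(i_1) = (\Sigma \jmath)_\ast(\iota) = \iota_Y$, so $i_1$ projects to a generator of the $\Sigma L_{K(1)}Y$ summand and therefore serves as the degree $1$ free generator. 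The only substantive input is \fullref{prop:2Y}; everything else is bookkeeping of generators through the naturality of the smash product, so I do not expect a hard step.
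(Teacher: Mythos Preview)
Your argument is correct and is exactly the approach the paper takes: the paper's proof is the single sentence ``This is an immediate consequence of \fullref{prop:2Y},'' and you have simply unpacked that sentence, including the identification of $i_0$ and $i_1$ as the free generators.
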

\begin{proof} 
This is an immediate consequence of \Cref{prop:2Y}.
\end{proof}


\section{Height $2$ cohomology calculations}\label{sec:height2}

In this section we collect together some calculations of the group cohomology of $\GG_2$ and
many of its closed subgroups. Here we will focus on cohomology with coefficients in the Witt vectors $\WW = W(\FF_4)$ 
and related quotient and subrings. The main result is \Cref{thm:cohS21} which gives a structure result for
$H_c^\ast(\SS_2^1,\ZZ_2)$. 

We will extend the calculations to other coefficients in the next 
section, when we prove \Cref{conj:chrom-vanish} in the case $n=p=2$. 

\subsection{Preliminaries and recollections}

A key to many of  our calculations is the behavior and properties of the classes in
$H_c^1(\GG_2,\FF_2)$; these play a central role in the story we are telling here. This is also a point 
where the prime $2$ has extra phenomena not seen at odd primes. The first part of this
section is devoted to analyzing these cohomology classes. We will also include some
material on the cohomology of some Poincar\'e duality subgroups of $\SS_2$
collected from \cite{Paper1}. 

In \eqref{det-defined} we defined the
extended determinant map 
\[
\mathrm{det}\colon \GG_2 \longr \ZZ_2^\times\ . 
\]
We use it in the following definition of some key cohomological classes.

\begin{defn}\label{rem:hone} 
Let 
\[
p_1\colon \ZZ_2^\times \to \ZZ_2^\times/\{\pm 1\} \cong \ZZ_2\qquad\mathrm{and}\qquad
p_2\colon \ZZ_2^\times \to \ZZ/4^\times \cong \{\pm 1\}
\]
be the natural projections. 
Using the explicit isomorphism $\ZZ_2^\times/\{\pm 1\} \cong \ZZ_2$ of
\Cref{rem:many-alphas}, define surjective homomorphisms
\footnote{{\bf Warning!} In Lemma 2.1 of Ravenel \cite{RavCoh} the
mod $2$ reductions of these
classes have the names $\zeta_2 + \rho_2$ and $\zeta_2$ respectively.} 
\begin{equation}\label{eqn:honeclasses}
\myzeta_{2}:=p_1\circ \det\colon \GG_2 \longr \ZZ_2\qquad\mathrm{and}\qquad
\myoneb_{2}:= p_2\circ \det\colon \GG_2 \longr \ZZ/2 \ .
\end{equation}
By abuse of notation, we call the corresponding cohomology classes by the same
names; thus, we have classes $\myzeta_2 \in H_c^1(G,\ZZ_2)$ and
$\myoneb_2 \in H_c^1(G,\FF_2)$ for any closed subgroup $G \subseteq \GG_2$. 
We write $\myzeta =\myzeta_2$ and $\myoneb = \myoneb_2$ when there can be no 
confusion.

We will write $\myzeta_2 \in H_c^1(\GG_2,\FF_2)$
to be the reduction of  $\myzeta_2 \in H_c^1(\GG_2,\ZZ_2)$. We define
\[
\mytwob \in H_c^2(\GG_2,\ZZ_2)
\]
to be the image of $\myoneb_2$ under the Bockstein $H_c^1(\GG_2,\FF_2) \to
H_c^2(\GG_2,\ZZ_2)$.
\end{defn}

\begin{rem}
The cohomology class $\myzeta_2$ is the reduced determinant discussed earlier in 
\eqref{norm-defined} and \eqref{def-zeta-n}; see also \Cref{prop:zeta-permanent}.
The class $\myoneb_2$ is the analog of the class $\ravclass_1$ defined in
\Cref{rem:many-alphas} for $n=1$. 

Looking back at \Cref{rem:many-alphas} we see 
that the projections $p_1$ and $p_2$ of \Cref{rem:hone} have arisen before. In fact
$p_1 = \zeta_1$ and $p_2 = \chi_1$, up to the isomorphism $\ZZ/4^\times \cong \ZZ/2$. 
We can write
\[
\zeta_2=\zeta_1\circ \det\qquad \mathrm{and}\qquad  \chi_2=\chi_1\circ \det.
\]
\end{rem}

Here are some more refined
details about the behavior of $\zeta_2$ and $\chi_2$. From
\Cref{rem:all-the-splittings} we have decompositions $K \rtimes G_{24} \cong \SS_2$ and
$K^1 \rtimes G_{24} \cong \SS_2^1$. In  \Cref{rem:g24prime}, we introduced elements $\pi$ and
$\alpha$ of  $\WW^{\times}\subseteq \SS_2$  that satisfy $\det(\pi)=3$ and
$\det(\alpha)=-1$. Furthermore, $\alpha$ and $\pi$ are elements of $K \subseteq \SS_2$.
\medskip

(1) Since $\mathrm{det}(\alpha) = -1$, it follows that $\myoneb_2(\alpha) = -1$ and
$\myzeta_2(\alpha)=0$. Thus $\alpha \in K^1$ and $\myoneb_2 \colon \SS_2^1 \to \FF_2$
restricts to a non-zero class of the same name in  $H_c^1(\SS_2^1,\FF_2)$ and even in 
$H_c^1(K^1,\FF_2)$. By definition $\SS_2^1$ is in the kernel of the reduced determinant; hence,
$\myzeta_2$ restricts to zero in $H_c^1(\SS_2^1,\ZZ_2)$.
\medskip

(2) Note also that $\myzeta_2^2 = 0$ in
cohomology, wherever it appears, as it is the image of a 
generator of $H_c^1(\ZZ_2,\ZZ_2)$. The exponent of $\myoneb_2$ is an important point 
addressed below in \Cref{prop:b0cubedbig}.
\medskip

We now begin our calculations.

\begin{lem}\label{lem:hone-of-stwo} (1) The cohomology group $H_c^1(\SS_2,\FF_2)$ is of dimension
$2$ over $\FF_2$ with basis $\myoneb_2$ and $\myzeta_2$. 

(2) The cohomology group $H_c^1(\SS_2,\ZZ_2)$ is free of rank one over $\ZZ_2$
generated by $\myzeta_2$. 
\end{lem}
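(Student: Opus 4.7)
Since $\FF_2$ and $\ZZ_2$ are trivial $\SS_2$-modules, we identify $H^1(\SS_2,M) = \Hom_{cts}(\SS_2,M) = \Hom_{cts}(\SS_2^{ab},M)$. The strategy combines two ingredients: the determinant $\det \colon \SS_2 \to \ZZ_2^\times$ from \eqref{det-defined} produces an explicit supply of classes, and the semidirect decomposition $\SS_2 \cong S_2 \rtimes \FF_4^\times$ of \eqref{morava-stab}, with $|\FF_4^\times|=3$ prime to $2$, lets the Lyndon--Hochschild--Serre spectral sequence collapse to give
\[
H^1(\SS_2, M) \cong H^1(S_2, M)^{\FF_4^\times}.
\]

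For part (1), I would obtain the lower bound on the dimension for free from the determinant. The classes $\zeta_2 = \zeta_1\circ\det$ and $\chi_2 = \chi_1\circ\det$ both factor through $\det$, and via the isomorphism $\ZZ_2^\times \cong \{\pm 1\} \times (1+4\ZZ_2)$ the pair $(\chi_2, \bar\zeta_2) \colon \SS_2 \to \FF_2 \oplus \FF_2$ coincides with the mod-squares projection $\SS_2 \xrightarrow{\det} \ZZ_2^\times \twoheadrightarrow \ZZ_2^\times / (\ZZ_2^\times)^2$. Surjectivity of $\det$ (a classical fact about the Morava stabilizer group) forces $\bar\zeta_2$ and $\chi_2$ to be $\FF_2$-linearly independent. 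For the matching upper bound, the plan is to quote the explicit calculation of $H^1(S_2,\FF_2)$ and the $\FF_4^\times$-action on it from Ravenel's Green book (e.g.\ Theorem 6.3.22 of \cite{ravgreen}) to see that the invariants form a $2$-dimensional $\FF_2$-vector space.

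For part (2), the module $\Hom_{cts}(\SS_2, \ZZ_2)$ is automatically torsion-free as a $\ZZ_2$-module, and since $\zeta_2 \colon \SS_2 \twoheadrightarrow \ZZ_2$ is surjective it already generates a free rank-one summand. To rule out higher rank, use the long exact sequence associated with $0 \to \ZZ_2 \xrightarrow{2} \ZZ_2 \to \FF_2 \to 0$, which produces an injection
\[
H^1(\SS_2, \ZZ_2)/2 \hookrightarrow H^1(\SS_2, \FF_2).
\]
The image contains $\bar\zeta_2$, so it will suffice to show $\chi_2$ is not in the image, equivalently that $\chi_2$ admits no continuous lift to $\Hom_{cts}(\SS_2, \ZZ_2)$. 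Any such lift would vanish on every torsion element of $\SS_2$ (since $\ZZ_2$ is torsion-free), so the task reduces to exhibiting a torsion element $g$ with $\chi_2(g)=1$. The quaternion subgroup $Q_8 \subset G_{24} \subset \SS_2$ is the natural place to look: since $Q_8$ has exponent $4$ and $\ZZ_2^\times$ has no element of order $4$, the restriction $\det|_{Q_8}$ lands in $\{\pm 1\}$, and because $\ZZ_2^\times$ is abelian this restriction is $\FF_4^\times$-equivariant. Since $\FF_4^\times$ cyclically permutes $\{i,j,k\}$, the only nontrivial $\FF_4^\times$-invariant homomorphism $Q_8 \to \{\pm 1\}$ sends $i,j,k$ all to $-1$. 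A direct computation using the explicit description of the maximal order in \fullref{rem:where-did-t-arise} rules out the trivial possibility, so $\chi_2(i)=1$ for any quaternionic generator.

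The principal obstacle is the upper bound in part (1): while two independent classes are immediately visible from the determinant, the equality $\dim_{\FF_2} H^1(\SS_2,\FF_2) = 2$ requires explicit control of the $\FF_4^\times$-invariants of $S_2^{ab}/2$, which is the genuine computational content of the lemma and is where we rely on Ravenel's calculation. A secondary but essential calculational point is the nontriviality of $\det|_{Q_8}$, used at the last step of part (2).
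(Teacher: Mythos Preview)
Your approach to part (1) is essentially the paper's: both cite Ravenel's computation of $H^1(S_2,\FF_2)$ and pass to $\FF_4^\times$-invariants. Fine.

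Part (2), however, has a genuine gap. Your plan is to show $\chi_2$ does not lift to $\ZZ_2$ by exhibiting a torsion element on which it is nonzero, and you propose to find such an element in $Q_8$. But there is no nontrivial $\FF_4^\times$-invariant homomorphism $Q_8 \to \{\pm 1\}$: since $k = ij$, any homomorphism sending $i$ and $j$ to $-1$ must send $k$ to $+1$, so the three nontrivial characters of $Q_8$ are cyclically permuted by the $\FF_4^\times$-action and none is fixed. Concretely, each quaternionic generator satisfies $i^2=-1$, so its reduced characteristic polynomial is $X^2+1$ and its reduced norm is $1$; thus $\det|_{Q_8}$ is trivial and $\chi_2$ vanishes on all of $Q_8$. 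Since every finite subgroup of $\SS_2$ is conjugate into $G_{24}=Q_8\rtimes\FF_4^\times$, and any homomorphism to $\FF_2$ kills the order-$3$ part, $\chi_2$ in fact vanishes on \emph{all} torsion in $\SS_2$. So the torsion obstruction you are looking for does not exist, and your strategy cannot succeed.

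The paper takes a different route: it uses the Bockstein. The class $\chi_2$ lifts to $\ZZ_2$ if and only if its integral Bockstein $\widetilde{\chi}\in H^2(\SS_2,\ZZ_2)$ vanishes, and the mod-$2$ reduction of $\widetilde{\chi}$ is $Sq^1\chi_2=\chi_2^2$. The nonvanishing of $\chi_2^2$ is established later (in \fullref{prop:orderb01}, via the algebraic duality resolution), and the paper simply forward-references that. If you want a self-contained argument here, you would need to supply an independent reason why $\chi_2^2\neq 0$, or else compute $\SS_2^{ab}$ directly and observe that its torsion-free quotient has rank one.
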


\begin{proof} 
The first statement follows from Theorem 6.3.12 of Ravenel \cite{ravgreen} by taking invariants with respect to the residual action of $
\FF_4^{\times}$. Alternatively, it follows from Corollary 5.4 of \cite{VIASM} by noting that $H_c^1(\SS_2,\FF_2)$ is dual to 
$H_1(\SS_2, \FF_2)$ and that
\[
H_1(\SS_2, \FF_2) \cong H_1(S_2, \FF_2)_{\FF_4^{\times}}.
\]
Specifically, in  Proposition 5.2, we can take $c=\omega$. The class represented by $1+4\omega$ and the class represented
by $1+\omega S^2$ are invariant for the residual action of $\F_4^{\times}$ (given by conjugation by $\omega$) while
the generators $1+\omega S$, $1+\omega^2 S$ and their product gets cyclicly permuted by $\FF_4^{\times}$.
The generator we call $\myzeta_2$ is dual to the class detected by $1+ 4 \omega$ and the generator we call
$\myoneb_2$ is dual to the class detected by $1+\omega S^2$ in the proof of Proposition 5.2 of \cite{VIASM}.

We will see below in \Cref{prop:orderb01}, that $0 \ne \myoneb_2^2 \in H_c^2(\SS_2^1,\FF_2)$, and hence in 
$H_c^2(\SS_2,\FF_2)$. The second statement then follows from examining the long exact sequence on coefficients induced by 
\[
0 \to \Z_2 \xrightarrow{2} \Z_2 \to \FF_2  \to 0
\] and noting that the composite of the connecting homomorphism with reduction modulo $2$ (i.e., the Bockstein) is the squaring 
operation in cohomology.
\end{proof}

We now add some further recollections on the cohomology of the subgroups $K$ and $K^1$ of $\SS_2$.
Since $\det(\pi)=3$, $\zeta_2(\pi)$ maps to a topological  generator of $\Z_2^{\times}/\{\pm 1\}$. Then since $\pi \in K$
the composition 
\[
\xymatrix{
K \ar[r]^\subseteq& \SS_2 \ar[r]^{\myzeta_2}& \ZZ_2
}
\]
remains surjective and defines a non-zero cohomology class $\myzeta_2 \in H_c^1(K,\ZZ_2)$.
Any choice of splitting of this surjection will define isomorphisms $\SS_2^1 \rtimes \ZZ_2
\cong \SS_2$ and $K^1 \rtimes \ZZ_2 \cong K$. An obvious choice of a splitting sends
a generator of $\ZZ_2$ to $\pi$.

The following can be found in Corollary 2.5.12 and Theorem 2.5.13 of \cite{Paper1}.\footnote{In Section 2.5 of \cite{Paper1}, the restriction of $\myoneb$ to $ 
H_c^1(K^1,\FF_2)$ is denoted by $x_0$ and the restriction of $\myzeta$ to $H_c^1(K,\FF_2)$ by $x_4$. 
We use the same name for the restrictions as for the original classes.}

\begin{lem}\label{lem:reph1} (1) The subgroups $K^1$ and $K$ are oriented
Poincar\'e duality groups of dimensions $3$ and $4$ respectively. 
\medskip

(2) There is an isomorphism
 \[
H_c^\ast (K,\FF_2) \cong 
\FF_2[\myzetaK,\myonebK,x_1,x_2]/(\myzetaK^2,\myonebK^2,x_1^2 + \myonebK x_1,x_2^2 + \myonebK x_2)
\]
where $\myzetaK, \myonebK$, $x_1$ and $x_2$ are in degree $1$.
\medskip

(3) There is an isomorphism
 \[
H_c^\ast (K^1,\FF_2) \cong 
\FF_2[\myonebK,x_1,x_2]/(\myonebK^2,x_1^2 + \myonebK x_1,x_2^2 + \myonebK x_2)
\]
where $\myonebK$, $x_1$ and $x_2$ are in degree $1$.
\end{lem}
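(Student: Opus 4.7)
The plan is to prove part (1) via Lazard's theorem, identify generators of $H^1$, compute the quadratic cup products via Steenrod squares, and use Poincar\'e duality to close out the ring structure, broadly following the approach of \cite{Paper1}.

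For part (1), recall that $\SS_2$ is a compact $2$-adic analytic group of dimension $n^2 = 4$, with $\SS_2^1$ closed of dimension $3$. The decomposition $\SS_2 \cong K \rtimes G_{24}$ from \fullref{rem:all-the-splittings} with $G_{24}$ finite gives $\dim K = 4$ and $\dim K^1 = 3$. By the classification of finite subgroups of $\SS_2$ (cf.\ \fullref{rem:g24prime}), every finite $2$-subgroup is conjugate into $G_{24}$, so the complement $K$ (and hence $K^1$) is torsion-free. Lazard's theorem then identifies both as Poincar\'e duality groups at $p=2$ of the stated dimensions, and orientability is verified by computing directly that $K$ and $K^1$ act trivially on the top exterior power of their Lie algebras.

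For parts (2) and (3), first identify $H^1$ via the isomorphism $H^1(G,\FF_2) = \Hom_{\mathrm{cts}}(G^{\mathrm{ab}},\FF_2)$ for a pro-$2$ group $G$. The class $\myoneb|_{K^1}$ is non-zero by \fullref{rem:honeremarks}(1); the central filtration of $K^1$ by congruence subgroups of the maximal order in the division algebra underlying $\SS_2$ produces two further generators $x_1, x_2$, and for $K = K^1 \rtimes \ZZ_2$ the class $\myzeta$ provides a fourth generator. Turning to quadratic relations: $\myzeta^2 = 0$ because $\myzeta$ is the mod-$2$ reduction of an integral class, so $\mathrm{Sq}^1(\myzeta) = 0$. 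The relation $\myoneb^2 = 0$ in $H^*(K,\FF_2)$ amounts to showing $\myoneb|_K$ lifts to a character $K \to \ZZ/4$, a distinctive feature of $K$ (note $\myoneb^2 \neq 0$ in $H^*(\SS_2^1,\FF_2)$ by \fullref{prop:orderb01}, so this lift does not survive to $\SS_2^1$). Finally, the defining relations $x_i^2 + \myoneb x_i = 0$ encode $\mathrm{Sq}^1(x_i) = \myoneb x_i$, to be verified at cocycle level from explicit $\ZZ/4$-lifts of the $x_i$ via the $2$-adic Lie algebra of $K^1$.

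Once the quadratic relations are in hand, Poincar\'e duality forces the top cohomology to be one-dimensional (generated by $\myoneb x_1 x_2$ for $K^1$, and by $\myzeta \myoneb x_1 x_2$ for $K$) with a perfect cup product pairing. The Poincar\'e series of the candidate graded $\FF_2$-algebras match those of Poincar\'e duality algebras of the correct dimensions, so the stated presentations must agree with the full cohomology rings. The main obstacle is the explicit cocycle computation $\mathrm{Sq}^1(x_i) = \myoneb x_i$: it requires a tractable presentation of $K^1$ together with concrete cocycles representing the $x_i$, both of which are extracted from the arithmetic of the maximal order in the height-$2$ division algebra over $\QQ_2$. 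This calculation is the technical heart of \cite[Cor.\ 2.5.12 and Thm.\ 2.5.13]{Paper1}.
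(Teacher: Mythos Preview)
The paper does not give a proof of this lemma at all: it simply cites Corollary 2.5.12 and Theorem 2.5.13 of \cite{Paper1} and moves on. Your proposal is a reasonable outline of the argument carried out in that reference, and you explicitly acknowledge this at both the beginning and end of your sketch, so there is no real discrepancy to discuss. The ingredients you list---Lazard's theorem for torsion-free compact $p$-adic analytic groups to get Poincar\'e duality, identification of $H^1$ via the abelianization, the Bockstein/$\mathrm{Sq}^1$ computation of the quadratic relations, and the Poincar\'e series count to close out the ring structure---are indeed the ones used in \cite{Paper1}, and your identification of the cocycle-level verification of $\mathrm{Sq}^1(x_i) = \myoneb x_i$ as the technical crux is accurate.
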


We also write down the cohomology of
$Q_8 \rtimes \FF_4^\times \cong G_{24}$ and $C_6$. We collect these results in the following remark.

\begin{rem}\label{rem:coho-of-g24} Recall that $Q_8$ has periodic
cohomology with a periodicity  class $k \in H^4(Q_8,\ZZ_2)$ of order $8$. See, for example, \cite[Ch. IV. Lemma 2.10]{MR2035696}.
We will also write $k \in 
H^4(Q_8,\FF_2)$ for the reduction of $k$.

We have an isomorphism
\[
\FF_2[x,y,k]/(x^2+xy+y^2,x^2y+xy^2) \cong H^\ast(Q_8,\FF_2)
\]
where $x$ and $y$ are in degree $1$. For a choice $\omega \in \FF_4$ of a primitive cube root of 
unity we have $\omega_\ast x = y$, $\omega_\ast y = x+y$, and $\omega_\ast k = k$; 
from this it follows that there is an isomorphism
\[
\FF_2[z,k]/(z^2) \cong H^\ast(Q_8,\FF_2)^{\FF_4^\times} \cong H^\ast(G_{24},\FF_2)
\]
where $z=x^2y=xy^2$. Finally, since $k$ has order $8$ in $H^4(Q_8,\ZZ_2)$, the Universal 
Coefficient Theorem gives an isomorphism
\[
 \ZZ_2[k]/(8k)  \cong H^\ast (G_{24},\ZZ_2)\ .
\]

Recall further that
\[
H^\ast(C_6,\FF_2) \cong \FF_2[h]
\]
for a class $h$ in degree $1$ and that 
\[
H^\ast(C_6,\ZZ_2) \cong \ZZ_2[g]/(2g)
\]
for a class $g$ in degree $2$ which is the image of $h$ under the connecting homomorphism for
$\ZZ_2 \xra{2} \ZZ_2 \to \FF_2$.
The inclusion $C_6 = \{\pm 1\} \times \FF_4^\times \subseteq G_{24}$ yields a map on cohomology
\[
H^\ast (G_{24},\ZZ_2) \cong \ZZ_2[k]/(8k) \to \ZZ_2[g]/(2g) \cong H^\ast(C_6,\ZZ_2)
\]
sending $k$ to $g^2$. With $\FF_2$ coefficients the map
\[
H^\ast (G_{24},\F_2) \cong \FF_2[z,k]/(z^2) \to \FF_2[h] \cong H^\ast(C_6,\FF_2)
\]
sends $z$ to $0$ and $k$ to $h^4$.
\end{rem}

\subsection{The cohomology of $\SS_2^1$}

In this section we use the algebraic duality spectral sequence of \Cref{sec.dualityres}  to 
calculate the integral and mod $2$ cohomology of $\SS_2^1$ as graded modules
over $H^\ast(G_{24},\ZZ_2)$. From \Cref{rem:algdualresSS} we have
\begin{equation}\label{SS2new}
\xymatrix{E_1^{p,q} = H^q(F_p, M ) \ar@{=>}[r] & H_c^{p+q}( \mathbb{S}_2^1,M)}\ .
\end{equation}
We are particularly interested in the cases $M=\ZZ_2$ and $M=\FF_2$.

This spectral sequence has a split edge homomorphism.  
The augmentation map $\ZZ_2[[\SS_2^1/G_{24}]] \to \ZZ_2$ induces, through the isomorphism
of \eqref{eqn:schapiro1}, an edge homomorphism
\[
H_c^\ast(\SS_2^1,\ZZ_2) \to H^\ast(G_{24},\ZZ_2)
\]
of the spectral sequence \eqref{SS2new}. This is induced by the inclusion of 
$G_{24} \subseteq \SS_2^1$. By \Cref{rem:all-the-splittings}
there is  a projection $\SS_2^1 \to \SS_2^1/K^1 \cong G_{24}$ which splits this inclusion, so we 
immediately have the following result.

\begin{lem}\label{lem:gexists} Let $M = \ZZ_2$ or $\FF_2$. The map of algebras
\[
H_c^\ast(\SS_2^1,M) \to H^\ast(G_{24},M)
\]
induced by the inclusion of the subgroup $G_{24}$ has an algebra splitting.
\end{lem}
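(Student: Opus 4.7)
The plan is essentially immediate from the structure already recorded in \fullref{rem:all-the-splittings}, so I would only need to assemble the pieces carefully. By \fullref{rem:all-the-splittings} the subgroup $K^1$ is normal in $\SS_2^1$ and the composition $G_{24} \hookrightarrow \SS_2^1 \twoheadrightarrow \SS_2^1/K^1$ is an isomorphism of groups. Thus we have a genuine semidirect product decomposition $\SS_2^1 \cong K^1 \rtimes G_{24}$, together with an honest group homomorphism
\[
\pi \colon \SS_2^1 \longrightarrow \SS_2^1/K^1 \xrightarrow{\ \cong\ } G_{24}
\]
that is a retraction of the inclusion $i \colon G_{24} \hookrightarrow \SS_2^1$, i.e.\ $\pi \circ i = \mathrm{id}_{G_{24}}$.

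Next I would apply continuous group cohomology $H^\ast(-,M)$ with the trivial action on $M = \ZZ_2$ or $M = \FF_2$. Functoriality produces graded ring homomorphisms $i^\ast \colon H^\ast(\SS_2^1,M) \to H^\ast(G_{24},M)$ and $\pi^\ast \colon H^\ast(G_{24},M) \to H^\ast(\SS_2^1,M)$, and the identity $\pi \circ i = \mathrm{id}$ gives $i^\ast \circ \pi^\ast = \mathrm{id}$ on $H^\ast(G_{24},M)$. This exhibits $\pi^\ast$ as the desired algebra splitting, since $\pi^\ast$ is automatically a map of graded rings (cup product is natural with respect to group homomorphisms).

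To tie this back to the spectral sequence \eqref{SS2new}, I would then verify that the map $i^\ast$ obtained above really is the edge homomorphism described in the statement. This is the standard identification: under the Shapiro isomorphism $\Ext^q_{\ZZ_2[[\SS_2^1]]}(\ZZ_2[[\SS_2^1/G_{24}]], M) \cong H^q(G_{24},M)$, the map induced by the augmentation $\ZZ_2[[\SS_2^1/G_{24}]] \to \ZZ_2$ corresponds to the restriction map along $i$. Once this identification is in place, the lemma follows.

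There is essentially no obstacle here; the only mild point requiring attention is that $\pi$ be a group homomorphism, which relies on $K^1$ being \emph{normal} in $\SS_2^1$. That is guaranteed by the semidirect-product structure cited from \cite{Paper1}, and it is what distinguishes $\SS_2^1$ from $\SS_2^1$ equipped with the conjugate complement $G_{24}'$ (which, as noted in \fullref{rem:g24prime}, is not conjugate to $G_{24}$ inside $\SS_2^1$). No further calculation is needed.
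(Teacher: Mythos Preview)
Your proposal is correct and follows exactly the paper's approach: use the semidirect product decomposition $\SS_2^1 \cong K^1 \rtimes G_{24}$ from \fullref{rem:all-the-splittings} to obtain a group-theoretic retraction $\pi\colon \SS_2^1 \to G_{24}$, then apply functoriality of cohomology. Your additional verification that $i^\ast$ agrees with the edge homomorphism is a nice touch, though the tangential remark about $G_{24}'$ is slightly off---\fullref{rem:all-the-splittings} in fact notes that $G_{24}'$ also splits the quotient map, so the non-conjugacy inside $\SS_2^1$ is not what is at stake here.
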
 

From \Cref{rem:coho-of-g24} and \Cref{lem:gexists} we get an injective map
\[
\ZZ_2[k]/(8k) \cong H^\ast(G_{24},\ZZ_2) \to H_c^\ast(\SS_2^1,\ZZ_2).
\]
We confuse $k$ with its image in $H_c^4(\SS_2^1,\ZZ_2)$
and in the cohomology of its subgroups. An example of such a subgroup is $G'_{24}$, the conjugate group defined in \eqref{eq:g24prime}. Then the composition
\[
H^\ast(G_{24},\ZZ_2) \to H_c^\ast(\SS_2^1,\ZZ_2) \to H^\ast(G'_{24},\ZZ_2)
\]
is an isomorphism; this follows from the fact that the inclusion $G'_{24}\subseteq \SS_2^1$ also splits the
projection $\SS_2^1 \to G_{24}$. 
Also, as in \Cref{rem:coho-of-g24}, the map 
\[
H^\ast(G_{24},\ZZ_2) \to H_c^\ast(\SS_2^1,\ZZ_2) \to H^\ast(C_6,\ZZ_2) \cong \ZZ_2[g]/(2g)
\]
sends $k$ to $g^2$.  

It will be useful to compare the algebraic
duality spectral sequence for $\SS_2^1$ to a spectral sequence for a quotient group.
Let $C_2 = \{ \pm 1\} \subseteq \SS_2$ be the central subgroup of order $2$.
For any group $G \subseteq \Sn_2$ which contains $C_2$ define $PG = G/C_2$. 
Note that $C_2$ is a subgroup of all of the groups $\SS_2^1$, $C_6$, $G_{24}$, and $G_{24}'$.
Whenever $C_2 \subseteq F$ we have isomorphisms of $\SS_2^1$-coset spaces
$\SS_2^1/F \cong P\SS_2^1/PF$ and, hence isomorphisms of continuous $\SS_2^1$-modules 
\[
\ZZ_2[[\SS_2^1/F]] \cong \ZZ_2[[P\SS_2^1/PF]]\ .
\]
The resolution of \Cref{thm:bob} is in fact constructed as a resolution of
continuous $P\SS_2^1$-modules
\begin{align}\label{eq:prof-dual-res}
0 \to \Z_2[[P\mathbb{S}_{2}^1/PF_3]]  &\mathop{\longr}^{\partial_3}  
\Z_2[[P\mathbb{S}_{2}^1/PF_2]]\nonumber\\
&\mathop{\longr}^{\partial_2}  \Z_2[[P\mathbb{S}_{2}^1/PF_1]]   
\mathop{\longr}^{\partial_1} \Z_2[[P\mathbb{S}_{2}^1/PF_0]]  \to \Z_2 \to 0\ .
\end{align}
We have
\begin{align*}
PF_0 &\cong A_4 := (C_2 \times C_2) \rtimes C_3\\ 
PF_1 &\cong PF_2 \cong C_3\\
PF_3 &\cong  A'_4
\end{align*}
with $A'_4=\pi A_4\pi^{-1}$, where $\pi = 1+2\omega \in \WW^{\times}$. Note, in particular, 
that if $p=1$ or $2$, then
$\Z_2[[P\mathbb{S}_{2}^1/PF_p]] \cong \Z_2[[P\mathbb{S}_{2}^1/C_3]]$
is a projective $\ZZ_2[[P\SS_2^1]]$-module. This often makes arguments
with this resolution simpler. For example, if $M$ is a profinite
$P\SS_2^1$ module we have an analog of the algebraic duality
 spectral sequence
\begin{equation*}
H^q(PF_p,M) \Longrightarrow H_c^{p+q}( P\mathbb{S}_2^1,M)
\end{equation*}
with 
\begin{equation}\label{rem:comtops21}
E_1^{p,q} = H^q(C_3,M) = 0
\end{equation} if $p=1$, $2$ and $q > 0$.

Finally, these considerations give a diagram of spectral sequences for any
profinite $P\SS_2^1$-module $M$
\begin{equation*}
\xymatrix{
H^q(PF_p, M ) \ar[d]\ar@{=>}[r] & H_c^{p+q}( P\mathbb{S}_2^1,M)\ar[d]\\
H^q(F_p, M )  \ar@{=>}[r] & H_c^{p+q}(\mathbb{S}_2^1,M)
} 
\end{equation*}
with the vertical maps are induced by the evident quotient homomorphisms.

\begin{lem}\label{lem:PS21collapse}
Let $M = \ZZ_2$ or $\FF_2$. The spectral sequence
\[\xymatrix{H^q(PF_p, M ) \ar@{=>}[r] & H_c^{p+q}( P\mathbb{S}_2^1,M)}\]
collapses at the $E_2$-page. Furthermore, if $M=\FF_2$, it collapses at the $E_1$-page.
\end{lem}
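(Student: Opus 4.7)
The plan is to use three structural facts to reduce each collapse statement to a short check. The first key observation is that $PF_1 = PF_2 = C_3$ has order prime to $2$, so $H^q(C_3, M) = 0$ for any profinite $\ZZ_2$-module $M$ and any $q > 0$. Consequently the $E_1$-page is concentrated in the four columns $p = 0,1,2,3$, and in rows $q>0$ only the columns $p=0$ and $p=3$ are non-zero. A bookkeeping with $d_r \colon E_r^{p,q} \to E_r^{p+r, q-r+1}$ then shows that the only potentially non-zero differentials are the row-$q=0$ differentials $d_1^{p, 0}$ for $p \in \{0,1,2\}$, the differential $d_2^{0, 1} \to E_2^{2, 0}$, and the differentials $d_3^{0, q} \to E_3^{3, q-2}$ for $q \geq 2$; every other $d_r$ has either zero source or zero target.

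Next, I would compute the bottom-row differentials directly. For $M$ with trivial $P\SS_2^1$-action, any homomorphism $\ZZ_2[[P\SS_2^1/PF_p]] \to M$ factors through the augmentation $\epsilon$, so $d_1^{p, 0} \colon E_1^{p, 0} = M \to M = E_1^{p+1, 0}$ is multiplication by $\lambda_p := \epsilon(\partial_{p+1}([e])) \in \ZZ_2$. By \fullref{rem:bobrefined}, $\partial_1$ and $\partial_3$ are trivial modulo $I\SS_2^1$ and $\epsilon$ annihilates the augmentation ideal, so $\lambda_0 = \lambda_2 = 0$; and $\partial_2$ is multiplication by $2$ modulo $(8, I\SS_2^1)$, so $\lambda_1 \equiv 2 \pmod 8$. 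If $M = \FF_2$, multiplication by $\lambda_1$ vanishes and every $d_1$ is zero; if $M = \ZZ_2$, $d_1^{1,0}$ is multiplication by $2$ times a unit, giving $E_2^{1, 0} = 0$ and $E_2^{2, 0} \cong \FF_2$, with all other entries of the $E_2$-page unchanged.

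To kill the remaining candidates $d_2^{0, 1}$ and $d_3^{0, q}$, I would exploit the semidirect decomposition $\SS_2^1 \cong K^1 \rtimes G_{24}$ of \fullref{rem:all-the-splittings}. Since $K^1$ is torsion-free, the central $C_2 \subseteq \SS_2^1$ sits entirely in $G_{24}$, and the decomposition descends to $P\SS_2^1 \cong K^1 \rtimes A_4$. Consequently the inclusion $A_4 = PF_0 \hookrightarrow P\SS_2^1$ admits a retraction, so the restriction
\[
H^*(P\SS_2^1, M) \longrightarrow H^*(A_4, M) = E_1^{0, *}
\]
is surjective. Because this restriction factors as the edge map $H^*(P\SS_2^1, M) \twoheadrightarrow E_\infty^{0, *} \hookrightarrow E_1^{0, *}$, we obtain $E_\infty^{0, *} = E_1^{0, *}$, and every differential leaving column $0$ must vanish. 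This kills all remaining candidates, yielding collapse at $E_1$ when $M = \FF_2$ and at $E_2$ when $M = \ZZ_2$.

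The main subtle point I expect in writing this up cleanly is identifying the restriction $H^*(P\SS_2^1, M) \to H^*(PF_0, M)$ with the edge homomorphism of this particular hypercohomology spectral sequence; this is standard once one unwinds the Shapiro identification in \eqref{eqn:schapiro1}, but it is where the care is required. Everything else is essentially the shape of the $E_1$-page forced by $|C_3|$ being a unit in $\ZZ_2$ together with the explicit information about $\partial_1, \partial_2, \partial_3$ recorded in \fullref{rem:bobrefined}.
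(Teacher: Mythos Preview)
Your proof is correct and follows the same approach as the paper: both arguments use the vanishing of $H^q(C_3,M)$ for $q>0$ to strip the $E_1$-page down to columns $p=0,3$ above the bottom row, then invoke the retraction $P\SS_2^1 \to A_4$ to force all differentials out of column $0$ to vanish, and finally read off the $d_1$ on the bottom row from part (2) of \fullref{thm:bob}. You have spelled out the bookkeeping of which $d_r$ survive this reduction more explicitly than the paper does, but the underlying argument is the same.
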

\begin{proof}
Since $H^q(PF_p, M) = 0$ for
$p=1,2$ and $q > 0$, and since $H^\ast (PF_0, M)$ is
a retract of $H_c^\ast (P\SS_2^1,M)$, the spectral sequence collapses at $E_2$.
The $d_1$-differential is induced by the maps $\partial_i$ of \Cref{thm:bob}.
By part (2) of \Cref{thm:bob} we have that $d_1 \equiv 0$ modulo $2$, so 
the spectral sequence collapses at $E_{1}$ for $M=\FF_2$.
\end{proof}
 
It is also useful to compare these spectral
sequences to yet another one defined for a subgroup of $\SS_2^1$.
Recall there is a semidirect product decomposition
$\SS_2^1 \cong K^1 \rtimes G_{24}$; this implies a decomposition
$P\SS_2^1 \cong K^1 \rtimes A_4$. 
Thus, the resolution (\ref{eq:prof-dual-res}) is a resolution of projective
$\ZZ_2[[K^1]]$-modules. For any profinite $P\SS_2^1$-module $M$, we get a diagram of spectral 
sequences  
\begin{equation}\label{diag:PS21toK}
\xymatrix{
E_1^{p,q} \cong \Ext^q_{\ZZ_2[[P\SS_2^1]]}(\ZZ_2[[P\SS_2^1/PF_p]], M) \ar[d]\ar@{=>}[r] &
H_c^{p+q}( P\mathbb{S}_2^1,M) \ar[d]\\
E_1^{p,q} \cong  \Ext^q_{\ZZ_2[[K^1]]}(\ZZ_2[[P\SS_2^1/PF_p]], M)  \ar@{=>}[r] & H_c^{p+q}(K^1,M)
}
\end{equation}
and in the bottom row the $\Ext$ groups vanish if $q >0$. This allows us to prove
the following lemma. Note that part (1) of \Cref{lem:reph1} implies $H_c^3(K^1,M) \cong M$
for $M$ either $\ZZ_2$ or $\FF_2$.

\begin{lem}\label{lem:comtoKat3} Let $M = \ZZ_2$ or $\FF_2$. 
The sequence 
\[
\xymatrix{
0 \to H^3(A_4,M) \ar[r] & H_c^3(P\SS_2^1,M) \ar[r] &H_c^3(K^1,M) \cong M \to 0
}
\]
is split short exact, where the maps are induced by the projection to $A_4$ and the inclusion
of $K^1$ in $P\SS_2^1 \cong K^1 \rtimes A_4$. Furthermore the restriction homomorphism
\[
H_c^3(\SS_2^1,M) \longrightarrow H_c^3(K^1,M)
\]
is split surjective. 
\end{lem}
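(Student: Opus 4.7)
Plan:

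The plan is to combine the algebraic duality spectral sequence (ADSS) for $P\SS_2^1$ with its comparison to the $K^1$-version in diagram \eqref{diag:PS21toK}. Two easy observations dispose of the formal parts: the composite $K^1 \hookrightarrow P\SS_2^1 \twoheadrightarrow A_4$ is trivial, so $H^3(A_4,M) \to H^3(P\SS_2^1,M) \to H^3(K^1,M)$ vanishes; and the section $A_4 \hookrightarrow P\SS_2^1$ coming from $P\SS_2^1 \cong K^1 \rtimes A_4$ provides a retraction of the inflation, making $H^3(A_4,M) \to H^3(P\SS_2^1,M)$ split injective.

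Next I would read off $H^3(P\SS_2^1,M)$ from the top ADSS. By \fullref{lem:PS21collapse} it collapses at $E_2$; since $H^q(C_3,M) = 0$ for $q > 0$, we have $E_\infty^{1,2} = E_\infty^{2,1} = 0$. By \fullref{thm:bob}(2), $\partial_3 \equiv 0$ modulo $IK^1$, which forces the $d_1$ into $E_1^{3,0}$ to vanish and yields $E_\infty^{3,0} = M$. The edge map to $E_\infty^{0,3} \subseteq H^3(A_4,M)$ is restriction to $A_4 = PF_0$, and composing with inflation gives the identity, so $E_\infty^{0,3} = H^3(A_4,M)$. The filtration therefore produces a short exact sequence
\[
0 \to M \to H^3(P\SS_2^1,M) \to H^3(A_4,M) \to 0,
\]
split by the inflation from $A_4$.

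To identify the subobject $M = E_\infty^{3,0}$ with $H^3(K^1,M)$ via restriction I would use the bottom row of \eqref{diag:PS21toK}. Since $K^1$ is torsion-free and each $PF_p$ is finite, $K^1$ acts freely on $P\SS_2^1/PF_p$, so the restricted resolution \eqref{eq:prof-dual-res} is a free $K^1$-resolution of $\ZZ_2$; the bottom spectral sequence collapses on the $q=0$ row to a chain complex computing $H^*(K^1,M)$. A direct count gives $K^1\backslash P\SS_2^1/A_4' = \{\text{pt}\}$ (the inclusion $A_4' \hookrightarrow P\SS_2^1$ projects isomorphically onto $P\SS_2^1/K^1 = A_4$, since $A_4' \cap K^1 = 1$ and both groups have order $12$), so $E_1^{3,0}_{\text{bot}} = M$ and the comparison map $M = E_1^{3,0}_{\text{top}} \to E_1^{3,0}_{\text{bot}} = M$ is the identity. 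The vanishing of $\partial_3$ modulo $IK^1$ also makes the final differential in the $K^1$-complex zero, so $H^3(K^1,M) = E_\infty^{3,0}_{\text{bot}} = M$. By naturality of the comparison, the restriction sends $E_\infty^{3,0} = M$ isomorphically onto $H^3(K^1,M)$ and kills the complementary inflated summand $H^3(A_4,M)$, giving the claimed split short exact sequence. The $\SS_2^1$ statement is then immediate: the restriction $H^3(P\SS_2^1,M) \to H^3(K^1,M)$ factors as inflation $H^3(P\SS_2^1,M) \to H^3(\SS_2^1,M)$ followed by restriction to $K^1$, and pushing our splitting through the inflation produces a splitting of $H^3(\SS_2^1,M) \to H^3(K^1,M)$. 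The main technical point is in the third paragraph: pinning down the subobject $E_\infty^{3,0}$ compatibly with restriction requires the orbit calculation $K^1\backslash P\SS_2^1/A_4' = \{\text{pt}\}$ together with the stronger form of the vanishing of $\partial_3$ modulo $IK^1$ (not merely $I\SS_2^1$).
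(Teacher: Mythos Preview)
Your proposal is correct and follows essentially the same route as the paper: both arguments use the comparison of spectral sequences in \eqref{diag:PS21toK}, the vanishing of $E_\infty^{1,2}$ and $E_\infty^{2,1}$ (since $PF_1=PF_2=C_3$), the fact that $\partial_3\equiv 0$ modulo $IK^1$ to get $E_2^{3,0}=E_1^{3,0}$ on both rows, and the identification of the comparison map on $E_1^{3,0}$ as the inclusion of invariants $M^{A_4'}\hookrightarrow M$, which is the identity. Your orbit count $K^1\backslash P\SS_2^1/A_4'=\{\mathrm{pt}\}$ is exactly the paper's statement that $\ZZ_2[[P\SS_2^1/A_4']]\cong\ZZ_2[[K^1]]$ as a $K^1$-module, and your deduction of the $\SS_2^1$ statement by factoring through inflation matches the paper's first paragraph.
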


\begin{proof}
The second statement follows from the first since the map $H_c^3(P\SS_2^1, M) \to H_c^3(K^1, M)$ 
factors through $H_c^3(\SS_2^1, M)  \to H_c^3(K^1, M)$.

For the first statement, we use the map of spectral sequences \eqref{diag:PS21toK}. 
Since $\ZZ_2[[P\SS_2^1/A'_4]] \cong \ZZ_2[[K^1]]$ as a $K^1$-module, 
$\Hom_{\ZZ_2[[K^1]]}(\ZZ_2[[P\SS_2^1/A'_4]], M) \cong M$
and, at $E_1^{3,0}$, the map \eqref{diag:PS21toK} is the inclusion of the invariants
\[M^{A'_4} \cong  \Hom_{\ZZ_2[[P\SS_2^1]]}(\ZZ_2[[P\SS_2^1/A'_4]], M) \to 
\Hom_{\ZZ_2[[K^1]]}(\ZZ_2[[P\SS_2^1/A'_4]], M) \cong M.\]
The action of $A_4'$ on $M$ is trivial, so this is an isomorphism. 

By part (2) of \Cref{thm:bob}, $E_2^{3,0} \cong E_1^{3,0}$ for both $K^1$ and $P\SS_2^1$ 
as the differential $d_1\colon E_1^{2,0} \to E_1^{3,0} $ is induced by the map $\partial_3$, 
which is zero modulo $IK^1$. Both spectral sequences collapse at the $E_2$-term. 
For $K^1$ this follows for degree reasons and for $P\SS_2^1$, this is \Cref{lem:PS21collapse}.
So the map in \eqref{diag:PS21toK} at $E_{\infty}^{3,0}$ is an isomorphism. Finally, for $P\SS_2^1$
we have $E_{\infty}^{2,1} = E_{\infty}^{1,2}=0$ by \eqref{rem:comtops21}.
Since $E_{\infty}^{0,3} \cong H^3 (PF_0, M)$ is
a retract of $H_c^3(P\SS_2^1,M)$, the extension is split.
\end{proof} 

\begin{rem}\label{rem:HS1action} In results to follow we will use
extra structure on the algebraic duality  spectral sequence.
For all profinite $\SS_2^1$-modules $M$, there is a natural action of
\[
\Ext^\ast_{\ZZ_2[[\SS_2^1]]}(\ZZ_2,\ZZ_2) \cong H_c^\ast(\SS_2^1,\ZZ_2)
\]
on $\Ext^\ast_{\ZZ_2[[\SS_2^1]]}(M,\ZZ_2)$ so, in particular, the algebraic duality  
spectral sequence 
\[
H^q(F_p,\ZZ_2) \Longrightarrow H_c^{p+q}(\SS_2^1,\ZZ_2)
\]
is a spectral sequence of $H_c^\ast(\SS_2^1,\ZZ_2)$-modules.
Furthermore, the action of
$H_c^{\ast}(\SS_2^1,\ZZ_2)$ on $E_1^{p,\ast} \cong H^\ast(F_p,\ZZ_2)$ is through the restriction
homomorphism induced by the inclusion $F_p \subseteq \SS_2^1$.

We go into more detail on this algebra action
in \Cref{sec:bock}; see the material after Lemma \ref{lem:bockstein1}.
\end{rem} 

\begin{rem}\label{rem:figure1explained} We are now ready to use the algebraic 
duality  spectral sequence to compute the cohomology of $\SS_2^1$ with $\ZZ_2$ and 
$\FF_2$ coefficients.
In arguments below we may use the notation $E_r^{\ast,\ast}(\FF_2)$ or  $E_r^{\ast,\ast}(\ZZ_2)$ for the
algebraic duality  spectral sequences converging to $H_c^\ast(\SS_2^1,\FF_2)$ or
$H_c^\ast(\SS_2^1,\ZZ_2)$ respectively.

\Cref{fig:ADSS}  displays the $E_1$-term with $\FF_2$-coefficients in the left
column, the $E_1$-term with
$\ZZ_2$-coefficients in the middle column, and the $E_\infty$-term with $\ZZ_2$-coefficients in the
right column. We will have $E_1= E_\infty$ for $\FF_2$-coefficients. See \Cref{cor:cohS21at2}. 

We explain the notation in this figure; let $M=\FF_2$ or $\ZZ_2$. 
Then a vertical subcolumn in an $E_1$-term
displays a copy of $H^\ast(F_p,M)$ where $F_p=G_{24}$ if $p=0$, $C_6$ if $p=1$ or $2$, or $G'_{24}$
if $p=3$. The cohomology rings of $G_{24}$ and $C_6$ were discussed in \Cref{rem:coho-of-g24}.
The symbol \circled{\circled{$\bullet$}} denotes a copy of $\ZZ/8$, while a $\bullet$ denotes a copy
of $\ZZ/2$. A $\square$ denotes a copy of $\ZZ_2$.

We write $\assa \in H^0(F_0, M)$, $\assb  \in H^0(F_1, M)$, $\assc  \in H^0(F_2, M)$,  
$\assd  \in H^0(F_3, M)$ for the generators corresponding to the ring units.  This notation is used to facilitate
references to \cite{Paper2}. Thus, for example, $E^{1,\ast}_1 \cong H^\ast(C_6,M)\assb$
as a module over $H_c^\ast(\SS_2^1,M)$. 

The generators of $\la_0$, $\lb_0$, $\lc_0$, and $\ld_0$ are, strictly speaking, each an element of
$H^0(F_p,M)$ for some $p$. In the next few results, when elements survive to the $E_2$-term, we will conflate them with their
images under the edge homomorphism
\[
E_2^{p,0} \longrightarrow E_{\infty}^{p,0} \subseteq H_c^p(\SS_2^1,M)
\]
of the algebraic duality spectral sequence. 
\end{rem}

\begin{figure}
\center
\begin{minipage}{0.3\textwidth}
\includegraphics[width=\textwidth]{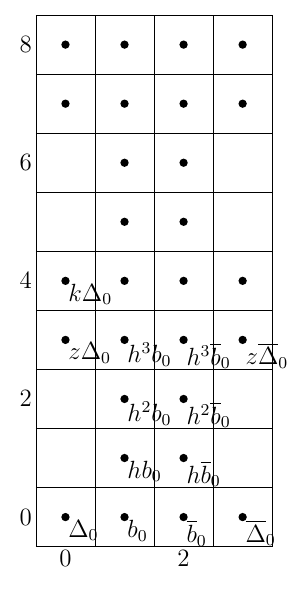}
\end{minipage}
\begin{minipage}{0.3\textwidth}
\center
\includegraphics[width=\textwidth]{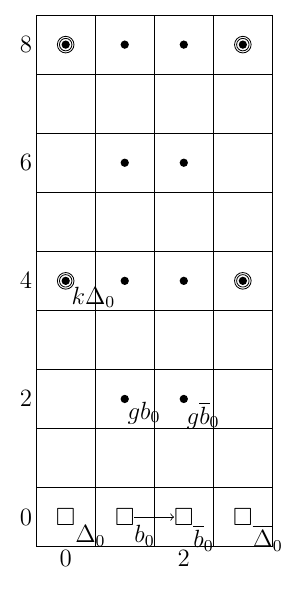}
\end{minipage}
\begin{minipage}{0.3\textwidth}
\center
\includegraphics[width=\textwidth]{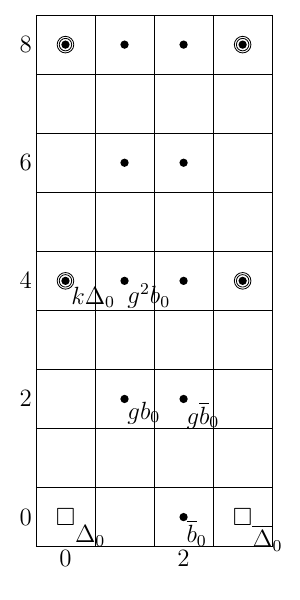}
\end{minipage}
\captionsetup{width=\textwidth}
\caption{The $E_1=E_{\infty}$-term (left) of the ADSS for $H_c^*(\Sn_2^1,\F_2)$ and the
$E_1$-term (center) and $E_2=E_{\infty}$-term (right) of the ADSS for $H_c^*(\Sn_2^1,\Z_2)$.}
\label{fig:ADSS}
\end{figure}

Before getting to our main theorems, we give some preliminary results about the classes $\assb$,
$\assc$, and $\assd$ in $E_1^{\ast,0}$ for both $\FF_2$ and $\ZZ_2$. Recall from 
\Cref{rem:hone} that $\mytwob \in H_c^2(\GG_2,\ZZ_2)$ is the Bockstein on
$\myoneb = \myoneb_2  \in H_c^1(\GG_2,\FF_2)$.

\begin{lem}\label{whatisd0anyway} (1) The class $\assb \in E_1^{1,0}(\FF_2)$ detects $\myoneb \in
H_c^1(\SS_2^1,\FF_2)$.
\medskip

(2) The class $\assc  \in E_1^{2,0}(\FF_2)$ detects $\myoneb^2 \in H_c^2(\SS_2^1,\FF_2)$, and the
class $\assc  \in E_1^{2,0}(\ZZ_2)$ detects the class $\mytwob \in H_c^2(\SS_2^1,\ZZ_2)$.
\medskip

(3) There is a torsion-free class $e \in H_c^3(\SS_2^1,\ZZ_2)$ 
detected by $\assd \in  E_2^{3,0}(\ZZ_2)$ which restricts to a generator of
\[
H_c^3(K^1,\ZZ_2) \cong \ZZ_2 .
\]
\end{lem}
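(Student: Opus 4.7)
The plan is to exploit the sparsity of the ADSS $E_1$-page, together with known non-vanishing results, to pin down the detecting class for each of $\myoneb$, $\myoneb^2$, $\mytwob$, and the torsion-free class $e$. Throughout, I will use that $E_1^{p,q}(M) = H^q(F_p, M)$ where $(F_0,F_1,F_2,F_3) = (G_{24},C_6,C_6,G'_{24})$, and invoke the structural information on the differentials from \fullref{thm:bob} and \fullref{rem:bobrefined}.

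For part (1), I would observe that $H^1(G_{24},\FF_2) = 0$ by \fullref{rem:coho-of-g24}, so $E_1^{0,1}(\FF_2) = 0$ and the only possible contribution to $H^1(\SS_2^1,\FF_2)$ is $E_\infty^{1,0}(\FF_2) \subseteq \FF_2\{\assb\}$. Since $\myoneb$ is non-zero in $H^1(\SS_2^1,\FF_2)$ by \fullref{rem:honeremarks}(1), it must be detected by $\assb$. For the mod $2$ part of (2), part (1) and the multiplicativity of the ADSS force $\myoneb^2$ to have filtration $\geq 2$; taking as input the forward reference \fullref{prop:b0cubedbig} (which asserts $\myoneb^2 \neq 0$ in $H^2(\SS_2^1,\FF_2)$) together with total degree $2$, detection must occur in the only remaining candidate $E_\infty^{2,0}(\FF_2) \subseteq \FF_2\{\assc\}$.

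For the integral version of (2), I would first check $\mytwob \neq 0$ by reducing mod $2$: the composite of the integral Bockstein with the mod-$2$ reduction map is $Sq^1$, which sends the degree-$1$ class $\myoneb$ to $\myoneb^2 \neq 0$. Since $H^2(G_{24},\ZZ_2) = 0$ and $H^1(C_6,\ZZ_2) = 0$ by \fullref{rem:coho-of-g24}, the only filtration that can detect $\mytwob$ is $E_\infty^{2,0}(\ZZ_2)$. Moreover, by part (2) of \fullref{thm:bob} the $d_1$-differential $E_1^{1,0}(\ZZ_2) \to E_1^{2,0}(\ZZ_2)$ induced by $\partial_2$ is multiplication by $2$, so $E_2^{2,0}(\ZZ_2) = \ZZ/2\{\assc\}$, consistent with $\mytwob$ being $2$-torsion.

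For part (3), I would use \fullref{lem:comtoKat3} to produce a class $e \in H^3(\SS_2^1,\ZZ_2)$ restricting to a generator of $H^3(K^1,\ZZ_2) \cong \ZZ_2$; since the target is torsion-free, so is $e$. By \fullref{rem:coho-of-g24}, the groups $H^3(G_{24},\ZZ_2)$ and $H^2(C_6,\ZZ_2)$ are torsion and $H^1(C_6,\ZZ_2) = 0$, so no torsion-free class can be detected in filtrations $0$, $1$, or $2$; hence $e$ must be detected in $E_\infty^{3,0}(\ZZ_2)$. The main obstacle here is to verify that $\assd$ actually survives: the relevant $d_1$-differential is induced by $\partial_3$, which by \fullref{rem:bobrefined} vanishes modulo $I\SS_2^1$ and therefore acts as zero on the trivial module $\ZZ_2$. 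Thus $E_2^{3,0}(\ZZ_2) = \ZZ_2\{\assd\}$, and since the spectral sequence collapses at $E_2$, the class $e$ is detected by $\assd$ (up to a $\ZZ_2$-unit).
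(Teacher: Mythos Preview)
Your arguments for parts (1) and (3) are essentially the paper's (the paper actually \emph{defines} $e$ as the image of $\assd$ and then uses \fullref{lem:comtoKat3}, and your appeal to collapse at $E_2$ is an avoidable forward reference: just note $E_1^{1,1}(\ZZ_2)=H^1(C_6,\ZZ_2)=0$ and $E_1^{0,2}(\ZZ_2)=H^2(G_{24},\ZZ_2)=0$). The real gap is in part (2). You take $\myoneb^2\neq 0$ from \fullref{prop:b0cubedbig}, but that result is proved via \fullref{prop:orderb01}, which cites \fullref{cor:cohS21at2}, which rests on \fullref{thm:cohS21}, which in turn invokes the present lemma; so the forward reference is circular. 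There is also a smaller gap: the paper only equips the ADSS with an $H^\ast(\SS_2^1,-)$-\emph{module} structure (\fullref{rem:HS1action}), not a ring structure, so ``multiplicativity'' does not by itself force $\myoneb^2$ into filtration $\geq 2$.

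The paper avoids both problems by reversing your order of argument. First compute $H^2(\SS_2^1,\ZZ_2)\cong\ZZ/2$ directly from the integral ADSS: you already noted $E_1^{0,2}(\ZZ_2)=E_1^{1,1}(\ZZ_2)=0$ and that $d_1\colon E_1^{1,0}\to E_1^{2,0}$ is multiplication by $2$, and $d_1\colon E_1^{2,0}\to E_1^{3,0}$ vanishes since $\partial_3\equiv 0$ mod $I\SS_2^1$; this gives $H^2\cong E_2^{2,0}(\ZZ_2)\cong\ZZ/2$ with generator detected by $\assc$. Then in the Bockstein long exact sequence, $\times 2$ is zero on $\ZZ/2$, so the connecting map $H^1(\SS_2^1,\FF_2)\to H^2(\SS_2^1,\ZZ_2)$ is onto; as the source is $\FF_2\{\myoneb\}$, the generator of $H^2(\SS_2^1,\ZZ_2)$ is $\mytwob$. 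Finally, reduction mod $2$ is injective on the $2$-torsion group $H^2(\SS_2^1,\ZZ_2)$, so $\myoneb^2\neq 0$, and naturality of the ADSS under $\ZZ_2\to\FF_2$ shows it is detected by $\assc\in E_\infty^{2,0}(\FF_2)$.
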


\begin{proof} For part (1) we see that in the spectral sequence for $H_c^\ast (\SS_2^1,\FF_2)$
(the left-hand column in \Cref{fig:ADSS}) we have $E_\infty^{1,0}(\FF_2) = \FF_2$ 
generated by $\assb$ and $E_\infty^{0,1}= 0$; hence $H_c^1(\SS_2^1,\FF_2) \cong \FF_2$. 
Since $\myoneb \in H_c^1(\SS_2^1,\FF_2)$ is not zero, it
must be detected by $\assb$. 

For part (2) we recall from  part (2) of \Cref{thm:bob} that in the integral spectral sequence
(the central column in \Cref{fig:ADSS}) 
\[
d_1\colon E_1^{1,0}(\ZZ_2) \to E_1^{2,0}(\ZZ_2) 
\]
is multiplication by $2$. Indeed, we are working $2$-locally, $\partial_1 \equiv 2$ modulo $(8, IS_2^1)$ and the action of $IS_2^1$ on $\Z_2$ is trivial.
Thus $H_c^2(\SS_2^1,\ZZ_2) \cong \ZZ/2$ generated
by a class detected by $\assc$. This implies that the connecting homomorphism
$H_c^1(\SS_2^1,\FF_2) \to H_c^2(\SS_2^1,\ZZ_2)$ must be non-zero. Since $H_c^1(\SS_2^1,\FF_2)\cong 
\FF_2$ generated by $\myoneb$, we must have that the generator of $H_c^2(\SS_2^1,\ZZ_2)$ 
is $\mytwob$. From this it follows that $\assc$ detects $\myoneb^2 \in H_c^2(\SS_2^1,\FF_2)$.

This leaves part (3). The integral algebraic duality  spectral sequence 
has an edge homomorphism
\[
\ZZ_2 \cong \Hom_{\ZZ_2[[\SS_2^1]]}(\ZZ_2[[\SS_2^1/G'_{24}]],\ZZ_2) 
= E_1^{3,0} \to H_c^3(\SS_2^1,\ZZ_2).
\]
This map is injective. To see this, note that $\assc$ is a
permanent cycle. Thus, the differential $d_1\colon E_1^{2,0} \to E_1^{3,0}$ is zero. All other differentials with
target $E_r^{3,0}$ have zero source.

Let $e$ be the image of $\assd$.  That $e$ restricts
to a generator of $H_c^3(K^1,\ZZ_2)$ follows from \Cref{lem:comtoKat3}.
\end{proof} 

We can now give the calculation of $H_c^\ast(\SS_2^1,\ZZ_2)$ as
a module over $H^\ast(G_{24},\ZZ_2)$. We give the integral calculation first 
as there are fewer possible differentials. Some of the generators 
in this result are written as products; this is meant only to be evocative
of their antecedents in the spectral sequence. Thus, for example, $g\myoneb$ is not a product in the
cohomology ring $H_c^\ast(\SS_2^1,\ZZ_2)$, but a class detected by $g\assb$.

\begin{thm}\label{thm:cohS21} As an $H^\ast(G_{24},\ZZ_2)\cong \ZZ_2[k]/(8k)$-module
$H_c^\ast(\SS_2^1,\ZZ_2)$ is generated by elements
\[
1,\ g\myoneb,\ g^2\myoneb, \ \mytwob,\ g\mytwob,\ e
\]
of degrees $0$, $3$, $5$, $2$, $4$, and $3$ respectively
and subject only to the relations
\[
2g\myoneb = 2g^2\myoneb =2\mytwob = 2g\mytwob=0.
\]
\end{thm}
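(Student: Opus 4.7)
The plan is to run the algebraic duality spectral sequence (ADSS) from \fullref{rem:algdualresSS} with trivial $\ZZ_2$-coefficients,
\[
E_1^{p,q} = H^q(F_p, \ZZ_2) \Longrightarrow H^{p+q}(\SS_2^1, \ZZ_2),
\]
whose $E_1$-page is depicted in the middle column of \fullref{fig:ADSS} and computed columnwise from the cohomology rings of $G_{24}, C_6, C_6, G_{24}'$ recorded in \fullref{rem:coho-of-g24}. The bottom-row differentials follow directly from \fullref{rem:bobrefined}: since $\SS_2^1$ acts trivially on $\ZZ_2$, the ideal $I\SS_2^1$ annihilates $\Hom$-classes, so the vanishing of $\partial_1$ and $\partial_3$ modulo $I\SS_2^1$ gives $d_1(\assa) = d_1(\assc) = 0$, while $\partial_2 \equiv 2 \pmod{(8, I\SS_2^1)}$ yields $d_1(\assb) = 2\assc$ up to a unit in $\ZZ_2$.

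To extend $d_1$ to the higher rows I exploit the $H^\ast(\SS_2^1, \ZZ_2)$-module structure on the ADSS (\fullref{rem:HS1action}) together with the fact that the class $k \in H^4(G_{24}, \ZZ_2)$ lifts to $H^4(\SS_2^1, \ZZ_2)$ via the splitting of \fullref{lem:gexists}. This lift acts on $E_1^{p,\ast}$ by restriction to $F_p$: as $k$ on the outer columns $p = 0, 3$, and as $g^2$ on the inner columns $p = 1, 2$. At once this yields $d_1(k^i \assa) = 0$, $d_1(g^{2j} \assb) = k^j d_1(\assb) = 2 g^{2j}\assc = 0$, and $d_1(g^{2j}\assc) = 0$ for all $i, j$. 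The generators not reached by $k$-multiplication are $g\assb \in E_1^{1,2}$ and $g\assc \in E_1^{2,2}$: for $g\assc$ the target $E_1^{3,2} = H^2(G_{24}', \ZZ_2) = 0$ forces $d_1$ to vanish, and for $g\assb$ I appeal to Bockstein naturality: since $g = \beta(h)$ in $H^\ast(C_6, \ZZ_2)$, the class $g\assb$ equals $\beta(h\assb_{\FF_2})$ where $h\assb_{\FF_2} \in E_1^{1,1}(\FF_2)$, and because the connecting homomorphism commutes with the ADSS differential and the mod-$2$ ADSS collapses at $E_1$ (\fullref{cor:cohS21at2}), one obtains $d_1(g\assb) = \beta\bigl(d_1(h\assb_{\FF_2})\bigr) = 0$.

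Because the spectral sequence has only four columns, only $d_1, d_2, d_3$ can be nonzero; the $E_1$-page is concentrated in even $q$, and each of $d_2, d_3$ shifts $q$ into an odd value or shifts $p$ outside the support, so $E_2 = E_\infty$ is as in the right column of \fullref{fig:ADSS}. The six permanent cycles $\assa, g\assb, g^2\assb, \assc, g\assc, \assd$ detect the named generators $1, g\myoneb, g^2\myoneb, \mytwob, g\mytwob, e$, with the identifications $\assc \leftrightarrow \mytwob$ and $\assd \leftrightarrow e$ supplied by \fullref{whatisd0anyway}; all other classes on $E_\infty$ arise by $k$-multiplication (using $k|_{C_6} = g^2$), giving the claimed $\ZZ_2[k]/(8k)$-module structure.

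For the relations, $2\mytwob = 0$ is immediate from $\mytwob = \beta(\myoneb)$, and $2g\mytwob = 0$ on the nose because the next filtration cell $E_\infty^{3,1} = H^1(G_{24}',\ZZ_2) = 0$ leaves no room for a hidden extension. For $g\myoneb$ and $g^2\myoneb$ a hidden extension could in principle deposit $2g\myoneb$ into $\ZZ_2 \cdot e$ or $2g^2\myoneb$ into $\ZZ_2\cdot ke$; this is ruled out by a dimension count via the Universal Coefficient Theorem, which forces $\dim_{\FF_2} H^n(\SS_2^1,\FF_2)$ to match the separately computed mod-$2$ ADSS only when these extensions are trivial. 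The main technical obstacle throughout is the vanishing $d_1(g\assb) = 0$, which rests on the mod-$2$ collapse \fullref{cor:cohS21at2}; without it one cannot separate $E_\infty$ from an alternative in which $g\myoneb$ and $g\mytwob$ support a common $d_1$.
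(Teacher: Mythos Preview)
Your overall strategy matches the paper's: run the integral ADSS, determine $d_1$, observe sparseness kills higher differentials, and then settle extensions. The problem is that you appeal to \fullref{cor:cohS21at2} in two essential places, and in the paper that corollary is \emph{deduced from} \fullref{thm:cohS21} via the Universal Coefficient Theorem. So as written your argument is circular: you use the mod-$2$ collapse to prove $d_1(g\assb)=0$, and again to rule out hidden extensions by a UCT dimension count, but the mod-$2$ collapse is not established independently of the theorem you are proving.

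The paper avoids both circularities. For the differential it does not separate out $g\assb$ at all: from part~(2) of \fullref{thm:bob} it asserts that the induced map $d_1\colon E_1^{1,q}\to E_1^{2,q}$ is multiplication by~$2$ in every internal degree $q$, hence zero for $q>0$ because $H^{>0}(C_6,\ZZ_2)$ is $2$-torsion. (Your module-structure argument only reaches the $g^{\text{even}}$ classes; the paper's point is that the description of $\partial_2$ in \fullref{thm:bob}(2) already forces the induced map on all of $H^\ast(C_6,\ZZ_2)$ to be $2\cdot\mathrm{id}$, not merely on $H^0$.) For the extensions the paper uses $k$-periodicity to reduce to total degrees $3$ and $4$. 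In degree~$4$ the splitting of \fullref{lem:gexists} shows the $\ZZ/8$ summand coming from $E_\infty^{0,4}$ splits off, which forces the extension to be trivial. In degree~$3$ the splitting comes from \fullref{lem:comtoKat3}: the restriction $H^3(\SS_2^1,\ZZ_2)\to H^3(K^1,\ZZ_2)\cong\ZZ_2$ is split surjective, and the class $e$ (detected by $\assd$) hits a generator, so the torsion-free piece splits off. Neither of these inputs depends on \fullref{thm:cohS21}, so there is no loop.

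If you want to keep your Bockstein idea for $d_1(g\assb)$, you would first need an independent proof that $d_1=0$ on the mod-$2$ $E_1$-page in bidegree $(1,1)$; the available input (part~(2) of \fullref{thm:bob} gives $\partial_2\equiv 0$ modulo $(2,I\SS_2^1)$) controls the $q=0$ row but does not by itself force vanishing in higher $q$ without further argument.
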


\begin{proof} As a reminder, we are using the algebraic
duality  spectral sequence of \Cref{rem:algdualresSS} with 
$M=\ZZ_2$. The $E_1$-page is determined as an 
$H^\ast(G_{24},\ZZ_2)$-module by \Cref{rem:coho-of-g24} and the result is displayed
in the center column of \Cref{fig:ADSS}. Then
\Cref{thm:bob} part (2) implies that $0=d_1 \colon E_1^{p,\ast} \to E_1^{p+1,\ast}$ if
$p=0$ or $p=2$ and the same result implies
\[
d_1 \colon E_1^{1,q} \to E_1^{2,q}
\]
is multiplication by $2$ and, hence, non-zero only if $q=0$. From $E_2$ onwards, the spectral 
sequence is too sparse for differentials. The result will now follow if we can show that there are no extensions.

Because we have a spectral sequence of modules over 
$H_c^\ast(\SS_2^1,\ZZ_2)$, as in \Cref{rem:HS1action}, by \Cref{lem:gexists} we also have a 
spectral sequence of $H^\ast(G_{24},\ZZ_2)$-modules. 
By periodicity with respect to $k$, we need only check that the extensions
\[
0 \to \ZZ_2 \cong E_\infty^{3,0} \to H_c^3(\SS_2^1,\ZZ_2) \to E_\infty^{1,2} \cong \FF_2 \to 0
\]
and
\[
0 \to \Z/2  \cong E_\infty^{2,2} \to H_c^4(\SS_2^1,\ZZ_2) \to E_\infty^{0,4} \cong \ZZ/8 \to 0
\]
are split. That the first is split follows from 
the second statement of \Cref{whatisd0anyway} with
$M =\ZZ_2$. That the second is split follows from  \Cref{lem:gexists}. 
\end{proof}

We now turn to the calculation of $H_c^\ast (\SS_2^1,\FF_2)$, again using the algebraic
duality  spectral sequence of \Cref{rem:algdualresSS} with $M=\FF_2$.
The result is displayed in the left column of \Cref{fig:ADSS}.

As is \Cref{rem:coho-of-g24}, write $H^\ast(G_{24},\FF_2) \cong
\FF_2[z,k]/(z^2)$ and $H^\ast(C_6,\FF_2) \cong \FF_2[h]$.
The class $k \in H^4(G_{24},\ZZ_2)$ reduces to the class of the
same name in $H^4(G_{24},\FF_2)$ and the class $g \in H^2(C_6,\ZZ_2)$ reduces to
$h^2 \in H^2(C_6,\FF_2)$.

\begin{cor}\label{cor:cohS21at2} The algebraic duality spectral sequence for
$H_c^\ast(\SS_2^1,\FF_2)$ collapses at $E_1$. 
As a module over $\FF_2[k]$, $H_c^\ast(\SS_2^1,\FF_2)$
is freely generated by classes
\[
1,\ z, \ e,\ ze,\ h^i\myoneb,\ h^i\myoneb^2
\]
with $0 \leq i \leq 3$. These classes are of degrees $0$, $3$, $3$, $6$, $1+i$, $2+i$
respectively.  
\end{cor}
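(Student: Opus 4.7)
The plan is to show collapse at $E_1$ by comparing Poincar\'e series with the integral computation of \fullref{thm:cohS21}, and then to identify the $\FF_2[k]$-module structure from the $E_\infty$-page.

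First I would check that $d_1\equiv 0$ in the $\FF_2$-ADSS. By \fullref{thm:bob}(2) together with \fullref{rem:bobrefined}, each boundary $\partial_i$ in the algebraic duality resolution lies in $I\SS_2^1$ (with an extra multiple of $2$ in the case of $\partial_2$). Both effects disappear after passage to $\FF_2$-coefficients with trivial $\SS_2^1$-action, so $d_1=0$ and $E_1=E_2$. Combined with \fullref{rem:coho-of-g24} and the restriction $k\mapsto h^4$, this displays $E_1$ as a free $\FF_2[k]$-module on $12$ generators in the expected degrees: $\{1,z\}$ in column $p=0$, $\{h^i\assb\}_{0\le i\le 3}$ in column $p=1$, $\{h^i\assc\}_{0\le i\le 3}$ in column $p=2$, and $\{\assd,z\assd\}$ in column $p=3$.

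Next I would rule out the remaining higher differentials --- the only candidates being $d_2\colon E_2^{p,q}\to E_2^{p+2,q-1}$ for $p\in\{0,1\}$ and $d_3\colon E_3^{0,q}\to E_3^{3,q-2}$ --- by a dimension count. Combining \fullref{thm:cohS21} with the universal coefficient short exact sequence
\[
0 \to H^n(\SS_2^1,\ZZ_2)/2 \to H^n(\SS_2^1,\FF_2) \to H^{n+1}(\SS_2^1,\ZZ_2)[2] \to 0
\]
yields $\dim_{\FF_2}H^n(\SS_2^1,\FF_2) = 1,1,2,4,3,2,3,4$ for $n = 0,\ldots,7$, and then $4$-periodicity via $k$. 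A direct column-by-column tally on $E_1$ gives exactly the same Hilbert series, so no higher differential can occur and $E_1 = E_\infty$.

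Finally, I would assemble the module structure. By \fullref{lem:gexists} the class $k$ lifts canonically to $H^4(\SS_2^1,\FF_2)$ and the ADSS filtration is stable under multiplication by $k$. Each $E_\infty^{p,\ast}$ being $\FF_2[k]$-free, hence projective, the (finite, four-step) filtration of $H^\ast(\SS_2^1,\FF_2)$ splits in each total degree, and $H^\ast(\SS_2^1,\FF_2)$ is a free $\FF_2[k]$-module isomorphic as a graded $\FF_2[k]$-module to $E_\infty$. By \fullref{whatisd0anyway}, $\assb$ detects $\myoneb$ and $\assc$ detects $\myoneb^2$, which justifies the generator names $h^i\myoneb$ and $h^i\myoneb^2$ for the column-$1$ and column-$2$ generators; the class $z$ may be chosen as a lift of $z\in H^3(G_{24},\FF_2)$ via \fullref{lem:gexists}, $e$ as the mod-$2$ reduction of the integral class of \fullref{whatisd0anyway}(3), and $ze$ as their cup product. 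The main obstacle is the Poincar\'e-series bookkeeping: the low-degree dimensions on both sides and the $4$-periodic pattern must be tracked carefully, and one has to ensure that no $d_r$-differential on a column-$1$ or column-$2$ generator $h^i\assb$ or $h^i\assc$ escapes detection --- this is exactly what the dimension count rules out.
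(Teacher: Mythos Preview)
Your proposal is correct and follows essentially the same route as the paper: the key input is the Universal Coefficient Theorem applied to the integral computation of \fullref{thm:cohS21}, which forces the $E_1$-page to already have the correct Poincar\'e series and hence all differentials to vanish. Your separate treatment of $d_1$ via \fullref{thm:bob}(2) is redundant, since the dimension count handles every $d_r$ simultaneously; the paper simply omits that step and absorbs it into the single comparison with \fullref{thm:cohS21}.
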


\begin{proof} The $E_1$-page of the spectral sequence is displayed in the left column of 
\Cref{fig:ADSS}. That this spectral sequence collapses follows from \Cref{lem:gexists},
the Universal Coefficient Theorem and \Cref{thm:cohS21}. 
\end{proof} 

A key input for our main result on the homotopy type of $L_{K(1)}L_{K(2)}S^0$
is the exact nilpotence order of the class $\myoneb=\myoneb_2 \in H_c^1(\GG_2,\FF_2)$.
The following is a preliminary step. The final result is below in \Cref{prop:b0cubedbig}.

\begin{prop}\label{prop:orderb01} Let $\myoneb \in H_c^1(\SS_2^1,\FF_2)$  be the restriction
of $\myoneb \in H_c^1(\GG_2,\FF_2)$. Then $\myoneb^2  \ne 0$ and $\myoneb^3 =0$ 
in $H_c^\ast(\SS_2^1,\FF_2)$.
\end{prop}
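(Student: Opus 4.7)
The first claim $\myoneb^2 \ne 0$ I would derive directly from \fullref{whatisd0anyway}(2): the class $\assc \in E_1^{2,0}(\FF_2)$ of the algebraic duality spectral sequence detects $\myoneb^2$, and this class survives to $E_\infty$ since the $\FF_2$-valued ADSS collapses at $E_1$ by \fullref{cor:cohS21at2}. For the vanishing $\myoneb^3 = 0$, the plan is to show that $\myoneb^3$ lies in the one-dimensional subspace $F^3 H^3(\SS_2^1,\FF_2)$ and then rule out the unique nonzero element of that subspace by restriction to $K^1$.

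The key input for the filtration jump is that $\myoneb$ vanishes on $C_6$: indeed $C_6 \subseteq G_{24}$, and by \fullref{rem:coho-of-g24} we have $H^1(G_{24},\FF_2) = 0$, so $\myoneb|_{G_{24}} = 0$ and a fortiori $\myoneb|_{C_6} = 0$. Since $F_1 = F_2 = C_6$ in the algebraic duality resolution, the $H^\ast(\SS_2^1,\FF_2)$-module structure of \fullref{rem:HS1action} forces multiplication by $\myoneb$ to act as zero on the columns $E_\infty^{p,\ast}$ for $p = 1, 2$. Applied to the leading term $\assc \in E_\infty^{2,0}$ of $\myoneb^2$, this gives $\myoneb \cdot \assc = 0$ in $E_\infty^{2,1}$, so $\myoneb^3 = \myoneb \cdot \myoneb^2$ has filtration at least $3$. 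Because the resolution has only four terms, $F^3 H^3(\SS_2^1,\FF_2) = E_\infty^{3,0} = \FF_2\{\assd\}$, and by \fullref{whatisd0anyway}(3) the class $\assd$ detects the mod-$2$ reduction of the integral class $e$. Hence $\myoneb^3$ equals either $0$ or $e$ in $H^3(\SS_2^1,\FF_2)$.

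To decide between the two, I would restrict to $K^1$. By part (3) of \fullref{lem:reph1}, the relation $\myoneb^2 = 0$ already holds in $H^\ast(K^1,\FF_2)$, so $\myoneb^3|_{K^1} = 0$; on the other hand, part (3) of \fullref{whatisd0anyway} says that $e|_{K^1}$ generates $H^3(K^1,\FF_2) \cong \FF_2$, hence is nonzero. This rules out $\myoneb^3 = e$ and leaves $\myoneb^3 = 0$. The one delicate step will be justifying that the module action of \fullref{rem:HS1action} really does compute the leading term of $\myoneb \cdot \beta$, for $\beta \in F^p$, as $\myoneb|_{F_p} \cdot \bar\beta \in E_\infty^{p,\ast+1}$; this is the standard Yoneda chain-level description applied to the algebraic duality resolution, and once it is in hand, the vanishing of $\myoneb|_{C_6}$ automatically yields the filtration jump from $2$ to $3$ and the rest of the argument is a straightforward restriction computation.
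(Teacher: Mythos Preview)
Your argument is correct and takes a genuinely different route from the paper's. The paper passes to the quotient $P\SS_2^1 = \SS_2^1/C_2$ (using that $\myoneb$ kills $-1$), then invokes \fullref{lem:comtoKat3} to obtain an isomorphism $H^3(P\SS_2^1,\FF_2) \cong H^3(K^1,\FF_2) \oplus H^3(A_4,\FF_2)$ and checks that the lift $b$ of $\myoneb$ cubes to zero in each summand separately. Your approach stays at the level of $\SS_2^1$ and exploits the $H^\ast(\SS_2^1,\FF_2)$-module structure of the ADSS directly: the vanishing of $\myoneb|_{C_6}$ (via $H^1(G_{24},\FF_2)=0$) forces the filtration jump to $F^3$, and then restriction to $K^1$ rules out $e$. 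Both proofs ultimately rest on $\myoneb^2=0$ in $H^\ast(K^1,\FF_2)$ from \fullref{lem:reph1}; what differs is the mechanism for reducing $H^3(\SS_2^1,\FF_2)$ to something detectable by $K^1$. The paper's route is slightly more self-contained (it avoids invoking the module structure on the spectral sequence), while yours is more in the spirit of the ADSS computations already established and avoids the detour through $P\SS_2^1$ and \fullref{lem:comtoKat3}. The ``delicate step'' you flag is exactly what \fullref{rem:HS1action} asserts (for any coefficient module, including $\FF_2$): the action on the associated graded is via restriction, so the leading term of $\myoneb\cdot\myoneb^2$ in $E_\infty^{2,\ast}$ is $\myoneb|_{F_2}\cdot\assc=0$, and there is no further gap.
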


\begin{proof} We already have $\myoneb^2 \ne 0$, by \Cref{cor:cohS21at2}, so
we need to show $\myoneb^3 = 0$.
The homomorphism $\myoneb \colon \SS_2^1\to \FF_2$ is trivial on the central element $-1\in \SS_2$, 
hence it comes from a unique class  $b \in H_c^1(P\SS_2^1,\FF_2)$. This class restricts to zero in 
$H^1(A_4,\FF_2)$. We will show that $b^3=0$.

To see this, recall the decomposition $P\SS_2^1 = K^1 \rtimes A_4$.
\Cref{lem:comtoKat3} implies that the map
\[
H_c^3 (P\SS_2^1,\FF_2) \to H_c^3 (K^1,\FF_2) \times H^3 (A_4,\FF_2)
\]
defined by the two restriction maps  is an isomorphism. Then, from part (3) of
\Cref{lem:reph1}, and the 
fact that $\myoneb$ (and hence $b$) restricts to the class with the same name in $ H_c^1(K^1,\FF_2)$,
we have that $b^3$ restricts to $0$ in $H_c^3(K^1,\FF_2)$.
Since $b$ restricts to $0$ in $H^1(A_4,\FF_2)$, the result follows. 
\end{proof}

\subsection{The exponent of $\myoneb_2$ in the cohomology of $\SS_2$}  We next turn to the analysis of the cohomology
of $\SS_2$ itself. It is natural to ask for a complete calculation of $H_c^\ast(\SS_2,\FF_2)$; however, at this point,
we have no succinct story to tell, and we won't need this calculation in later sections, so we won't follow this idea here.\footnote{See also Theorem 3.4 of \cite{RavCoh} where the author computes an associated graded for $H^*(S(2))$ where $S(2)$ is the Morava stabilizer algebra. This can be related to $H^*_c(S_2, \FF_4)$ after extending coefficients from $\FF_2$ to $\FF_4$.}
Some of the difficulty arises because we do not have an algebraic duality spectral sequence for $\SS_2$.
For this reason, to prove the next result we will examine the Lyndon-Hochschild-Serre Spectral Sequence (LHSSS) for the split
group extension $K \to \SS_2 \to G_{24}$ of \Cref{rem:all-the-splittings}.

\begin{prop}\label{prop:b0cubedbig} In $H_c^\ast(\SS_2,\FF_2)$, the class $\myoneb = \myoneb_2 \in 
H_c^1(\SS_2,\FF_2)$ has nilpotence order exactly 3; that is, $\myoneb^2 \ne 0$ and
$\myoneb^3 = 0$.
\end{prop}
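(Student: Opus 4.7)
The first claim, $\chi^2 \ne 0$, is immediate: the restriction map $H^\ast(\SS_2, \FF_2) \to H^\ast(\SS_2^1, \FF_2)$ sends $\chi$ to the class of the same name, and by \fullref{prop:orderb01} we have $\chi^2 \ne 0$ in $H^2(\SS_2^1, \FF_2)$, whence $\chi^2 \ne 0$ in $H^2(\SS_2, \FF_2)$.

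For the second claim, $\chi^3 = 0$, I will analyze the Lyndon--Hochschild--Serre spectral sequence for the split extension $1 \to K \to \SS_2 \to G_{24} \to 1$ of \fullref{rem:all-the-splittings},
\[
E_2^{p,q} = H^p(G_{24}, H^q(K, \FF_2)) \Longrightarrow H^{p+q}(\SS_2, \FF_2).
\]
Since $\chi\colon \SS_2 \to \FF_2$ is a group homomorphism into an abelian group, the restriction $\chi|_K$ is automatically $G_{24}$-invariant, and $\chi$ is represented in $E_\infty^{0,1}$ by $\chi|_K \in H^1(K, \FF_2)^{G_{24}}$. Since $\chi^2 = 0$ already in $H^\ast(K, \FF_2)$ by \fullref{lem:reph1}(2), we have $(\chi|_K)^3 = 0$, so $\chi^3$ has vanishing image in $E_\infty^{0,3}$ and lies in $F^1 H^3(\SS_2, \FF_2)$. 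The strategy is then to show that $\chi^3$ vanishes in each of the remaining graded pieces $E_\infty^{3,0}$, $E_\infty^{2,1}$, and $E_\infty^{1,2}$, which will force $\chi^3 = 0$. The easiest of these is $E_\infty^{3,0} \subseteq H^3(G_{24}, \FF_2) = \FF_2\{z\}$: because $G_{24} \cong Q_8 \rtimes C_3$ has abelianization $C_3$ of odd order, every homomorphism $G_{24} \to \FF_2$ is trivial, so $\chi|_{G_{24}} = 0$; combined with the retraction $\SS_2 \to G_{24}$ coming from the semidirect product, $\chi^3|_{G_{24}} = 0$ forces the $E_\infty^{3,0}$-component of $\chi^3$ to vanish.

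For $E_\infty^{2,1}$ and $E_\infty^{1,2}$, a first reduction is that $H^1(G_{24}, \FF_2) = H^2(G_{24}, \FF_2) = 0$ by \fullref{rem:coho-of-g24}, so the trivial $G_{24}$-summands of $H^q(K, \FF_2)$ contribute nothing to $E_2^{p,q}$ for $p = 1, 2$; what remains can only come from the nontrivial $2$-dimensional $G_{24}$-submodule $M = \FF_2\{x_1, x_2\} \subseteq H^1(K, \FF_2)$ (and from the summands of $H^2(K, \FF_2)$ isomorphic to $M$ via multiplication by the invariants $\zeta$ and $\chi$). The hard step will be ruling out these remaining contributions from $M$: the plan is to compute the $G_{24}$-action on $H^q(K, \FF_2)$ explicitly from the presentation in \fullref{lem:reph1}(2) --- in particular, identifying $M$ as an irreducible $\FF_2$-representation of $G_{24}$ factoring through $G_{24} \twoheadrightarrow C_3$ --- then to determine $H^p(G_{24}, M)$ for $p = 1, 2$, and finally to combine this LHSSS information with the extra constraint $\chi^3|_{\SS_2^1} = 0$ from \fullref{prop:orderb01} --- applied via the LHSSS for $\SS_2^1 \to \SS_2 \to \ZZ_2$, which has only two columns since $\ZZ_2$ has cohomological dimension $1$ --- to pin $\chi^3$ down from enough directions to conclude that it is zero.
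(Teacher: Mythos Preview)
Your overall framework matches the paper's: both use the Lyndon--Hochschild--Serre spectral sequence for $K \to \SS_2 \to G_{24}$ and push $\chi^3$ down the filtration, with the $\chi|_{G_{24}} = 0$ retraction argument handling the final step once $\chi^3 \in F^{3,0}$. The divergence is in the hard middle step---eliminating the $E_\infty^{1,2}$ and $E_\infty^{2,1}$ contributions---and there your plan has a genuine gap.

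The paper's key move is to compare this spectral sequence with the one for the \emph{parallel} extension $K^1 \to \SS_2^1 \to G_{24}$, i.e.\ with the \emph{same} quotient group. Because $H^*(K,\FF_2) \cong H^*(K^1,\FF_2) \otimes E(\zeta)$ as $G_{24}$-algebras and $H^1(G_{24},\FF_2)=H^2(G_{24},\FF_2)=0$, the restriction $i^*$ is an isomorphism on $E_2^{p,1}$ for $p=1,2$ and a surjection on all of $E_2$. Since $\chi^2$ is detected by some $\gamma \in E_\infty^{1,1}$, the product $\chi^3$ is detected by $\chi\gamma$ at $E_\infty^{1,2}$; but $i^*(\chi\gamma)$ then detects $\chi^3|_{\SS_2^1}=0$, and as no differentials enter $E_r^{1,2}$ this forces $\chi\gamma=0$ already at $E_2$. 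A similar argument---now using surjectivity of $i^*$ on $E_2^{0,2}$ together with the isomorphism on $E_2^{2,1}$ to lift a $d_2$-boundary back to the $\SS_2$-spectral sequence---handles $E_\infty^{2,1}$.

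Your alternative route has two problems. First, it is not clear from what is quoted that $M=\FF_2\{x_1,x_2\}$ is a $G_{24}$-\emph{submodule} of $H^1(K,\FF_2)$ (as opposed to a quotient), nor that $Q_8$ acts trivially on it; these would require further input from \cite{Paper1}. Second, and more seriously, even granting both claims one computes $H^1(G_{24},M)\cong H^2(G_{24},M)\cong (M\otimes_{\FF_2} M)^{C_3}\cong \FF_2^2$, so these groups are nonzero and the relevant $E_2$-terms are not eliminated by representation theory alone. Your proposed extra input---the two-column spectral sequence for $\SS_2^1 \to \SS_2 \to \ZZ_2$---only tells you that $\chi^3$ lies in $H^1(\ZZ_2, H^2(\SS_2^1,\FF_2))$, another a priori nonzero group, and you have given no mechanism for transporting that constraint back into the $G_{24}$-filtration. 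The missing idea is precisely to make the comparison through the \emph{same} quotient $G_{24}$, so that it becomes a map of spectral sequences and filtration information transfers directly.
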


\begin{proof}
We write $H^\ast (G) = H_c^\ast(G,\FF_2)$ in this argument. 
Recall from \Cref{prop:orderb01} that $\myoneb$ maps to the like-named
class $\myoneb \in H^1(\SS_2^1)$ and there we have $\myoneb^2 \ne 0$ and $\myoneb^3 = 0$. 
It follows immediately that  $\myoneb^2 \ne 0$ in $H^2(\SS_2)$. It remains to show
$\myoneb^3=0$ in $H^\ast(\SS_2)$. 

We will use that $\myoneb^3$ restricts to zero in $H^\ast(\SS^1_2)$ and
a comparison of Lyndon-Hochschild-Serre Spectral Sequences induced
by the inclusion $i \colon \SS_2^1 \to \SS_2$:
\begin{equation}\label{eqn:LHSSSb}
\xymatrix{
H^p(G_{24},H^q (K)) \ar[d]_{i^\ast}\ar@{=>}[r] & H^{p+q}(\SS_2) \ar[d]^{i^\ast}\\
H^p(G_{24},H^q (K^1)) \ar@{=>}[r] & H^{p+q}(\SS^1_2)\ .
}
\end{equation}
From \Cref{lem:reph1} we have a decomposition $H^\ast(K) \cong H^\ast(K^1) \otimes E(\myzetaK)$. 
Since $\myzetaK$ lifts to $H^\ast(\GG_2)$,
it is necessarily $G_{24}$-invariant; hence, this is an isomorphism of 
$G_{24}$-algebras and we have an isomorphism 
\[
H^\ast(G_{24},H^\ast(K)) \cong H^\ast(G_{24},H^\ast(K^1)) \otimes E(\myzetaK)\ .
\]
Note that $\myzetaK \in E_2^{0,1} \cong H^0(G_{24},H^1(K))$; therefore, 
we can deduce an isomorphism 
\begin{equation}\label{eq:veryrefinede2}
E_2^{p,q} \cong
H^p(G_{24},H^q(K)) \cong H^p(G_{24},H^q (K^1)) \oplus H^p(G_{24},H^{q-1} (K^1))\myzetaK\ .
\end{equation}
The map $i^\ast$ of $E_2$ terms in 
(\ref{eqn:LHSSSb}) is the algebra map sending $\myzetaK$ to zero. The class $\myoneb$
is detected in $E_2^{0,1}\cong H^0(G_{24},H^1(K))$. 

As with any spectral sequence, the LHSSS gives a filtration
\[
F^{3,0} \subseteq F^{2,1} \subseteq  F^{1,2} \subseteq F^{0,3} = H^3(\SS_2)
\]
with
\[
E_\infty^{p,q} = F^{p,q}/F^{p+1,q-1}
\]
a subquotient of $E_2^{p,q}\cong H^p(G_{24},H^q (K))$.
There is a similar filtration for $H^2(\SS_2)$. 

We will show that $\myoneb^3 \in F^{3,0}$ and hence it is in the image of the
map $H^\ast(G_{24}) \to H^\ast(\SS_2)$ induced by the projection $\SS_2 \to G_{24}$.
Since this map has a section by \Cref{lem:gexists} and  $\chi$ restricts to $0$ in
$H^1(G_{24}) = 0$, we will have $\myoneb^3 = 0$ in $H^\ast(\SS_2)$.

Since $\myonebK^2 = 0$ in $H^2(K)$ we have $\myoneb^2 \in F^{1,1}$. 
Thus $\myoneb^2$ is detected by a permanent cycle 
\[
\inproofchisquare \in H^1(G_{24},H^1 (K))\ .
\]
It follows that the class $\myoneb^3$ is detected at $E_\infty^{1,2}$ by the class 
$\myonebK \inproofchisquare$.

By \Cref{rem:coho-of-g24} we know $H^1(G_{24})=H^2(G_{24})  = 0$.
Then (\ref{eq:veryrefinede2}) forces the map
\[
i^\ast \colon H^p(G_{24},H^1 (K)) \to H^p(G_{24},H^1 (K^1))
\]
at the $E_2^{p,1}$-term to be an isomorphism for $p=1,2$;
in particular, if $\myonebK \inproofchisquare \ne 0$ in $H^1(G_{24},H^2 (K))$
then $i^\ast(\myonebK \inproofchisquare) = \myonebK i^\ast( \inproofchisquare ) \ne 0$ 
in $H^1(G_{24},H^2 (K^1))$. But $\myoneb^3  = 0$ in 
$H^\ast(\SS_2^1)$, so we have $\myonebK \inproofchisquare = 0$ and we can conclude 
$\myoneb^3 \in F^{2,1}$.

Let $\myoneb^3  \in H^\ast(\SS_2)$ be detected
by a permanent cycle $\inproofz$ in $H^2(G_{24},H^1(K))$. Since $\myoneb^3 = 0$ in 
$H^\ast(\SS_2^1)$ and $i^*$ is an isomorphism at $E_2^{2,1}$, we have an equation 
$d_2(\inproofw) = i^\ast( \inproofz )$ in the LHSSS for 
$H^\ast(\SS_2^1)$. Since the map $i^\ast$ of $E_2$-terms in
(\ref{eqn:LHSSSb}) is a surjection, this implies $\inproofz$ itself is in the image of $d_2$.
Thus $\myoneb^3 \in F^{3,0}$ as needed.
\end{proof}

\section{The cohomology of $H_c^\ast(\GG_2,E_0)$}\label{sec:c-v-section}  We now come to one of the key results
of the paper: we show the inclusion of $\WW \to E_0\cong \WW[[u_1]]$ of continuous $G$-modules yields an 
isomorphism
\[
\xymatrix{
H_c^\ast (G,\WW) \ar[r]^-\cong&  H_c^\ast(G,E_0).
}
\]
See \Cref{thm:const} below. This verifies the Chromatic Vanishing Conjecture (see \Cref{conj:chrom-vanish}) in the case
$n=p=2$. It is also a remarkable simplification and at the heart of much
of what we can prove in $K(2)$-local homotopy theory.  At the end of the section we give some related
rational calculations. 

\subsection{Chromatic vanishing at $n=p=2$} We begin with the following  basic case, which gives a
tight control over the algebraic duality  spectral sequence of \Cref{rem:algdualresSS} 
for $M=E_0/2$. As in \Cref{rem:figure1explained} and \Cref{whatisd0anyway} 
we write $\la_0$, $\lb_0$, $\lc_0$, and $\ld_0$ for the
obvious generator of $H^0(F_p,\FF_2)$, for $p=0$ through $3$ respectively. This gives
generators in $H^0(F_p,E_0/2)$ as well.

\begin{thm}\label{thm:consts21} Let $i \colon \FF_4 \to E_0/2 \cong \FF_4[[u_1]]$ be the inclusion of the 
constants. Then the induced  map of algebraic duality  spectral sequences
\begin{equation}\label{eqn:map-of-const-ss}
\xymatrix{
H^q(F_p,\FF_4) \ar[d]_-{i_*} \ar@{=>}[r] & H_c^{p+q}( \mathbb{S}_2^1,\FF_4)\ar[d]^-{i_\ast} \\ 
H^q(F_p,E_0/2) \ar@{=>}[r] & H_c^{p+q}( \mathbb{S}_2^1,E_0/2)} 
\end{equation}
becomes an isomorphism at $E_2$ and yields an isomorphism
\[
i_\ast \colon H_c^*(\mathbb{S}_2^1, \FF_4) \xrightarrow{\cong} H_c^*(\mathbb{S}_2^1,E_0/2).
\]
\end{thm}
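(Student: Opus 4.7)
The plan is to compare the two algebraic duality spectral sequences column by column. Since the top spectral sequence collapses at $E_1 = E_2$ by \fullref{cor:cohS21at2}, it is enough to show that at each column the $d_1$-cohomology of the bottom spectral sequence is isomorphic via $i_\ast$ to $H^\ast(F_p,\FF_4)$, and then to check that the resulting $E_2$-pages agree globally.

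The first step is to understand each column of the bottom $E_1$-term; that is, to compute $H^\ast(F_p, E_0/2)$ for the finite subgroups $F_p \in \{G_{24}, C_6, C_6, G'_{24}\}$. I would filter $E_0/2 \cong \FF_4[[u_1]]$ by powers of $u_1$ and analyze the action of each $F_p$ on the associated graded $\bigoplus_{n \geq 0} \FF_4 \cdot u_1^n$. The map $i$ admits a retraction given by evaluation at $u_1 = 0$, so $i_\ast$ is a split injection $H^\ast(F_p, \FF_4) \hookrightarrow H^\ast(F_p, E_0/2)$ in each column; the question is then whether the cokernel, living in strictly positive $u_1$-filtration, is annihilated at $E_2$.

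To control the $d_1$ differentials, I would invoke \fullref{rem:bobrefined}: modulo $2$, all of the boundary maps $\partial_i$ in the resolution of \fullref{thm:bob} are zero modulo $I\SS_2^1$ (the case of $\partial_2$ because $2 \equiv 0$). Thus $d_1$ on any cohomology class is governed by the action of elements $(g-1)$ for coset representatives $g \in \SS_2^1$. On the constants $\FF_4 \subseteq E_0/2$ the $\SS_2^1$-action is trivial, so $d_1$ vanishes on the image of $i_\ast$, consistent with the collapse on top. On positive $u_1$-powers, non-trivial actions of elements such as $\pi = 1 + 2\omega$ and $\alpha \equiv 1 - 2\omega$ from \fullref{rem:honeremarks} on $u_1$ should produce $d_1$ classes that cancel the cokernel column by column. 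A useful preliminary observation is that the central element $-1 \in \SS_2$ acts trivially on $E_0/2$ (because $[-1](x) \equiv x$ modulo $2$ in the formal group), which lets me pass via \fullref{rem:comtops21} to the $P\SS_2^1$ spectral sequence, whose middle columns $PF_1 = PF_2 = C_3$ have cohomology concentrated in degree zero and are therefore much easier to analyze.

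The main obstacle is to verify the $d_1$-cancellation in each column exactly: that at $E_2$ only $H^\ast(F_p,\FF_4)$ survives, with no residual $u_1$-torsion. This requires an explicit description of the $\SS_2^1$-action on $u_1$ modulo $2$, matched against the specific coset representatives used in the resolution. Once the $E_2$-isomorphism is established, the convergent isomorphism $H^\ast(\SS_2^1, \FF_4) \cong H^\ast(\SS_2^1, E_0/2)$ is automatic, since both spectral sequences have horizontal range $[0,3]$ and therefore converge strongly.
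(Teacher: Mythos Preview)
Your injectivity argument via the retraction $E_0/2 \to E_0/(2,u_1) \cong \FF_4$ is correct and is exactly what the paper does; the split shows $i_\ast$ is injective at every page, so the entire content is surjectivity at $E_2$.

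For surjectivity the paper takes a very different route from yours. Rather than analysing $d_1$ on the cokernel directly, it invokes Theorem~1.2.2 of \cite{Paper2}, which already computes the $E_2=E_\infty$-term of the algebraic duality spectral sequence for $E_0/2$ outright and exhibits it as a free $\FF_4[k]$-module on an explicit list of twelve generators. One then compares with \fullref{cor:cohS21at2} and the left column of \fullref{fig:ADSS}: the $E_1=E_2$-page for $\FF_4$ coefficients is likewise free over $\FF_4[k]$ with the same number of generators in each bidegree. An injection between free graded $\FF_4[k]$-modules of equal finite rank in each bidegree is an isomorphism, and that finishes the proof.

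Your proposal---compute $H^\ast(F_p,E_0/2)$ by filtering on powers of $u_1$, then show the positive-$u_1$ part is $d_1$-acyclic---is in effect a proposal to redo the computation carried out in \cite{Paper2}. That computation occupies the bulk of that paper: it requires explicit formulas for the maps $\partial_i$ (not merely their residues modulo $I\SS_2^1$, which as you note only tell you $d_1$ vanishes on constants) together with explicit formulas for the $G_{24}$- and $C_6$-actions on $E_0/2$. You rightly flag this as ``the main obstacle,'' but it is not a step one can sketch; without it the argument is incomplete. Passing to $P\SS_2^1$ as you suggest is a genuine simplification of the middle two columns, but it does not touch the $A_4$-columns $p=0,3$, where the hard work lies. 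One small correction: the element $\pi = 1 + 2\omega$ is not in $\SS_2^1$ (indeed $N(\pi)$ is a topological generator of $\ZZ_2$), so it enters the resolution only as the conjugating element relating $F_0=G_{24}$ to $F_3=G_{24}'$, not as an element whose action on $u_1$ produces $d_1$-differentials.
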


\begin{proof} We first show that the map is one-to-one. Recall from \Cref{rem:v1-plus-ss} that
$v_1 = u^{-1}u_1$ for our formal group. This is a $\GG_2$-invariant element modulo $2$; hence the
quotient map
\[
E_\ast/2 \longrightarrow E_\ast/(2,v_1) \cong \FF_4[u^{\pm 1}]
\]
is $\GG_2$-equivariant. The composite map 
\[
H_c^*(\mathbb{S}_2^1,\FF_4) \to  H_c^*(\mathbb{S}_2^1,E_0/2) \to  H_c^*(\mathbb{S}_2^1,E_0/(2,u_1))
\] 
is then an isomorphism because it is induced by the isomorphism
$\FF_4 \cong E_0/(2,u_1)$ on coefficients. For similar reasons the map on
$E_1$-terms of (\ref{eqn:map-of-const-ss})  must also be an injection.

The hard part is to show that the map induced by $i$ is surjective on the $E_2$-term.
Here we need Theorem 1.2.2 of \cite{Paper2}. From there we  read that the $E_2=E_\infty$-term of 
the algebraic duality  spectral sequence for $E_0/2$ is free over $\FF_4[k]$ on elements
\begin{align*}
\la_0 &\in H^0(F_0,E_0/2)\\
\nu^2y \la_{-1} &\in H^3(F_0, E_0/2)\\
h^i\lb_0  &\in H^i(F_1,E_0/2),\qquad 0 \leq i \leq 3,\\
h^i \lc_{0} & \in H^i(F_2,E_0/2),\qquad 0 \leq i \leq 3,\\
\ld_0 &\in H^0(F_3,E_0/2)\\
\nu^2y  \ld_{-1} &\in H^3(F_3,E_0/2)\ .
\end{align*}
(Here, the elements $\la_0, \lb_0, \lc_0, \ld_0$ are the images of the same named generators from \Cref{rem:figure1explained} under the natural maps $H^0(F_p, \FF_2) \to H^0(F_p, E_0/2)$.)
This, the left column of \Cref{fig:ADSS}, and \Cref{cor:cohS21at2} 
imply that on the $E_2$-pages of spectral sequences (\ref{eqn:map-of-const-ss}) 
$i_\ast$ induces an injective homomorphism of 
free graded $\FF_4[k]$-modules with the same number of generators in 
each bidegree. Thus our map must be onto.
\end{proof}
 
We can now give an integral statement. By \Cref{coh-decomp-galois} there is an isomorphism
\begin{align}\label{eq:isoZ2tensorW}
\WW \otimes_{\ZZ_2} H_c^\ast(\SS_2^1,\ZZ_2) \cong H_c^\ast (\SS_2^1,\WW).
\end{align}
Hence \Cref{thm:cohS21} gives an explicit computation of $H_c^\ast (\SS_2^1,\WW)$ using the 
algebraic duality spectral sequence.

\begin{cor}\label{cor:consts21} Let $i\colon \WW \to E_0 \cong \WW[[u_1]]$ be the inclusion of the 
constants. Then $i$ induces an isomorphism
\begin{align*}
H_c^*(\mathbb{S}_2^1, \W) \cong H_c^*(\mathbb{S}_2^1,E_0).\end{align*}
\end{cor}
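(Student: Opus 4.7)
The plan is to reduce the integral statement to the mod $2$ statement of \fullref{thm:consts21} via induction on powers of $2$, followed by passage to the inverse limit. The key observation is that $\WW = \lim_k \WW/2^k$ and $E_0 = \lim_k E_0/2^k$ as profinite $\SS_2^1$-modules, and the inclusion of constants is compatible with these filtrations.

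First I would prove by induction on $k \geq 1$ that $i\colon \WW/2^k \to E_0/2^k$ induces an isomorphism $H^*(\SS_2^1, \WW/2^k) \xrightarrow{\cong} H^*(\SS_2^1, E_0/2^k)$. The base case $k=1$ is \fullref{thm:consts21}, since $\WW/2 \cong \FF_4$. For the inductive step, the short exact sequences
\[
0 \to \WW/2 \xrightarrow{2^{k-1}} \WW/2^k \to \WW/2^{k-1} \to 0
\]
and
\[
0 \to E_0/2 \xrightarrow{2^{k-1}} E_0/2^k \to E_0/2^{k-1} \to 0
\]
fit into a commutative diagram via $i$. The induced long exact sequences in continuous cohomology form a ladder in which the maps involving $\WW/2$ and $E_0/2$ are isomorphisms by the base case and the maps involving $\WW/2^{k-1}$ and $E_0/2^{k-1}$ are isomorphisms by the inductive hypothesis. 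The five lemma then yields the isomorphism at level $k$.

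Finally, I would pass to the inverse limit. For a profinite group $G$ acting continuously on a profinite module $M = \lim_k M_k$, one has $H^n(G, M) \cong \lim_k H^n(G, M_k)$ whenever the inverse system on cohomology is Mittag-Leffler. In our setting the transition maps $\WW/2^k \to \WW/2^{k-1}$ and $E_0/2^k \to E_0/2^{k-1}$ are surjective; combining the previous step with the explicit calculation of \fullref{thm:cohS21} shows that each $H^n(\SS_2^1, \WW/2^k)$, and hence each $H^n(\SS_2^1, E_0/2^k)$, is a finitely generated $\WW/2^k$-module of uniformly bounded size, so the Mittag-Leffler condition is automatic. Passing to the limit yields the desired isomorphism $i_* \colon H^*(\SS_2^1, \WW) \xrightarrow{\cong} H^*(\SS_2^1, E_0)$. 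The main technical obstacle is the careful handling of this inverse limit in continuous group cohomology and the verification of Mittag-Leffler; these are standard for profinite-group cohomology with profinite coefficients, but require some setup to state cleanly.
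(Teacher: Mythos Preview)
Your proposal is correct and follows essentially the same route as the paper: induction on $k$ using the five lemma to obtain $H^*(\SS_2^1,\WW/2^k)\cong H^*(\SS_2^1,E_0/2^k)$, then passage to the inverse limit via vanishing of $\lim^1$. One small imprecision: the groups $H^n(\SS_2^1,\WW/2^k)$ are not of uniformly bounded size (the free part of $H^n(\SS_2^1,\WW)$ contributes a factor growing with $k$), but they are finite in each degree, which is all that is needed for Mittag--Leffler.
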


\begin{proof}
That $i$ induces an isomorphism
\[
H_c^*(\mathbb{S}_2^1, \W/2^n) \cong H_c^*(\mathbb{S}_2^1,E_0/2^n)
\]
follows by induction on $n$. The base case, $n=1$, is \Cref{thm:consts21}.
For the inductive step, we use the five lemma. The integral result is shown by taking inverse limits 
and using the $\lim^1$-exact sequence.
\end{proof}

We record the following companion result to \Cref{cor:consts21}  for use in the proof of \Cref{thm:f4}.
 If $M$ is a profinite $\SS_2^1$-module, let $E^{p,q}_r(M)$ be the $r$th page 
of the algebraic duality spectral sequence of $M$. See \Cref{rem:algdualresSS}.
Recall that $\mytwob \in H_c^2(\GG_2,\ZZ_2)$ is the Bockstein on $\myoneb \in H_c^1(\GG_2,\FF_2)$.

\begin{lem}\label{E2*0-comp} (1) The unit map $i \colon \WW \to E_0$ induces an isomorphism
\[E_2^{*,0}(\WW) \to E_2^{*,0}(E_0) \]
where $E_2^{*,*}(M)$ denotes the $E_2$-term of the algebraic duality spectral sequence for $M$.
\medskip

(2) For $0 \leq p \leq 3$ we have isomorphisms
\[
E_2^{p,0}(E_0) \cong
\begin{cases}
\WW  & p=0 \\
0 & p=1  \\
\FF_4 & p=2 \\ 
\WW & p=3\ .
\end{cases}
\]
Each of the non-zero groups $E_2^{p,0}(E_0)$ is generated by the cohomology class of the unit in 
$E_1^{p,0} = H^0(F_p,E_0)$.
\end{lem}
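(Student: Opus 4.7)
The plan is to first compute $E_2^{*,0}(\WW)$ directly from \fullref{thm:bob} and \fullref{rem:bobrefined}, then transfer the calculation to $E_0$ using the abutment isomorphism of \fullref{cor:consts21}; part (2) will fall out of part (1) by tracing generators.

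Since $\WW$ carries the trivial $\SS_2^1$-action, $E_1^{p,0}(\WW) = H^0(F_p,\WW) = \WW$ for each $p \in \{0,1,2,3\}$. On a trivial module, the differential $d_1 \colon E_1^{p,0}(\WW) \to E_1^{p+1,0}(\WW)$ induced by dualizing $\partial_{p+1}$ reduces to multiplication by the augmentation $\epsilon(\partial_{p+1}) \in \ZZ_2$. By \fullref{rem:bobrefined}, $\epsilon(\partial_1) = \epsilon(\partial_3) = 0$ and $\epsilon(\partial_2) \equiv 2 \pmod{8}$, so this augmentation is twice a unit in $\ZZ_2$. Taking cohomology of $\WW \xrightarrow{0} \WW \xrightarrow{\cdot 2u} \WW \xrightarrow{0} \WW$ yields $E_2^{p,0}(\WW) \cong \WW, 0, \FF_4, \WW$ for $p = 0,1,2,3$, and each nonzero term is generated by the class of the unit $1 \in \WW$.

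For (1), I would use the short exact sequence $0 \to \WW \to E_0 \to Q \to 0$ of $\SS_2^1$-modules with $Q = E_0/\WW$. Because each $F_p$ is finite and $\WW$ is a torsion-free trivial $\ZZ_2$-module, $H^1(F_p,\WW) = \Hom(F_p^{\mathrm{ab}},\WW) = 0$, so taking $F_p$-invariants produces a short exact sequence of complexes on the $q = 0$ row of the ADSS. The associated long exact sequence in $E_2^{*,0}$ reduces (1) to showing $E_2^{p,0}(Q) = 0$ for $p = 0,1,2,3$. The cases $p = 0$ and $p = 1$ are automatic from \fullref{cor:consts21}: $E_2^{0,0}(Q) = H^0(\SS_2^1,Q) = 0$, and the four-column structure of the ADSS forces $E_2^{1,0}(Q) = E_\infty^{1,0}(Q) \subseteq H^1(\SS_2^1,Q) = 0$ (no incoming higher differentials at $p = 1$).

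The main obstacle is the vanishing for $p = 2, 3$, where incoming $d_2$ and $d_3$ differentials could a priori contribute. My plan is to first handle the mod-$2$ case via \fullref{thm:consts21}, which supplies the full $E_2$-term isomorphism $E_2^{*,*}(\FF_4) \cong E_2^{*,*}(E_0/2)$; the analogous short-exact-sequence comparison applied to $0 \to \FF_4 \to E_0/2 \to Q/2 \to 0$ (with any obstructions from the possibly nonvanishing $H^1(F_p,\FF_4)$ handled by a five-lemma chase on the exact couples) then forces $E_2^{*,0}(Q/2) = 0$. Bockstein induction on the short exact sequences $0 \to Q/2 \to Q/2^{n+1} \to Q/2^n \to 0$ gives $E_2^{*,0}(Q/2^n) = 0$ for all $n$, and a $\lim^1$-argument on the Mittag-Leffler inverse system transfers this to integral $Q$, completing (1). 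Part (2) is then immediate: under the isomorphism of (1), each nonzero $E_2^{p,0}(E_0)$ is generated by the image of $1 \in \WW$, i.e., by the class of the unit $1 \in E_0 = H^0(F_p,E_0)$.
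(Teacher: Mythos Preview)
Your computation of $E_2^{*,0}(\WW)$ from \fullref{rem:bobrefined} is correct, and your argument that $E_2^{p,0}(Q)=0$ for $p=0,1$ via the abutment isomorphism of \fullref{cor:consts21} is fine. The gap is at $p=2,3$.

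You correctly flag that $H^1(F_p,\FF_4)$ need not vanish---indeed $H^1(C_6,\FF_4)\cong\FF_4$ by \fullref{rem:coho-of-g24}---so the sequence $0\to\FF_4\to E_0/2\to Q/2\to 0$ does not give a short exact sequence of $q=0$ row complexes. The phrase ``five-lemma chase on the exact couples'' is not an argument: a short exact sequence of coefficients gives, for each fixed $p$, a long exact sequence in $H^q(F_p,-)$, but this is only a long exact sequence of \emph{chain complexes} in the $p$-direction, and taking $d_1$-cohomology does not produce a long exact sequence of $E_2^{*,0}$-terms. Knowing the full $E_2$-isomorphism $E_2^{*,*}(\FF_4)\cong E_2^{*,*}(E_0/2)$ from \fullref{thm:consts21} does not by itself force $E_2^{*,0}(Q/2)=0$. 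The same obstruction bites again at your Bockstein step, where the induction would require $H^1(F_p,Q/2)=0$, which you have not established. Finally, note that even knowing $H^*(\SS_2^1,Q)=0$ only forces $E_\infty^{p,0}(Q)=0$; for $p=2,3$ there are potential incoming $d_2$ and $d_3$, so $E_2^{p,0}(Q)$ could still be nonzero.

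The paper sidesteps all of this by invoking one extra computational input you do not use: $H^1(F_p,E_0)=0$ for every $p$, taken from the known calculations of $H^*(G_{24},E_*)$ and $H^*(C_6,E_*)$. With this in hand (together with $H^1(F_p,\WW)=0$), multiplication by $2^n$ on both $\WW$ and $E_0$ gives genuine short exact sequences of $q=0$ row complexes $H^0(F_*,-)$, hence short exact sequences $0\to H^0(F_*,M/2^n)\to H^0(F_*,M/2^{n+1})\to H^0(F_*,M/2)\to 0$. The base case---that $H^0(F_*,\WW/2)\to H^0(F_*,E_0/2)$ is a quasi-isomorphism---is exactly the $q=0$ row of the $E_2$-isomorphism in \fullref{thm:consts21}; the five lemma then inducts on $n$, and a $\lim/\lim^1$ argument passes to the integral statement. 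The missing ingredient in your route is precisely the vanishing of $H^1(F_p,E_0)$.
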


\begin{proof}
For part (1) we use that $E_2^{*,0}(M)$ is the cohomology of the torsion-free cochain
complexes $E_1^{*,0}=H^0(F_*, M)$. Now, let $M = \WW$ or $E_0$. 
For both choices of $M$ and any $p$, $H^1(F_p, M)=0$. For $M=\WW$ this follows from
\Cref{rem:coho-of-g24} and for $M=E_0$ this follows from \cite{tbauer} and \cite{MR}, 
but see also Section 2 of \cite{BobkovaGoerss}. From this it follows that for all $n>1$
there is a short exact sequences of complexes 
\[
\xymatrix{
0 \ar[r] & H^0(F_{\ast}, M) \ar[r]^-{2^n} & H^0(F_{\ast}, M)  \ar[r] 
& H^0(F_{\ast}, M/2^n)  \ar[r]  & 0.
}
\]
Thus we deduce that we have a short exact sequence of chain complexes
\[
\xymatrix{
0 \ar[r] & H^0(F_{\ast}, M/2^n) \ar[r]^-{2} & H^0(F_{\ast}, M/2^{n+1})  \ar[r] 
& H^0(F_{\ast}, M/2)  \ar[r]  & 0.
}
\]
The map of cochain complexes
\[
H^0(F_{\ast}, \WW/2) \longrightarrow H^0(F_{\ast}, E_0/2)
\]
induces an isomorphism in cohomology by \Cref{cor:consts21}. 
Then, using the five lemma, we have that 
\[
H^0(F_{\ast}, \WW/2^n) \longrightarrow H^0(F_{\ast}, E_0/2^n)
\]
is an isomorphism. The integral result then follows by taking inverse limits and using 
the $\lim^1$-exact sequence.

Part (2) is proved by combining \eqref{eq:isoZ2tensorW}, \Cref{whatisd0anyway}, and \Cref{thm:cohS21}.
\end{proof}

We can now extend \Cref{cor:consts21} to a larger class of groups which includes $
\SS_2$, $\GG_2$, and $\GG_2^1$.

\begin{thm}\label{thm:const} Let $G \subseteq \GG_2$ be any closed subgroup
which contains $\SS_2^1$ as a normal subgroup. Then
the inclusion of $\Z_2[[G]]$-modules
$i \colon \W \rightarrow E_0$ induces an 
isomorphism
\[
i_\ast: H_c^*(G, \W) \cong H_c^*(G, E_0).
\]
\end{thm}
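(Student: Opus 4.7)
The plan is to bootstrap from the case $G = \SS_2^1$ (which is \fullref{cor:consts21}) by comparing Lyndon--Hochschild--Serre spectral sequences. Since $\SS_2^1$ is normal in $G$ by hypothesis, the extension
\[
1 \to \SS_2^1 \to G \to G/\SS_2^1 \to 1
\]
yields, for any profinite continuous $G$-module $M$, a convergent spectral sequence in continuous cohomology
\[
E_2^{p,q}(M) = H^p(G/\SS_2^1,\, H^q(\SS_2^1, M)) \Longrightarrow H^{p+q}(G, M),
\]
in which the $G/\SS_2^1$-action on $H^q(\SS_2^1, M)$ is induced by the conjugation action of $G$ on $\SS_2^1$ together with the $G$-action on $M$. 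The unit inclusion $i\colon \W \to E_0$ is $\GG_2$-equivariant, so it induces a map from the LHSSS for $\W$ to the one for $E_0$.

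On the $E_2$-pages this comparison is the map on $G/\SS_2^1$-cohomology induced by the edge map $i_\ast \colon H^q(\SS_2^1, \W) \to H^q(\SS_2^1, E_0)$. By \fullref{cor:consts21} this map is an isomorphism, and the $G$-equivariance of $i$ ensures that it is $G/\SS_2^1$-equivariant. Hence $i_\ast$ is an isomorphism on $E_2$, therefore on $E_\infty$, and so on the abutment, which is the assertion. Convergence is not an issue here because $G/\SS_2^1$ is a closed subgroup of the pro-$2$ group $\GG_2/\SS_2^1 \cong \ZZ_2 \times \mathrm{Gal}(\FF_4/\FF_2)$ and therefore has finite mod-$2$ cohomological dimension.

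The main technical point to be verified is the setup of the continuous LHSSS with the profinite coefficient modules $\W$ and $E_0$. If we prefer to avoid invoking this directly, we can instead reduce to the discrete case by mimicking the limit argument from the proof of \fullref{cor:consts21}. Prove by induction on $n$, using the five lemma applied to the coefficient sequences $0 \to M/2 \to M/2^{n+1} \to M/2^n \to 0$ for $M = \W$ and $M = E_0$, that $H^\ast(G, \W/2^n) \to H^\ast(G, E_0/2^n)$ is an isomorphism; the base case $n=1$ uses the LHSSS argument above with coefficients $\FF_4$ and $\FF_4[[u_1]]$, each an inverse limit of finite discrete $G$-modules. A final $\mathrm{lim}$--$\mathrm{lim}^1$ passage then recovers the integral statement.
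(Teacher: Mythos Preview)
Your argument is correct and is essentially identical to the paper's own proof, which simply invokes \fullref{cor:consts21} together with the comparison of Lyndon--Hochschild--Serre spectral sequences for the extension $1 \to \SS_2^1 \to G \to G/\SS_2^1 \to 1$. Your additional remarks on convergence and the alternative inductive reduction modulo $2^n$ are sound elaborations, but the core idea matches exactly.
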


\begin{proof} This follows from \Cref{cor:consts21} and
the following diagram of Lyndon-Hochschild-Serre Spectral Sequences
\begin{equation*}
\xymatrix@R=0.2in{
H_c^p(G/\SS_2^1,H_c^q(\SS_2^1,\WW)) \ar[d]_\cong^{i_*} \ar@{=>}[r] &
H_c^{p+q}(G,\WW)\ar[d] \\ 
H_c^p(G/\SS_2^1,H_c^q(\SS_2^1,E_0)) \ar@{=>}[r] & H_c^{p+q}( G,E_0)} 
\end{equation*}
\end{proof}

\begin{rem}\label{rem:other-const} \Cref{thm:const} extends to an isomorphism
\[
i_\ast: H_c^*(G, \W/2^n) \cong H_c^*(G, E_0/2^n)
\]
for all $n \geq 1$.
\end{rem} 

\begin{rem}\label{rem:z-vs-w} One point that might be worth clarifying is the effect of the
extension of scalars $\ZZ_2 \to \WW$. Using \Cref{coh-decomp-galois} and
\Cref{thm:const} we can conclude that
\[
H_c^\ast(\SS_2,\ZZ_2) \cong H_c^\ast(\GG_2,E_0).
\]
Similar remarks apply to the inclusion $\SS_2^1 \subseteq \GG_2^1$. 
\end{rem}

\begin{rem}\label{we-need-this-in-sec8} \Cref{lem:hone-of-stwo} and \Cref{thm:const} imply
\[
H_c^1(\GG_2,E_0) \cong \ZZ_2
\]
generated by $\zeta_2$ and \Cref{lem:hone-of-stwo} and
\Cref{rem:other-const} imply
\[
H_c^1(\GG_2,E_0/2) \cong \ZZ/2 \times \ZZ/2
\]
generated by $\zeta_2$ and $\myoneb_2$.
\end{rem}

\subsection{The rational cohomology of $\SS_2$ and $\GG_2$} In this subsection we make
the rational calculations needed to deduce the homotopy type of $L_1L_{K(2)}S^0$ from
the homotopy type of $L_{K(1)}L_{K(2)}S^0$.
The main result is \Cref{prop:ratcohE}, which completely
calculates $H_c^*(\mathbb{G}_2,E_*)\otimes \QQ$.
\medskip

By \Cref{whatisd0anyway}, there exists a class
\[
e \in H_c^3(\SS_2^1,\ZZ_2)
\]
which maps to a generator of $H_c^3(K^1,\ZZ_2)\cong \ZZ_2$.
Let $E_{\QQ_2}(-)$ denote exterior algebras over $\QQ_2 \cong \ZZ_2 \otimes \QQ$.

\begin{prop}\label{prop:ratcoh} (1) There is an isomorphism of graded rings
\[
E_{\QQ_2}(e) \cong H_c^\ast(\SS_2^1,\ZZ_2) \otimes \QQ.
\]

(2) There is an isomorphism of graded rings
\[
E_{\QQ_2}(\myzeta, e) \cong H_c^\ast(\SS_2,\ZZ_2) \otimes \QQ.
\]
\end{prop}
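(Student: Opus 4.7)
The plan is to read off part~(1) from \fullref{thm:cohS21}, and then derive part~(2) via a Lyndon-Hochschild-Serre spectral sequence argument combined with a dimension count provided by the Morava-Lazard theorem cited in the introduction.

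For part~(1), \fullref{thm:cohS21} describes $H^\ast(\SS_2^1,\ZZ_2)$ as a module over $H^\ast(G_{24},\ZZ_2)\cong\ZZ_2[k]/(8k)$ on the six generators $1$, $g\myoneb$, $g^2\myoneb$, $\mytwob$, $g\mytwob$, $e$; the four middle generators are $2$-torsion, and the ring generator $k$ is $8$-torsion. Upon tensoring with $\QQ$, everything outside of $\QQ_2\cdot 1\oplus\QQ_2\cdot e$ is killed, so $H^\ast(\SS_2^1,\ZZ_2)\otimes\QQ$ has $\QQ_2$-dimension $1$ in each of degrees $0$ and $3$. Since $e$ is odd-degree and the target is a graded-commutative $\QQ$-algebra, $e^2=0$, and the evident ring map $E_{\QQ_2}(e)\to H^\ast(\SS_2^1,\ZZ_2)\otimes\QQ$ is an isomorphism by dimension count.

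For part~(2), I would apply the Lyndon-Hochschild-Serre spectral sequence to the split extension $1\to\SS_2^1\to\SS_2\to\ZZ_2\to 1$ coming from \fullref{rem:all-the-splittings}, with $\QQ_2$ coefficients. By part~(1) the fiber cohomology is supported only in rows $q\in\{0,3\}$, and since the pro-$p$-group $\ZZ_2$ has continuous cohomological dimension $1$ only the columns $p\in\{0,1\}$ survive; hence the $E_2$-page sits on a $2\times 2$ grid and collapses at $E_2$ for dimensional reasons.

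The main obstacle is to show that the $\ZZ_2$-action on $H^3(\SS_2^1,\QQ_2)\cong\QQ_2\cdot e$ is trivial, since otherwise both the invariants and the coinvariants in the row $q=3$ would vanish. I would handle this by a dimension count. By Morava-Lazard, $H^\ast(\GG_2,\WW)\otimes\QQ$ is an exterior algebra on two generators in degrees $1$ and $3$, of total $\QQ_2$-dimension $4$. Since $\SS_2$ is the kernel of $\GG_2\to\Gal$ it acts trivially on $\WW$, giving $H^\ast(\SS_2,\WW)\cong\WW\otimes_{\ZZ_2}H^\ast(\SS_2,\ZZ_2)$; combined with the rational Lyndon-Hochschild-Serre argument for $\SS_2\lhd\GG_2$ (whose quotient $\Gal$ has order invertible in $\QQ$), this pins down $\dim_{\QQ_2}H^\ast(\SS_2,\ZZ_2)\otimes\QQ=4$. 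The $q=0$ row of the LHSSS already contributes $\QQ_2$-dimension $2$ via $1$ and $\myzeta$, so the $q=3$ row must also contribute $2$, forcing the $\ZZ_2$-action on $H^3(\SS_2^1,\QQ_2)$ to be trivial. The $E_\infty$-page is then $E_{\QQ_2}(\myzeta)\otimes E_{\QQ_2}(e)$; since $\myzeta$ and $e$ are odd-degree they square to zero in the graded-commutative rationalization, giving a well-defined ring map $E_{\QQ_2}(\myzeta,e)\to H^\ast(\SS_2,\ZZ_2)\otimes\QQ$ that is an isomorphism by a final dimension count.
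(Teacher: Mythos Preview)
Your proof is correct. Part~(1) is identical to the paper's argument. For part~(2), both you and the paper run the Lyndon--Hochschild--Serre spectral sequence for the split extension $\SS_2^1 \to \SS_2 \to \ZZ_2$ and reduce to showing that the $\ZZ_2$-action on $H^3(\SS_2^1,\ZZ_2)\otimes\QQ \cong \QQ_2$ is trivial, but you resolve this key point differently.

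The paper compares with the parallel LHSSS for the finite-index subgroup $K = K^1 \rtimes \ZZ_2 \subseteq \SS_2$ (see \fullref{rem:getting-to-coh-K}). Since $K$ is a $4$-dimensional Poincar\'e duality group by \fullref{lem:reph1}, one has $H^4(K,\ZZ_2)\otimes\QQ \cong \QQ_2$; in the LHSSS for $K$ this can only come from the $(p,q)=(1,3)$ spot, forcing the $\ZZ_2$-action on $H^3(K^1,\ZZ_2)\otimes\QQ$ to be trivial. The finite-index restriction $H^3(\SS_2^1,\ZZ_2)\otimes\QQ \to H^3(K^1,\ZZ_2)\otimes\QQ$ is a $\ZZ_2$-equivariant injection of one-dimensional spaces, hence an isomorphism, transferring the triviality back. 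Your approach instead invokes the Morava--Lazard theorem as an external input to force the dimension count. Both arguments are short; the paper's is more self-contained within the structure already set up (the Poincar\'e duality subgroups $K$ and $K^1$), while yours treats Morava--Lazard as a black box cited in the introduction. A minor note: your reference for the split extension $\SS_2^1 \to \SS_2 \to \ZZ_2$ should point to \fullref{subgroupsection} rather than \fullref{rem:all-the-splittings}, which concerns the decompositions $K \rtimes G_{24} \cong \SS_2$.
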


\begin{proof} The first part follows from \Cref{thm:cohS21}. For the second
part consider the diagram of Lyndon-Hochschild-Serre Spectral Sequences
induced by the inclusion $K \subseteq \SS_2$:
\[
\xymatrix{
H_c^p(\ZZ_2,H_c^q(\SS_2^1,\ZZ_2)) \ar[r]\ar[d]& H_c^{p+q}(\SS_2,\ZZ_2)\ar[d]\\
H_c^p(\ZZ_2,H_c^q(K^1,\ZZ_2)) \ar[r]& H_c^{p+q}(K,\ZZ_2).
}
\]
Since $K \subseteq \SS_2$ and $K^1 \subseteq \SS_2^1$ are finite index
subgroups, the vertical maps are rational monomorphisms.
By part (1) of  \Cref{lem:reph1}, we have $H_c^4(K,\ZZ_2) \otimes \QQ \cong \QQ_2$.
Thus the action of $\ZZ_2$ on $H_c^3(K^1,\ZZ_2) \otimes \QQ \cong \QQ_2$ must be trivial,
as needed.
\end{proof}

This extends immediately to a much larger calculation. 

\begin{thm}\label{prop:ratcohE} Let $i\colon \WW \to E_0 \cong \WW[[u_1]]$ be the inclusion of the 
constants.This map induces an isomorphism
\[
E_{\QQ_2}(\zeta,e) \cong H_c^\ast(\GG_2,\WW)\otimes \QQ \cong
H_c^*(\mathbb{G}_2,E_*)\otimes \QQ.
\]
\end{thm}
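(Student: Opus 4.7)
The strategy is to decompose $E_\ast = \bigoplus_{n \in \ZZ} E_{2n}$ as $\GG_2$-modules (the $\GG_2$-action preserves internal degree), to handle the $n=0$ summand using \fullref{thm:const} and \fullref{prop:ratcoh}, and to kill the remaining internal degrees by a central-character argument.

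\emph{The $n=0$ summand.} Applying \fullref{thm:const} with $G = \GG_2$ (which contains $\SS_2^1$ as a normal subgroup), the unit map $i\colon \WW \to E_0$ induces an isomorphism $H^\ast(\GG_2, \WW) \xrightarrow{\cong} H^\ast(\GG_2, E_0)$. The Lyndon-Hochschild-Serre spectral sequence for $\SS_2 \triangleleft \GG_2$ collapses after tensoring with $\QQ$ (since $|\Gal| = 2$ is invertible rationally), and $\SS_2$ acts trivially on $\WW$, so
\[
H^\ast(\GG_2, \WW) \otimes \QQ \;\cong\; \bigl(H^\ast(\SS_2, \ZZ_2) \otimes_{\ZZ_2} (\WW \otimes \QQ)\bigr)^{\Gal} \;\cong\; H^\ast(\SS_2, \ZZ_2) \otimes \QQ,
\]
which is $E_{\QQ_2}(\zeta, e)$ by \fullref{prop:ratcoh}.

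\emph{Vanishing for $n \neq 0$.} Let $H = 1 + 4\ZZ_2 \subseteq \ZZ_2^\times$, embedded in $\SS_2$ as the scalar subgroup of $\WW^\times$. Using the ring-theoretic description of \fullref{rem:where-did-t-arise}, $\ZZ_2^\times$ commutes with both $\WW$ and $T$, so $H$ is central in $\SS_2$; since $\Gal$ fixes $\ZZ_2 \subseteq \WW$ pointwise, $H$ is also central in $\GG_2$. By the standard description of the $\SS_2$-action on the universal deformation, the lift of multiplication by $a \in \ZZ_2^\times$ from $\Gamma_2$ to the universal deformation covers the identity on $E_0$ and acts on the chosen tangent vector by $u \mapsto a u$. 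Hence $H$ acts on $E_{2n} = E_0 \cdot u^{-n}$ through a continuous character $\chi_n\colon H \to \ZZ_2^\times$ which is non-trivial for $n \neq 0$ (as $\log\colon 1 + 4\ZZ_2 \to \QQ_2$ is non-zero). Consequently both the invariants $(E_{2n} \otimes \QQ)^H$ and the coinvariants $(E_{2n} \otimes \QQ)_H$ vanish, and since $H \cong \ZZ_2$ has cohomological dimension one, $H^\ast(H, E_{2n} \otimes \QQ) = 0$. The Lyndon-Hochschild-Serre spectral sequence for $H \triangleleft \GG_2$ then forces $H^\ast(\GG_2, E_{2n}) \otimes \QQ = 0$.

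Assembling the two cases yields the desired isomorphism, and the composite is induced by $i$ by naturality. The main obstacle is the verification that the scalar subgroup acts trivially on $E_0$ and by multiplication on $u$; this is standard but requires unwinding the definition of the $\GG_2$-action on Lubin-Tate theory, using that the $\star$-isomorphism associated to multiplication by $a$ on $\Gamma_2$ lifts uniquely, and covers the identity on $E_0$.
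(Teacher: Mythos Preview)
Your proposal is correct and follows essentially the same approach as the paper's proof. Both arguments use \fullref{thm:const} together with \fullref{prop:ratcoh} for the degree-zero part, and both kill the nonzero internal degrees by passing to the central copy of $\ZZ_2 \cong 1 + 4\ZZ_2 \subseteq \ZZ_2^\times$ (the paper describes it as the subgroup topologically generated by some $\ell \equiv 1 \pmod 4$) and using the Lyndon--Hochschild--Serre spectral sequence; the paper simply asserts the action formula $\ell \cdot u^{-t} = \ell^{-t} u^{-t}$ where you unwind it explicitly.
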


\begin{proof} From \Cref{thm:const}, \Cref{rem:z-vs-w},
and \Cref{prop:ratcoh} we get isomorphisms
\[
E_{\QQ_2}(\zeta,e)\cong H_c^\ast(\GG_2,\WW) \otimes \QQ \cong H_c^\ast (\mathbb{G}_2,E_0)\otimes \QQ.
\]

To complete the proof, we must show $H_c^\ast(\mathbb{G}_{2},E_t)$ is torsion for $t \ne 0$. 
This  is very standard, but we give the proof as it is short.

Define $\Z_2^\times \to \SS_2 \subseteq \G_2$ by sending $\ell$
to the $\ell$-series $[\ell](x)$ of our chosen formal group law. This identifies $\Z_2^\times$ with the 
center of $\GG_2$. Let $\Z_2 \subset 
\Z_2^\times$ be the subgroup topologically generated by
an integer $\ell$ with $\ell \equiv 1$ modulo $4$. Now consider
the Lyndon-Hochschild-Serre Spectral Sequence for the extension
\[
0 \to \Z_2 \to \G_2 \to \G_2/\Z_2 \to 0.
\]
If $t \ne 0$, then $\ell$ acts on $E_{2t}$ by multiplication by $\ell^{-t}$, hence
$H_c^0(\Z_2,E_{2t}) = 0$ and $H_c^1(\Z_2,E_{2t})$ is torsion. Since
$E_{2t+1} = 0$, the result follows.
\end{proof} 

\begin{rem}\label{rem:eis4dzero} By \Cref{prop:ratcoh} there must be a class in
$\pi_{-3}L_{K(2)}S^0$ which maps to a multiple of $e \in \QQ \otimes \pi_{-3}L_{K(2)}S^0$. 
We can be very specific about this. We will see below in \Cref{rem:backfill-on-pi-3} that
\[
\pi_{-3}L_{K(2)}S^0 \cong \ZZ_2 \oplus \ZZ/2 \subseteq H_c^3(\GG_2,E_0)
\]
generated by the classes $4e$ and $\zeta_2\mytwob$.

A similar phenomenon happened at $p=3$ and $n=2$; there was a class $e$ so that 
$3e$ detects a copy of $\Z_3$. See \cite{GoerssSplit}. 

\end{rem}


\section{The class $\myoneb_2$ is a $d_3$-cycle}\label{sec:d3cycle}
\setcounter{subsection}{1}

As should be clear by now, the class $\myoneb=\myoneb_2 \in H_c^1(\GG_2,E_0/2)$ is one key to the 
extra subtleties we encounter at $p=2$ in $K(2)$-local homotopy theory. Over the next few sections we
will work on some of the specific implications of the existence of this class. We show that
$\myoneb$ is a permanent cycle in the $K(2)$-local Adams-Novikov Spectral
Sequence in \Cref{thm:bzeroinvo}; see also \Cref{prop:chiperm} for a $K(1)$-local application.
However, it turns out that evaluating $d_3$ requires an entirely different set of techniques,
which we isolate in this section. The central idea of this section is due to Mike Hopkins.

The goal is to prove the following result. 

\begin{thm}\label{prop:b0d3cycle}
In the spectral sequence
\[
E_2^{s,t} = H_c^s( \G_2, E_tV(0)) \Longrightarrow \pi_{t-s}L_{K(2)}V(0)
\]
the classes $\myoneb$ and $\myoneb^2$ are $d_3$-cycles.
\end{thm}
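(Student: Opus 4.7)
The plan, following an idea of Hopkins acknowledged in the introduction, is to exploit the cohomological isomorphism $H^*(\G_2, \W/2) \cong H^*(\G_2, E_0/2)$ of \fullref{thm:consts21} by lifting it (partially) to a map of spectral sequences. In that comparison, $\chi$ and $\chi^2$ are detected by classes in an auxiliary spectral sequence whose differentials vanish for degree reasons alone, forcing the same vanishing in the ANSS.

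The first observation is that $\chi$ and $\chi^2$ come from a very small subalgebra. By \fullref{rem:hone}, $\chi \in H^1(\G_2, \F_2)$ is pulled back along the composite $\G_2 \xrightarrow{\det} \Z_2^\times \twoheadrightarrow \Z/4^\times \cong \Z/2$ from the generator of $H^1(\Z/2, \F_2)$, so both $\chi$ and $\chi^2$ lie in the image of the polynomial algebra $H^*(\Z/2, \F_2) = \F_2[\chi]$. In particular, a single comparison map can handle both classes simultaneously, and it is not necessary (and would fail) to appeal to Leibniz at $E_3$, where the ring structure of the spectral sequence for $V(0)$ is lost (see \fullref{rem:warningcopy}).

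The second step is to construct an auxiliary homotopy fixed point spectral sequence whose $E_2$-term involves $H^s(\G_2, \W/2)$, built from a $\G_2$-equivariant Eilenberg-MacLane-type object $X$ with coefficients concentrated in topological degree $t = 0$. For such an $X$, the differentials $d_r \colon E_r^{s,0} \to E_r^{s+r,\, r-1}$ vanish for all $r \geq 2$ because the target row is zero, so every class is a permanent cycle. A $\G_2$-equivariant comparison map $X \to L_{K(2)}(E \wedge V(0))$ inducing the inclusion $\W/2 \hookrightarrow E_0/2$ on coefficients would then produce a map of spectral sequences which, by \fullref{thm:consts21}, is an isomorphism on the entire $t = 0$ row of $E_2$. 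Transferring the vanishing differentials along this map yields $d_3(\chi) = 0$ and $d_3(\chi^2) = 0$ in the ANSS for $V(0)$.

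The main obstacle, which the introduction candidly flags, is the honest construction of the comparison. The paper does not build a full map of homotopy fixed point spectral sequences; it extracts only the structure needed to control this single differential. In practice I expect to work with a model for $X$ coming from the Galois-type action of $\G_2$ on $\W$ via the quotient $\G_2 \to \Gal(\F_4/\F_2)$, together with a filtration argument comparing it to the Adams-Novikov tower for $V(0)$ in the narrow range of bidegrees relevant to $\chi$ and $\chi^2$. The key technical point to verify is that the comparison respects $d_3$ on classes in the $t = 0$ row, which is precisely what the inputs from \fullref{thm:consts21} and the factorization of $\chi$ through $\Z/2$ are designed to supply.
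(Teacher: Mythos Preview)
Your high-level picture is right: the argument is a comparison of spectral sequences exploiting the factorization of $\chi$ through the quotient $\chi\colon \G_2 \to C_2$. But your concrete plan diverges from the paper's in a way that leaves a real gap, and it also misses the ingredient that actually closes the argument.

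You propose to build a $\G_2$-equivariant Eilenberg--MacLane-type object $X$ concentrated in $t=0$, so that differentials vanish in the auxiliary spectral sequence for degree reasons, and then transfer this to the ANSS via \fullref{thm:consts21}. The obstacle you flag is exactly the obstacle: no such $X$ with a genuine map to $E\wedge V(0)$ in the $K(2)$-local category is constructed here, and your suggestion to use the Galois action of $\G_2$ on $\W$ through $\G_2\to\Gal(\F_4/\F_2)$ is the wrong quotient (that is the Frobenius $C_2$, not the determinant $C_2$ through which $\chi$ factors). So as written, the comparison map and hence the transfer of vanishing is not established.

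The paper sidesteps the construction problem entirely. It uses the \emph{existing} spectrum $V(0)$ with the \emph{trivial} $C_2$-action, so the auxiliary spectral sequence is the Atiyah--Hirzebruch spectral sequence $H^s(C_2,\pi_t V(0))\Rightarrow \pi_{t-s}V(0)^{hC_2}$ for the cohomotopy of $\R P^\infty_+$, and the comparison map to the ANSS is supplied by Devinatz's iterated homotopy fixed point machinery (\fullref{rem:sssetup}, \fullref{lem:sssetup}). The price is that this auxiliary spectral sequence is \emph{not} concentrated in $t=0$, so differentials do not vanish automatically. Two genuine computations are needed: first $d_2(h)=0$ is checked by restricting to the $3$-skeleton $\R P^3$ and counting (using $\pi_{-1}(D\R P^3_+\wedge V(0))\cong \Z/4\oplus\Z/4$); second, the only possible target for $d_3(h)$ is $v_1 h^4$, and its image under $\varphi$ is $v_1\chi^4=0$ because $\chi^3=0$ by \fullref{prop:b0cubedbig}. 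This nilpotence input is the key step you do not mention, and it is what \fullref{thm:consts21} is really buying in the argument (it underlies the proof of $\chi^3=0$). Finally, $\chi^2$ is handled not by Leibniz but by the Geometric Boundary Theorem, since $\chi^2$ is the Bockstein of $\chi$.
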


Before proving this, we give some background to explain our methods.

We proved above in \Cref{thm:const} and \Cref{rem:z-vs-w}
that the inclusion of the constant power series $\ZZ_2 \to E_0$  
induced an isomorphism 
\[
H_c^\ast(\SS_2,\ZZ_2) \cong H_c^\ast(\GG_2,\WW) \cong H_c^\ast (\GG_2,E_0).
\]
We would like to exploit a homotopy fixed point 
spectral sequence for the trivial action of $\SS_2$ on the $2$-completed sphere $S^0_2$; 
the issue is that it would take considerable
effort to define such a spectral sequence. Fortunately, the class $\myoneb^i \in H_c^\ast(\SS_2,\FF_2)$ is
the restriction of the non-zero class $h^i \in H^\ast(C_2,\FF_2)$ under a quotient map 
$\myoneb \colon \GG_2 \to C_2$. See \Cref{rem:hone}. We can then examine the map on cohomology
\[
H^\ast(C_2,\pi_\ast S^0_2) \to H_c^\ast(\GG_2,\pi_\ast S^0_2) \to H_c^\ast(\GG_2,E_\ast).
\]
We will show below in \Cref{lem:sssetup} that this composite map does extend 
to a map of homotopy fixed point spectral sequences. 

For the trivial action of $C_2$ on the $2$-completed sphere spectrum $S^0_2$,  we have
\[
(S^0_2)^{hC_2} \simeq F({BC_2}_+, S^0_2) \simeq F(\R P^{\infty}_+,S^0_2).
\]
Here $F(X,Y)$ is the function spectrum. The homotopy fixed point spectral sequence
\[
H^s(C_2,\pi_tS^0_2) \Longrightarrow \pi_{t-s}F({BC_2}_+, S^0_2)
\]
is the Atiyah-Hirzebruch Spectral Sequence for the cohomotopy of ${\R P^{\infty}}$.
It follows from  Lin's Theorem \cite{LinsTheorem} 
that ${(S^0_2)}^{hC_2} \simeq (S^0 \vee \R P^{\infty}_+)^{\wedge}_2$;
hence, the homotopy fixed point spectral sequence is an impractical approach to Lin's theorem.  
Low-dimensional calculations can still be informative, however, and we use them to give
us the information we need.

We now set up the map of spectral sequences we will use.
We recall some results from Devinatz \cite{DevTrans}, especially sections 2 and 3. This paper 
by Devinatz undertakes a thorough analysis iterating the homotopy fixed point constructions.
Let $H$ and $K$ be closed subgroups of $\GG_n$ and suppose $H$ is normal in $K$. 
This implies that there is a
$K/H$-equivariant map $E^{hK} \to E^{hH}$ of $E_\infty$-ring spectra. Let $X$ be an
$E^{hK}$-module. Then, in the $K(n)$-local category of $E^{hK}$-modules, we can form an $E^{hH}$-based
Adams-Novikov resolution of $E^{hK}$ and apply the mapping space functor 
$F_{E^{hK}}(X,-)$ to obtain a spectral sequence
\begin{equation*}\label{eq:homotopy-LSHSS}
H_c^s(K/H,\pi_tF_{E^{hK}}(X,E^{hH})) \Longrightarrow \pi_{t-s}F_{E^{hK}}(X,E^{hK}).
\end{equation*}
The subtle part is to identify the $E_2$-term. This is Theorem 3.1 of \cite{DevTrans}. 
If $Y$ is a finite CW-spectrum and $DY$ its Spanier-Whitehead dual,
we can set $X = E^{hK} \wedge DY$  to obtain a spectral sequence
\begin{equation}\label{eq:dev-desc1}
H_c^s(K/H,\pi_t(E\wedge Y)^{hH}) \Longrightarrow \pi_{t-s}(E\wedge Y)^{hK}.
\end{equation}
Furthermore, in the Appendix of \cite{DevTrans}, Devinatz shows that if $K/H$ is finite, then this is
the usual homotopy fixed point spectral sequence. 

This is natural in the pair $(K,H)$ in the sense that if $K_0 \subseteq K$ and 
$H_0 \subseteq H$ then we get a diagram of spectral sequences
\begin{equation}\label{eq:dev-desc2bis}
\xymatrix{
H_c^s(K/H,\pi_t(E\wedge Y)^{hH}) \ar[d] \ar@{=>}[r] &\pi_{t-s}(E\wedge Y)^{hK} \ar[d]\\
H_c^s(K_0/H_0,\pi_t(E\wedge Y)^{hH_0}) \ar@{=>}[r] &\pi_{t-s}(E\wedge Y)^{hK_0}.
} 
\end{equation}

Now let $H_0 = \{e\}$, $K_0 = K = \GG_2$, and $H$ the kernel of the map 
\[
\myoneb \colon \G_2\to (\Z/4)^{\times}\cong C_2
\]
of \Cref{rem:hone}. Then $K/H \cong C_2$ and $K_0/H_0 = \GG_2$.
If we set $Y = V(0)$, then \eqref{eq:dev-desc2bis}
gives a diagram of spectral sequences
\begin{equation}\label{eq:dev-desc2}
\xymatrix{
H^s(C_2,\pi_t(E\wedge V(0))^{hH}) \ar[d] \ar@{=>}[r] &\pi_{t-s}L_{K(2)}V(0)\ar[d]\\
H_c^s(\GG_2,E_tV(0)) \ar@{=>}[r] &\pi_{t-s}L_{K(2)}V(0).
}
\end{equation}

Now consider that map
\[
V(0)  \simeq  S^0_2 \wedge V(0) \to E^{hH} \wedge V(0) \simeq (E \wedge V(0))^{hH}
\]
of $C_2$-equivariant spectra, with trivial action on $V(0)$. Since the top row of (\ref{eq:dev-desc2}) 
is the usual homotopy spectral sequence we get a map of spectral sequences
\begin{equation}\label{eq:dev-desc3}
\xymatrix{
H^s(C_2,\pi_tV(0)) \ar[d] \ar@{=>}[r] &\pi_{t-s}V(0)^{hC_2}\ar[d]\\
H^s(C_2,\pi_t(E\wedge V(0))^{hH}) \ar@{=>}[r] &\pi_{t-s}L_{K(2)}V(0). \\
}
\end{equation}
The  top spectral sequence here is in the usual (unlocalized) stable category.
Many variants are possible here; for example, we could use the sphere as well.

We now have the following result. 
\begin{lem}\label{lem:sssetup} There is a map of spectral sequences 
\begin{equation}\label{segalss}
\xymatrix{
H^s(C_2,\pi_tV(0))  \ar[d]_\varphi  \ar@{=>}[r]  &  \pi_{t-s} V(0)^{hC_2}\ar[d]\\
H_c^s(\G_2,E_tV(0))   \ar@{=>}[r] & \pi_{t-s} L_{K(2)}V(0).
}
\end{equation}
from the homotopy fixed point spectral sequence for $V(0)$ to the Adams-Novikov Spectral Sequence 
of $L_{K(2)}V(0)$. The map $\varphi$ at the $E_2$-page is the map on cohomology induced by the 
quotient map $\myoneb \colon \G_2\to (\Z/4)^{\times}\cong C_2$ and the Hurewicz map
$\pi_\ast V(0) \to E_\ast V(0)$. 
\end{lem}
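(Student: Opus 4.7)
The plan is to paste together the two diagrams of spectral sequences assembled in \fullref{rem:sssetup}, which have already imported all the nontrivial content from Devinatz \cite{DevTrans}: the iterated homotopy fixed point spectral sequence \eqref{eq:dev-desc1}, its naturality in the pair $(K,H)$ recorded in \eqref{eq:dev-desc2bis}, and its identification with the ordinary HFPSS when $K/H$ is finite. Once these tools are on the table, the lemma reduces to careful bookkeeping.

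First I would instantiate the Devinatz spectral sequence \eqref{eq:dev-desc1} at $K = \GG_2$, $H = \ker(\myoneb)$, and $Y = V(0)$. Because $V(0)$ is finite we have $(E \wedge V(0))^{h\GG_2} \simeq L_{K(2)}V(0)$ (using \fullref{rem:ssisos}), so this yields
\[
H^s\bigl(C_2,\,\pi_t(E\wedge V(0))^{hH}\bigr) \Longrightarrow \pi_{t-s}L_{K(2)}V(0),
\]
which by the closing paragraph of \fullref{rem:sssetup} is the genuine HFPSS for the residual $C_2 = K/H$-action on $(E \wedge V(0))^{hH}$. The naturality square \eqref{eq:dev-desc2bis} applied with $(K_0,H_0) = (\GG_2,\{e\})$ against $(K,H) = (\GG_2,\ker\myoneb)$ then produces the diagram \eqref{eq:dev-desc2}, whose lower row is the $K(2)$-local Adams--Novikov spectral sequence for $L_{K(2)}V(0)$.

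Next I would stack on top of this the map of HFPSSs recorded in \eqref{eq:dev-desc3}, coming from the $C_2$-equivariant unit map $V(0) \to (E \wedge V(0))^{hH}$ (trivial $C_2$-action on the source, residual action on the target). Pasting \eqref{eq:dev-desc3} on top of \eqref{eq:dev-desc2} and collapsing the intermediate row produces the map of spectral sequences claimed in \eqref{segalss}.

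Finally, to identify $\varphi$ on $E_2$-terms, observe that the group-cohomology factor is the inflation along the surjection $\myoneb\colon \GG_2 \twoheadrightarrow C_2$, by the naturality \eqref{eq:dev-desc2bis}, while the coefficient factor is the composite
\[
\pi_t V(0) \longrightarrow \pi_t(E \wedge V(0))^{hH} \longrightarrow \pi_t(E \wedge V(0)) = E_t V(0),
\]
which is the $E$-Hurewicz map of $V(0)$ factored through the $H$-fixed points and hence equal to it. The only ingredient that is not purely formal, namely the functoriality of Devinatz's construction in $(K,H)$ and its agreement with the ordinary HFPSS when $K/H$ is finite, has already been delegated to \cite{DevTrans} in \fullref{rem:sssetup}; the one subtlety worth flagging is checking that the quotient $C_2$-action appearing on the middle row really is the one induced by $\myoneb$, but this is forced by the definition $H = \ker(\myoneb)$.
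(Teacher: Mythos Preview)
Your proposal is correct and follows essentially the same approach as the paper: the paper's proof simply says to combine the maps of spectral sequences \eqref{eq:dev-desc2} and \eqref{eq:dev-desc3} from \fullref{rem:sssetup}, which is exactly the pasting you describe. You supply more detail than the paper on the identification of $\varphi$ at the $E_2$-page (the inflation along $\myoneb$ on the group-cohomology side and the Hurewicz map on coefficients), but this is a welcome elaboration rather than a different argument.
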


\begin {proof} Combine the maps of spectral sequences (\ref{eq:dev-desc2}) with (\ref{eq:dev-desc3}). 
Note the top spectral sequence of (\ref{eq:dev-desc3}) is the homotopy fixed point spectral 
sequence for $V(0)$ with a trivial $C_2$ action. 
\end{proof} 

Since $E \wedge V(0)$ doesn't have a ring structure, neither of the spectral sequences
of (\ref{segalss}) is a spectral sequence of rings. Nonetheless,
the $E_2$-terms are graded commutative rings and the map $\varphi$ is a ring map. 
Let $h$ be the non-zero element in $H^1(C_2, \pi_0V(0)) \cong \F_2$ and write $h^i$ for its
powers. Then, by definition, $\varphi(h^i)= \myoneb^i$. 

Finally, since the homotopy fixed point spectral sequence for $V(0)$ is the 
Atiyah-Hirzebruch Spectral Sequence for $V(0)^\ast (\R P^\infty_{+})$ we have the following observation.  
The inclusion of the skeleton $\R P^n \to \R P^{\infty}$ induces a map of spectral  sequences 
\[
\xymatrix{
H^s(C_2,\pi_tV(0))  \ar[d]^-{Skel_n}  \ar@{=>}[r]  &    \pi_{t-s} V(0)^{hC_2} \ar[d] \\
H^s(\R P^n_{+}, \pi_{t}V(0))   \ar@{=>}[r]
& \pi_{t-s}F(\R P^n_{+}, V(0))\  . 
}
\]
where the lower spectral sequence is the Atiyah-Hirzebruch Spectral Sequence for 
$V(0)^\ast ( \R P^n_{+})$.
 
\begin{proof}[Proof of \Cref{prop:b0d3cycle}.] To prove that $\myoneb$ is a $d_3$-cycle, we prove that
$d_2(h) =0$ in the fixed point spectral sequence and that the only
target for $d_3(h)$  maps to zero under the map $\varphi$ of (\ref{segalss}). We will
use our result \Cref{prop:b0cubedbig} on the nilpotence order of $\myoneb$.

We begin by analyzing the $d_2$-differential. 
First, note that 
\[Skel_3 \colon H^s(C_2,\pi_tV(0)) \to H^s(\R P^3_{+}, \pi_{t}V(0))   \]
is an isomorphism if $0\leq s \leq 2$ and injective if $s=3$.
Hence, to prove that $h$ is a $d_2$-cycle, it suffices to prove that this holds for $Skel_3(h)$.

First, note that $\R P^{3}_+\simeq S^0\vee \Sigma V(0)\vee S^3$.
Hence,
\[
D\R P^3_+ \smsh V(0)\simeq V(0)\vee \Sigma^{-3}V(0)\vee (\Sigma^{-2}V(0)\wedge V(0)).
\]
Therefore, 
\begin{equation*}
\pi_{-1}(D\R P^3_+ \smsh V(0)) \cong \Z/4 \oplus \Z/4 .
\end{equation*}
The needed calculations are elementary, but see also \Cref{sec:remonV0}.

We now turn to the analysis of the relevant Atiyah-Hirzebruch spectral sequences. In the spectral sequence 
\[
E_2^{s,t} = H^s(\R P^3_{+}, \pi_{t}V(0)) \Longrightarrow \pi_{t-s} (D\R P^3_+ \smsh V(0)) ,
\]
the only non-zero contributions to $\pi_{-1} (D\R P^3_+ \smsh V(0))$ in $E_{2}^{s,t}$ are:
\[
E_{2}^{s,t} = \begin{cases}\Z/4\{v_1 h^3\} & (s,t) = (3,2) \\
 \Z/2\{ \eta h^2 \} &  (s,t) = (2,1) \\
\Z/2\{h\}  &  (s,t) = (1,0) .
\end{cases}
\]

Thus $d_2(h)$ must be zero, or the $E_\infty$-term would be too small to account  
for the size of $\pi_{-1}(D\R P^3_+ \smsh V(0))$.

We turn to the $d_3$-differential. For this we do not need to restrict to 
any skeleton, but can directly  use the diagram of spectral sequences \eqref{segalss}. 
The targets for $d_3(h)$ are in 
\[
E_2^{4,2} = H^4(C_2,\pi_2V(0))\cong \Z/2\{v_1h^4\}.
\] 
If $d_3(h)=\lambda v_1h^4$ for $\lambda\in \Z/2$, 
then, by naturality, \[d_3(\myoneb)=\lambda v_1\varphi(h^4)=\lambda v_1\myoneb^4.\] 
However, by \Cref{prop:b0cubedbig}, $\myoneb^4 = 0$. This finishes the proof that
$\myoneb$ is a  $d_3$-cycle. By the Geometric Boundary Theorem (see \Cref{rem:whats-up-GBT})
and the fact that  $\myoneb^2$ is the Bockstein of $\myoneb$, we have that
$\myoneb^2$ is also a $d_3$-cycle.
\end{proof}

\section{The decomposition of $L_{K(1)}L_{K(2)}S^0$}\label{sec:k1k2s}

In this section, we prove one of our main results; that is, we show that
\begin{align*}
L_{K(1)}L_{K(2)}S^0 \simeq L_{K(1)}\big(S^0 \vee S^{-1}\vee\Sigma^{-2}V(0)\vee\Sigma^{-3}V(0)\big)\  .
\end{align*}
See  \Cref{thm:the-big-one} below. In \Cref{sec:getting-to-l1} we will use this result to calculate
$L_1L_{K(2)}S^0$.

The subtle point to the argument is to make an analysis of the action of the element
$\zeta_1 \in \pi_\ast L_{K(1)}S^0$ on $\pi_\ast L_{K(1)}L_{K(2)}S^0$. In \Cref{prop:homotopy-v0-k1}
we saw that $v_1^4\zeta_1 = \sigma \in \pi_\ast L_{K(1)}V(0)$, where $\sigma \in \pi_7S^0$ 
is the Hopf map. We gave some further background on the role of $\sigma$ 
in $K(n)$-local homotopy theory in \Cref{findsigma}, but now we get specific and analyze
the action of $\sigma$ in $\pi_\ast L_{K(2)}S^0$.

\subsection{A Bockstein Lemma and detecting products with $\sigma$}\label{sec:bock}
Fix a prime $p$ and let $C = (C^\bullet,\partial_{C})$ be a torsion-free cochain complex
over $\ZZ_{(p)}$. 
We assume all cochain complexes are zero in negative degrees. Write
\[
\bockn=\bockn_C \colon H^{s}(C/p^n) \longrightarrow H^{s+1}(C)
\]
for the connecting homomorphism in the long exact sequence in cohomology induced by the short exact
sequence of cochain complexes
\[
\xymatrix@C=25pt{
0 \ar[r] &C \ar[r]^-{p^n} & C \ar[r] & C/p^n \ar[r] & 0.
}
\]
This is, of course, the $n$th Bockstein homomorphism. We may write $\bock{n}_C$
if we need to emphasize the $n$ and we may abbreviate the Bockstein to $\bockn_G$ if $C$
is a complex for computing the group cohomology of a group $G$ with
coefficients in some module.

In working with the Bockstein we write $\overline{a}$ for the image of $a \in C$ under the reduction
to $C/p^n$ and likewise we write $\overline{y}$ for the image of $y\in H^s(C)$ under the reduction 
to $H^s(C/p^n)$.
By construction, if $x \in H^s(C/p^n)$ is any cohomology class, then 
$\bockn_C(x)$ is represented by $\partial_{C}(a)/p^n$, 
where $a \in C^s$ is such that $\overline{a}$ is a representative for $x$. 
The next result then follows from the Leibniz rule.

\begin{lem}\label{lem:bockstein1} Let $A$ be a torsion-free differential graded algebra over $\ZZ_{(p)}$
and let $a \in H^\ast (A/p^n)$ be a cohomology class. Let  $M$ be a torsion-free differential 
graded module over $A$ and $y \in H^{\ast}(M)$. Then
\[
\bockn_M(a\overline{y}) = \bockn_A(a) y.
\]
\end{lem}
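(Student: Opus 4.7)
The plan is to prove this by lifting cocycle representatives and applying the Leibniz rule as the hint suggests. Let $\alpha \in (A/p^n)^s$ be a cocycle representative of $a$, and lift it arbitrarily to $\widetilde{\alpha} \in A^s$. Since $\alpha$ is a cocycle mod $p^n$, we have $\partial_A(\widetilde{\alpha}) = p^n \widetilde{\beta}$ for some $\widetilde{\beta} \in A^{s+1}$, and by the definition of the Bockstein, $\widetilde{\beta}$ represents $\bockn_A(a) \in H^{s+1}(A)$. Next, let $m \in M^t$ be a cocycle representative of $y$, so $\overline{m} \in (M/p^n)^t$ represents $\overline{y}$.

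The first step is then to identify representatives on both sides. On one hand, $\alpha \cdot \overline{m}$ represents $a \overline{y} \in H^{s+t}(M/p^n)$, and it lifts to the chain $\widetilde{\alpha}\cdot m \in M^{s+t}$. On the other hand, $\widetilde{\beta}\cdot m$ represents $\bockn_A(a)\cdot y \in H^{s+t+1}(M)$ by the definition of the $A$-module action on cohomology.

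The computation then uses the Leibniz rule for the $A$-module structure on $M$:
\[
\partial_M(\widetilde{\alpha}\cdot m) = \partial_A(\widetilde{\alpha}) \cdot m + (-1)^s \widetilde{\alpha}\cdot \partial_M(m) = p^n \widetilde{\beta}\cdot m,
\]
since $\partial_M(m) = 0$. By the recipe for $\bockn_M$ described just before the statement of the lemma, $\bockn_M(a\overline{y})$ is represented by $\partial_M(\widetilde{\alpha}\cdot m)/p^n = \widetilde{\beta}\cdot m$, which is exactly our representative for $\bockn_A(a)\cdot y$. This gives the desired equality.

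There is no real obstacle here; the only minor care needed is to ensure the sign in the Leibniz rule does not cause trouble, which it does not because the second summand vanishes. The torsion-freeness of $A$ and $M$ is what makes division by $p^n$ unambiguous at the chain level, which is all that the argument requires.
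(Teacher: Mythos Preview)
Your proof is correct and is exactly the argument the paper has in mind: the paper's entire proof is the one-line remark that the result ``follows from the Leibniz rule,'' and you have simply spelled out that computation in detail. There is nothing to add.
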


Let $A = (A^\bullet,\partial_A)$ be a torsion-free differential graded algebra and 
\begin{equation}\label{test-case-1}
\xymatrix{
M_0 \ar[r]^{d_1} & M_1 \ar[r]^{d_1} & \cdots \ar[r]^{d_1} & M_{n-1} \ar[r]^{d_1} & M_n \ar[r] & \ldots
}
\end{equation}
a cochain complex of differential graded $A$-modules. This is a bicomplex, but the internal differentials 
$\partial_{M}\colon M_p^q \to M_p^{q+1}$ and the external differentials 
$d_1$ have different behavior with respect to the $A$-module structure:
\begin{equation}\label{eqn:diffstwotypes}
\partial_{M}(ax) = \partial_A(a)x + (-1)^{\deg(a)}a\partial_{M}(x)\qquad\mathrm{and}\qquad 
d_1(ax) = ad_1(x).
\end{equation}
Nonetheless, we get a total complex $T(M)$ and, by filtering by degree in $p$, a spectral sequence 
\[
E_1^{p,q} = H^qM_{p}\Longrightarrow H^{p+q}T(M)
\]
with $d_1 = H^\ast(d_1)$ and $d_r \colon E_r^{p,q} \to E_r^{p+r,q-r+1}$. It follows from 
(\ref{eqn:diffstwotypes}) that this is a spectral
sequence of $H^\ast(A)$ modules. Finally, there is an edge homomorphism
\[
E_2^{p,0} = H^p(H^0(M_\bullet),d_1) \longrightarrow H^p(T(M)).
\]
The following result has generalizations, but will suffice for our purposes. The proof is a diagram chase.

\begin{lem}\label{lem:bocksteininSS} Let $x \in H^p (T(M))$ be the image of a $d_1$-cocycle 
$y \in H^0(M_p)$ under the edge homomorphism.  Let $a \in H^0(A/p^n)$ be a class 
so that there is an element $c\in A$ satisfying

\begin{enumerate}

\item the class $\overline{c}$ is a cocycle representing $a$, and 

\item the class $cy$ is a cocycle for the coboundary operator $\partial \colon M_p^0 \to M_p^1$.
\end{enumerate}
Then $\bock{n}_{T(M)}(a\overline{x}) \in H^{p+1} (T(M))$ is the image under the edge homomorphism of 
\[
\Big[\frac{d_1(cy)}{p^n}\Big] \in H^0(M_{p+1})
\] 
where $[z]$ denotes the cohomology class of $z$.  
\end{lem}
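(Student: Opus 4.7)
The plan is to perform a direct chain-level computation of the Bockstein, following the standard recipe: choose a cocycle representative of $a\overline{x}$ in $T(M)/p^n$, lift it to $T(M)$, apply the total differential $D$, and divide the result by $p^n$. I use the sign convention $D|_{M_p^q} = \partial_M + (-1)^q d_1$.

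First, since $x$ is the edge image of $y$, the chain $y \in M_p^0 \subseteq T(M)^p$ is already a $D$-cocycle representing $x$; consequently $\overline{cy}$ represents $a\overline{x}$ in $T(M)/p^n$. Using the Leibniz rule \eqref{eqn:diffstwotypes} together with $\partial_M(y) = 0$,
\[
\partial_M(cy) = \partial_A(c)\,y + c\,\partial_M(y) = \partial_A(c)\,y,
\]
and by hypothesis (2) this equals zero in $M_p^1$. Hence $D(cy) = d_1(cy)$. The cocycle-mod-$p^n$ condition on $\overline{cy}$ then forces $d_1(cy)$ to be divisible by $p^n$, so $d_1(cy)/p^n$ defines an honest chain in $T(M)$.

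Lifting $\overline{cy}$ to $cy \in T(M)^p$ and applying the definition of the $n$th Bockstein gives
\[
\bock{n}_{T(M)}(a\overline{x}) \;=\; \left[\frac{D(cy)}{p^n}\right] \;=\; \left[\frac{d_1(cy)}{p^n}\right] \in H^{p+1}(T(M)).
\]
The chain $d_1(cy)/p^n$ lies in $M_{p+1}^0 \subseteq T(M)^{p+1}$, and since $d_1$ commutes with $\partial_M$ in the bicomplex while $\partial_M(cy) = 0$, it is a $\partial_M$-cocycle. Its class therefore represents an element of $H^0(M_{p+1}) = E_1^{p+1,0}$, and since the representative sits in the top filtration $F^{p+1}T(M)^{p+1} = M_{p+1}^0$, the resulting class in $H^{p+1}(T(M))$ is precisely the edge image claimed.

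The main bookkeeping obstacle is tracking the two distinct behaviors of the $A$-action with respect to the two differentials recorded in \eqref{eqn:diffstwotypes}: the mixed Leibniz rule for $\partial_M$ produces the correction term $\partial_A(c)\,y$ which hypotheses (1) and (2) together annihilate, while the strictly $A$-linear formula for $d_1$ carries the Bockstein into the direction picked up by the edge homomorphism. One must verify throughout that the candidate representative genuinely lifts through the filtration to the edge term $H^0(M_{p+1})$, which is exactly what condition (2) secures by eliminating any residual contribution in $M_p^1$.
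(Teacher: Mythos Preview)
Your proof is correct and is precisely the diagram chase the paper alludes to; the paper's own proof consists solely of the sentence ``The proof is a diagram chase,'' and you have carried out that chase in full detail. One very minor point of exposition: when you write ``consequently $\overline{cy}$ represents $a\overline{x}$,'' the reader must supply hypothesis~(1) (that $\overline{c}$ is a $\partial_A$-cocycle) together with the Leibniz-type identity $D(cy)=\partial_A(c)\,y+c\,D(y)$ to see that $\overline{cy}$ is indeed a $D$-cocycle mod~$p^n$; making that half-line explicit would remove any ambiguity, but the argument is sound as written.
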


We apply \Cref{lem:bocksteininSS} to the algebraic duality spectral sequence
of  \Cref{sec.dualityres}.
Applying the functor $\Hom_{cts}(-,E_*)$ to the algebraic duality resolution of \Cref
{thm:bob} gives us an exact sequence of twisted $E_*$-$\SS_2^1$-modules
\begin{align*}
0 \to E_\ast \to & \Hom_{cts}(\Z_2[[\SS_2^1/F_0]],E_\ast) \to \Hom_{cts}(\Z_2[[\SS_2^1/F_1]],E_\ast)\\
&\to\Hom_{cts}(\Z_2[[\SS_2^1/F_2]],E_\ast) \to \Hom_{cts}(\Z_2[[\SS_2^1/F_3]],E_\ast) \to 0.
\end{align*}
We now let $A =C^{\bullet}(\SS_2^1,E_\ast)$ be the differential graded algebra of continuous cochains 
with
\[
C^n(\SS_2^1,E_*)=\map_{cts}(\SS_2^1\times\ldots\times \SS_2^1,E_*) 
\cong \Hom_{cts}(\Z_2[[\SS_2^1\times\ldots \times\SS_2^1]],E_*) 
\]  
with $n$ factors of $\SS_2^1$. Furthermore 
let $M_p$ for $0\leq p\leq 3$ be the differential graded 
$A$ module with 
\[
M_p^n=\Hom_{cts}(\Z_2[[\SS_2^1\times\ldots\times  \SS_2^1]]\otimes \ZZ_p[[\SS_2^1/F_p]],E_*)\  , 
\]
again with $n$ factors of $\SS_2^1$. The tensor product has to be taken 
in the appropriate category of 
profinite $\ZZ_2[[\SS_2^1]]$-modules, so in fact it is a completed tensor product. 
Then we get a complex of differential graded $A$-modules 
\[
0\to M_0\to M_1\to M_2\to M_3\to 0
\]
to which we will apply \Cref{lem:bocksteininSS}. Note that the cohomology 
of the differential graded algebra $A$ is equal to $H_c^*(\SS_2^1,E_*)$ while the cohomology 
of $M_p$ is given by $\Ext_{\ZZ_2[[\SS_2^1]]}^*(\ZZ_2[[\SS_2^1/F_p]],E_*)\cong H^*(F_p,E_*)$.
The spectral sequence of the total complex $T(M)$ is isomorphic to 
the algebraic duality spectral sequence of  \Cref{rem:algdualresSS} with $E_1$-term given as 
\[
E_1^{p,q}= H^q(F_p,E_*)\Longrightarrow H_c^{p+q}(\mathbb{S}_2^1,E_*). 
\]

We now get specific about the class $c$ needed for \Cref{lem:bocksteininSS}. Recall from
\Cref{rem:v1-plus-ss} that $v_1 = u_1u^{-1} \in H_c^0(\GG_2,E_2/2)$.  We record a list of facts that are used in
applications of \Cref{lem:bocksteininSS}.

\begin{lem}\label{facts} Let 
\begin{equation*}\label{c4defined}
c_4 = 9 (u_1^4u^{-4}+8 u_1u^{-4}) \in E_8.
\end{equation*} 

\begin{enumerate}

\item  The class $c_4$ is invariant for the action of $G_{24}$ and defines a class
\[
c_4 \in H^0(G_{24},E_8).
\]

\item The reduction $\overline{c}_4 \in E_8/16$ of $c_4$ is invariant with respect to the action of $\GG_2$
and defines a class
\[
\overline{c}_4 \in H_c^0(\GG_2,E_8/16)
\]
which reduces to $v_1^4$ modulo $2$. Furthermore, $H_c^0(\GG_2,E_8/2) \cong \ZZ/2$ generated
by $v_1^4$.
\end{enumerate}
\end{lem}

\begin{proof} This all follows from Theorem 1.2.2 and Lemma 5.2.2 of \cite{Paper2}. 
\end{proof}

We use \Cref{facts} to deduce the following result. Let $\bock{4} \colon H_c^0(H,E_8/16) \to H_c^1(H,E_8)$ denote
the Bockstein  homomorphism.

\begin{prop}\label{prop:c4-and-sigma} \label{rem:sigma-non-zero}  (1) 
Let $H \subseteq \GG_2$ be any closed subgroup. Up to multiplication by a unit in $\ZZ_2$, the image of $\sigma$ in
$\pi_\ast E^{hH}$ is detected in the spectral sequence
\begin{equation*}\label{eq:how-many-more-times}
H_c^s(H,E_t) \Longrightarrow \pi_{t-s}E^{hH}
\end{equation*}
by the class $\bock{4} (\overline{c}_4) \in H_c^1(H, E_8)$.

(2) If $H \subseteq G_{24}$ then 
\[
\bock{4} (\overline{c}_4) =0 \in H_c^1(H,E_8).
\]
\end{prop}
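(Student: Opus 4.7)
The plan is to verify that the class $\overline{c}_4 \in H^0(H, E_8/16)$ satisfies the three hypotheses of \fullref{prop:finding-sigma}, and then simply invoke that proposition. Since $H$ contains $\SS_2^1$ (and in particular the $\GG_2$--invariance hypotheses descend to $H$), this reduces the result to a verification exercise.

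First I would verify conditions (1) and (2) of \fullref{prop:finding-sigma}. From the explicit formula \eqref{c4defined}, namely $c_4 = 9(u_1^4 u^{-4} + 8 u_1 u^{-4})$, we have $c_4 \equiv u_1^4 u^{-4} = v_1^4 \pmod{2}$ by \fullref{rem:v1-plus-ss}, which gives condition (1). For condition (2), the discussion after \eqref{c4defined} records that $c_4$ is $\GG_2$--invariant modulo $16$, so $\overline{c}_4 \in H^0(\GG_2, E_8/16)$ and hence $\overline{c}_4 \in H^0(H, E_8/16)$ is $H$--invariant for any closed subgroup $H \subseteq \GG_2$.

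The main content lies in verifying condition (3): that $H^0(H, E_8/2)$ is cyclic over $R = H^0(H, E_0)$ generated by $v_1^4$. This is where \fullref{thm:const} does the real work. Since $v_1 = u_1 u^{-1}$ is $\GG_2$-invariant modulo $2$, multiplication by $v_1^4$ gives an $H$-equivariant isomorphism $E_0/2 \xrightarrow{\cong} E_8/2$ of twisted $E_*$-$H$-modules, so
\[
H^0(H, E_8/2) \;\cong\; v_1^4 \cdot H^0(H, E_0/2).
\]
By \fullref{thm:const} (applied to $H$, which contains $\SS_2^1$ as a normal subgroup) and \fullref{rem:other-const}, the inclusion of constants induces an isomorphism $H^0(H, \WW/2) \cong H^0(H, E_0/2)$, so $H^0(H, E_0/2)$ is a quotient of $R = H^0(H, E_0)$. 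Hence $H^0(H, E_8/2)$ is cyclic over $R$ generated by $v_1^4$.

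With the three conditions verified, \fullref{prop:finding-sigma} immediately yields that $\bock{4}(\overline{c}_4) \in H^1(H, E_8)$ detects the image of $\sigma$ in $\pi_7 E^{hH}$ up to a unit in $R$. The expected obstacle, cyclicity of $H^0(H, E_8/2)$ over $R$, is handled cleanly by \fullref{thm:const}; indeed, this proposition is essentially a corollary of that theorem, illustrating its utility for locating specific homotopy classes in the Adams-Novikov $E_2$-term.
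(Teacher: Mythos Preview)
Your proof is correct and follows the same overall template as the paper: verify the hypotheses of \fullref{prop:finding-sigma} and invoke it. The one genuine difference is in how you establish condition~(3). The paper quotes Theorem~1.8 of \cite{Paper2} to get $H^0(\SS_2^1,E_\ast/2)\cong\FF_4[v_1]$, applies \fullref{prop:finding-sigma} for $H=\GG_2$ (where $R=\ZZ_2$), and then restricts to a general~$H$. You instead stay internal to this paper and use \fullref{thm:const} together with the $v_1^4$-isomorphism $E_0/2\cong E_8/2$. Your route has the virtue of being self-contained; the paper's route avoids having to argue separately that the reduction $R=H^0(H,E_0)\to H^0(H,E_0/2)$ is surjective.

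Two small points you should tighten. First, the claim that $H^0(H,E_0/2)$ is a quotient of $R$ needs the surjectivity of $H^0(H,\WW)\to H^0(H,\FF_4)$; this is true (the action of $H$ on $\WW$ factors through $\Gal$, so one is looking at either $\WW\to\FF_4$ or $\ZZ_2\to\FF_2$), but it deserves a sentence. Second, your conclusion gives a unit in $R$, whereas the proposition asserts a unit in $\ZZ_2$. The paper gets $\ZZ_2$ precisely because it first works at $H=\GG_2$, where $R=\ZZ_2$, and then restricts; you can recover this by the same move, since both $\sigma$ and $\bock{4}(\overline{c}_4)$ are restricted from $\GG_2$.
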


\begin{proof} For part (1), first let $H=\GG_2$ and $\overline{c}_4 \in H^0(\GG_2, E_8/16)$.
Part (2) of \Cref{facts} implies $\overline{c}_4$ satisfies the assumptions of \Cref{prop:finding-sigma} in this case.
Therefore, by that proposition, $\bock{4} (\overline{c}_4) \in H_c^1(\GG_2, E_8)$ detects $\sigma$.
For general $H$, the result follows by restriction in group cohomology. 

For part (2), it is sufficient to do the case $H=G_{24}$. Part (1) of \Cref{facts} says that $\overline{c}_4 \in H^0(G_{24}, E_8/16)$
is the reduction of $c_4 \in H^0(G_{24}, E_8)$, so  $\bock{4} (\overline{c}_4)=0 \in  H^1(G_{24}, E_8)$. 
\end{proof}

If $H \subseteq \GG_2$ is a closed subgroup which contains $\SS_2^1$, then $\sigma \ne 0 \in \pi_\ast E^{hH}$.
This will follow from \Cref{thm:what-we-hope-to-prove} below. If $H \subseteq G_{24}$, then
$\sigma = 0 \in \pi_\ast E^{hH}$.\footnote{To see $\sigma = 0$ in $\pi_\ast E^{hG_{24}}$, use 
Section 8.2 of \cite{tbauer} to see $\sigma = 0$ in $\pi_\ast\mathbf{tmf}$ and the existence of a map $\mathbf{tmf} \to E^{hG_{24}}$.} 
For now we have the following detection results. 

\begin{lem}\label{lem:bocksteininSSc4}  
Let $0\leq p<3$. Let $x \in H_c^{p} (\SS_2^1,E_*)$ be the image
of a $d_1$-cocycle $y \in H^0(F_{p},E_*)$ under the edge homomorphism
\[
H^{p}(H^0(F_\bullet,E_\ast),d_1) \to H_c^{p}(\SS_2^1,E_\ast).
\]
Then $\bock{4}(c_4\overline{x}) \in H_c^{p+1} (\SS_2^1,E_\ast)$ is the image 
under the edge homomorphism of 
\[
\Big[\frac{d_1(c_4y)}{16}\Big] \in H^{p+1}(H^0(F_\bullet,E_\ast),d_1)\ .
\]
\end{lem}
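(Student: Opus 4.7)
The plan is to deduce this lemma directly from the general Bockstein formula \fullref{lem:bocksteininSS} applied to a specific choice of ingredients. Let $A = C^\bullet(\SS_2^1, E_\ast)$ be the continuous cochain complex of $\SS_2^1$ with coefficients in $E_\ast$, and for $0 \leq p \leq 3$ let $M_p = C^\bullet(\SS_2^1, \Hom_{cts}(\ZZ_2[[\SS_2^1/F_p]], E_\ast))$ be the bar complex computing $H^\ast(F_p, E_\ast)$ by Shapiro's lemma. With the external differential $d_1 \colon M_p \to M_{p+1}$ induced by the boundary maps of the algebraic duality resolution of \fullref{thm:bob}, the spectral sequence of the total complex $T(M)$ is exactly the algebraic duality spectral sequence of \fullref{rem:algdualresSS}. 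The proposal is then to apply \fullref{lem:bocksteininSS} with $n = 4$ and $c = c_4 \in A^0 = E_\ast$.

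The first step is to verify hypothesis (1) of \fullref{lem:bocksteininSS}, namely that $\overline{c}_4$ is a cocycle in $A/16$ representing $\overline{c}_4 \in H^0(\SS_2^1, E_\ast/16)$. This is immediate from the fact, recalled in \eqref{c4defined} and Lemma 5.2.2 of \cite{Paper2}, that $c_4$ is $\GG_2$-invariant modulo $16$, and hence in particular $\SS_2^1$-invariant modulo $16$.

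The second step is to verify hypothesis (2), that the product $c_4 y \in M_p^0$ is a $\partial$-cocycle, i.e., lies in $H^0(M_p) = E_\ast^{F_p}$. This is the main point at which the hypothesis $0 \leq p < 3$ enters in an essential way. In this range $F_p$ equals either $G_{24}$ or $C_6$, both of which are contained in $G_{24}$. Since $c_4$ is $G_{24}$-invariant integrally by \eqref{c4defined}, it is automatically $F_p$-invariant, so the product $c_4 y$ lies in $E_\ast^{F_p}$ as required. The argument would fail for $p = 3$ because $c_4$ is only $G_{24}'$-invariant modulo $16$ and not integrally; this integrality failure is the principal obstacle avoided by the hypothesis on $p$.

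With both hypotheses in hand, \fullref{lem:bocksteininSS} yields immediately that $\bock{4}(c_4 \overline{x})$ is the image under the edge homomorphism of $[d_1(c_4 y)/16] \in H^0(F_{p+1}, E_\ast)$, as claimed. To close the argument I would include a short consistency check that $d_1(c_4 y)$ is indeed divisible by $16$: applying the Leibniz rule of \fullref{lem:bockstein1} to the $d_1$-differential on the $E_1$-page of the spectral sequence computing $H^\ast(\SS_2^1, E_\ast/16)$, and using that $\overline{c}_4$ is $\SS_2^1$-invariant modulo $16$ (hence a $d_1$-cocycle) while $\overline{y}$ is a $d_1$-cocycle by hypothesis, one obtains $d_1(\overline{c_4 y}) = 0$ in $E_\ast^{F_{p+1}}/16$, so $d_1(c_4 y) \in 16 \cdot E_\ast^{F_{p+1}}$ and the quotient $[d_1(c_4 y)/16]$ is well-defined.
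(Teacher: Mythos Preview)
Your proposal is correct and takes the same approach as the paper, which simply says ``This is an immediate rephrasing of \fullref{lem:bocksteininSS} in this context.'' You have filled in the details the paper leaves implicit: the verification of hypotheses (1) and (2) of \fullref{lem:bocksteininSS}, including the key observation that the restriction $0 \leq p < 3$ is precisely what guarantees $F_p \subseteq G_{24}$ so that the integral $G_{24}$-invariance of $c_4$ makes $c_4 y$ an $F_p$-invariant cocycle.
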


\begin{proof} This is an immediate rephrasing of \Cref{lem:bocksteininSS} in this context.
\end{proof} 

\Cref{lem:bockstein1}, \Cref{prop:c4-and-sigma} and \Cref{lem:bocksteininSSc4} 
can be combined to prove the following result. 

\begin{prop}\label{prop:siggam} \label{lem:finding-sigma-in-s21} 
Let $0\leq s <3$.
Let $x \in \pi_{t-s} E^{h\SS_2^1}$ be detected in the spectral sequence
\[
H_c^s(\SS_2^1,E_t) \Longrightarrow \pi_{t-s}E^{h\SS_2^1}
\]
by the image of a class $y \in H^s(H^0(F_\bullet,E_t))$ under the edge homomorphism 
\[
H^s(H^0(F_\bullet,E_t)) \longrightarrow H_c^s(\SS_2^1,E_t)
\]
of the algebraic duality spectral sequence.
Then $\sigma x$ is detected in $H_c^{s+1}(\SS_2^1,E_{t+8})$ by the image of the class
\[
\Big[\frac{d_1(c_4y)}{16}\Big] \in H^{s+1}(H^0(F_\bullet,E_{t+8}))\ 
\]
under the edge homomorphism.
\end{prop}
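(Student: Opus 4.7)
The plan is to chain three earlier ingredients: the detection of $\sigma$ by the Bockstein of $\overline{c}_4$ (\fullref{prop:c4-and-sigma}), the Leibniz rule for the mod-$2^n$ Bockstein in a DGA (\fullref{lem:bockstein1}), and the algebraic duality spectral sequence formula for such a Bockstein on an edge class (\fullref{lem:bocksteininSSc4}). The key point is that the ANSS is multiplicative, so the detector of $\sigma$ times the detector of $x$ represents $\sigma x$; the Leibniz rule folds that product of detectors into a single Bockstein; and then the ADSS formula unwinds that Bockstein back into an edge class in $H^{s+1}$.

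First I would invoke \fullref{prop:c4-and-sigma} with $H = \SS_2^1$: the image of $\sigma \in \pi_7 E^{h\SS_2^1}$ is detected in the Adams--Novikov spectral sequence $H^*(\SS_2^1, E_*) \Rightarrow \pi_* E^{h\SS_2^1}$, up to a unit in $\ZZ_2$, by $\bock{4}(\overline{c}_4) \in H^1(\SS_2^1, E_8)$. By multiplicativity of the ANSS, the product $\sigma \cdot x$ is represented at filtration $s+1$, up to the same unit, by $\bock{4}(\overline{c}_4) \cdot x \in H^{s+1}(\SS_2^1, E_{t+8})$.

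Next, I would apply the Leibniz formula \fullref{lem:bockstein1} to the DGA $A = C^\bullet(\SS_2^1, E_*)$ of continuous cochains, viewed as a module over itself, with $a = \overline{c}_4$ and $y = x$. This rewrites the product of detectors as a single mod-$16$ Bockstein:
\[
\bock{4}(\overline{c}_4) \cdot x \;=\; \bock{4}(\overline{c}_4 \cdot \overline{x}) \;=\; \bock{4}(c_4 \overline{x}),
\]
where in the final equality we interpret $c_4$ as any lift of $\overline{c}_4$ to $E_8$ and $c_4\overline{x}$ as the mod-$16$ reduction of the product. Finally, \fullref{lem:bocksteininSSc4} identifies $\bock{4}(c_4 \overline{x}) \in H^{s+1}(\SS_2^1, E_{t+8})$ with the image under the ADSS edge homomorphism of $[d_1(c_4 y)/16] \in H^{s+1}(H^0(F_\bullet, E_{t+8}))$, completing the proof.

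The hypothesis $s < 3$ is inherited from \fullref{lem:bocksteininSSc4}: it ensures that both $s$ and $s+1$ lie within the range where the algebraic duality spectral sequence admits an edge homomorphism from the $q = 0$ row $H^0(F_\bullet, E_*)$, which is constrained by the length $3$ of the algebraic duality resolution of \fullref{thm:bob}. The main obstacle is purely bookkeeping: one must track identifications across two interacting spectral sequences (the ANSS and the ADSS) and reconcile the various meanings of ``detected,'' including the unit ambiguity produced by \fullref{prop:c4-and-sigma}. No new analytic input is required beyond the three cited results.
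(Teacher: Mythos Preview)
Your proposal is correct and matches the paper's approach exactly: the paper simply states that \fullref{lem:bockstein1}, \fullref{prop:c4-and-sigma}, and \fullref{lem:bocksteininSSc4} combine to give the result, and you have spelled out that combination. The only minor notational slip is that in applying \fullref{lem:bockstein1} you write ``$y = x$,'' when what you mean is the cohomology class in $H^s(\SS_2^1,E_t)$ detecting $x$ (i.e., the edge image of $y$); this is a harmless abuse already present in the paper's conventions.
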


\subsection{The group cohomology of $v_1^{-1}E_\ast V(0)$}\label{sec:local-coh}
In the next few results, we identify products with $\sigma$ in the algebraic duality
spectral sequence and then use this to give a description of $v_1^{-1}H_c^\ast(\GG_2,E_\ast V(0))$. See
\Cref{thm:cohG2V0} below. We begin with a connection to homotopy theory. 

If $Z$ is a type $1$ complex with a $v_1$-self map
$f \colon \Sigma^d Z \to Z$ and if $X$ is any spectrum,
let
\[
v_1^{-1}(X \wedge Z) = (1 \wedge f)^{-1}(X \wedge Z).
\]
This is independent of the choice of $f$ and functorial by the essential uniqueness of $v_n$-self maps.
See \cite{NilpotenceII}. For the same reason, this construction preserves cofiber sequences.

The Telescope Conjecture $n=1$ and $p=2$ gives a weak equivalence
\[
v_1^{-1}(X \wedge Z) \simeq L_{K(1)}(X \wedge Z)
\]
and, more generally, a weak equivalence
\[
L_{K(1)}X \simeq \holim v_1^{-1} (X \wedge S/2^n)
\]
where $S/2^n$ is the mod $p^n$-Moore spectrum. See Section 2 of \cite{MahImJ}. 

From this we have that for any closed subgroup $G \subseteq \GG_2$ there
is a localized spectral sequence 
\[
v_1^{-1}H_c^\ast(G,E_\ast V(0)) \Longrightarrow
 \pi_\ast L_{K(1)}(E^{hG}\wedge V(0))\ .
\]
This spectral sequence has a strong horizontal vanishing line and converges strongly.
See \eqref{def:strong-converge}, \Cref{def:horiz-van},
\Cref{lem:two-cond-van-line}, and \Cref{thm:en-vanishing-ex-1}. 

Recall from \Cref{rem:warningcopy} that even though $V(0)$ is not a ring spectrum,
we often describe the $E_2$-term of a spectral sequences for the homotopy of a localization
of $V(0)$ as a ring. Again, the $E_2$-terms are the underlying graded abelian groups of the rings we write down and the spectral 
sequences are not multiplicative.

The following result from \cite{Paper2}  is the key to all computations. The cohomology
classes $\myoneb=\myoneb_2$ and $e$ appeared in \Cref{cor:cohS21at2}.

\begin{thm}\label{rthm:freeoverv1eta} The localized cohomology group $v_1^{-1}
H_c^\ast(\SS_2^1,E_\ast V(0))$
is free on six generators over the subring $\FF_4[v_1^{\pm 1},\eta]$. Specifically, in the 
algebraic duality resolution spectral sequence
\[
H^q(F_p, E_t V(0)) \Longrightarrow H_c^{p+q}(\SS_2^1,E_t V(0))
\]
there are permanent cycles 
\begin{enumerate}
 
\item $\la_0$ of degree $(p,q,t)=(0,0,0)$ detecting the unit;
 
 \item $\lb_0$ of degree $(p,q,t)=(1,0,0)$ detecting $\myoneb$;
 
 \item $\lb_1$ of degree $(p,q,t)=(1,0,6)$;
 
 \item  $\lc_0$ of degree $(p,q,t)=(2,0,0)$ detecting $\myoneb^2$;

\item $\lc_1$ of degree $(p,q,t)=(2,0,6)$; 
  
 \item  $\ld_0$ of degree $(p,q,t)=(3,0,0)$ detecting $e$.
\end{enumerate}
After inverting $v_1$, there is an isomorphism of $ \F_4[v_1^{\pm 1}, \eta]$-modules
\begin{equation}\label{eqn:abcd}
v_1^{-1}H_c^*(\Sn_2^1, (E_2)_*V(0)) \cong \F_4[v_1^{\pm 1}, \eta] \{\la_0,\lb_0,\lb_1, \lc_0,\lc_1, \ld_0 \}
\end{equation}
\end{thm}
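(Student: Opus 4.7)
The plan is to compute $v_1^{-1}H^{*}(\SS_2^1, E_*V(0))$ by running the algebraic duality spectral sequence from Remark~\ref{rem:algdualresSS} after inverting the self map $v_1$. Since localization is exact on the resolution, we still have a spectral sequence
\[
v_1^{-1}H^q(F_p, E_tV(0)) \Longrightarrow v_1^{-1}H^{p+q}(\SS_2^1, E_tV(0)),
\]
and the six classes listed in the theorem will be read off from a $v_1$-inverted analog of Figure~\ref{fig:ADSS} together with standard calculations for $H^*(F_p,E_*V(0))$ drawn from \cite{tbauer}, \cite{MR}, and the summaries in \S\ref{input-for-the-DSS}.

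First I would compute each column $v_1^{-1}H^q(F_p, E_*V(0))$. For $F_0=G_{24}$ and $F_3=G_{24}'$, the homotopy fixed point spectral sequence for $E^{hG_{24}} \wedge V(0)$ is classical; inverting $v_1$ kills essentially all higher cohomology and leaves, in each case, a single $\F_4[v_1^{\pm 1},\eta]$-module of rank $2$, generated by a class in $q=0$ and $t=0$ (namely $\la_0$, resp.\ $\ld_0$) together with a $v_1$-periodic companion in $t=6$ which, after inverting $v_1$, is absorbed into the $\F_4[v_1^{\pm 1},\eta]$-module generated by $\la_0$ (and similarly $\ld_0$). For $F_1=F_2=C_6$, the cohomology of $C_6$ with coefficients in $E_*V(0)$ is a free $\F_4[v_1^{\pm 1}]$-module after localization, and the topological $v_1$-self map allows one to check that $\lb_0,\lb_1$ (resp.\ $\lc_0,\lc_1$) generate the relevant $\F_4[v_1^{\pm 1},\eta]$-submodule freely. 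The class names are chosen so that $\lb_0$ detects $\chi$ (compare Lemma~\ref{whatisd0anyway}) and $\lc_0$ detects $\chi^2$, while $\lb_1,\lc_1$ are the degree-6 generators forced by the $C_6$-calculation.

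Next I would analyze the differentials $d_r$ in the localized spectral sequence. The key input is part (2) of Theorem~\ref{thm:bob}: modulo $I\SS_2^1$ the boundary maps $\partial_1,\partial_3$ vanish and $\partial_2$ is multiplication by $2$, so on $v_1$-inverted mod-$2$ cohomology all of $d_1$ already vanishes and the $E_1$-page coincides with $E_\infty$ additively. This forces the six classes $\la_0,\lb_0,\lb_1,\lc_0,\lc_1,\ld_0$ listed in the statement to be permanent cycles. After this, one must rule out higher differentials. A counting argument, using the rational and integral computations of Corollary~\ref{cor:cohS21at2} and Theorem~\ref{thm:cohS21} combined with Lemma~\ref{E2*0-comp}, shows the $v_1$-inverted $E_\infty$ page has exactly the expected $\F_4[v_1^{\pm 1},\eta]$-rank in each bidegree.

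Finally, I would prove the freeness assertion \eqref{eqn:abcd} by comparing Poincar\'e series: the $v_1$-localized spectral sequence $E_\infty$-page is a free $\F_4[v_1^{\pm 1},\eta]$-module on the six listed classes by inspection of the columns, and there is no room for multiplicative or additive extensions in the $\eta$-periodic region because $\eta$ acts faithfully on each column (this uses that $v_1$-localization kills $\eta$-torsion in the columns coming from $F_1,F_2$, and that the $G_{24}$-columns become $v_1^{-1}K(1)_*V(0)$-modules which are already $\eta$-periodic). The main obstacle is the last step: verifying the absence of hidden $\eta$- and multiplicative extensions between the four columns, since $V(0)$ is not a ring spectrum (cf.\ Warning~\ref{rem:warningcopy}); this is handled by comparing to the $K(1)$-local calculation of \S\ref{sec:k1-local-calcs} through the map $L_{K(1)}(E^{h\SS_2^1} \wedge V(0)) \to L_{K(1)}V(0)\wedge$-summands, where the $\F_4[v_1^{\pm 1},\eta]$-structure is already fully determined by Proposition~\ref{prop:homotopy-v0-k1}.
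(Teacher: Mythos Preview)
There is a genuine gap in your $d_1$ analysis, and it propagates backward to your description of the $E_1$-columns.

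Your key claim is that part (2) of Theorem~\ref{thm:bob} forces $d_1=0$ on $v_1$-inverted mod-$2$ cohomology. This is not correct. The congruences $\partial_1,\partial_3\equiv 0$ modulo $IK^1$ and $\partial_2\equiv 2$ modulo $(8,IS_2^1)$ tell you that the induced maps vanish after applying $-\otimes_{\ZZ_2[[\SS_2^1]]}\ZZ_2$; this forces $d_1=0$ only for coefficient modules with \emph{trivial} $\SS_2^1$-action, as in Corollary~\ref{cor:cohS21at2}. For $M=E_*V(0)$ the action is highly nontrivial, and the terms of $\partial_p$ lying in $IK^1$ contribute nonzero maps on $H^*(F_p,E_*V(0))$. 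Indeed, the $d_1$'s here are substantial: their explicit computation is the main technical content of \cite{Paper2}, and the paper's own proof of this theorem is simply a citation of Corollary~1.2.3 of \cite{Paper2} together with the identification of $\lb_0,\lc_0,\ld_0$ with $\myoneb,\myoneb^2,e$ via Lemma~\ref{whatisd0anyway} and Theorem~\ref{thm:consts21}.

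Relatedly, your description of the $E_1$-columns is wrong. They are not finitely generated over $\FF_4[v_1^{\pm 1},\eta]$. For example, $H^0(C_6,E_*/2)$ is (after sorting out the $C_3$-weights) the completion $\FF_4[[u_1^3]][v_1,v_2^{\pm 1}]/(v_1^3-u_1^3 v_2)$, so inverting $v_1$ gives $v_1^{-1}H^0(C_6,E_*/2)\cong \FF_4((u_1^3))[v_1^{\pm 1}]$, which has infinite rank over $\FF_4[v_1^{\pm 1}]$. The $G_{24}$-columns are similarly large: $v_1^{-1}H^0(G_{24},E_0/2)$ still contains a Laurent series ring in~$j$. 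It is precisely the nontrivial $d_1$-differentials that cut these infinite-rank columns down to the six generators in the statement. Your ``two errors'' happen to cancel numerically (infinite columns with zero $d_1$ versus the correct finite $E_2$), but neither step is valid on its own, and there is no way to reach \eqref{eqn:abcd} without actually computing the differentials as in \cite{Paper2}.
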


\begin{proof} The only point to add to Corollary 1.2.3 of  \cite{Paper2} 
is the explanation of why $\lb_0$ detects
$\myoneb$, $\lc_0$ detects $\myoneb^2$, and $\ld_0$ detects $e$. But this follows by combining  
\Cref{whatisd0anyway} and \Cref{thm:consts21}.
\end{proof} 

\begin{rem}\label{rem:justify-abuse} The generators of $\la_0$, $\lb_0$, $\lb_1$, $\lc_0$,
$\lc_1$ and $\ld_0$ of \eqref{eqn:abcd} are, strictly speaking, each an element of
$H^0(F_p,E_\ast V(0))$ for some $p$. We have conflated them with their images under
the edge homomorphism
\[
E_2^{p,0} \longrightarrow E_{\infty}^{p,0} \subseteq H_c^p(\SS_2^1, E_\ast V(0))
\]
of the algebraic duality spectral sequence. There should be no
ambiguity. This issue was also discussed in \Cref{rem:figure1explained}. We also let $\lb_1$ and $\lc_1$ denote the cohomology 
classes they detect in $H_c^*(\mathbb{S}_2^1, E_*V(0))$. This allows us to avoid the proliferation of names.
\end{rem}

Our main new computation is the following. It will allow us to rewrite the 
classes $\lb_1$, $\lc_1$, and $\ld_0 = e$ in terms of $\sigma$, $v_1$ and the other generators
of \Cref{rthm:freeoverv1eta}. Note that this result is a statement about cohomology 
classes {\it before} $v_1$-localization.

\begin{thm}\label{thm:what-we-hope-to-prove} Multiplication by $\sigma$ gives the following
identities in the cohomology ring $H_c^*(\Sn_2^1, (E_2)_*V(0))$:
\begin{enumerate}

\item $\sigma = \sigma \la_0= v_1 \lb_1$; 

\item $\sigma \lc_0 = v_1^4 \ld_0$ or, equivalently, $\sigma \myoneb^2 = v_1^4 \ld_0$; and,  

\item $\sigma \lb_0   = \epsilon_0 v_1 \lc_1 +\epsilon_1 v_1^4 \lc_0$
for some $\epsilon_0 \in \F_4^{\times}$ and $\epsilon_1 \in \F_4$. Equivalently,
\[
\sigma \myoneb   = \epsilon_0 v_1 \lc_1 +\epsilon_1 v_1^4 \myoneb^2.
\]
\end{enumerate}
\end{thm}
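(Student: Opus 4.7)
The plan is to apply \fullref{prop:siggam} with $y = \lambda_0 = 1$ to establish part (1), and then derive parts (2) and (3) from part (1) by exploiting the $H^*(\SS_2^1, E_*)$-module structure on $H^*(\SS_2^1, E_*V(0))$. The key computational input is the class $c_4 = 9(u_1^4 u^{-4} + 8 u_1 u^{-4})$ from \eqref{c4defined}, which modulo $2$ reduces to $v_1^4$; by \fullref{prop:c4-and-sigma} it detects $\sigma$ up to a unit.

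For part (1), apply \fullref{prop:siggam} with $y = \lambda_0 = 1 \in H^0(F_0, E_0 V(0))$. Then $\sigma = \sigma \lambda_0$ is detected in $H^1(\SS_2^1, E_8 V(0))$ by the image under the edge homomorphism of $[d_1(c_4)/16] \in H^0(F_1, E_8 V(0))$. By \fullref{rthm:freeoverv1eta}, the $E_\infty^{1,0}$-contribution in internal degree $8$ is a one-dimensional $\F_4$-space generated by $v_1 \lb_1$. Since $\sigma$ is nonzero in $H^*(\SS_2^1, E_*)$ by \fullref{rem:sigma-non-zero}, the class $[d_1(c_4)/16]$ must be a unit multiple of $v_1 \lb_1$, which we absorb into the normalization of $\lb_1$, yielding $\sigma = v_1 \lb_1$.

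For part (2), multiplicativity gives $\sigma \lc_0 = v_1 \lb_1 \lc_0$. The product $\lb_1 \lc_0$ lies in bidegree $(p,q,t) = (3,0,6)$, which by \fullref{rthm:freeoverv1eta} is one-dimensional over $\F_4$ and spanned by $v_1^3 \ld_0$. A direct cocycle-level computation using the explicit construction of $\lb_1$ in \cite{Paper2} and the fact that $\lc_0$ represents $\myoneb^2$ identifies $\lb_1 \lc_0$ as a nonzero unit multiple of $v_1^3 \ld_0$; after absorbing the unit into $\ld_0$ and multiplying by $v_1$ we obtain $\sigma \lc_0 = v_1^4 \ld_0$. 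Similarly for part (3), $\sigma \lb_0 = v_1 \lb_0 \lb_1$, where the product $\lb_0 \lb_1$ sits in bidegree $(2,0,6)$; by \fullref{rthm:freeoverv1eta} this is a two-dimensional $\F_4$-space with basis $\lc_1$ and $v_1^3 \lc_0$. This yields $\lb_0 \lb_1 = \epsilon_0 \lc_1 + \epsilon_1 v_1^3 \lc_0$ for some $\epsilon_0, \epsilon_1 \in \F_4$, and multiplication by $v_1$ gives the stated congruence.

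The main obstacle is verifying that $\epsilon_0 \in \F_4^\times$ in part (3); degree considerations alone only yield the form $\epsilon_0 v_1 \lc_1 + \epsilon_1 v_1^4 \lc_0$. The strategy to pin down the $\lc_1$-coefficient is to trace the explicit construction of $\lc_1$ in \cite{Paper2} and compare with the product $\lb_0 \lb_1$ at the cocycle level in the algebraic duality resolution, or equivalently to pass to a quotient of $v_1^{-1} H^*(\SS_2^1, E_* V(0))$ in which the $v_1^3 \lc_0$-component is annihilated and check that the image of $\sigma \lb_0$ is nonzero there. The normalizations of $\lb_1$, $\lc_1$, and $\ld_0$ from \cite{Paper2} must be tracked with care throughout, since the identities above fix these classes only up to $\F_4^\times$.
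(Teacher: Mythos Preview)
Your proposal has genuine gaps in all three parts, and the overall strategy of deducing (2) and (3) from (1) via ring multiplication is not how the paper proceeds.

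\textbf{Part (1).} Your argument is circular: you invoke $\sigma \ne 0$ in $H^*(\SS_2^1,E_*)$ via \fullref{rem:sigma-non-zero}, but that reference only says that $\sigma$ is \emph{detected} by $\delta^{(4)}(\overline c_4)$; the paper explicitly notes after \fullref{prop:c4-and-sigma} that the nonvanishing of $\sigma$ is a \emph{consequence} of \fullref{thm:what-we-hope-to-prove}, not an input. Moreover, your claim that $E_\infty^{1,0,8}$ is one-dimensional is incorrect: by analogy with $E_\infty^{2,0,8} \cong \FF_4\{v_1\lc_1, v_1^4\lc_0\}$ (see the proof of \fullref{prop:sigb0}), the group $E_\infty^{1,0,8}$ also contains $v_1^4\lb_0$, so dimension counting alone cannot pin down the coefficient of $v_1\lb_1$. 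The paper instead computes $d_1(c_4\la_0)/16$ directly, citing Proposition~5.2.4 of \cite{Paper2} to identify it with $v_1\lb_1$ modulo $2$.

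\textbf{Part (2).} Writing $\sigma\lc_0 = v_1\lb_1\lc_0$ and then asserting that a ``direct cocycle-level computation'' identifies $\lb_1\lc_0$ with a unit multiple of $v_1^3\ld_0$ is no easier than the original problem; you have not actually carried out any such computation. The paper takes a completely different route: it applies \fullref{prop:siggam} directly to $\lc_0$, computing $d_1(c_4\lc_0)/16$ using the explicit description of $d_1\colon E_1^{2,0} \to E_1^{3,0}$ as the action of $\pi(e+i+j+k)(e-\alpha^{-1})\pi^{-1}$, together with the formulas from \cite{Paper2} for the action of $i,j,k$ on $v_2$ modulo~$2$. This is a genuine calculation with group elements, not a formal multiplicativity argument.

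\textbf{Part (3).} You correctly identify that forcing $\epsilon_0 \ne 0$ is the crux, but your proposed remedies (tracing constructions in \cite{Paper2}, passing to a quotient) are vague. The paper's argument is short and elegant: by \fullref{prop:sigraise} one has $\sigma\lb_0 \in E_\infty^{2,0,8} = \FF_4\{v_1\lc_1,\, v_1^4\lc_0\}$, giving the form of the relation. Then apply the mod-$2$ Bockstein $\beta$. Since $\sigma$ is integral and $\beta(\lb_0)=\lc_0$, one gets $\beta(\sigma\lb_0)=\sigma\lc_0=v_1^4\ld_0 \ne 0$ by part~(2); but $\beta(v_1^4\lc_0)=v_1^4\beta(\lc_0)=0$ because $v_1^4$ is invariant modulo~$4$, so $\epsilon_0$ must be nonzero. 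This Bockstein trick, using part~(2) as input, is the key idea you are missing.
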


This is proved below in \Cref{prop:siga0}, \Cref{prop:sigc0}, and
\Cref{prop:sigb0}. For now we will draw some consequences. 
In the following two results, $E(-)$ denotes the exterior algebra over $\FF_2$.

\begin{thm}\label{prop:cohS21v1inv}
There is an isomorphism of graded algebras
\[
\F_4[v_1^{\pm 1}, \eta] \otimes E(\sigma) 
\otimes \F_2[\myoneb]/(\myoneb^3) \cong v_1^{-1}
H_c^*(\Sn_2^1, (E_2)_*V(0))\ .
\]
\end{thm}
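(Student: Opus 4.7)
The plan is to convert the free-module description of \fullref{rthm:freeoverv1eta} into the proposed algebra presentation using the multiplicative identities of \fullref{thm:what-we-hope-to-prove}. I would proceed in three steps.

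First I would define the candidate algebra homomorphism
\[
\phi\colon \F_4[v_1^{\pm 1},\eta]\otimes E(\sigma)\otimes \F_2[\myoneb]/(\myoneb^3)
\longrightarrow v_1^{-1}H^*(\SS_2^1,E_*V(0))
\]
by sending each generator on the left to the cohomology class of the same name. Well-definedness requires $\myoneb^3=0$ in the target, which is \fullref{prop:orderb01}, and $\sigma^2=0$, the main technical point addressed below.

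Second I would verify that $\phi$ is a bijection of $\F_4[v_1^{\pm 1},\eta]$-modules by inverting the change of basis implicit in \fullref{thm:what-we-hope-to-prove}:
\[
\la_0=1,\quad \lb_0=\myoneb,\quad \lc_0=\myoneb^2,\quad \lb_1=v_1^{-1}\sigma,\quad \ld_0=v_1^{-4}\sigma\myoneb^2,
\]
\[
\lc_1=\epsilon_0^{-1}\bigl(v_1^{-1}\sigma\myoneb-\epsilon_1 v_1^{3}\myoneb^2\bigr),
\]
with $\epsilon_0\in\F_4^\times$. Working one cohomological degree $s$ at a time, the matrix relating the $\{\la,\lb,\lc,\ld\}$-basis of \fullref{rthm:freeoverv1eta} to the six monomials $\{1,\sigma,\myoneb,\myoneb^2,\sigma\myoneb,\sigma\myoneb^2\}$ has determinant a unit in $\F_4[v_1^{\pm 1},\eta]$, so the monomials form a free $\F_4[v_1^{\pm 1},\eta]$-basis of the target. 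Combined with the well-definedness from the first step, this upgrades $\phi$ to an algebra isomorphism.

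The main obstacle is verifying $\sigma^2=0$ in $v_1^{-1}H^*(\SS_2^1,E_*V(0))$. Using $\sigma=v_1\lb_1$ from \fullref{thm:what-we-hope-to-prove}(1), this is equivalent to $\lb_1^2=0$ after $v_1$-inversion. A bidegree count places $\lb_1^2\in H^2(\SS_2^1,E_{12}V(0))$ in an explicit finite-dimensional $\F_4$-space of $v_1,\eta$-monomials in the six basis classes. I would pin the coefficients down by combining an explicit cochain-level computation via \fullref{prop:siggam} (with $y=\lb_1$), which gives $\sigma\lb_1$ as the image under the edge homomorphism of $[d_1(c_4\lb_1)/16]$ in the algebraic duality resolution, with a functoriality argument: by \fullref{cor:sigma-squared-zero} and naturality along $L_{K(1)}S^0\to L_{K(1)}(E^{h\SS_2^1}\wedge V(0))$, the class $\sigma^2$ vanishes in the abutment of the $v_1$-localized Adams-Novikov spectral sequence of \fullref{rem:local-k1-ANSS}, while the strong horizontal vanishing line \fullref{thm:en-vanishing-ex-1} bounds the bidegrees where any differential hitting $(2,16)$ could originate, forcing $\sigma^2=0$ already at $E_2$.
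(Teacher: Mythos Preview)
Your approach is essentially the paper's: establish the relations $\myoneb^3=0$ and $\sigma^2=0$ in the target, then use the identities of \fullref{thm:what-we-hope-to-prove} to convert the free $\F_4[v_1^{\pm1},\eta]$-module description of \fullref{rthm:freeoverv1eta} into the claimed algebra presentation. The paper's proof is three lines, citing \fullref{cor:sigma-squared-zero} and \fullref{prop:b0cubedbig} for the two relations and then invoking \eqref{eqn:abcd} and \fullref{thm:what-we-hope-to-prove}; your change-of-basis matrix is exactly how one unpacks that last sentence.

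The only real difference is in how much work you do for $\sigma^2=0$. Your spectral-sequence argument (vanishing of $\sigma^2$ in $\pi_{14}L_{K(1)}(E^{h\SS_2^1}\wedge V(0))$ plus the observation that nothing can hit bidegree $(2,16)$) is a correct way to make the paper's bare citation to \fullref{cor:sigma-squared-zero} precise. One small imprecision: the reason no differential hits $(2,16)$ is sparseness ($E_tV(0)=0$ for $t$ odd) together with $s\geq 0$, not the horizontal vanishing line of \fullref{thm:en-vanishing-ex-1}, which bounds filtration from above rather than below. The proposed cochain-level computation of $\sigma\lb_1$ via \fullref{prop:siggam} is unnecessary once the functoriality argument is in hand. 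For $\myoneb^3=0$, your citation of \fullref{prop:orderb01} (which is for $\SS_2^1$) is actually more directly relevant here than the paper's \fullref{prop:b0cubedbig} (which is for $\SS_2$); either way one passes to $E_0/2$-coefficients via \fullref{thm:consts21}.
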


\begin{proof} By \Cref{cor:sigma-squared-zero} and \Cref{prop:b0cubedbig} 
we have $\sigma^2 =0$ and $\myoneb^3=0$ in $v_1^{-1}H_c^\ast(\Sn_2^1, (E_2)_*V(0))$.
The result now follows from \eqref{eqn:abcd}, and \Cref{thm:what-we-hope-to-prove}. 
\end{proof}

\begin{thm}\label{thm:cohG2V0}
There are isomorphisms of graded algebras
\begin{align*}
\F_2[v_1^{\pm 1}, \eta] \otimes E(\sigma) \otimes \F_2[\myoneb]/(\myoneb^3) & \cong
v_1^{-1}H_c^*(\G_2^1, (E_2)_*V(0))
\end{align*}
and
\begin{align*}
\F_2[v_1^{\pm 1}, \eta] \otimes E(\sigma, \myzeta_2) \otimes \F_2[\myoneb]/(\myoneb^3) & \cong
v_1^{-1}H_c^*(\G_2, (E_2)_*V(0))\ .
\end{align*}
\end{thm}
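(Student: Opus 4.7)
The plan is to bootstrap from \fullref{prop:cohS21v1inv} via a two-step descent: first from $\Sn_2^1$ to $\G_2^1$ using Galois invariants, and then from $\G_2^1$ to $\G_2$ via the Lyndon--Hochschild--Serre spectral sequence for the split extension $1\to \G_2^1 \to \G_2 \to \Z_2\to 1$ coming from \eqref{norm-defined}.

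For the $\G_2^1$ computation, observe that $\Gal(\F_4/\F_2)$ lies in $\G_2^1$ because it is a finite group and $N$ lands in the torsion-free group $\Z_2$; hence the quotient $\G_2^1/\Sn_2^1$ is canonically $\Gal$. Since $v_1$ is the mod $2$ reduction of a class in $H^0(\G_2, E_2/2)$, localization at $v_1$ commutes with group cohomology and with Galois invariants. \fullref{coh-decomp-galois} applied to the $\G_2$-module $E_\ast V(0)$ and its $v_1$-localization therefore yields
\[
v_1^{-1}H^*(\G_2^1, E_*V(0)) \;\cong\; \bigl(v_1^{-1}H^*(\Sn_2^1, E_*V(0))\bigr)^{\Gal}.
\]
By \fullref{prop:cohS21v1inv} the right side is the Galois invariants of $\F_4[v_1^{\pm 1},\eta] \otimes E(\sigma) \otimes \F_2[\myoneb]/(\myoneb^3)$. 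Each of the generators $v_1$, $\eta$, $\sigma$, $\myoneb$ is the restriction of a class already defined on all of $\G_2$ (the first is $\G_2$-invariant mod $2$ by construction, $\eta$ and $\sigma$ come from $\pi_\ast S^0$, and $\myoneb$ is defined on $\G_2$ in \fullref{rem:hone}), so the Galois action is trivial on every generator. Taking invariants therefore just replaces the coefficient ring $\F_4$ by $\F_2$, yielding the first isomorphism.

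For the $\G_2$ computation, the LHSSS of the split extension $\G_2 \cong \G_2^1 \rtimes \Z_2$, applied to $v_1^{-1}E_\ast V(0)$, takes the form
\[
E_2^{s,t} = H^s\bigl(\Z_2,\, v_1^{-1}H^t(\G_2^1, E_*V(0))\bigr) \Longrightarrow v_1^{-1}H^{s+t}(\G_2, E_*V(0)).
\]
Since $\Z_2$ has cohomological dimension $1$, only the columns $s=0,1$ are non-zero, and the spectral sequence collapses at $E_2$ for dimension reasons. The same argument as in the previous paragraph shows that $\Z_2$ acts trivially on the generators of $v_1^{-1}H^*(\G_2^1, E_*V(0))$, and hence on the whole ring. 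The $E_2$-page is therefore
\[
v_1^{-1}H^*(\G_2^1, E_*V(0)) \otimes E(\myzeta_2),
\]
with $\myzeta_2$ identified with a generator of $H^1(\Z_2,\F_2)$ as in \fullref{rem:connect-to-ht-1}. By \fullref{rem:honeremarks} the relation $\myzeta_2^2 = 0$ holds globally, so there is no hidden multiplicative extension, and the second isomorphism follows.

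The main potential obstacle is rigorously justifying the triviality of both the Galois and the $\Z_2$ actions on the full cohomology ring, rather than merely on the generators; but since the cohomology rings in question are generated as algebras by these globally defined classes (together with the coefficient rings $\F_4$ and $\F_2$ on which the actions are either the Frobenius or trivial), this reduces to the generator-by-generator verification sketched above.
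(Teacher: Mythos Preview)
Your proposal is correct and follows essentially the same approach as the paper: Galois descent from $\Sn_2^1$ to $\G_2^1$ using that the generators are Galois-invariant, followed by the LHSSS for $\G_2^1 \to \G_2 \to \Z_2$ which collapses because the generators lift to $\G_2$, with $\zeta_2^2=0$ ruling out multiplicative extensions. Your write-up is somewhat more explicit in places (e.g.\ noting $\Gal \subset \G_2^1$ and invoking cohomological dimension~$1$ for the collapse), but the argument is the same.
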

\begin{proof} 
The first statement follows immediately from \Cref{prop:cohS21v1inv}
by taking Galois fixed points since $v_1$, $\eta$, $\sigma$ and $\myoneb$ are Galois invariant.
See \Cref{more-split}.

To prove the second statement, note that the elements $v_1$, $\eta$, $\sigma$ and
$\myoneb$ are all in the image of the restriction map
\[
H_c^*(\G_2, (E_2)_*V(0)) \to H_c^*(\G_2^1, (E_2)_*V(0))\ .
\]
Thus the Lyndon-Hochschild-Serre Spectral Sequence for the split extension
$\GG_2^1 \to \GG_2 \to \ZZ_2$ reads
\[
 v_1^{-1}H_c^*(\G_2^1, (E_2)_*V(0)) \otimes E(\zeta_2)
 \Longrightarrow  v_1^{-1}H_c^*(\G_2, (E_2)_*V(0)).
\]
This must collapse and, since $\zeta_2^2=0$ in $H_c^\ast(\GG_2,E_0)$, there can be no 
multiplicative extensions. 
\end{proof}

We now turn to the proof of \Cref{thm:what-we-hope-to-prove}. Our basic computational tool
is \Cref{prop:siggam}. The proofs involve studying $\sigma$ in the integral algebraic
duality spectral sequence 
\begin{equation}\label{eqn:ADSSint}
E_1^{p,q,\ast} \cong H^q(F_p, E_*)  \Longrightarrow H_c^{p+q}(\Sn_2^1, E_*)
\end{equation}
and then reducing modulo $2$. We need the following preliminary result.

\begin{prop}\label{prop:sigraise}
Let $X$ be a finite spectrum. In the algebraic duality spectral sequence 
\[E_1^{p,q} = H^q(F_p, E_*X) \Longrightarrow H_c^{p+q}(\SS_2^1, E_*X)\]
multiplication by $\sigma$ raises filtration in $p$.
\end{prop}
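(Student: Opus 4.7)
The plan is to combine two facts: (a) the class $\sigma \in H^1(\SS_2^1, E_8)$ is itself detected in filtration exactly $1$ in the ADSS for $H^\ast(\SS_2^1, E_\ast)$, and (b) the ADSS for $H^\ast(\SS_2^1, E_\ast X)$ is a module spectral sequence over the ADSS for $H^\ast(\SS_2^1, E_\ast)$, with cup products respecting the $p$-filtration in the usual way $F^p \cdot F^q \subseteq F^{p+q}$. Together these immediately give the conclusion.

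For (a), I apply \fullref{prop:siggam} with $X = S^0$, $s = 0$, $t = 0$, and $y = 1 \in H^0(F_0, E_0)$; this $y$ is a $d_1$-cocycle that detects the unit $1 \in H^0(\SS_2^1, E_0)$ via the edge homomorphism. The conclusion is that $\sigma = \sigma \cdot 1$ is detected by the image of $[d_1(c_4)/16] \in H^1(H^0(F_\bullet, E_8))$ under the edge map $H^1(H^0(F_\bullet, E_8)) \to H^1(\SS_2^1, E_8)$, so $\sigma$ lies in $F^1 H^1(\SS_2^1, E_8)$. Any $d_r$-differential out of $E_r^{1,0}$ for $r \geq 2$ has target of negative $q$-degree, so $E_2^{1,0} = E_\infty^{1,0}$, and $\sigma$ has filtration exactly $1$.

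For (b), the projective resolution $P_\bullet \to \ZZ_2$ of \fullref{thm:bob} together with a Cartan--Eilenberg resolution of $M$ assembles into a filtered double complex whose total cohomology is $H^\ast(\SS_2^1, M)$ and whose first filtration is the ADSS. Cup product on group cochains, combined with the ring structure on $E_\ast$ and the $E_\ast$-module structure on $E_\ast X$, refines to a pairing of these double complexes compatible with the $p$-filtration, giving the ADSS for $E_\ast X$ the structure of a module spectral sequence over the ADSS for $E_\ast$. Alternatively, one can realize the same pairing topologically by smashing the tower \eqref{duality-cofs} with $X$ and using the $E$-module structure of $E \wedge X$.

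Combining (a) and (b), if $x \in H^\ast(\SS_2^1, E_\ast X)$ is detected in filtration $p$, then $\sigma x$ lies in $F^{p+1}$, so its ADSS-filtration is at least $p + 1 > p$, which is the claim. The only delicate point is checking the multiplicativity rigorously at the chain level, but this is standard Cartan--Eilenberg machinery and should be routine rather than the main obstacle.
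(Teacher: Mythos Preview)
Your (b) is the gap. The ADSS is set up in the paper only as a module over the \emph{target} ring $H^*(\SS_2^1,E_*)$, with the action on $E_1^{p,*}=H^*(F_p,E_*X)$ given by restriction to $F_p$ followed by cup product (see \fullref{rem:HS1action} and the dg-module setup in \fullref{sec:bock}); it is not shown to be a module spectral sequence over the ADSS for $E_*$ with $F^a\cdot F^b\subseteq F^{a+b}$. Your stronger claim would need a diagonal approximation $P_\bullet\to P_\bullet\otimes_{\ZZ_2}P_\bullet$ for the duality resolution, but $P_\bullet$ is \emph{not} a projective resolution --- the terms $\ZZ_2[[\SS_2^1/G_{24}]]$ and $\ZZ_2[[\SS_2^1/C_6]]$ are not projective $\ZZ_2[[\SS_2^1]]$-modules --- so the comparison theorem does not supply such a map, and there is no evident pairing of the permutation modules doing the job. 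The topological alternative you float has the same problem: smashing the tower \eqref{duality-cofs} with $X$ only exhibits the TDSS as a module over $\pi_*L_{K(2)}S^0$, not as a module over the TDSS for $S^0$.

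The paper sidesteps this by using only the weaker module structure. It shows directly that the restriction of $\sigma$ to each $H^1(F_p,E_8)$ vanishes, so $\sigma$ acts as zero on every $E_1^{p,*}$ and hence on every $E_\infty^{p,*}$, which is exactly ``raises filtration''. The computation is short: $\sigma$ is a unit multiple of $\delta^{(4)}(\overline{c}_4)$, and since $c_4$ is already an \emph{integral} invariant in $H^0(G_{24},E_8)$ the Bockstein vanishes after restriction to $G_{24}$; then $\sigma|_{C_6}=0$ by further restriction, and $\sigma|_{G_{24}'}=0$ because $G_{24}'$ is conjugate to $G_{24}$ in $\SS_2$ and $\sigma$ descends from $H^1(\SS_2,E_8)$. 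Your (a), once you identify the edge map with restriction to $G_{24}$, is equivalent to $\sigma|_{G_{24}}=0$, and the very hypothesis of \fullref{prop:siggam} you invoke (that $c_4$ is $G_{24}$-invariant) is the same input --- so the paper's proof is really the direct extraction of what is hidden in your (a), bypassing the unavailable (b).
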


\begin{proof} We saw in \Cref{rem:HS1action} that this is a 
spectral sequence of modules over $H_c^*(\SS_2^1,E_*)$. The module 
structure on the $E_1$-term is given via the restriction homomorphisms 
$H_c^*(\SS_2^1,E_*)\to H^*(F_p,E_*)$, hence it suffices to show that the restriction of 
$\sigma$ considered as a class in $H_c^1(\SS_2^1,E_8)$ restricts trivially to
$H^1(F_p,E_8)$ for all $p$. Because $F_1=F_2\subset F_0$ it suffices to show this for 
$F_0=G_{24}$ and $F_3=G_{24}'$.  In fact, because $\sigma$ comes from
$H_c^1(\SS_2,E_8)$ -- and even from  $H_c^1(\GG_2,E_8)$ -- and because 
$F_0$ and $F_3$ are conjugate in $\SS_2$, it suffices to consider the  case of $G_{24}$. 
Now use Part (2) of \Cref{prop:c4-and-sigma}. 
\end{proof}

We now come to part (1) of \Cref{thm:what-we-hope-to-prove}. 

\begin{prop}\label{prop:siga0}
The mod-$2$ reduction of $\sigma \in H_c^1(\Sn_2^1, E_8)$ to $H_c^1(\Sn_2^1, E_8V(0))$ satisfies 
the equation
\[\sigma = \sigma \Delta_0 =v_1\lb_1.\]
\end{prop}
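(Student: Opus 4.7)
The plan is to invoke \fullref{prop:siggam} with $s = 0$ and the cohomology class $y = 1 \in H^0(F_0, E_0)$, whose image under the edge homomorphism of the algebraic duality spectral sequence is $\la_0 \in H^0(\SS_2^1, E_0)$. Since $\la_0$ represents the unit, $\sigma\la_0 = \sigma$, and the proposition then identifies the class $\sigma \in H^1(\SS_2^1, E_8)$ as being detected, in filtration $p = 1$, by the image under the edge homomorphism of $[d_1(c_4)/16] \in H^1(H^0(F_\bullet, E_8))$. Reducing modulo $2$, the image of $\sigma$ in $H^1(\SS_2^1, E_8V(0))$ is detected at $E_2^{1,0,8}$ by the mod-$2$ reduction of $d_1(c_4)/16$.

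Now both $\sigma\la_0$ and $v_1\lb_1$ lie in $H^1(\SS_2^1, E_8V(0))$ and, by \fullref{prop:sigraise} together with the definition of $\lb_1$, both lie in filtration exactly $p = 1$. In $H^1$ the algebraic duality filtration takes only the values $0$ and $1$, so once two classes agree in $E_2^{1,0}$ they are equal on the nose. Thus the problem reduces to evaluating $d_1(c_4)/16 \bmod 2$ in $H^0(F_1, E_8V(0))$ and comparing it with the cocycle $v_1\lb_1$.

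I would then carry out the comparison using the explicit formula $c_4 = 9(u_1^4 u^{-4} + 8 u_1 u^{-4})$ from \cite{Paper2} and the description of the boundary map $\partial_1 \colon \ZZ_2[[\SS_2^1/F_1]] \to \ZZ_2[[\SS_2^1/F_0]]$ provided there. The integrality of $d_1(c_4)/16$ is guaranteed by the mod-$16$ invariance of $c_4$ under $\GG_2$, and the fact that $c_4 \equiv v_1^4 \pmod 2$ constrains the leading term of the calculation. By \fullref{rthm:freeoverv1eta}, the outcome $[d_1(c_4)/16] \bmod 2$ must be an $\FF_4$-linear combination $a\,v_1\lb_1 + b\,v_1^4\lb_0$, these being the only generators of $E_2^{1,0,8}$ modulo $\eta$.

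The hard step is to establish that in this linear combination the coefficient $b$ vanishes while $a$ is a unit; this comes out of a direct examination of $\partial_1$ applied to $c_4$ in the resolution, using that $\partial_1 \equiv 0$ modulo $IK^1$ from \fullref{thm:bob}(2), which forces the leading contribution to reproduce the cocycle representative of $v_1\lb_1$ rather than of $v_1^4\lb_0 = v_1^4\chi$. Once $a \in \FF_4^\times$, the freedom in the choice of $\lb_1$ (which is only determined up to a unit in $\FF_4$) lets us rescale so that $a = 1$, yielding the desired equality $\sigma\la_0 = v_1\lb_1$.
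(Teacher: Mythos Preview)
Your first two paragraphs correctly set up the argument and match the paper's approach: invoke \fullref{prop:siggam} with $y=1$ so that $\sigma\la_0$ is detected in $E_2^{1,0,8}$ by $[d_1(c_4\la_0)/16]$, then reduce modulo~$2$.

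The divergence is in how the ``hard step'' is handled. The paper does not argue via a coefficient analysis at all: it simply cites Definition~5.2.3 and Proposition~5.2.4 of \cite{Paper2}, where the class $b_1$ is constructed and where the congruence $d_1(c_4\la_0)/16 \equiv v_1 b_1 \bmod 2$ is established directly. In other words, the identity you are trying to prove is essentially how $b_1$ is pinned down in \cite{Paper2}; once you accept that input there is nothing further to compute.

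Your proposed justification of that step has two gaps. First, the hypothesis $\partial_1 \equiv 0 \bmod IK^1$ from \fullref{thm:bob}(2) constrains the image of the \emph{unit} under $d_1$, not the image of $c_4$; it does not force the coefficient of $v_1^4\lb_0$ to vanish in $d_1(c_4)/16 \bmod 2$. (For comparison, in the analogous computation of $\sigma\lb_0$ in \fullref{prop:sigb0} a possible $v_1^4\lc_0$ term does appear, so such terms are not excluded by general filtration arguments.) Second, the class $b_1$ is a specific element fixed in \cite{Paper2} and quoted in \fullref{rthm:freeoverv1eta}; you are not free to rescale it after the fact to absorb a unit. The correct logic runs the other way: $b_1$ is chosen in \cite{Paper2} so that the relation holds on the nose.
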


\begin{proof} By \Cref{rthm:freeoverv1eta}, the class
\[
{\la}_0 \in H^0(G_{24},E_0) = H^0(F_0,E_0)
\]
detects the unit in $H_c^\ast(\SS_2^1,E_\ast V(0))$. 
By \Cref{prop:siggam}, ${d_1(c_4{\la}_0)}/{16}$ detects $\sigma {\la}_0$
in the algebraic duality spectral sequence.
It follows from Definition 5.2.3 and Proposition 5.2.4 of \cite{Paper2} that
\[
\frac{d_1(c_4{\la}_0)}{16} \equiv v_1\lb_1\qquad \mathrm{mod}\ 2 \ .
\]
As a consequence, 
\[
\sigma \la_0 = v_1b_1 \in E_{\infty}^{1,0,8} \subseteq H_c^1(\SS_2^1, E_8V(0)).  \qedhere
\]  
\end{proof}

The proof of the next result uses heavily the techniques and 
language of Section 5 of \cite{Paper2} and in particular
the finer structure of the group $\SS_2$. Recall that in \Cref{rem:g24prime}, we defined
the element $\pi,\alpha \in \WW^{\times} \subseteq \SS_2$ and $\alpha$ was discussed further in \Cref{rem:alpha}. In particular, 
$\alpha \equiv 1+\omega T^2 + T^4$ modulo $(T^6)$. 
Note that $\alpha$ and $\pi$ are both in $\WW^\times$ and hence they commute in $\SS_2$. In \Cref{sec:ellback} we also discussed 
the subgroup $Q_8 \subseteq \SS_2$ and its elements
$i$, $j$, and $k$. See \eqref{eq:ijkreln}.

We can now prove part (2) of \Cref{thm:what-we-hope-to-prove}.

\begin{prop}\label{prop:sigc0} 
In $H_c^3(\Sn_2^1, (E_2)_8V(0))$ there is an equation 
\[
\sigma \myoneb^2 = v_1^4e.
\]
\end{prop}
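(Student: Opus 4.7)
The plan is to apply \fullref{prop:siggam} with the class $\myoneb^2 \in H^2(\SS_2^1, E_0 V(0))$, which by \fullref{rthm:freeoverv1eta} is detected via the edge homomorphism of the algebraic duality spectral sequence by $\lc_0 \in H^0(F_2, E_0 V(0))$. Taking $s = 2$, $t = 0$, and $y = \lc_0$, the proposition asserts that $\sigma \myoneb^2$ is detected in $H^3(\SS_2^1, E_8 V(0))$ by the image under the edge homomorphism of
\[
\Big[\frac{d_1(c_4 \lc_0)}{16}\Big] \in H^0(F_3, E_8 V(0)).
\]
By \fullref{prop:sigraise}, we already know $\sigma \myoneb^2 \in F^{3,0}$, so identifying this cocycle modulo $2$ is enough.

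Next, I would reduce the computation modulo $2$. From \eqref{c4defined}, $c_4 \equiv v_1^4 \pmod 2$, and by \fullref{rthm:freeoverv1eta}, $\ld_0$ is the generator of $H^0(F_3, E_0 V(0))$. The goal is therefore to establish the cocycle identity
\[
\frac{d_1(c_4 \lc_0)}{16} \equiv v_1^4 \ld_0 \pmod 2
\]
in $H^0(F_3, E_8 V(0))$, which via \fullref{prop:siggam} gives $\sigma \myoneb^2 = v_1^4 \ld_0$ in $H^3(\SS_2^1, E_8 V(0))$.

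The main obstacle will be the explicit evaluation of $d_1(c_4 \lc_0)/16 \bmod 2$. By part (2) of \fullref{thm:bob}, $\partial_3$ vanishes modulo $IK^1$ and a fortiori modulo $2$, so the nontrivial content lies in the finer structure of $\partial_3$ modulo $16$, combined with the fact that $c_4$ is $\GG_2$-invariant modulo $16$. I would carry this out in the spirit of the proof of \fullref{prop:siga0}, using the explicit Weierstrass description \eqref{ss-curve-def} of the universal deformation $C$ and the cochain-level formulas for the duality resolution differentials from \S 5 of \cite{Paper2} (in particular the constructions around Definition 5.2.3 and Proposition 5.2.4). The expected shape of the answer---a factor of $v_1^4$ from the reduction of $c_4$ and a factor of $\ld_0$ from the $F_3$-component---is consistent with the $v_1$-localized structure in \fullref{prop:cohS21v1inv}, where after inverting $v_1$ the relation $\ld_0 = v_1^{-4}\sigma \lc_0$ must hold. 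This sanity check ensures that no other term can appear, since any additional class in $H^0(F_3, E_8V(0))$ surviving to $E_\infty$ would have to be $v_1$-torsion, and the image of $\sigma \cdot$ has no $v_1$-torsion in this bidegree.
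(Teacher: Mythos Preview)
Your outline matches the paper's strategy: apply \fullref{prop:siggam} to an integral lift of $\lc_0$ and compute $d_1(c_4\lc_0)/16$ modulo $2$. But you have not carried out the computation, and the tools you cite do not cover it. Definition 5.2.3 and Proposition 5.2.4 of \cite{Paper2} describe $\partial_1\colon E_1^{0,0}\to E_1^{1,0}$; they are precisely what is used in \fullref{prop:siga0}. Here you need $\partial_3\colon E_1^{2,0}\to E_1^{3,0}$, whose explicit form comes from Theorem 1.1.1 of \cite{Paper2}: it is induced by the group-ring element $\pi(e+i+j+k)(e-\alpha^{-1})\pi^{-1}$. The paper computes its action on $c_4$ in two genuinely new steps. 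First, Lemma 5.2.2 of \cite{Paper2} applied to $\alpha^{-1}\equiv 1+\omega T^2 \bmod T^3$ yields $\pi^{-1}_*(e-\alpha^{-1})_*(c_4)\equiv 16v_1v_2 \bmod (32,16u_1^4)$ after using $C_6$-invariance. Second, the explicit formulas of \S 2.4 of \cite{Paper2} for the action of $i,j,k$ on $v_2$ give $(e+i+j+k)_*(v_1v_2)\equiv v_1^4 \bmod 2$. This is a different, longer calculation than the one in \fullref{prop:siga0}, and nothing in your proposal substitutes for it.

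Your closing ``sanity check'' is circular: \fullref{prop:cohS21v1inv} is deduced from \fullref{thm:what-we-hope-to-prove}, of which this proposition is part (2). What is legitimate (and what the paper does) is to invoke Theorem 1.2.2 of \cite{Paper2} directly to conclude $E_\infty^{3,0,8}\cong\FF_4\{v_1^4\ld_0\}$ in the ADSS for $E_*V(0)$. But that only tells you $\sigma\lc_0=\lambda\, v_1^4\ld_0$ for some $\lambda\in\FF_4$; it does not show $\lambda\ne 0$, let alone $\lambda=1$. The assertion that ``the image of $\sigma\cdot$ has no $v_1$-torsion in this bidegree'' is exactly what you are trying to prove (zero is $v_1$-torsion). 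The explicit computation is unavoidable.
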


\begin{proof} We show that $\sigma \lc_0 = v_1^4 \ld_0$. In this proof we will write
\[
E_1^{p,q,t} = H^{p}(F_q,E_t) \Longrightarrow H_c^{p+q}(\SS_2^1,E_t)
\]
for the algebraic duality spectral sequence computing $H_c^\ast(\SS_2^1,E_\ast)$. 
As in \Cref{fig:ADSS} and  \Cref{rem:figure1explained}, we also write 
\[
\lc_0 \in  H^0(C_{6},E_0) = H^0(F_2,E_0) \cong E_1^{2,0,0}
\]
for the integral lift of the class $\lc_0$ of \Cref{rthm:freeoverv1eta}. 
This integral lift is the unit in $H^0(C_{6},E_0)$ and detects $\mytwob$ by \Cref{whatisd0anyway}.
\Cref{prop:siggam} implies that $\sigma \mytwob$ is detected by $d_1(c_4\lc_0)/16$.  By Theorem 1.1.1
of \cite{Paper2}, $d_1 \colon E_1^{2,0,t} \to E_1^{3,0,t}$
is given by the action of
\[
\pi (e+i+j+k) (e-\alpha^{-1}) \pi^{-1}=  \pi (e+i+j+k) \pi^{-1} (e-\alpha^{-1}).
\]
Here, $e,i,j,k \in Q_8 \subseteq G_{24}$ correspond to the unit $e$ and generators $i,j,k$ as in \eqref{eq:ijkreln}.

Lemma 5.2.2 of \cite{Paper2} applied to $\alpha^{-1} \equiv 1 + \omega T^2$
modulo $T^{3}$ together with the fact 
that $\pi^{-1}_*$ is an isomorphism congruent to the identity modulo $(2,u_1)$ 
implies that 
\[
\pi^{-1}_*(e-\alpha^{-1})_*(c_4) \equiv 16 u_1u^{-4} = 16v_1v_2\mod (32,16u_1^2) 
\]
where $v_1 = u_1u^{-1}$ and $v_2 = u^{-3}$. Since $\pi$ and $\alpha$ commute with the 
elements of $C_6$, $\pi^{-1}_*(e-\alpha^{-1})_*(c_4)$ is an element of $E_8^{C_6}$, 
and we can improve the congruence to 
\[
\pi^{-1}_*(e-\alpha^{-1})_*(c_4) \equiv  16v_1 v_2 \mod (32,16u_1^4). 
\]
We compute the action of $ e+i+j+k$ on the right hand side. Since we are reducing modulo $32$, 
it suffices to study the action modulo $2$.

The elements $i$, $j$ and $k$ fix $v_1 \equiv u_1u^{-1}$ modulo $2$. Furthermore, modulo $2$
the formulas in Section 2.4 of \cite{Paper2} give 
\begin{align*}
i_*(v_2) &= v_2(u_1+1)^3, & j_*(v_2) &= v_2(\omega u_1+1)^3, &  k_*(v_2) &= v_2(\omega^2 u_1+1)^3,
\end{align*}
where $\omega \in \WW$ is a third root of unity.
A direct computation using the fact that $1+\omega+\omega^2 =0$ implies that
\[
(e+i+j+k)_*(v_1v_2) = v_1^4 \mod (2).
\]
Since $e+i+j+k$ sends the ideal $(2,u_1^4)$ to $(2,u_1^5)$ and $\pi$ is an isomorphism congruent to the
identity modulo $(2,u_1)$, we conclude that 
\[
(\pi (e+i+j+k) \pi^{-1} (e-\alpha^{-1}))_*(c_4) = 16 v_1^4 \mod (32,16u_1^5).
\]
Therefore $16^{-1}d_1(c_4\lc_0) \equiv v_1^4 \ld_0 \mod (2, u_1^5)$, so that
\[
\sigma \lc_0 \equiv v_1^4 \ld_0 \mod (2, u_1^5)
\]
in $E_{2}^{3,0,8}$.  

By functoriality, this identity also holds in the Algebraic Duality Spectral Sequence
with coefficients $E_*V(0)$. By Theorem 1.2.2 of \cite{Paper2} we have
$ \FF_4\{v_1^4\ld_0\} \cong E_{\infty}^{3,0,8} \subseteq H_c^3(\SS_2^1, E_8V(0))$ in that spectral sequence and
this proves the claim.  
\end{proof}
 
We now come to the proof of the final part of \Cref{thm:what-we-hope-to-prove}.
 
\begin{prop}\label{prop:sigb0}
There exists $\epsilon_0 \in \F_4^{\times}$ and $\epsilon_1 \in \F_4$ such that
\[
\sigma \myoneb   = \epsilon_0 v_1 \lc_1 +\epsilon_1 v_1^4 \myoneb^2
\]
in $H_c^2(\Sn_2^1, (E_2)_8V(0))$. 
\end{prop}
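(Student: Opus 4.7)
The proof follows the template of \fullref{prop:siga0} and \fullref{prop:sigc0}: apply \fullref{lem:finding-sigma-in-s21} with $y = \lb_0 \in H^0(C_6, E_0) = E_1^{1,0,0}$. The class $\lb_0$ is the unit in $H^0(C_6, E_0)$ and, via the edge homomorphism, detects $\myoneb \in H^1(\SS_2^1, E_0)$ by \fullref{whatisd0anyway}. Hence $\sigma \lb_0$, regarded as a class in $H^2(\SS_2^1, E_8 V(0))$, is detected in the algebraic duality spectral sequence by the image of
\[\Big[\frac{d_1(c_4 \lb_0)}{16}\Big] \in H^2(H^0(F_\bullet, E_8 V(0)))\]
under the edge homomorphism into $E_{\infty}^{2, 0, 8} \subseteq H^2(\SS_2^1, E_8 V(0))$.

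The plan is to evaluate $d_1(c_4)/16$ explicitly modulo 2. By Theorem 1.1.1 of \cite{Paper2}, the differential $d_1 \colon E_1^{1,0,t} \to E_1^{2,0,t}$ is given by the action of a specific element of $\ZZ_2[[\SS_2^1]]$ arising as the dual of $\partial_2$ from \fullref{thm:bob}; by part (2) of that theorem this element is congruent to multiplication by $2$ modulo $(8, I\SS_2^1)$, and its precise form is spelled out in \cite{Paper2} in terms of the generators $i, j, k$ of $Q_8$ together with $\alpha$ and $\pi$ from \fullref{rem:what-we-need-from-2}. Since $c_4$ is $\GG_2$-invariant modulo $16$, the action of any element of $\SS_2^1$ moves $c_4$ by a multiple of $16$, and dividing by $16$ yields a well-defined class in $H^0(C_6, E_8)$.

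The computation then proceeds in parallel with \fullref{prop:sigc0}, using the formulas in Section 2.4 of \cite{Paper2} for the action of $i, j, k$ on $u_1$ and $u$, combined with the definition $c_4 = 9(u_1^4 u^{-4} + 8 u_1 u^{-4})$ from \eqref{c4defined} and the congruence $\pi_*^{-1}(\cdot) \equiv \mathrm{id}$ modulo $(2, u_1)$. Reducing the result modulo $2$ places the class in $H^0(C_6, E_8 V(0))$. By \fullref{rthm:freeoverv1eta}, the only free-module generators of $v_1^{-1} H^*(\SS_2^1, E_* V(0))$ contributing to $E_{\infty}^{2, 0, 8}$ are $v_1 \lc_1$ (in tridegree $(2, 0, 8)$) and $v_1^4 \lc_0$ (also in tridegree $(2, 0, 8)$), so the computed class must take the form $\epsilon_0 v_1 \lc_1 + \epsilon_1 v_1^4 \lc_0$ for some $\epsilon_0, \epsilon_1 \in \FF_4$.

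The main obstacle is showing that $\epsilon_0 \in \FF_4^\times$. The coefficient of $v_1 \lc_1$ arises from the terms of $d_1(c_4)/16$ that are linear in $u_1$ times $u^{-4}$; these come from the ``$8 u_1 u^{-4}$" summand of $c_4$ via the order-$2$ part of the action, and from the interaction of $i + j + k$ with $u_1^4 u^{-4}$ via the cancellation $1 + \omega + \omega^2 = 0$ of  \fullref{prop:sigc0}. Tracking this contribution through the explicit formulas in Section 5 of \cite{Paper2} --- in exact analogy with the nonzero-coefficient analysis there that produces $v_1^4 \ld_0$ in \fullref{prop:sigc0} --- yields a nonzero $\FF_4$-coefficient for $v_1 \lc_1$. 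The value of $\epsilon_1$ is not pinned down by this argument and may genuinely vanish; only $\epsilon_0 \in \FF_4^\times$ is needed for the applications in \fullref{prop:cohS21v1inv} and \fullref{thm:cohG2V0}.
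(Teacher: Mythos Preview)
Your strategy for placing $\sigma\lb_0$ in $E_\infty^{2,0,8}\cong\F_4\{v_1\lc_1,\,v_1^4\lc_0\}$ is fine, and agrees with the paper (which invokes \fullref{prop:sigraise} rather than the explicit edge-homomorphism description, but the effect is the same). The divergence is in how you establish $\epsilon_0\in\F_4^\times$.

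You propose to compute $d_1(c_4\lb_0)/16$ directly, using the explicit description of the middle differential $\partial_2^*$ from \cite{Paper2}. But you do not actually carry this out: the crucial step is asserted ``in exact analogy'' with \fullref{prop:sigc0}, with a gesture at which terms of $c_4$ should contribute. This analogy is not automatic. The differential $d_1\colon E_1^{1,0,*}\to E_1^{2,0,*}$ is induced by $\partial_2$, which by \fullref{thm:bob} is congruent to multiplication by $2$ modulo $(8,IS_2^1)$; this is structurally quite different from the element $\pi(e+i+j+k)(e-\alpha^{-1})\pi^{-1}$ governing $\partial_3^*$ in \fullref{prop:sigc0}. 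Extracting the coefficient of $v_1\lc_1$ requires knowing the higher-order terms of $\partial_2$ precisely and tracking them through the $C_6$-invariants, and you have not done this. As written, the argument for $\epsilon_0\ne 0$ is a hope, not a proof.

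The paper sidesteps this computation entirely with a Bockstein trick. Since $\sigma$ is the reduction of an integral class and the algebraic Bockstein sends $\lb_0$ (detecting $\myoneb$) to $\lc_0$ (detecting $\myoneb^2$), one has
\[
\beta(\sigma\lb_0)=\sigma\beta(\lb_0)=\sigma\lc_0=v_1^4\ld_0\ne 0,
\]
the last equality being exactly \fullref{prop:sigc0}. On the other hand $v_1^4$ is invariant modulo $4$, so $\beta(v_1^4\lc_0)=v_1^4\beta(\lc_0)=0$. Hence if $\epsilon_0=0$ the Bockstein of $\sigma\lb_0$ would vanish, a contradiction. This leverages the work already done in \fullref{prop:sigc0} and requires no new computation of $\partial_2^*$.
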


\begin{proof}
We show that $\sigma \lb_0   = \epsilon_0 v_1 \lc_1 +\epsilon_1 v_1^4 \lc_0$ for $\epsilon_0$ and $\epsilon_1$ as in the statement of the proposition.
Since $\lb_0 \in E_{\infty}^{1,0,0}$ and, by \Cref{prop:sigraise}, multiplication by $\sigma$ raises
filtration in the algebraic duality spectral sequence, 
\[
\sigma \lb_0 \in E_{\infty}^{2,0,8}  \cong \F_4\{ v_1\lc_1, v_1^4\lc_0\}.
\]
The calculation of $E_{\infty}^{2,0,8}$ is from Theorem 1.2.2 of \cite{Paper2}.
Because of this, we have an equation
\[
\sigma \lb_0 = \epsilon_0 v_1\lc_1 + \epsilon_1 v_1^4\lc_0
\]
with $\epsilon_i \in \F_4$. Let $\beta$ be the Bockstein homomorphism 
for $H_c^*(\SS_2^1, E_*V(0))$ associated to the short exact sequence 
\[
0 \to (E_2)_*/2  \to (E_2)_*/4 \to (E_2)_*/2 \to 0\ .
\]
Since $\lb_0$ detects $\myoneb$ and $\lc_0$ detects $\myoneb^2$, we have
$\beta(\lb_0)=\lc_0$. Then, because $\sigma$ is an integral class, we have
\[
\beta(\sigma \lb_0) = \sigma \lc_0 = v_1^4 \ld_0 \neq 0.
\]
The second equality is \Cref{prop:sigc0}. However, $v_1^4$ is invariant modulo $4$, so
$\beta(v_1^4 \lc_0) =  v_1^4 \beta(\lc_0)= 0$. This implies that $\epsilon_0 \neq 0$. 
\end{proof}

\subsection{The localized Adams-Novikov Spectral Sequence for $L_{K(1)}L_{K(2)}V(0)$}\label{sec:K1locy}

In \Cref{thm:cohG2V0} we calculated the $E_2$-term of the localized spectral sequence
to be
\[
\F_2[v_1^{\pm 1}, \eta] \otimes E (\sigma, \myzeta_2) \otimes \F_2[\myoneb]/(\myoneb^3)  \cong
v_1^{-1}H_c^*(\G_2, (E_2)_*V(0))\ .
\] 
In this section, we compute the differentials and extensions. The classes $1$, $\eta$, $\sigma$ and $\myzeta_2$ are permanent cycles. We will show that the 
classes $\myoneb$ and $\myoneb^2$ are permanent cycles as well, which is the key step in the computation.
We will see below in \Cref{thm:bzeroinvo} 
that, in fact, they are permanent cycles before $v_1$-localization, but that requires considerably more work.

We break up the computation into a number of results. We will use the universal differential of 
\Cref{rem:unid3}. There is no $\eta$-torsion at $E_3$ so this becomes 
\begin{equation}\label{eq:unidiffapplied}
d_3(v_1^2z)=v_1^2d_3(z)+\eta^3z \ .
\end{equation}

Our first result is a list of basic structural properties of our spectral sequences.

\begin{lem}\label{lem:serious-prelims} The localized Adams-Novikov Spectral Sequences
\begin{align*}
v_1^{-1}H_c^*(\G^1_2, (E_2)_*V(0))&\Longrightarrow \pi_\ast L_{K(1)}(E^{\GG_2^1}\wedge V(0))\\
\\
v_1^{-1}H_c^*(\G_2, (E_2)_*V(0))&\Longrightarrow \pi_\ast L_{K(1)}L_{K(2)}V(0)
\end{align*}
have the following properties: 
\begin{enumerate}[(1)]

\item Both spectral sequences are modules over $\FF_2[v_1^{\pm 4},\eta] \otimes E(\sigma,\zeta_2)$.
\item In the first spectral sequence $\zeta_2$ acts by zero.
\item In both spectral sequences $d_{2r} = 0$ for all $r \geq 1$. 
\item In both spectral sequences $\eta^3E_4^{\ast,\ast}=0$. 
\item For the first spectral sequence $E^{s,t}_4=0$ for $s \geq 6$ and for the second spectral sequence
$E_4^{s,t}=0$ for $s \geq 7$. 
\item In both spectral sequences the classes $1$ and $v_1$ are permanent cycles.
\end{enumerate}
\end{lem}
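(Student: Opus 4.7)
I would treat the six parts separately. Parts (1), (2), and (6) are essentially by naturality and prior identification of permanent cycles. For (1), each algebra generator acts on the spectral sequence by multiplication by a known permanent cycle: $v_1^4$ comes from the $v_1^4$-self map on $V(0)$, the elements $\eta$ and $\sigma$ lift from $\pi_\ast S^0$, and $\zeta_2$ is a permanent cycle by \fullref{prop:zeta-permanent}; the $v_1$-localization inverts $v_1^4$. For (2), the class $\zeta_2$ factors through the quotient $\GG_2\to\GG_2/\GG_2^1\cong\ZZ_2$ by the very definition of $\GG_2^1$ as the kernel of the reduced determinant (see \fullref{rem:hone}), so its restriction to $\GG_2^1$ is zero. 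For (6), the $v_1$-self map on $L_{K(1)}V(0)$ (\fullref{prop:homotopy-v0-k1}) yields a class $v_1 \in \pi_2 L_{K(1)}V(0)$; smashing with $E^{hG}$ and $K(1)$-localizing produces a corresponding class in $\pi_2 L_{K(1)}(E^{hG}\smsh V(0))$ with Hurewicz image $u^{-1}u_1$, so $v_1 = u^{-1}u_1$ is a permanent cycle.

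Part (3) follows from a parity argument on internal degree. Each algebra generator $v_1, \eta, \sigma, \zeta_2, \myoneb$ of the $E_2$-page sits in bidegree $(s,t)$ with $t$ even: respectively $(0,2)$, $(1,2)$, $(1,8)$, $(1,0)$, $(1,0)$. Hence every monomial, and thus every element of $E_r^{s,t}$ for $r\geq 2$, has even $t$. Since $d_{2r}$ shifts $t$ by the odd integer $2r-1$, its target lies in odd internal degree and must vanish.

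Part (4) invokes the universal differential of \fullref{rem:unid3}. Using (3), $E_3 = E_2$, which is free over $\FF_2[\eta]$ and in particular $\eta$-torsion-free. The universal formula then reads $d_3(v_1^2 z) = \eta^3 z + v_1^2 d_3(z)$ on all of $E_3$. Given $y\in E_4^{s,t}$ lifted to a $d_3$-cycle $z\in E_3$, we have $\eta^3 z = d_3(v_1^2 z)$, a $d_3$-boundary, so $\eta^3 y = 0$ in $E_4$.

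Finally, part (5) combines the preceding with the explicit $E_2$-structure from \fullref{thm:cohG2V0}. A monomial $v_1^a\eta^b\sigma^{c_1}\zeta_2^{c_2}\myoneb^d$ of filtration $s$ satisfies $b = s - c_1 - c_2 - d$ with $c_1, c_2 \in \{0,1\}$ and $d\in\{0,1,2\}$; in the first spectral sequence $c_2 = 0$ gives $b \geq s-3$, and in the second $b \geq s-4$. Therefore the stated bounds $s \geq 6$ and $s \geq 7$ force $b \geq 3$, so every element of $E_3^{s,\ast} = E_2^{s,\ast}$ factors as $\eta^3\tilde z$ with $\tilde z \in E_3^{s-3,\ast}$. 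Given a $d_3$-cycle $\tilde y = \eta^3\tilde z$ lifting $y\in E_4^{s,\ast}$, $\eta$-linearity of $d_3$ (as $\eta$ is a permanent cycle) plus injectivity of multiplication by $\eta$ on $E_3$ force $\tilde z$ to also be a $d_3$-cycle; hence $[\tilde z] \in E_4^{s-3,\ast}$ is defined and $[y] = \eta^3 [\tilde z]$ vanishes by (4). The main technical point to watch is that the universal $d_3$ formula applies in the $v_1$-localized, $E^{hG}$-based setting; this should follow from the module structure over the sphere ANSS in (1).
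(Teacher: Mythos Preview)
Your proof is correct and follows essentially the same approach as the paper: permanent cycles from the sphere and the $v_1^4$-self map for (1) and (2), even-internal-degree sparseness for (3), the universal $d_3$ for (4), and the explicit monomial basis from \fullref{thm:cohG2V0} for (5). Your argument for (5) is in fact more careful than the paper's: the paper simply asserts that the result follows from part (4) once every $E_2$-class in high filtration is an $\eta^3$-multiple, whereas you explicitly check that the $\eta^3$-factorization passes from $E_3$ to $E_4$ using $\eta$-injectivity and $\eta$-linearity of $d_3$. For (6) the paper argues by comparison with the $BP$-based Adams--Novikov Spectral Sequence via \eqref{eq:compare-ANSS}; your route through $\pi_2 L_{K(1)}V(0)$ is slightly circuitous and the phrase ``$v_1$-self map on $L_{K(1)}V(0)$'' is imprecise (at $p=2$ only a $v_1^4$-self map exists on $V(0)$; you mean the homotopy class $v_1\in\pi_2 V(0)$), but the substance is the same once you identify the Hurewicz image as $u_1u^{-1}$.
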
 

\begin{proof} For the first statement we use that $\eta$, $\sigma$, and $\zeta_2$
are permanent cycles for the sphere and that $V(0)$ has a $v_1^4$ self-map. For the second 
statement, we use that $\zeta_2 = 0$ in $\pi_\ast E^{h\GG_2^1}$. See \Cref{prop:zeta-permanent}.
Since $E_\ast V(0)$ is concentrated in even degrees, the third statement is the
standard sparseness result for the Adams-Novikov Spectral Sequence. 

In particular the first differential is $d_3$. There is no $\eta$-torsion at $E_3$, so if $d_3(z)=0$,
\eqref{eq:unidiffapplied} gives $d_3(v_1^2z)= \eta^3 z$. Part (4) follows.

To get the vanishing lines, note that every element at the $E_2$-term of the spectral
sequence of $\pi_\ast L_{K(1)}L_{K(2)}V(0)$ is an $\eta$-multiple of the classes
\[
v_1^j\myoneb^i\sigma^{\epsilon_1}\zeta_2^{\epsilon_2}
\]
with $j \in \ZZ$, $0 \leq i \leq 2$ and $\epsilon_i = 0$ or $1$. These classes have $s$-filtration 
at most $4$, so the result follows from part (4). In the spectral
sequence for $\pi_\ast L_{K(1)}(E^{h\GG_2^1}\wedge V(0))$, we have  $\epsilon_2=0$, and we get the
better vanishing line. 

The final statement is true for the $BP$-based Adams-Novikov Spectral Sequence for the sphere,
so is true here as well. Compare \Cref{fig:V0}; see also \eqref{eq:compare-ANSS}. 
\end{proof}

In the next proof we will use the homotopy Bockstein 
homomorphism 
\[
\beta \colon \pi_{n}L_{K(1)}L_{K(2)}V(0) \to \pi_{n-1}L_{K(1)}L_{K(2)}V(0).
\]
See \Cref{rem:betabock}. Any element in the image of $\beta$ is of order $2$. 

Our next result is a preliminary calculation with $E^{h\GG_2^1}$. \Cref{fig:LK1LK2V0} below gives a partial illustration.
\begin{lem}\label{prop:chipermG1}
For the spectral sequence
\begin{equation}\label{eq:extra-localized1}
v_1^{-1}H_c^\ast(\G_2^1,(E_2)_*V(0))\Longrightarrow \pi_*L_{K(1)}(E^{h\GG_2^1}\wedge V(0)) 
\end{equation}
we have the following:
\begin{enumerate}

\item The classes $\myoneb$, $\myoneb^2$, and $v_1\myoneb^2$, are permanent cycles. 
If $y$ is any class detected by $\myoneb$, then $\beta(y)$  is detected by $\myoneb^2$, and $2y = \beta(y)\eta$.

\item The spectral sequence collapses at $E_4$.
The non-trivial differentials are determined by the differential
\[
d_3(v_1 \chi) = \eta^2 \chi^2
\]
and the fact that the spectral sequence is a module over $\FF_2[v_1^{\pm 4},\eta] \otimes E(\sigma)$.

\item The class $\eta v_1 \myoneb+v_1^2\myoneb^2$ 
is a permanent cycle. If $x$ is any class detected by $\eta v_1 \myoneb+v_1^2\myoneb^2$, 
then $\beta(x)$ is detected by $\eta^2\chi+\eta v_1\chi^2$,
and $2x = \beta(x)\eta$.
\end{enumerate}
\end{lem}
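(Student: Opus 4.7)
The plan is to combine \fullref{prop:b0d3cycle}, the strong vanishing line of \fullref{lem:serious-prelims}(5), the universal differential of \fullref{rem:unid3}, and the relations of \fullref{thm:what-we-hope-to-prove} to determine the differential and extension structure. For part (1), \fullref{prop:b0d3cycle} gives $d_3(\chi) = 0 = d_3(\chi^2)$, which remains valid after restricting to $\GG_2^1$ and $v_1$-localizing. Since $d_{2r} = 0$ by \fullref{lem:serious-prelims}(3) and $E_4^{s,t} = 0$ for $s \ge 6$ by \fullref{lem:serious-prelims}(5), any higher differential on $\chi$ (at $s=1$) or $\chi^2$ (at $s=2$) lands in the vanishing region, so both are permanent cycles; the same argument handles $v_1\chi^2$ once $d_3(v_1\chi^2) = 0$ is established in part (2). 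The claim that $\beta(y)$ is detected by $\chi^2$ reflects the fact that the homotopy Bockstein of \fullref{rem:betabock} agrees at $E_\infty$ with the algebraic mod-$2$ Bockstein, which sends $\chi$ to the mod-$2$ reduction $\chi^2$ of the integral Bockstein $\mytwob$ (by \fullref{whatisd0anyway}(2)). The identity $2y = \eta\beta(y)$ then follows by applying \fullref{lem:twoxtildes10} to $q_*(y) \in \pi_{-2}E^{h\GG_2^1}$, which has order $2$ since it is detected by the $2$-torsion class $\mytwob \in H^2(\GG_2^1,E_0)$ (see \fullref{thm:cohS21}).

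For part (2), the module structure over $\FF_2[v_1^{\pm 4},\eta] \otimes E(\sigma)$ makes $d_3$ linear over these operators (each being a permanent cycle of the sphere ANSS and so commuting with $d_3$), which together with the universal differential
\[
d_3(v_1^2 z) = \eta^3 z + v_1^2 d_3(z)
\]
of \fullref{rem:unid3} and the fact that $\chi$, $\chi^2$ are $d_3$-cycles, reduces the entire $d_3$-pattern to the values of $d_3(v_1\chi)$ and $d_3(v_1\chi^2)$. The key assertion is $d_3(v_1\chi) = \eta^2\chi^2$ together with $d_3(v_1\chi^2) = 0$; granting these, the propagated $E_4$-term is concentrated in $s \leq 5$, and the vanishing line at $s = 6$ eliminates any further differentials, giving $E_4 = E_\infty$.

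For part (3), using $d_3(v_1\chi) = \eta^2\chi^2$ and $d_3(v_1^2\chi^2) = \eta^3\chi^2$ (from the universal formula) gives
\[
d_3(\eta v_1 \chi + v_1^2\chi^2) = \eta \cdot \eta^2\chi^2 + \eta^3\chi^2 = 0,
\]
so $\eta v_1\chi + v_1^2\chi^2$ is a $d_3$-cycle and, by the vanishing line, permanent. For the Bockstein, the hidden extension $2v_1 = \eta^2$ in $\pi_*V(0)$ from \eqref{eq:the-more-basic-rel} propagates to $2(\eta v_1\chi) = \eta^3\chi$ and $2(v_1^2\chi^2) = \eta^2 v_1\chi^2$ in $\pi_*L_{K(1)}(E^{h\GG_2^1}\wedge V(0))$, whence $2x = \eta(\eta^2\chi + \eta v_1\chi^2)$. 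Combining with $2x = \eta\beta(x)$ from \fullref{lem:twoxtildes10}, together with $\eta$-injectivity on the relevant bidegree of $E_\infty$, identifies $\beta(x)$ as detected by $\eta^2\chi + \eta v_1\chi^2$.

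The main obstacle is the key identification $d_3(v_1\chi) = \eta^2\chi^2$ and $d_3(v_1\chi^2) = 0$: the universal differential together with $v_1^{\pm 4}$-, $\eta$-, and $\sigma$-linearity of $d_3$ alone yields only tautological identities for these values. Additional input is essential. One route is to exploit the permanence of the generators $\lc_1$ and $\ld_0$ of \fullref{rthm:freeoverv1eta} together with the multiplicative relations of \fullref{thm:what-we-hope-to-prove}, carefully matching coefficients against the basis of the $v_1$-localized cohomology determined by \fullref{prop:cohS21v1inv}; an alternative is to extend the Hopkins-style comparison of \fullref{whatsophere} used in \fullref{sec:d3cycle} to compute $d_3(v_1\chi)$ directly in the trivial $C_2$-fixed-point spectral sequence and transport the answer via the map $\varphi$ of \fullref{lem:sssetup}.
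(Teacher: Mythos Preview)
You correctly isolate the crux of the argument but then explicitly leave it open: you label $d_3(v_1\chi)=\eta^2\chi^2$ and $d_3(v_1\chi^2)=0$ as ``the main obstacle'' and only sketch two possible routes without carrying either out. The paper's argument for the first is short and uses nothing beyond what you have already assembled in part~(1). Since $\eta\chi^2$ detects $2y=\eta\beta(y)$ and $2\eta=0$ in $\pi_1S^0$, the product $\eta\cdot 2y=2\eta y$ vanishes in homotopy; hence $\eta^2\chi^2$ must be zero at $E_\infty$ and so must be hit by a $d_3$. The group $E_3^{1,2}$ has basis $\{v_1\chi,\ v_1^{-3}\sigma\}$, and $v_1^{-3}\sigma$ is a permanent cycle by the module structure over $\FF_2[v_1^{\pm 4}]\otimes E(\sigma)$; therefore $d_3(v_1\chi)=\eta^2\chi^2$. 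For $d_3(v_1\chi^2)=0$ the paper argues in part~(1), not part~(2): the Geometric Boundary Theorem applied to $d_3(\chi)=0$ shows that the integral class $\mytwob\in H^2(\GG_2,E_0)$ is a $d_3$-cycle in the sphere spectral sequence, and since the $V(0)$-spectral sequence is a module over that one, $\mytwob\cdot v_1=v_1\chi^2$ is a $d_3$-cycle.

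Your part~(3) argument also needs repair. Writing ``$2(\eta v_1\chi)=\eta^3\chi$'' and ``$2(v_1^2\chi^2)=\eta^2 v_1\chi^2$'' conflates $E_\infty$-names with homotopy classes; the homotopy class $x$ is detected by the \emph{sum} $\eta v_1\chi+v_1^2\chi^2$, and one cannot compute $2x$ by multiplying the individual summands by~$2$. The paper instead computes the \emph{algebraic} Bockstein directly, using that $\beta$ is a derivation with $\beta(\chi)=\chi^2$, $\beta(\chi^2)=0$, $\beta(\eta)=0$, and $\beta(v_1^2)=0$, to get $\beta(\eta v_1\chi+v_1^2\chi^2)=\eta^2\chi+\eta v_1\chi^2$; the Geometric Boundary Theorem then says this detects $\beta(x)$, and \fullref{lem:twoxtildes10} gives $2x=\eta\beta(x)$.
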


\begin{proof} Recall we have
\[
\F_2[v_1^{\pm 1}, \eta] \otimes E (\sigma) \otimes \F_2[\myoneb]/(\myoneb^3) \cong
v_1^{-1}H_c^*(\G_2^1,(E_2)_*V(0))\ . 
\]

We begin part (1). The class $\myoneb$ is a $d_3$-cycle by \Cref{prop:b0d3cycle}, hence
it must be a permanent cycle by part (5) of \Cref{lem:serious-prelims}. 
Let $y$ be a class detected by $\myoneb$ in the spectral sequence \eqref{eq:extra-localized1}. 
Since $\myoneb^2$ is the algebraic Bockstein of $\myoneb$, the Geometric
Boundary Theorem (see \Cref{rem:whats-up-GBT}) 
implies $\myoneb^2$ is a permanent cycle for this spectral sequence and detects the
Bockstein $\beta(y)$. By \Cref{lem:twoxtildes10} there must
be an  additive extension $2y=\beta(y)\eta$ in the spectral sequence.

To see $v_1\myoneb^2$ is a 
permanent cycle, note that \Cref{prop:b0d3cycle} and the Geometric Boundary Theorem
imply that $\mytwob \in H_c^2(\GG_2,E_0)$ is a $d_3$-cycle in the Adams-Novikov Spectral
Sequence for $L_{K(2)}S^0$. Since $\myoneb^2$ is the reduction of $\mytwob$ and $v_1$ is a $d_3$-cycles,
we have  $\myoneb^2v_1$ is a $d_3$-cycle. It is then a permanent cycle by
the vanishing line of part (5) of \Cref{lem:serious-prelims}.

We now take on part (2). Since the class $\eta \myoneb^2$ detects a class divisible by $2$, we must have
$\eta^2 \myoneb^2 =0$  at 
$E_\infty$. It follows that there must be a non-trivial differential originating at $E_3^{1,2}$. This vector space
has a basis given by $v_1^{-3}\sigma$ and $v_1 \myoneb$. 
The class $v_1^{-3}\sigma$ is a permanent cycle, since $v_1$ is a permanent cycle
and the spectral sequence is a module over $\F_2[v_1^{\pm 4},\sigma]$; hence, we must have
the claimed differential
\[
d_3(v_1 \myoneb) =\eta^2 \myoneb^2.
\]
We now apply  \Cref{lem:serious-prelims} parts (1), (2), and (5) to get the collapse at $E_4$ and complete
this step in the argument. 

We now turn to part (3). 
From this, the differential from part (2), and \eqref{eq:unidiffapplied}, we can
deduce the following differentials:
\begin{align*}
d_3(v_1^2 \myoneb)&=\eta^3 \myoneb  & d_3(v_1^2\myoneb^2)&= \eta^3 \myoneb^2 \\
d_3(v_1^3 \chi^2) &= \eta^3 v_1 \myoneb^2 & d_3(v_1^3 \myoneb)&= \eta^3v_1 \myoneb + \eta^2 v_1^2\myoneb^2.
\end{align*}
Then $\eta v_1 \myoneb+v_1^2\myoneb^2$ is a $d_3$-cycle and hence a permanent cycle, again by
the the vanishing line from \Cref{lem:serious-prelims}.

Now let $x$ be a class detected by $\eta v_1 \myoneb+v_1^2\myoneb^2$.
We have an equation for the algebraic Bockstein
\[
\beta(\eta v_1 \myoneb+v_1^2\myoneb^2) =    \beta(\eta v_1 \myoneb) = \eta^2\chi+
\eta v_1 \chi^2.
\]
The Geometric Boundary Theorem implies $\eta^2\chi+\eta v_1 \chi^2$ is a permanent cycle and detects the
Bockstein $\beta(x)$. By \Cref{lem:twoxtildes10} there must
be an  additive extension $2x=\beta(x)\eta$ in the spectral sequence.
\end{proof}

\begin{figure} 
\center
\includegraphics[width=0.8\textwidth]{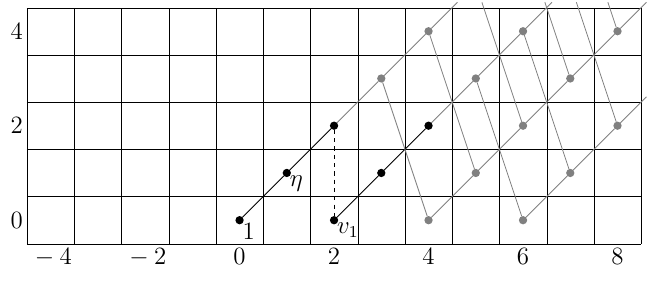}
\includegraphics[width=0.8\textwidth]{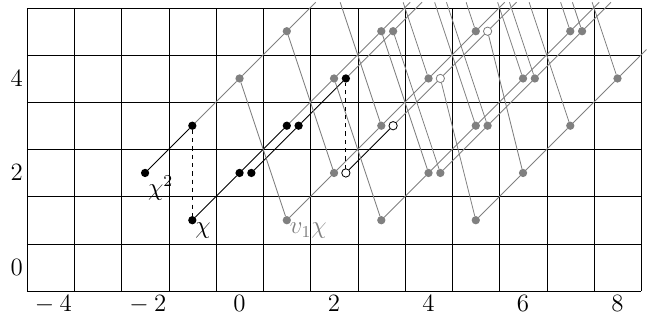}
\captionsetup{width=\textwidth}
\caption{Patterns for the differentials on certain classes in the spectral sequences 
\eqref{eq:extra-localized1} and \eqref{eq:main-localized}.
The top figure illustrates the differential pattern on the unit $1$ and the bottom figure 
the differential pattern on $\chi$ and $\chi^2$. There, a $\circ$ denotes a copy of $\FF_2$ 
generated by an $\eta$-multiple 
of the class $\eta v_1 \myoneb+v_1^2\myoneb^2$. The dashed lines denote the 
extension by $2$ at $E_{\infty}$.
}
\label{fig:LK1LK2V0}
\end{figure}

We now come to one the main calculations of this paper, \Cref{prop:chiperm}. This result shows  
that the localized Adams-Novikov Spectral Sequence \eqref{eq:main-localized} for the spectrum
$L_{K(1)}L_{K(2)}V(0)$ can be obtained from the spectral sequence of \eqref{eq:extra-localized1}
for  $L_{K(1)}(E^{h\GG_2^1}\wedge V(0))$ by tensoring with the exterior algebra $E(\zeta_2)$ 
and extending the differentials
by the Leibniz rule. This is by no means formal. It requires a statement and
argument which, on the surface, looks like a repetition of
\Cref{prop:chipermG1}, but the logic requires us to prove part (2) of \Cref{prop:chipermG1}
before proving part (1) of \Cref{prop:chiperm}.

\begin{thm}\label{prop:chiperm}
The spectral sequence
\begin{equation}\label{eq:main-localized}
v_1^{-1}H_c^*(\G_2,(E_2)_*V(0))\Longrightarrow \pi_*L_{K(1)}L_{K(2)}V(0). 
\end{equation}
collapses at $E_4$. We have 
\begin{enumerate}

\item The classes $\myoneb$, $\myoneb^2$, and $v_1\myoneb^2$, are permanent cycles.
If $y$ is any class detected by
$\myoneb$, then $\beta(y)$  is detected by $\myoneb^2$, and $2y = \beta(y)\eta$.

\item The spectral sequence collapses at $E_4$.
The non-trivial differentials are determined by the differential
\[
d_3(v_1 \chi) = \eta^2 \chi^2
\]
and the fact that the spectral sequence is a module over 
$\FF_2[v_1^{\pm 4},\eta] \otimes E(\sigma,\zeta_2)$.

\item The class $\eta v_1 \myoneb+v_1^2\myoneb^2$ 
is a permanent cycle. If $x$ is any class detected by $\eta v_1 \myoneb+v_1^2\myoneb^2$, 
then $\beta(x)$ is detected by $\eta^2\chi+\eta v_1\chi^2$,
and $2x = \beta(x)\eta$.
\end{enumerate}
\end{thm}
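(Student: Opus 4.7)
The strategy for \fullref{prop:chiperm} is to transfer the results of \fullref{prop:chipermG1} from the $\GG_2^1$ spectral sequence to the $\GG_2$ one, using the $E(\zeta_2)$-module decomposition from \fullref{thm:cohG2V0} together with topological input provided by the fibration \eqref{eq:an-easy-reduction}. Since $\zeta_2$ is a permanent cycle by \fullref{prop:zeta-permanent} and the spectral sequence is a module over $\FF_2[v_1^{\pm 4},\eta]\otimes E(\sigma,\zeta_2)$ by \fullref{lem:serious-prelims}(1), all its differentials are $\zeta_2$-linear. The restriction map $H^*(\GG_2, E_*V(0)) \to H^*(\GG_2^1, E_*V(0))$ sends $\zeta_2 \mapsto 0$ and induces a map of spectral sequences; combined with \fullref{prop:chipermG1}(2), this determines $d_3(v_1\chi) = \eta^2 \chi^2$ in the $\GG_2$ case and yields the identification $E_4(\GG_2) \cong E_4(\GG_2^1)\otimes E(\zeta_2)$.

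Next I would handle the easy permanent cycles. The classes $\chi^2\in E_4^{2,0}$ and $v_1\chi^2\in E_4^{2,2}$ are $d_3$-cycles by \fullref{prop:b0d3cycle}, and any higher $d_r$ on them ($r\geq 5$) lands in $s\geq 7$, which vanishes by \fullref{lem:serious-prelims}(5). So they are permanent cycles. For $\chi \in E_4^{1,0}$ however, the potential differential $d_5(\chi)$ lives in $E_4^{6,4}(\GG_2) = \zeta_2\cdot E_4^{5,4}(\GG_2^1)$; this is where the full differential structure from \fullref{prop:chipermG1}(2) is needed, as it makes $E_4^{5,4}(\GG_2^1)$ explicit and nonzero, so $d_5(\chi)$ is not ruled out purely algebraically.

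The plan to rule out $d_5(\chi)$ is to use the fibration \eqref{eq:an-easy-reduction} smashed with $V(0)$ and localized at $K(1)$, which yields a Devinatz-Hopkins spectral sequence
\[
H^s(\ZZ_2, \pi_t L_{K(1)}(E^{h\GG_2^1}\wedge V(0))) \Longrightarrow \pi_{t-s}L_{K(1)}L_{K(2)}V(0)
\]
concentrated in $s=0,1$. By \fullref{prop:chipermG1}(1), $\chi$ detects a class $\tilde y\in \pi_{-1}L_{K(1)}(E^{h\GG_2^1}\wedge V(0))$. Since $\chi$ is the restriction of $\chi\in H^1(\GG_2,\FF_2)$, it is $\pi$-invariant at the $E_\infty$-level; a filtration argument using the concentration in $s=0,1$ allows one to modify $\tilde y$ by a higher-filtration correction to make it $\pi$-invariant on the nose. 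The resulting lift $y\in \pi_{-1}L_{K(1)}L_{K(2)}V(0)$ is detected by $\chi$, so $\chi$ is a permanent cycle. Applying the same lifting procedure to the $\GG_2^1$-permanent cycle $\eta v_1\chi + v_1^2\chi^2$ from \fullref{prop:chipermG1}(3) (which is built from $\GG_2$-invariant classes) handles part (3), and together with $\zeta_2$-linearity this gives the collapse at $E_4$ asserted in part (2).

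Finally, the additive extensions follow from \fullref{lem:twoxtildes10}. The algebraic Bockstein sends $\chi\mapsto\chi^2$, so by the Geometric Boundary Theorem (\fullref{rem:whats-up-GBT}) $\beta(y)$ is detected by $\chi^2$; applying \fullref{lem:twoxtildes10} to the order-two class detected by $\chi^2$ gives $2y = \beta(y)\eta$. The corresponding computation of $\beta(\eta v_1\chi + v_1^2\chi^2) = \eta^2\chi + \eta v_1\chi^2$, already carried out in the proof of \fullref{prop:chipermG1}(3), yields the extension for part (3). The main obstacle is the topological lifting for $\chi$: algebraically, $d_5(\chi) = \zeta_2 B$ with $B\neq 0$ is allowed, and ruling this out requires carefully matching the $\pi$-action on $E_\infty$ with its action on homotopy classes through the Devinatz-Hopkins framework --- this is why \fullref{prop:chipermG1}(2) must be in hand before \fullref{prop:chiperm}(1) can be addressed.
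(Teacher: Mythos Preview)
Your central claim that $E_4^{5,4}(\GG_2^1)$ is nonzero is incorrect, and this error sends you on an unnecessary topological detour.  In $E_3^{5,4}(\GG_2^1)$ the only class that is not already an $\eta^3$-multiple (hence not killed by \fullref{lem:serious-prelims}(4)) is $v_1^{-4}\sigma\eta^2\chi^2$, and this is the $d_3$-boundary of $v_1^{-4}\sigma\cdot v_1\chi$ by the very differential $d_3(v_1\chi)=\eta^2\chi^2$ you invoke from \fullref{prop:chipermG1}(2).  So $E_4^{5,4}(\GG_2^1)=0$.  The paper's argument for part~(1) is precisely the $\GG_2$-version of this observation: the unique candidate for $d_5(\chi)$ in $E_4^{6,4}(\GG_2)$ is $v_1^{-4}\zeta_2\sigma\eta^2\chi^2$, and one checks it is the $d_3$-boundary of $v_1^{-4}\zeta_2\sigma\cdot v_1\chi$.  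A priori $d_3(v_1\chi)=\eta^2\chi^2+\zeta_2 x$ (restriction only determines it modulo $\zeta_2$, so your assertion that restriction ``determines $d_3(v_1\chi)=\eta^2\chi^2$'' is also too quick), but after multiplying by $\zeta_2$ the correction vanishes since $\zeta_2^2=0$.  Hence $d_5(\chi)=0$ is entirely algebraic, and no topological input beyond \fullref{prop:b0d3cycle} is needed.

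Independently, your proposed lifting argument has a gap.  The action of $\pi$ on $v_1^{-1}H^*(\GG_2^1,E_*V(0))$ is indeed trivial (all generators restrict from $\GG_2$), so $\pi_*-1$ raises Adams--Novikov filtration; but this unipotence does not imply that a class $\tilde y$ can be corrected in higher filtration to be $\pi$-invariant.  A toy counterexample: $\pi$ acting on $\FF_2\{a\}\oplus\FF_2\{b\}$ with $F^1=\FF_2\{b\}$ by $a\mapsto a+b$, $b\mapsto b$; here the associated graded action is trivial but no class of filtration~$0$ is $\pi$-fixed.  Your appeal to the two-line Devinatz--Hopkins spectral sequence does not address this, since the ``concentration in $s=0,1$'' is for a different filtration than the Adams--Novikov one.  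The paper simply avoids the issue by the algebraic vanishing above.
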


\begin{proof} We have
\[
\F_2[v_1^{\pm 1}, \eta] \otimes E (\sigma,\zeta_2) \otimes \F_2[\myoneb]/(\myoneb^3) \cong
v_1^{-1}H_c^*(\G_2,(E_2)_*V(0))\ . 
\]

We begin with part (1). 
The class $\myoneb$ is a $d_3$-cycle by \Cref{prop:b0d3cycle}.  Thus, 
by parts (3) and (5) of
\Cref{lem:serious-prelims}, $\myoneb$ will be a permanent cycle if it is a $d_5$-cycle.
Using part (4) of \Cref{lem:serious-prelims}, we see that the only possible non-zero differential is
\[
d_5(\myoneb) = v_1^{-4} \zeta_2 \sigma    \eta^2 \myoneb^2.
\]
However,  by naturality with respect to the map $\GG_2^1 \to \GG_2$ and by part (2) of
\Cref{prop:chipermG1}, we have
$d_3(v_1\myoneb) = \eta^2\myoneb^2 +  \zeta_2 x$ for some class 
$x \in E_2^{4,4}$. Since $\zeta_2^2=0$ in $H_c^\ast(\GG_2,E_\ast)$, we can calculate
 \[
 d_3( v_1^{-4}   \zeta_2  \sigma  v_1\myoneb) =v_1^{-4}  \zeta_2\sigma d_3(v_1 \myoneb)
 = v_1^{-4} \zeta_2 \sigma  \eta^2\myoneb^2\ .
 \] 
Hence $v_1^{-4}\zeta_2\sigma\eta^2\myoneb^2$ is zero on the $E_4$-term.
We conclude that $\myoneb$ is a $d_5$-cycle. The remainder of the proof of part (1) goes through
exactly as for part (1) of \Cref{prop:chipermG1}.

We now turn to part (2). As in part (2) of \Cref{prop:chipermG1}, there must be a class 
$a \in E_3^{1,2}$ with $d_3(a) = \eta^2 \myoneb^2$. The vector space $E_3^{1,2}$
has basis $v_1\myoneb$, $v_1\zeta_2$, $v_1^{-3}\sigma$. The class
$v_1\zeta_2$ is a permanent cycle by (1) and (3) of \Cref{lem:serious-prelims}. The
class $v_1^{-3}\sigma$ is a permanent cycle exactly as in the proof of (2) in
\Cref{prop:chipermG1} . Thus we have the indicated differential.

To get the collapse at $E_4$ we use \Cref{lem:serious-prelims} parts (1), (2), (3), and (5).
Part (3) is proved exactly as in part (3) of \Cref{prop:chipermG1}. 
\end{proof} 

\begin{rem}\label{rem:addendum-to-k1-calc} 
We have not listed all the additive extensions in the spectral sequences \eqref{eq:extra-localized1}
and \eqref{eq:main-localized}. In fact we have missed only
those exotic extensions by $2$ implied by the extension
from $v_1$ to $\eta^2$ already visible in \Cref{fig:V0} of \Cref{sec:recollections}. Indeed, it will follow from
\Cref{thm:the-big-one} below that all additive extensions are determined by $v_1^4$ and $\zeta_2$ multiplications and
the exotic extensions by $2$ from $v_1$ to $\eta^2$, from $\myoneb$ to $\myoneb^2\eta$, and from
$\eta v_1 \myoneb+v_1^2\myoneb^2$ to
\[
\eta^3 \myoneb + \eta^2v_1\myoneb^2 \equiv \eta^2v_1\myoneb^2.
\]
The last equivalence follows from the fact that $\eta^3=0$ at $E_4$. These extension are
all shown in \Cref{fig:LK1LK2V0}. The upper figure shows a pattern from $L_{K(1)}V(0)$, and
the lower figure shows a pattern from $L_{K(1)}(V(0)\wedge V(0))$. Compare \Cref{fig:V0}
and \Cref{fig:LK1V0}. 
\end{rem}

\subsection{The homotopy type of $L_{K(1)}L_{K(2)}S^0$}\label{thek1k2subsection}

We are now ready to prove the decomposition of the $K(1)$-localization of $L_{K(2)}S^0$. To begin
we define a key map.

\begin{defn}\label{cor:mapx}
Choose a class
\begin{equation*}
y \in \pi_{-1}L_{K(1)}L_{K(2)}V(0)
\end{equation*}
detected by $\myoneb \in v_1^{-1}H_c^1(\GG_2,E_\ast V(0))$;  this is possible by \Cref{prop:chiperm}. 
If $D(-)$ denotes the Spanier-Whitehead duality functor,
we let 
\[x
\colon \Sigma^{-2}V(0) = \Sigma^{-1}DV(0) \longr  L_{K(1)}L_{K(2)}S^0\
\]
be the SW-adjoint of $y$;  that is, the image of $y$ under the isomorphism
\[
[S^{-1},L_{K(1)}L_{K(2)}V(0) ] \cong [ \Sigma^{-1}DV(0),L_{K(1)}L_{K(2)}S^0 ].
\]
\end{defn}

\begin{rem}
The class $y$ is not unique. Any two choices of $y$ differ by an element of higher Adams-Novikov
filtration. This implies that $x$ is not unique. However,
the proof of \Cref{thm:the-big-one} below comes down to an Adams-Novikov Spectral Sequence 
argument, and we will see that any choice of class $x$ will do. \Cref{thm:bzeroinvo} below will allow us to refine
our choice.
\end{rem}

We prove the following preliminary result.  In \Cref{sec:remonV0}, we gave a short analysis of $\pi_\ast (V(0) \wedge V(0))$. In 
\Cref{rem:pi1V0smV0} and \eqref{eq:choice-ione}, we named classes $i_0$ and $i_1$ of degree $0$ and $1$ respectively
with the property that $\beta(i_1) = i_0$, where $\beta$ is the Bockstein in homotopy as in \Cref{rem:betabock}.

\begin{lem}\label{lem:wither-Bocksteins-x} Let $x$ be as in \Cref{cor:mapx}. Then under the map
\[
x \wedge V(0)\colon \Sigma^{-2}V(0) \wedge V(0) \longr L_{K(1)}L_{K(2)}V(0)
\]
the class $(x \wedge V(0))_\ast (i_0)$ is detected by $\myoneb^2$ and $(x \wedge V(0))_\ast (i_1)$ 
is detected by $\myoneb$ in the spectral sequence
\[
v_1^{-1}H_c^s(\G_2,(E_2)_tV(0))\Longrightarrow \pi_{t-s}L_{K(1)}L_{K(2)}V(0).
\]
\end{lem}

\begin{proof} Since $x$ is the SW-adjoint of $y$ and $y$ is detected by
$\myoneb$, \Cref{what-ew-need-sw} implies $(x \wedge V(0))_\ast (i_1)=y$ 
is detected by $\myoneb$. The same result shows that $(x \wedge V(0))_\ast (i_0) = \beta(y)$.
Then the Geometric Boundary Theorem (see \Cref{rem:whats-up-GBT}) implies
that $(x \wedge V(0))_\ast (i_0)$ is detected by the algebraic Bockstein of $\myoneb$, which is exactly
$\chi^2$.  
\end{proof} 

We now have maps
\begin{align*}\label{the-parts-of-f}
\iota \colon S^0 &\longr L_{K(2)}S^0\\
\zeta_2 \colon S^{-1} & \longr L_{K(2)}S^0\\
x \colon \Sigma^{-2}V(0) & \longr  L_{K(1)}L_{K(2)}S^0\\
x\zeta_2 \colon \Sigma^{-3}V(0) & \longr  L_{K(1)}L_{K(2)}S^0
\end{align*} 
The first of these maps is the unit; the second is the class of \Cref{prop:zeta-permanent}. These maps
assemble into a map
\begin{equation}\label{eq:we-write-the-map}
f = \iota \vee \zeta_2 \vee x \vee x\zeta_2 \colon S^0 \vee S^{-1} \vee \Sigma^{-2}V(0) 
\vee \Sigma^{-3}V(0) \longr L_{K(1)}L_{K(2)}S^0.
\end{equation}
Here is one of our main results.  

\begin{thm}\label{thm:the-big-one} The map $f$ induces a $K(1)$-local equivalence 
\[
L_{K(1)}\big(S^0 \vee  S^{-1}  \vee \Sigma^{-2}V(0) \vee   \Sigma^{-3}V(0)\Big) \simeq
L_{K(1)}L_{K(2)}S^0\ .
\]
\end{thm}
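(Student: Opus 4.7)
The plan is to compare both sides of $f$ after smashing with $V(0)$ and then to match the Adams--Novikov $E_\infty$-terms generator by generator. Both source and target of $L_{K(1)}f$ are $K(1)$-local and $2$-complete; if the cofiber $C$ of $L_{K(1)}f$ satisfies $C \wedge V(0) = 0$, then $C$ is $2$-divisible and $2$-complete, hence zero. So it suffices to show that $f \wedge V(0)$ is a weak equivalence. The target $L_{K(1)}L_{K(2)}V(0)$ has its homotopy computed by the localized Adams--Novikov spectral sequence of \fullref{prop:chiperm}, whose $E_2$-term is described in \fullref{thm:cohG2V0}. As a module over $R := \F_2[v_1^{\pm 1}, \eta] \otimes E(\sigma)$, this $E_2$-term is free on the six generators $1$, $\chi$, $\chi^2$, $\zeta_2$, $\chi\zeta_2$, $\chi^2\zeta_2$.

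The main step is to identify each of these six generators as the image of an explicit class on the source, compatibly with the $R$-module structure. The first two summands are straightforward: $\iota \wedge V(0)$ hits $1$, and $\zeta_2 \wedge V(0)$ hits $\zeta_2$ by \fullref{prop:zeta-permanent}. For the third summand, consider $x \wedge V(0)\colon \Sigma^{-2}V(0) \wedge V(0) \to L_{K(1)}L_{K(2)}V(0)$, where $x$ is Spanier--Whitehead dual to $y \in \pi_{-1}L_{K(1)}L_{K(2)}V(0)$ (detected by $\chi$). By the duality recovery formula, $y$ factors as the composite of $x \wedge V(0)$ with the shifted coevaluation $S^{-1} \to \Sigma^{-2}V(0) \wedge V(0)$ (a desuspension of the class $i_1$ of \fullref{rem:pi1V0smV0}), so this composite is detected by $\chi$. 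On the bottom cell $S^{-2} \to \Sigma^{-2}V(0) \wedge V(0)$, \fullref{what-ew-need-sw} identifies the image of $x \wedge V(0)$ as $j_*(x \circ \iota) = \beta(y)$, the topological Bockstein of $y$. Since $\chi^2$ is the algebraic Bockstein of $\chi$ and is a permanent cycle by \fullref{prop:b0d3cycle}, the Geometric Boundary Theorem (\fullref{rem:whats-up-GBT}) shows that $\beta(y)$ is detected by $\chi^2$. The same argument with $\zeta_2$ multiplied in shows that $x\zeta_2 \wedge V(0)$ hits classes detected by $\chi\zeta_2$ and $\chi^2\zeta_2$.

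This establishes that $f \wedge V(0)$ surjects onto the six $R$-module generators of the $E_\infty$-term. To conclude that it is an isomorphism, compare $\F_2$-dimensions in each bidegree: the source is a wedge whose homotopy is computable from \fullref{prop:homotopy-v0-k1} and the cofiber sequence $V(0) \xrightarrow{2} V(0) \to V(0) \wedge V(0)$, and both sides have the same total dimension in each bidegree, with convergence ensured by the strong horizontal vanishing line of \fullref{thm:en-vanishing-ex-1}. Hence $f \wedge V(0)$ is a weak equivalence, and therefore $f$ itself is a $K(1)$-local equivalence. The main technical obstacle is the detection of $\chi^2$ on the bottom cell of $\Sigma^{-2}V(0) \wedge V(0)$, which requires tying together the Spanier--Whitehead duality statement of \fullref{what-ew-need-sw}, the permanent-cycle property of $\chi^2$, and the Geometric Boundary Theorem to convert the topological Bockstein into the algebraic Bockstein inside the localized Adams--Novikov spectral sequence.
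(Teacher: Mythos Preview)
Your proposal shares the two essential ideas with the paper's proof: reduce to showing $f$ becomes an equivalence after smashing with a type~$1$ complex (your cofiber argument is exactly the content of \fullref{lem:type1detects}), and identify the images of the bottom cells of $\Sigma^{-2}V(0)\wedge V(0)$ via Spanier--Whitehead duality and the Geometric Boundary Theorem (this is precisely \fullref{lem:wither-Bocksteins-x}). So the architecture is right.

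The substantive difference is your choice of test complex. You work with $V(0)$; the paper instead passes to $Y = V(0)\wedge C(\eta)$. This is not merely cosmetic. For $L_{K(1)}L_{K(2)}V(0)$ the localized spectral sequence of \fullref{prop:chiperm} has nontrivial $d_3$'s (e.g.\ $d_3(v_1\chi)=\eta^2\chi^2$) and exotic multiplications by~$2$ (e.g.\ $2y=\beta(y)\eta$), so the $E_\infty$-term is \emph{not} free over your $R=\F_2[v_1^{\pm 1},\eta]\otimes E(\sigma)$ on six generators, and the homotopy groups contain $\Z/4$'s. Your sentence ``compare $\F_2$-dimensions in each bidegree'' therefore hides real work: you would have to compute $\pi_\ast L_{K(1)}(V(0)\wedge V(0))$ (including its own $\Z/4$ extensions), match the $d_3$-patterns on both sides, and then argue that a map hitting a class \emph{detected} by $\chi$ actually hits a generator of the relevant $\Z/4$. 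This is all doable, but it is exactly the bookkeeping the paper sidesteps.

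By contrast, for $Y$ one has $2=0$ on $L_{K(1)}Y$ (\fullref{prop:2Y}), so $\pi_\ast L_{K(1)}L_{K(2)}Y$ is an honest $\F_2$-vector space; moreover the localized spectral sequence collapses at $E_2$ with no extensions (\fullref{prop:E2Yhomotopy}), and the comparison becomes a one-line module argument over $\F_2[v_1^{\pm 1}]\otimes E(\sigma)$ (\fullref{thm:k1k2-y}). The paper still proves your key detection statement for $V(0)$ in \fullref{lem:wither-Bocksteins-x}, then pushes it forward along $\jmath\colon V(0)\to Y$. If you want to keep your version, you should either carry out the $V(0)\wedge V(0)$ bookkeeping explicitly, or---more efficiently---smash once more with $C(\eta)$ and observe that your detection argument survives.
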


This is a consequence of \Cref{thm:k1k2-y}, but to get there we need some preliminaries. The following
result is standard. 

\begin{lem}\label{lem:type1detects} Let $f \colon X_1 \to X_2$ be a map of spectra. 
Suppose there is a type $n$ complex $Z$ so that $K(n)_\ast Z \ne 0$ and so that the induced map
\[
L_{K(n)} (X_1 \wedge Z) \to L_{K(n)} (X_2 \wedge Z)
\]
is an equivalence. Then $L_{K(n)}X_1 \to L_{K(n)}X_2$ is an equivalence.
\end{lem}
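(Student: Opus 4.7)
The plan is to reduce to a statement about the cofiber and then detect its $K(1)$-acyclicity via a Künneth argument. Let $Y$ denote the cofiber of $f \colon X_1 \to X_2$. Since $L_{K(1)}(-)$ is exact, proving that $L_{K(1)}f$ is an equivalence is equivalent to showing $L_{K(1)}Y \simeq \ast$, or equivalently $K(1)_\ast Y = 0$. The hypothesis on $f \wedge Z$ gives $L_{K(1)}(Y \wedge Z) \simeq \ast$, and hence $K(1)_\ast(Y \wedge Z) = 0$.

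The main step is then the following standard fact: for any spectrum $W$, if $Z$ is a finite type $1$ complex, then $K(1)_\ast(W \wedge Z) = 0$ implies $K(1)_\ast W = 0$. This will finish the proof when applied to $W = Y$. For this fact, I would invoke that $K(1)_\ast = \FF_p[v_1^{\pm 1}]$ is a graded field, so the Künneth formula gives a natural isomorphism
\[
K(1)_\ast(W \wedge Z) \;\cong\; K(1)_\ast W \otimes_{K(1)_\ast} K(1)_\ast Z.
\]
Since $Z$ has type $1$, $K(1)_\ast Z$ is a nonzero graded module over the graded field $K(1)_\ast$, hence free of positive rank. Tensoring with a nonzero free module reflects zero, so the vanishing of the left-hand side forces $K(1)_\ast W = 0$.

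There is essentially no obstacle here: the argument is almost formal once one has the Künneth isomorphism for a field spectrum and the defining nonvanishing $K(1)_\ast Z \neq 0$ of a type $1$ complex. The only thing to be careful about is that we really do have the Künneth isomorphism for $L_{K(1)}$-local smash products, which is immediate because $K(1)_\ast(W \wedge Z) = K(1)_\ast L_{K(1)}(W \wedge Z)$ and $K(1)$ is a module spectrum over a graded field.
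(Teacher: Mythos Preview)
Your proof is correct, but it takes a different route from the paper's. The paper argues via the identification
\[
L_{K(1)}X \simeq \holim_n v_1^{-1}(X \wedge S/2^n)
\]
and then observes that the class of type~$1$ complexes $Y$ for which $v_1^{-1}(X_1 \wedge Y) \to v_1^{-1}(X_2 \wedge Y)$ is an equivalence forms a thick subcategory; since it contains $Z$, it contains every type~$1$ complex, in particular each $S/2^n$. Your argument instead passes to the cofiber and uses the K\"unneth isomorphism for the field spectrum $K(1)$ together with the nonvanishing $K(1)_\ast Z \neq 0$. Your route is shorter and more self-contained for this particular statement; the paper's thick-subcategory argument is slightly more structural and would adapt verbatim if one replaced $K(1)$-localization by any localization detected on type~$1$ complexes (e.g.\ the telescopic localization), where a direct K\"unneth argument is not available.
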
 

\begin{proof} 
It suffices to show that $K(n)_*f$ is an isomorphism. We have a diagram
\[
\xymatrix{
K(n)_*(X_1 \smsh Z) \ar[d]_-{\cong} \ar[rr]^-{K(n)_*(f \smsh Z)}_-{\cong} & & K(n)_*(X_2 \smsh Z) \ar[d]^-{\cong}\\
K(n)_*(X_1)\otimes_{K(n)_*}K(n)_*(Z)   \ar[rr]^-{K(n)_*f \otimes 1} & &K(n)_*(X_1)\otimes_{K(n)_*}K(n)_*(Z)  .
}
\]
The vertical maps are the K\"unneth isomorphisms for $K(n)_*$.
Since $Z$ is of type $n$, $K(n)_*Z\neq 0$ and so is a faithful $K(n)_*$-module. 
Therefore, $K(n)_*f$ is an isomorphism.
\end{proof} 

From this last result we see that it is enough to make a judicious choice of type $1$ complex. 
Recall that $Y = V(0) \wedge C(\eta)$. See \Cref{sec:all-about-y}. 
\begin{prop}\label{prop:E2Y}
There are isomorphisms of $\F_2[v_1^{\pm 1}] \otimes E(\sigma)$-modules
\begin{align*}
\F_2[v_1^{\pm 1}] \otimes E(\sigma) \otimes \F_2[\myoneb]/(\myoneb^3) &\cong
v_1^{-1}H_c^*(\G_2^1,E_*Y)\\
\\
\F_2[v_1^{\pm 1}] \otimes E(\sigma, \zeta_2) \otimes \F_2[\myoneb]/(\myoneb^3) &\cong
v_1^{-1}H_c^*(\G_2,E_*Y)\ .
\end{align*} 
\end{prop}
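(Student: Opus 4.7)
The plan is to compare $E_\ast Y$ to $E_\ast V(0)$ via the cofiber sequence from \fullref{rem:Ycomodule} and reduce everything to a short exact sequence whose connecting homomorphism is multiplication by $\eta$; after $v_1$-inversion, \fullref{thm:cohG2V0} tells us $\eta$ is a non-zero divisor, so the long exact sequence collapses to an identification of $v_1^{-1}H^\ast(G,E_\ast Y)$ with the $\eta$-quotient of $v_1^{-1}H^\ast(G,E_\ast V(0))$.

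First, I would smash the cofiber sequence $\Sigma V(0) \xrightarrow{V(0)\wedge \eta} V(0) \to Y \to \Sigma^2 V(0)$ from \fullref{rem:Ycomodule} with $E$ and take homotopy. Because $E$ is complex oriented, $\eta = 0 \in \pi_1 E$, so the induced map $E_\ast \Sigma V(0) \to E_\ast V(0)$ vanishes and we obtain a short exact sequence of twisted $E_\ast$-$\GG_2$-modules
\[
0 \to E_\ast V(0) \to E_\ast Y \to E_{\ast -2}V(0) \to 0
\]
whose extension class is $\eta \in H^1(\GG_2, E_2)$ (this is the analog of the $BP$-level statement in \fullref{rem:Ycomodule}). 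Applying $H^\ast(G,-)$ for $G = \GG_2^1$ or $G = \GG_2$ yields a long exact sequence whose connecting homomorphism
\[
\delta \co H^s(G, E_{t-2}V(0)) \to H^{s+1}(G,E_tV(0))
\]
is multiplication by $\eta$.

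Next, I would invert $v_1$. By \fullref{thm:cohG2V0}, $v_1^{-1}H^\ast(G,E_\ast V(0))$ contains $\F_2[\eta]$ as a subalgebra over which the whole cohomology is free, so $\eta$ acts as a non-zero divisor. Hence $\ker\delta = 0$ after $v_1$-inversion, and the long exact sequence breaks into short exact sequences
\[
0 \to v_1^{-1}H^s(G, E_tV(0)) \to v_1^{-1}H^s(G,E_tY) \to 0
\]
modulo the image of $\delta$, giving the identification
\[
v_1^{-1}H^\ast(G,E_\ast Y) \;\cong\; v_1^{-1}H^\ast(G, E_\ast V(0))\big/(\eta).
\]

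Finally, the computation is immediate from \fullref{thm:cohG2V0}: quotienting the two descriptions of $v_1^{-1}H^\ast(\GG_2^1, E_\ast V(0))$ and $v_1^{-1}H^\ast(\GG_2, E_\ast V(0))$ by $\eta$ yields the stated isomorphisms of $\F_2[v_1^{\pm 1}]\otimes E(\sigma)$-modules. There is no real obstacle here; the only point requiring care is the verification that the extension class is $\eta$ and that $\eta$ acts injectively on the localized cohomology, both of which are already in hand. The module structure over $E(\sigma)$ (and over $E(\zeta_2)$ in the $\GG_2$ case) is automatic because $\sigma$ and $\zeta_2$ are integral classes that lift from the sphere, hence commute with the connecting homomorphism.
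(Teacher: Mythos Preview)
Your proposal is correct and follows essentially the same approach as the paper: use the short exact sequence of Morava modules coming from the cofiber sequence for $Y$, identify the boundary map with multiplication by $\eta$, and then read off the answer from \fullref{thm:cohG2V0} after inverting $v_1$. The paper's proof is a terse three-sentence version of exactly this; your only cosmetic issue is that the displayed ``short exact sequence'' $0 \to v_1^{-1}H^s(G,E_tV(0)) \to v_1^{-1}H^s(G,E_tY) \to 0$ is garbled---what you mean is that injectivity of $\delta$ forces $v_1^{-1}H^\ast(G,E_\ast V(0)) \to v_1^{-1}H^\ast(G,E_\ast Y)$ to be surjective with kernel $\eta\cdot v_1^{-1}H^{\ast-1}(G,E_{\ast-2}V(0))$, which is precisely your conclusion in the next line.
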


\begin{proof}
By Landweber exactness and \Cref{rem:Ycomodule} we have a short exact sequence of $\GG_2$-modules
\[
0 \to E_\ast V(0) \to E_\ast Y \to E_\ast \Sigma^2V(0)\to 0\ .
\]
Furthermore, in the long exact sequence in group cohomology, the boundary
map is given be multiplication by $\eta$. We then get a long exact sequence for the 
localized cohomology groups $v_1^{-1}H_c^*(\G_2,-)$ and the result follows from \Cref{thm:cohG2V0}. 
\end{proof}

Let $\iota \colon S^0 \to Y$ be inclusion of the bottom cell and let $\zeta_2$ and $\sigma$ be the images 
of the same named class under the induced map
\[
\iota_* \colon \pi_{*}L_{K(1)}L_{K(2)}S^0 \to \pi_{*}L_{K(1)}L_{K(2)}Y \ .
\]
Similarly, let $\jmath \colon V(0) \to Y$ be the inclusion of the bottom two cells; see\eqref{eq:cofV0Y}. Let $y \in \pi_{-1}L_{K(1)}
L_{K(2)}V(0)$ be as in \Cref{cor:mapx} and call the images of $y$ and $\beta(y)$ under the induced map
\[
\jmath_* \colon \pi_{*}L_{K(1)}L_{K(2)}V(0) \to \pi_{*}L_{K(1)}L_{K(2)}Y
\]
by the same names. 

\begin{prop}\label{prop:E2Yhomotopy} The localized spectral sequence 
\[
v_1^{-1}H_c^s(\G_2,(E_2)_tY)\Longrightarrow \pi_{t-s}L_{K(1)}L_{K(2)}Y
\]
strongly converges, collapses at the $E_2$-term, and has no additive extensions.
\end{prop}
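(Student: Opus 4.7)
The plan breaks the proposition into its three assertions: strong convergence, collapse at $E_2$, and vanishing of additive extensions.

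Strong convergence is immediate from \fullref{thm:en-vanishing-ex-1} applied with $G=\GG_2$, $Z=Y$, and $f$ a $v_1$-self map of $Y$: the localized spectral sequence acquires a strong horizontal vanishing line at some integer $N$, whence $d_r=0$ for $r>N$ and strong convergence follows from \fullref{lem:two-cond-van-line}.

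For the collapse I would exploit the module structure of the $Y$-spectral sequence over the $K(2)$-local Adams-Novikov Spectral Sequence for the sphere. By \fullref{prop:E2Y} the $E_2$-term is generated over $\FF_2[v_1^{\pm 1}]$ by $1,\chi,\chi^2,\sigma,\zeta_2$ and their products, so it suffices to show that each of $v_1,\chi,\chi^2,\sigma,\zeta_2$ is a permanent cycle. The class $v_1$ is a permanent cycle because $Y$ admits a $v_1$-self map, and $\sigma,\zeta_2$ are permanent cycles already on the sphere. For $\chi$ and $\chi^2$, the tensor-factor description of the $E_2$-term (in which $\sigma,\zeta_2$ contribute at most $1$ and $\chi^i$ at most $2$ to the $s$-filtration) shows that $E_2^{s,\ast}=0$ for $s\geq 5$, and combined with the horizontal vanishing line this reduces the check to $d_r$ for $2\leq r\leq 4$. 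A direct bidegree count shows that the target bidegrees of $d_2,d_4$ on either $\chi$ or $\chi^2$ are zero by a parity obstruction on $t$ or a size obstruction on $s$, and the same count rules out $d_3(\chi^2)$. This leaves only $d_3(\chi)$, which vanishes in the corresponding spectral sequence for $L_{K(2)}V(0)$ by \fullref{prop:b0d3cycle}; naturality of the spectral sequences under $\jmath\co V(0)\to Y$, together with $v_1$-inversion, transports the $d_3$-cycle statement to $Y$. Multiplicativity over the sphere spectral sequence then yields $E_2=E_\infty$.

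For the absence of additive extensions, the key input is that multiplication by $2$ is zero on $\pi_\ast L_{K(1)}L_{K(2)}Y$. By \fullref{prop:2Y} the degree-$2$ map on $Y$ factors as $Y\xrightarrow{p}S^3\xrightarrow{\nu}S^0\xrightarrow{\iota}Y$, so it suffices to check that $\iota\nu=0$ in $\pi_3 L_{K(1)}L_{K(2)}Y$. Writing $\iota=\jmath\iota_V$ reduces the problem to showing that the image of $\nu$ in $\pi_3 L_{K(1)}L_{K(2)}V(0)$ is $\eta$-divisible, since $\eta=0$ on $Y$ will then annihilate the class after applying $\jmath_\ast$; this $\eta$-divisibility is inherited from part $(3)$ of \fullref{prop:wither-nu} via the natural map $L_{K(1)}V(0)\to L_{K(1)}L_{K(2)}V(0)$. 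Consequently $\pi_\ast L_{K(1)}L_{K(2)}Y$ is an $\FF_2$-vector space; since each $E_\infty^{s,t}$ is already $2$-torsion and $\eta$ acts trivially on the $E_2$-term, no hidden extensions are possible. The main obstacle in this plan is the collapse step, which rests on the highly nontrivial $d_3$-cycle result \fullref{prop:b0d3cycle}; granted that, everything else is bookkeeping with the multiplicative structure and the vanishing of $\eta$ on $Y$.
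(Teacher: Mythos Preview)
Your proposal is correct and follows essentially the same route as the paper. Both arguments use the strong horizontal vanishing line for convergence, the module structure over $\FF_2[v_1^{\pm 1}]\otimes E(\sigma,\zeta_2)$ together with sparseness, the vanishing $E_2^{s,\ast}=0$ for $s\geq 5$, and \fullref{prop:b0d3cycle} for $d_3(\myoneb)$ to force collapse; for the extensions both use the factorization of $2$ through $\nu$ from \fullref{prop:2Y} and the $\eta$-divisibility of $\nu$ in $\pi_\ast L_{K(1)}V(0)$. The only cosmetic difference is that the paper first concludes $\nu=0$ in $\pi_\ast L_{K(1)}Y$ (this is already recorded in \fullref{prop:2Y}) and then pushes forward along $L_{K(1)}Y\to L_{K(1)}L_{K(2)}Y$, whereas you transport the $\eta$-divisibility directly to $\pi_\ast L_{K(1)}L_{K(2)}V(0)$ and then kill it with $\jmath\circ(V(0)\wedge\eta)=0$; these are the same argument run in a different order.
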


\begin{proof} The $E_2$-term was calculated in \Cref{prop:E2Y}. Convergence follows
from \Cref{lem:two-cond-van-line} and \Cref{thm:en-vanishing-ex-1}.
Since $Y$ has a $v_1$-self map,
this is a spectral sequence of modules over $\F_2[v_1^{\pm1}] \otimes E(\sigma,\zeta_2)$. The class
$\zeta_2$ is a permanent cycle by \Cref{prop:zeta-permanent}. The $E_2$-term has a 
horizontal vanishing line at $s=5$ and, by sparseness, $E_2=E_3$. 
By \Cref{prop:b0d3cycle}, the class $\myoneb$ is 
a $d_3$-cycle, and this forces the spectral sequence to collapse.
It follows from \Cref{prop:chiperm} that $y$ is detected by $\myoneb$ and $\beta(y)$
is detected by $\myoneb^2$. 

We turn to extensions; to do this, we must show $2$ annihilates the homotopy of 
$\pi_\ast L_{K(1)}L_{K(2)}Y$. In fact, we have
\[
0 = 2\colon L_{K(1)}L_{K(2)}Y \longr L_{K(1)}L_{K(2)}Y.
\]
To see this, recall that in \Cref{prop:2Y} we showed there is a factoring
\[
\xymatrix{
Y \ar[r]^p \ar@/_1pc/[rr]_2 & S^3 \ar[r]^\nu & Y
}
\]
where $\nu \in \pi_3S^0$ is the Hopf map and $p$ is collapse onto the top cell. In \Cref{prop:2Y}, we
also showed that $0 = \nu \in \pi_\ast L_{K(1)}Y$; hence, 
\[0 = \nu \in \pi_\ast L_{K(1)}L_{K(2)}Y.\qedhere\]  
\end{proof}

\Cref{thm:the-big-one} now follows from \Cref{lem:type1detects} and the next result.

\begin{thm}\label{thm:k1k2-y} The map $f$ of \eqref{eq:we-write-the-map} induces a $K(1)$-local
equivalence
\[
L_{K(1)}\big(S^0\vee S^{-1}\vee\Sigma^{-2}V(0)\vee\Sigma^{-3}V(0)\Big)\wedge Y
\simeq L_{K(1)}L_{K(2)}Y\ .
\]
\end{thm}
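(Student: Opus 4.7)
My plan is to reduce to an $E_\infty$-page comparison for the localized Adams-Novikov spectral sequences on both sides of the claimed equivalence.

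First I would identify the homotopy of the source. Using \fullref{cor:yandv0} to decompose $L_{K(1)}(\Sigma^{-k}V(0)\smsh Y)\simeq \Sigma^{-k}L_{K(1)}Y\vee\Sigma^{-k+1}L_{K(1)}Y$ for $k=2,3$, together with \fullref{prop:k1-of-y}, the source homotopy is free of rank twelve over $\FF_2[v_1^{\pm 1}]$, with generators $\iota_Y,\sigma\iota_Y$ (from $S^0\smsh Y$), $\zeta_2\iota_Y,\sigma\zeta_2\iota_Y$ (from $S^{-1}\smsh Y$), and four generators apiece from $\Sigma^{-2}V(0)\smsh Y$ and $\Sigma^{-3}V(0)\smsh Y$. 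By \fullref{prop:E2Yhomotopy}, the target homotopy $\pi_\ast L_{K(1)}L_{K(2)}Y$ is also free of rank twelve over $\FF_2[v_1^{\pm 1}]$, with basis $\{\sigma^i\zeta_2^j\chi^k : 0\le i,j\le 1,\ 0\le k\le 2\}$ and no additive extensions.

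Next I would identify the detecting class of each image under $(f\smsh Y)_\ast$. From the $\iota$ and $\zeta_2$ summands of $f$, the generators of $S^0\smsh Y$ and $S^{-1}\smsh Y$ map to classes detected by $1,\sigma,\zeta_2,\sigma\zeta_2$. For the $V(0)$-summands, \fullref{cor:yandv0} presents the generators of $\pi_\ast L_{K(1)}(V(0)\smsh Y)$ as the images under $\jmath\smsh V(0)$ of $i_0,i_1\in\pi_\ast(V(0)\smsh V(0))$. The commuting square
\[
\xymatrix{
\Sigma^{-2}V(0)\smsh V(0)\ar[r]^-{1\smsh\jmath}\ar[d]_-{x\smsh V(0)}&\Sigma^{-2}V(0)\smsh Y\ar[d]^-{x\smsh Y}\\
L_{K(1)}L_{K(2)}V(0)\ar[r]_-{\jmath_\ast}&L_{K(1)}L_{K(2)}Y
}
\]
together with \fullref{lem:wither-Bocksteins-x} shows that the generators from $\Sigma^{-2}V(0)\smsh Y$ map under $x\smsh Y$ to classes detected by $\chi^2$ and $\chi$, and since $\chi$ and $\chi^2$ in $v_1^{-1}H^\ast(\GG_2,E_\ast V(0))$ map to the like-named classes in $v_1^{-1}H^\ast(\GG_2,E_\ast Y)$, the detections persist. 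Their $\sigma$-multiples give $\sigma\chi^2,\sigma\chi$. An identical argument with $x\zeta_2$ in place of $x$ produces detections $\zeta_2\chi^2,\zeta_2\chi,\sigma\zeta_2\chi^2,\sigma\zeta_2\chi$ from $\Sigma^{-3}V(0)\smsh Y$.

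Together this accounts for all twelve basis elements of the target's $E_\infty$-page; since source and target are both free of rank twelve over $\FF_2[v_1^{\pm 1}]$ with no additive extensions, $(f\smsh Y)_\ast$ is an isomorphism on homotopy and hence a weak equivalence. The main obstacle will be the careful bookkeeping of the generators of $\pi_\ast L_{K(1)}(\Sigma^{-k}V(0)\smsh Y)$ and their naturality along $\jmath\colon V(0)\to Y$; once this is tracked, the rest of the proof draws directly on the spectral sequence computation in \fullref{prop:E2Yhomotopy} and the detection lemma \fullref{lem:wither-Bocksteins-x}.
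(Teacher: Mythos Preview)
Your proposal is correct and follows essentially the same approach as the paper's proof: compute the source and target homotopy as free $\FF_2[v_1^{\pm 1}]$-modules (the paper works over $\FF_2[v_1^{\pm 1}]\otimes E(\sigma)$, halving the rank, but this is cosmetic), then use \fullref{lem:wither-Bocksteins-x} together with the naturality square along $\jmath\colon V(0)\to Y$ to identify the detecting classes of the images, exactly as you do. The paper phrases the $V(0)$-summand step slightly more tersely, but the logic is identical.
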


\begin{proof} 
As a module over $\FF_2[v_1^{\pm 1}] \otimes E(\sigma)$, $\pi_\ast L_{K(1)}Y$ 
is free of rank $1$ on a class $\iota$ of degree $0$ and $\pi_\ast L_{K(1)}(Y \wedge V(0))$ 
is free of rank $2$ on the classes $i_0$ and $i_1$ of degrees $0$ and $1$ respectively.
See \Cref{prop:k1-of-y}, \Cref{cor:yandv0}, and the remarks before \Cref{cor:yandv0}. By
\Cref{prop:E2Y} and \Cref{prop:E2Yhomotopy} we have that 
$\pi_\ast L_{K(1)}L_{K(2)}Y$ is free of rank $6$ over  
$\FF_2[v_1^{\pm 1}] \otimes E(\sigma)$  on generators $\iota$, $\zeta_2$, $y$, $\beta(y)$, 
$\zeta_2 y$ and $\zeta_2 \beta(y)$ of degrees $0$, $-1$, $-1$, $-2$,  $-2$ and $-3$
respectively.  See the remarks before  \Cref{prop:E2Yhomotopy} as well.

We show that the maps
\begin{align*}
\iota\wedge Y\colon&\ L_{K(1)}Y \longr L_{K(1)}L_{K(2)}Y\\
\zeta_2\wedge Y\colon&\ \Sigma^{-1}L_{K(1)}Y  \longr L_{K(1)}L_{K(2)}Y\\
x\wedge Y\colon&\ \Sigma^{-2}L_{K(1)}(V(0) \wedge Y)  \longr  L_{K(1)}L_{K(2)}Y\\
x\zeta_2\wedge Y\colon &\ \Sigma^{-3}L_{K(1)}(V(0) \wedge Y)  \longr  L_{K(1)}L_{K(2)}Y
\end{align*} 
are all injective on homotopy and exhaust the various summands. 

The map
\[
\iota\wedge Y \colon L_{K(1)}(S^0 \smsh Y)    \to  L_{K(1)}L_{K(2)}(S^0\smsh Y)
\] 
is injective on homotopy and maps onto the $\F_2[v_1^{\pm1}] \otimes E(\sigma)$ summand generated by
$\iota$. Similarly $\zeta_2 \smsh Y$ maps injectively onto the summand generated by $\zeta_2$.

This leaves the other four summands. Using the inclusion $\jmath \colon V(0) \to Y$ and 
\Cref{lem:wither-Bocksteins-x} we have that 
\[
x\wedge Y\colon \Sigma^{-2}L_{K(1)}(V(0) \wedge Y)  \longr  L_{K(1)}L_{K(2)}Y
\]
sends the generators $i_0$ and $i_1$ in degrees $-2$ and $-1$ to non-zero classes detected
by $\myoneb^2$ and $\myoneb$ respectively. By multiplying with $\zeta_2$ we can conclude that
\[
x\zeta_2\wedge Y\colon  \Sigma^{-3}L_{K(1)}(V(0) \wedge Y)  \longr  L_{K(1)}L_{K(2)}Y
\]
sends the generators $i_0$ and $i_1$ in degrees $-3$ and $-2$ to non-zero classes detected
by $\myoneb^2\zeta_2$ and $\myoneb\zeta_2$ respectively. Since $y$ is detected by $\myoneb$
and $\beta(y)$ by $\myoneb^2$, the result follows from \Cref{prop:E2Y}.
\end{proof}

\subsection{The homotopy types of $L_0L_{K(2)}S^0$ and $L_1L_{K(2)}S^0$}\label{sec:getting-to-l1}
We end this section by recording one of the main consequences of \Cref{thm:the-big-one}, 
which was stated in the introduction as \Cref{thm:main}. Recall that $X_p$ denote the
$p$-completion of $X$. 

\begin{thm}[\textbf{Chromatic Splitting}]\label{thm:L1L2}
There is an equivalence
\[L_1L_{K(2)}S^0 \simeq L_1(S^0_{2} \vee S^{-1}_2) \vee L_0(S^{-3}_2 \vee S^{-4}_2) 
\vee L_1(\Sigma^{-2}V(0)\vee\Sigma^{-3}V(0)) \ .
\]
\end{thm}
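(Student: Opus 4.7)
The plan is to deduce this from the chromatic fracture square
\[
\xymatrix{
L_1 L_{K(2)}S^0 \ar[r]\ar[d] & L_{K(1)}L_{K(2)}S^0\ar[d]\\
L_0 L_{K(2)}S^0 \ar[r] & L_0L_{K(1)}L_{K(2)}S^0
}
\]
combined with the identifications of the upper-right corner via \fullref{thm:C} and of the lower-left corner via \fullref{thm:B}, both induced by restrictions of the map $f$ of \eqref{eq:the-map-f-intro}. This mirrors the $p=3$ argument in \cite{GoerssSplit} referenced in the introduction.

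Let $W$ denote the proposed wedge on the right-hand side of the theorem. The first step is to compute its two localizations. Since any rational spectrum is $K(1)$-acyclic, $L_{K(1)}$ annihilates the summand $L_0(S^{-3}_2 \vee S^{-4}_2)$; since $V(0)$ is $2$-torsion with finite homotopy, $L_0$ annihilates the summand $L_1(\Sigma^{-2}V(0) \vee \Sigma^{-3}V(0))$. Consequently
\[
L_{K(1)} W \simeq L_{K(1)}\bigl(S^0 \vee S^{-1} \vee \Sigma^{-2}V(0) \vee \Sigma^{-3}V(0)\bigr)
\]
and
\[
L_0 W \simeq L_0\bigl(S^0_2 \vee S^{-1}_2 \vee S^{-3}_2 \vee S^{-4}_2\bigr),
\]
which by \fullref{thm:C} and \fullref{thm:B} respectively are canonically identified with $L_{K(1)} L_{K(2)}S^0$ and $L_0 L_{K(2)}S^0$. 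In particular $W$ is already $L_1$-local, being a wedge of $L_1$-local summands.

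Next I would produce a comparison map $g\colon W \to L_1 L_{K(2)}S^0$ assembled from restrictions of $L_1 f$. For the summands built from $S^0_2$, $S^{-1}_2$, $\Sigma^{-2}V(0)$, and $\Sigma^{-3}V(0)$ this is straightforward. The nontrivial point is that the restriction of $f$ to $S^{-3} \vee S^{-4}$ becomes null after $L_{K(1)}$: by \fullref{thm:C}, the groups $\pi_{-3}L_{K(1)}L_{K(2)}S^0$ and $\pi_{-4}L_{K(1)}L_{K(2)}S^0$ are entirely $2$-torsion, whereas the classes detecting $S^{-3}$ and $S^{-4}$ in $\pi_\ast L_{K(2)}S^0$ are torsion-free. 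Hence the composites $S^{-k}_2 \to L_{K(2)}S^0 \to L_1 L_{K(2)}S^0$ lift through the fiber of $L_1L_{K(2)}S^0 \to L_{K(1)}L_{K(2)}S^0$, which by the fracture square is equivalent to the (rational) fiber of $L_0L_{K(2)}S^0 \to L_0L_{K(1)}L_{K(2)}S^0$. These lifts therefore factor through $L_0(S^{-3}_2 \vee S^{-4}_2)$, yielding the required map from the rational summand of $W$.

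Naturality of the chromatic fracture square then gives a map of pullback squares from the square of $W$ to the square of $L_{K(2)}S^0$ with $g$ on the upper-left. The upper-right vertex $L_{K(1)}(g)$ is an equivalence by \fullref{thm:C}, the lower-left $L_0(g)$ is an equivalence by \fullref{thm:B}, and the lower-right is $L_0(L_{K(1)}(g))$, hence also an equivalence. The pullback property then forces $g$ itself to be an equivalence. The main technical obstacle is verifying compatibility of the gluing maps $L_{K(1)}W \to L_0 L_{K(1)}W$ and $L_{K(1)}L_{K(2)}S^0 \to L_0 L_{K(1)}L_{K(2)}S^0$ under the \fullref{thm:C} and \fullref{thm:B} identifications; this amounts to naturality of the localization unit $L_{K(1)} \to L_0 L_{K(1)}$ applied to the map $f$, and the verification proceeds exactly as at $p=3$ in \cite{GoerssSplit}.
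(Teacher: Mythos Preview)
Your approach is exactly the paper's: both deduce the result from the chromatic fracture square by combining \fullref{thm:C} for the $K(1)$-local corner and \fullref{thm:B} (equivalently \fullref{thm:mainrat}) for the rational corner, deferring the gluing verification to the $p=3$ argument in \cite{GoerssSplit}. The paper's own proof is literally a one-sentence reference to that source, so your more detailed outline is in the same spirit.

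One small point to clean up. Your justification that the composites $S^{-3}_2,\ S^{-4}_2 \to L_{K(2)}S^0 \to L_{K(1)}L_{K(2)}S^0$ are null is not valid as written: the source classes being torsion-free and the target groups being torsion does \emph{not} force the image to vanish (a generator of $\ZZ_2$ can map to a nonzero element of $\ZZ/2$). The cleaner fix is to bypass this step entirely and build the map from $L_0(S^{-3}_2 \vee S^{-4}_2)$ directly via the universal property of the pullback: a rational spectrum maps trivially to any $K(1)$-local spectrum, so the map to the $L_{K(1)}$-corner is automatically zero; the map to the $L_0$-corner is supplied by \fullref{thm:B}; and compatibility in $L_0L_{K(1)}L_{K(2)}S^0$ holds because the latter has no homotopy in degrees $-3$ and $-4$ (its rational homotopy is concentrated in degrees $0,-1,-2$ by \fullref{thm:C}). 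This is the maneuver carried out in \cite{GoerssSplit}, and with it your argument goes through.
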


\Cref{thm:L1L2} is proved exactly as in Theorem 5.11 of \cite{GoerssSplit}, i.e. 
by examining the homotopy pull-back
\[\xymatrix{L_1L_{K(2)}S^0 \ar[d] \ar[r] & L_{K(1)}L_{K(2)}S^0 \ar[d] \\
L_0L_{K(2)}S^0 \ar[r]  & L_0L_{K(1)}L_{K(2)}S^0 \ .}\]
For this, we need the following result which describes $L_0L_{K(2)}S^0$.

\begin{thm}\label{thm:mainrat}
There is an equivalence
\begin{align*}
L_{0}L_{K(2)}S^0 \simeq  L_0(S^0_2 \vee S^{-1}_2 \vee S^{-3}_2 \vee S^{-4}_2).
\end{align*}
\end{thm}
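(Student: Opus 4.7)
The plan is to realize the four rational generators of $H^*(\GG_2, E_*) \otimes \QQ$ as permanent cycles detecting explicit classes in $\pi_* L_{K(2)}S^0$, assemble them into a map, and verify that it becomes an equivalence after rationalization.

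First I would observe that, by \fullref{prop:ratcohE}, the rationalized $E_2$-term of the $K(2)$-local Adams-Novikov spectral sequence for $S^0$ is the exterior algebra $E_{\QQ_2}(\zeta, e)$, concentrated in the four bidegrees $(s,t) = (0,0), (1,0), (3,0), (4,0)$. Any Adams-Novikov differential $d_r$ with $r \geq 2$ shifts bidegree by $(r, r-1)$, so its target has $t$-coordinate $r-1 \geq 1$ and must vanish rationally. Hence the spectral sequence collapses rationally, and since the surviving classes lie in the four distinct stems $0, -1, -3, -4$, there are no rational additive extensions. Consequently $\pi_* L_{K(2)}S^0 \otimes \QQ$ is one-dimensional over $\QQ_2$ in each of these four stems and vanishes elsewhere.

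Next I would produce explicit homotopy lifts. The unit $\iota \colon S^0_2 \to L_{K(2)}S^0$ detects the generator $1$; the class $\zeta_2 \in \pi_{-1} L_{K(2)}S^0$ from \fullref{prop:zeta-permanent} detects $\zeta$; and by \fullref{rem:eis4dzero} there is a torsion-free class (also denoted $e$) in $\pi_{-3}L_{K(2)}S^0$ whose rationalization is a $\QQ_2$-generator detected by the cohomology class $e$. The composition product $\zeta_2 \cdot e \in \pi_{-4}L_{K(2)}S^0$ is then detected by $\zeta e$ up to a unit, because $\zeta_2$ and $e$ are permanent cycles and the $E_\infty$-pairing is compatible with composition in homotopy. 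These four maps assemble into
\[
g \colon S^0_2 \vee S^{-1}_2 \vee S^{-3}_2 \vee S^{-4}_2 \longrightarrow L_{K(2)}S^0.
\]

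To finish, I would apply the rationalization $L_0$ and compare homotopy groups. The left-hand side has rational homotopy $\QQ$ in each of the stems $0, -1, -3, -4$ and zero elsewhere, while the right-hand side has the same description by the first paragraph. By construction $L_0 g$ carries the generator of each wedge summand to a $\QQ_2$-generator of the corresponding stem on the right, so $L_0 g$ is an isomorphism on rational homotopy, hence an equivalence. The one point demanding care is to ensure that $\zeta_2 \cdot e$ is genuinely rationally nonzero and is detected by $\zeta e$ rather than by a class of strictly higher Adams-Novikov filtration; this reduces to the fact that $\zeta e \neq 0$ in $E_{\QQ_2}(\zeta, e)$ together with the vanishing of every rationally higher-filtration target in stem $-4$, which follows from the first paragraph.
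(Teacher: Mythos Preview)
Your argument is correct and follows essentially the same approach as the paper: compute the rationalized $E_2$-term via \fullref{prop:ratcohE}, observe collapse for degree reasons, and assemble the four generators into a map that becomes an equivalence after $L_0$. The only point you leave implicit that the paper makes explicit is the convergence of the rationalized spectral sequence, which the paper handles by citing the strong horizontal vanishing line (\fullref{lem:two-cond-van-line} and \fullref{thm:en-vanishing-ex-1}); you should mention this, since collapse alone does not give convergence.
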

\begin{proof}
By \Cref{prop:ratcohE}, the spectral sequence
\[
E_2^{s,t}=H_c^s(\GG_2, E_t)\otimes \QQ \Longrightarrow \pi_{t-s}L_0L_{K(2)}S^0_2.
\]
has $E_2$-term  $E_{\QQ_2}(\zeta_2, e)$. It collapses with no extensions for degree reasons,
and it converges by \Cref{lem:two-cond-van-line} and \Cref{thm:en-vanishing-ex-1}. 

The homotopy classes detected by $1$, $\zeta_2$, $e$, $e\zeta_2$ combine to give a map
\[
S^0_2 \vee S^{-1}_2 \vee S^{-3}_2 \vee S^{-4}_2 \to L_{0}L_{K(2)}S^0
\]
which is a rational equivalence.
\end{proof}


\section{Lifting to $L_{K(2)}S^0$}\label{sec:K2locy}

In \Cref{sec:k1k2s} we constructed a map
\[
x \vee x\zeta_2:\Sigma^{-2}V(0) \vee \Sigma^{-3}V(0) \longr L_{K(1)}L_{K(2)}S^0
\]
which became part of our $K(1)$-equivalence. The map $x$, which appeared in \Cref{cor:mapx}, was not unique, but any choice
yielded the equivalence. In this section we show that there is a choice for this
map that factors through the localization map
\[
L_{K(2)}S^0 \to L_{K(1)}L_{K(2)}S^0.
\]
This is important for the chromatic assembly process.

The map $x$ is defined in \Cref{cor:mapx} to be SW adoint to a class
\[
y \in \pi_{-1}L_{K(1)}L_{K(2)}V(0)
\]
detected by $\myoneb = \myoneb_2 \in v_1^{-1}H_c^1(\GG_2,E_\ast V(0))$. 
The key result of this section is \Cref{thm:bzeroinvo}, which shows that there is a
choice of $y$ in the image of the map from $\pi_{-1}L_{K(2)}V(0)$.

There are two main steps: we first show
$\pi_{-1}(E^{h\GG_2^1}\wedge V(0)) \cong \ZZ/4$ generated by a class detected by $\myoneb \in
H_c^1(\GG_2^1,E_0/2)$; this uses the topological duality spectral sequence of \Cref{thm:top-dual-res}. 
We then show this generator is in the image of the map
\[
\pi_{-1}L_{K(2)}V(0) \longr \pi_{-1}(E^{h\GG_2^1}\wedge V(0))
\]
using entirely different information, ultimately going back to \Cref{prop:b0d3cycle}. 

\subsection{Computing $\pi_iE^{h\GG_2^1}$, $-3 \leq i \leq 0$} For this calculation
we use the topological duality spectral sequence of \Cref{rem:topdualresSS}:
\[
\pi_q\sE_p \Longrightarrow \pi_{q-p}E^{h\mathbb{S}_2^1}.
\]
We will use Lemmas \ref{coh-decomp-galois} and \ref{more-split} to deduce information
about $\pi_\ast E^{h\GG_2^1}$. The result is recorded in \Cref{thm:f4}. The homotopy
of $E^{h\GG_2^1}$ in the indicated range is given in \Cref{thm:piG21}.

The first step is to record what we need about the homotopy groups of the fibers $\sE_p$. Since 
we will be trying to compute $\pi_iE^{h\SS_2^1}$  for $i$ near $0$, we will only need information
in a small range of dimensions. 

\begin{rem}\label{rem:hur} If $p=0$, then $\sE_p = E^{hG_{24}}$; if $p=3$,  then
$\sE_p=\Sigma^{48}E^{hG_{24}}$. Write $G_{48} = G_{24} \rtimes \Gal$. Then the computation
of $\pi_\ast E^{hG_{48}}$ can be mined from \cite{tbauer} or \cite{tmfbook}; it is
very elaborate, but the part we need is fairly simple. We then have $\pi_\ast E^{hG_{24}} \cong
\WW \otimes \pi_\ast E^{hG_{48}}$. See \Cref{more-split}.

Recall from \Cref{input-for-the-DSS}  that there
are classes $c_4$, $c_6$, $\Delta$, and $j$ in $H^0(G_{48},E_\ast)$, of degrees
$8$, $12$, $24$, and $0$ respectively, and an isomorphism
\[
\ZZ_2[[j]][c_4,c_6,\Delta^{\pm 1}]/I \cong H^0(G_{48},E_\ast)
\]
where $I$ is the ideal generated by
\[
c_4^3 - c_6^2 = (12)^3 \Delta\qquad\mathrm{and}\qquad j\Delta = c_4^3.
\]
The class $j$ is a permanent cycle. The class $\Delta^{-2}$ is not a permanent cycle, 
but $4\Delta^{-2}$ and $j\Delta^{-2}$ are.
This will  explain the complicated form of $\pi_{-48}E^{hG_{24}}$ in \eqref{eq:homotopy-for-g24} and the
factor of $4$ in part (1) of \Cref{thm:piG21}. See also \Cref{rem:eis4dzero}. 
Let $\eta$ and $\nu$ be the Hopf classes in the homotopy groups of spheres. 

We pause to explain some notation. All homotopy groups $\pi_\ast E^{hG_{24}}$ are modules over
$\WW[[j]] \cong \pi_0E^{hG_{24}}$. For a quotient ring $R$
of $\WW[[j]]$ we write $R\{x\}$ for the free $R$-module generated by $x$. If
$J \subseteq R$ is an ideal, we write $J\{x\} \subseteq R\{x\}$ for the sub-$R$-module
generated by $J$. 
The name of a generator, if given, is the
label given to the class in the cohomology $H^\ast(G_{24},E_\ast) \cong \WW \otimes_{\ZZ_2}
H^\ast(G_{48},E_\ast)$ which detects the element in question. In all the degrees we list,
$\pi_iE^{hG_{24}}$ is a sub-$\WW[[j]]$-module of a single cohomology group $E_2^{s,t} \cong
H^s(G_{24},E_t)$, and we indicate that cohomology group.

With this in mind we now record the homotopy groups we need. 
\begin{equation}\label{eq:homotopy-for-g24}
\pi_{i}E^{hG_{24}}\cong 
\begin{cases}
\W[[j]] \cong E_2^{0,0}  & i=0 \\
\FF_4[[j]]\{\eta^i\} \cong E_2^{i,2i} & i=1,2 \\
\WW/8\{\nu\} \cong E_2^{1,4} & i=3\\
0         &   i=-3,-2,-1  \\ 
(4,j)\{\Delta^{-2}\}\subseteq \W[[j]]\{\Delta^{-2}\} \cong E_2^{0,\text{-}48} & i=-{48}  \\
(j)\{\eta\Delta^{-2}\} \subseteq \FF_4[[j]]\{\eta\Delta^{-2}\} \cong E_2^{1,\text{-}46} & i=-47 \\ 
(j)\{\eta^2\Delta^{-2}\} \subseteq \FF_4[[j]]\{\eta^2\Delta^{-2}\} \cong E_2^{2,\text{-}44} & i=-46  \\
\W/8\{\nu\Delta^{-2}\} \cong E_2^{1,\text{-}44}    &   i=-45  
\end{cases}
\end{equation}
The edge homomorphism $h\co \pi_iE^{hG_{24}} \to E_i^{G_{24}}$ is an 
isomorphism when $i=0$ and injective when $i=-48$. 

If $p=1$ or $2$, then $F_p = E^{hC_6}$. The following computation is relatively
simple and can be read off of Section 4 of \cite{MR} or, more explicitly, from Section
2 of \cite{BobkovaGoerss}. The notation is analogous to that
in \eqref{eq:homotopy-for-g24}, except that $\pi_3E^{hC_6} \cong E_\infty^{3,6}$ is only
a quotient of $E_2^{3,6}$.
\begin{equation}\label{eq:homotopy-for-c6}
\pi_{i}E^{hC_6}\cong 
\begin{cases}
0  & i=-1,-2 \\
\W[[u_1^3]] \cong E_2^{0,0} & i=0  \\
\FF_4[[u_1^3]]\{\eta^i\} \cong E_2^{i,2i}  & i=1,2 \\ 
\FF_4 \{ \nu \} & i=3 \\
\end{cases}
\end{equation}
The edge homomorphism $h\co \pi_0E^{hC_6} \to E_0^{C_6}$ of the homotopy fixed 
point spectral sequence is an isomorphism. 
\end{rem}

Finally, we will need the following result in our calculations below.

\begin{lem}\label{eq:the-extra-stuff-hw-wanted} Let $G=G_{24}$ and $G = C_6$.
Multiplication by $\eta$ induces isomorphisms
\begin{align*}
\eta\colon H^0(G,E_0/2) &\mathop{\longr}^{\cong} H^1(G,E_2/2)\\
\eta^2\colon H^0(G,E_0/2) &\mathop{\longr}^{\cong} H^2(G,E_4/2).
\end{align*}
Furthermore, for $i=1,2$ reduction modulo $2$ induces an isomorphism
\[
\xymatrix{
H^i(G,E_{2i}) \ar[r]^-{\cong} & H^i(G,E_{2i}/2).
}
\]
\end{lem}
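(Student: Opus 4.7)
My plan is to prove the lemma by identifying all the relevant cohomology groups as free rank-one modules over a common base ring. Let $R = \FF_4[[j]]$ for $G = G_{24}$ and $R = \FF_4[[u_1^3]]$ for $G = C_6$. The necessary computations are essentially contained in the references collected at \fullref{input-for-the-DSS} and in the integral homotopy displayed in \eqref{eq:homotopy-for-g24} and \eqref{eq:homotopy-for-c6}.

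The first input is to read off from \eqref{eq:homotopy-for-g24} and \eqref{eq:homotopy-for-c6} that $H^i(G,E_{2i}) \cong R\{\eta^i\}$ for $i = 1, 2$: these groups equal $\pi_i E^{hG}$ because they sit in the maximal filtration $s = i$ for total degree $i$ (there are no differentials in or out for bidegree reasons), and are already $2$-torsion. The analogous computation $H^0(G,E_0/2) \cong R$ follows from the Bockstein long exact sequence
\[
0 \to H^0(G,E_0)/2 \to H^0(G,E_0/2) \to H^1(G,E_0)[2] \to 0
\]
combined with $H^1(G,E_0) = 0$, which itself follows from the vanishing of $\pi_{-1}E^{hG}$ in the cited references. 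The key remaining step is to identify $H^i(G,E_{2i}/2) \cong R\{\eta^i\}$ for $i = 1, 2$. This can be extracted either directly from the mod-$2$ cohomology calculations of \cite{tbauer} and \cite{tmfbook} for $G_{24}$, and \cite{MR} or \S 2 of \cite{BobkovaGoerss} for $C_6$, or by running the Bockstein long exact sequence: the crucial input is that $v_1 = u_1 u^{-1}$ is $G$-invariant modulo $2$ by \fullref{rem:v1-plus-ss}, so $v_1 \in H^0(G,E_2/2)$ with Bockstein equal to $\eta$, which pins down the connecting maps.

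Given these identifications, both parts of the lemma follow formally. The multiplication-by-$\eta^i$ map $R \cong H^0(G, E_0/2) \to H^i(G, E_{2i}/2) \cong R\{\eta^i\}$ is $R$-linear and sends $1$ to $\eta^i$, hence is an isomorphism. Similarly, the reduction map $H^i(G,E_{2i}) \to H^i(G,E_{2i}/2)$ is a map of free rank-one $R$-modules sending $\eta^i$ to $\eta^i$, so it is an isomorphism. The main obstacle is the identification $H^i(G,E_{2i}/2) \cong R\{\eta^i\}$ for $i = 1, 2$; equivalently, showing that the Bockstein connecting map $H^i(G,E_{2i}/2) \to H^{i+1}(G,E_{2i})$ vanishes on the generator. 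This requires either tracing through the relevant segment of Bauer's integral computation of $H^*(G_{24},E_*)$ and its mod-$2$ reduction, or checking vanishing of differentials in the HFPSS for $E^{hG} \wedge V(0)$ by comparing $\pi_*(E^{hG}\wedge V(0))$ computed from the cofiber sequence of $V(0)$ to the $E_\infty$-page.
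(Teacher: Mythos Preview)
Your approach is essentially the same as the paper's: both run the Bockstein long exact sequence for $0 \to E_* \xrightarrow{2} E_* \to E_*/2 \to 0$ and reduce to facts about the integral cohomology $H^s(G,E_t)$. The paper's proof is more direct at the final step, however. Rather than your proposed methods (tracing through the mod-$2$ reduction of Bauer's computation, or comparing the HFPSS for $E^{hG}\wedge V(0)$), the paper simply records the integral vanishing $H^s(G,E_t)=0$ for $(s,t)=(1,0),(2,2),(3,4)$, which can be read off from the same references; this immediately kills the Bockstein $H^i(G,E_{2i}/2)\to H^{i+1}(G,E_{2i})$ and gives the reduction isomorphism in one line. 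Your remark about $v_1$ and its Bockstein $\eta$ is not needed here---it concerns $H^0(G,E_2/2)\to H^1(G,E_2)$, which is not one of the connecting maps in play.
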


\begin{proof} For these statements we will need some information about
$H^s(G,E_t)$ for low values of $s$ and $t$. There are many references
for this; see \cite{MR} for $C_6$, \cite{tmfbook} for $G_{24}$, or \S 2 of \cite{BobkovaGoerss}
for more references and a convenient chart for $G= G_{24}$.

From these references we see that there are isomorphisms
\[
\xymatrix{
H^0(G,E_0)/2 \ar[r]^{\eta}_\cong & H^1(G,E_2)  \ar[r]^{\eta}_\cong & H^2(G,E_4).
}
\]
We also have that $H^s(G,E_t) = 0$ for $(s,t) = (1,0)$, $(2,2)$, and $(3,4)$.
The result follows from the long exact sequence in group cohomology induced by the short exact sequence
\[
\xymatrix{
0 \ar[r] & E_\ast \ar[r]^-{\times 2} & E_\ast \ar[r] & E_\ast/2 \ar[r] & 0.
}\qedhere
\]
\end{proof}

We now begin our calculation of the topological duality spectral sequence
for $E^{h\SS_2^1}$. The result is illustrated in \Cref{fig:topduality}.
The main calculations needed to produce the $E_2$-term can be found in
\Cref{E2*0-comp} and \Cref{thm:f4}. 
There is an explanatory remark for these charts immediately following.

\begin{center}
\begin{minipage}{\textwidth}
\begin{figure}[H]
\begin{minipage}{.49\textwidth}
\centering
\includegraphics[width=0.6\textwidth]{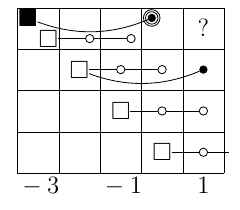}
\end{minipage}
\begin{minipage}{.49\textwidth}
\centering
\includegraphics[width=0.6\textwidth]{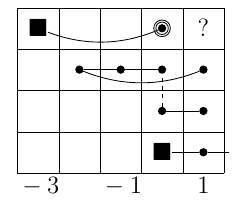}
\end{minipage}
\caption{{The $E_1$ (left) and $E_{2}$ (right) terms of the TDSS for $\pi_*(E^{h\mathbb{S}_2^1})$.
}}
\label{fig:topduality}
\end{figure}
\end{minipage}
\end{center}

\begin{rem}\label{rems-on-TDRSS4-alt} The two charts are respectively the $E_1$-term  and
$E_2$-term of the topological duality spectral sequence of \Cref{rem:topdualresSS} for
$\pi_*(E^{h\mathbb{S}_2^1})$. 
We use the Adams grading
with $E_1^{p,q}$ in \Cref{rem:topdualresSS} in the $(q-p,p)$ box. 

A $\Box$ denotes a copy of $\W[[j]]$ if $s=0,3$ and $\W[[u_1^3]]$ if $s=1,2$. Similarly,
a $\circ$ denotes a copy 
of $\F_4[[j]]$ or $\F_4[[u_1^3]]$. 
A $\bullet$ is a copy of $\mathbb{F}_4$. A $\blacksquare$ is a copy  
of $\W$. The symbol \circled{\circled{$\bullet$}} denotes a copy of $\W/8$. 

Horizontal and curved lines indicate multiplication by $\eta$ respectively $\nu$.
The dashed vertical line on the $E_2$-term is there to indicate the additive extension 
on the $E_{\infty}$-term.  See \Cref{thm:f4}.
In the top row of the left chart, we have depicted the ideal $(4,j)\{\Delta^{-2}\} 
\subseteq \pi_{-48}E^{hG_{24}}$ using the $\WW$-module structure as
$\WW\{4\}\oplus \WW[[j]]\{j\} \subseteq \WW[[j]]$. The $\blacksquare$ represents $\WW\{4\}$ and 
the $\Box$ represents $\WW[[j]]\{j\}$.

The upper right corners of both charts contribute to $\pi_1 E^{h\SS_2^1}$. 
The elements in these spots do not support 
differentials in the topological duality spectral sequence, so don't come into the calculations
of $\pi_0$. As they don't affect our answers, we don't include them in the discussion and leave
these boxes marked with a question mark. 
\end{rem}

We now come to the calculation of $\pi_\ast E^{h\SS_2^1}$ in the range we need. We compute using the topological duality 
spectral sequence of  \Cref{rem:topdualresSS}:
\begin{equation*}
\xymatrix{E_1^{p,q} = \pi_{q}\sE_p \ar@{=>}[r] & \pi_{q-p}(E_{2}^{h\bS_{2}^1})}.
\end{equation*}
We will use information from \Cref{rem:hur}. The $E_1$-term is displayed in \Cref{fig:topduality}. 
The notation $\la_0$, 
$\lb_0$, $\lc_0$, and $\ld_0$ was explained in \Cref{rem:figure1explained}. 

\begin{thm}\label{thm:f4bis}
There are isomorphisms
\begin{enumerate}

\item $\pi_{-3} (E^{h\bS_{2}^1}) \cong \WW$ with generator detected
by $4\ld_0 \in E_1^{3,0} = \pi_0\Sigma^{48}E^{hG_{24}}$.

\item $\pi_{-2}(E^{h\bS_{2}^1}) \cong \F_4$ detected by $\lc_0 \in 
E_1^{2,0} = \pi_0E^{hC_6}$.

\item $\pi_{-1}(E^{h\bS_{2}^1}) \cong \F_4$ with generator
detected by $\eta \lc_0$ in $E_1^{2,1} = \pi_1E^{hC_6}$.

\item $\pi_0(E^{h\mathbb{S}_2^1}) \cong \W \oplus \W/4 \oplus \W/8$.
\end{enumerate}

Furthermore, for $\pi_0E^{h\SS_2^1}$,

\begin{enumerate}

\item[(i)] a generator of the summand $\WW$ is detected by the unit in $E_1^{0,0} = \pi_0E^{hG_{24}}$;

\item[(ii)] a generator of the summand $\W/4$ is detected by $\eta\lb_0 \in E_1^{1,1} = \pi_1E^{hC_6}$;

\item[(iii)] a generator of the summand $\WW/8$ is detected by $\nu\ld_0 \in E_1^{3,3} = \pi_3\Sigma^{48}E^{hG_{24}}$.
\end{enumerate} 

If a class in $\pi_0E^{h\SS_2^1}$ is detected by $\eta\lb_0 \in E_1^{1,1} $ then twice that class is detected by
$\eta^2\lc_0 \in E_1^{2,2}\cong \pi_2E^{hC_6}$. 

\end{thm}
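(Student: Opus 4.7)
The plan is to realize any $x \in \pi_0 E^{h\SS_2^1}$ detected by $\eta\lb_0$ as a representative of the Toda bracket $\langle z, 2, \eta\rangle$, where $z \in \pi_{-2}E^{h\SS_2^1} \cong \F_4$ is the generator from part~(2) above, and then to compute $2x$ by a standard juggle. Since $z$ has order~$2$ and $2\eta = 0$, this bracket is defined in $\pi_0 E^{h\SS_2^1}$.

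First, I would argue that $\langle z, 2, \eta\rangle$ is detected by $\eta\lb_0$ in the topological duality spectral sequence. By part~(2) of \fullref{thm:bob}, the algebraic differential $\partial_2$ is multiplication by~$2$ modulo $(8, I\SS_2^1)$; realized in the topological tower of \fullref{thm:top-dual-res}, this identifies the $d_1$-differential of the TDSS on $\lb_0 \in E_1^{1,0}$ with $2\lc_0$, modulo classes of higher $u_1^3$-adic and ANSS filtration. Consequently $\lb_0 \in \pi_0\sE_1$ gives a topological nullhomotopy of $2z$ at TDSS-filtration~$1$, and combining it with the canonical nullhomotopy of $2\eta$ produces a representative of $\langle z, 2, \eta\rangle$ whose leading term in the TDSS is $\eta\lb_0$.

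Next, by the shuffle in \fullref{rem:basic-toda}, $\langle z, 2, \eta\rangle \cdot 2 = z \cdot \langle 2, \eta, 2\rangle = z\eta^2$, where $\langle 2,\eta, 2\rangle = \eta^2$ in the stable stems. Since $z$ is detected by $\lc_0$ and $\eta$ is a permanent cycle, $z\eta^2$ is detected by $\eta^2\lc_0$. The indeterminacy of the outer bracket lies in $z \cdot \pi_2 E^{h\SS_2^1} + \eta \cdot \pi_{-1} E^{h\SS_2^1}$, both of which sit in TDSS-filtration $\geq 2$ and are built from copies of $\F_4$, hence are annihilated by $2$. Thus $2x = z\eta^2$ modulo TDSS-filtration $\geq 3$; any other choice $x'$ detected by $\eta\lb_0$ differs from $x$ by a class of filtration $\geq 2$ whose double lies in filtration $\geq 3$, so the conclusion is independent of the choice of $x$.

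The main obstacle will be making the first step rigorous---essentially a Moss-type convergence result for the TDSS. Concretely, one must lift the algebraic identity $\partial_2 \equiv 2 \pmod{(8, I\SS_2^1)}$ to an explicit topological nullhomotopy of $2z \in \pi_{-2}\sE_2$ realized through $\lb_0 \in \pi_0\sE_1$, and verify that the Toda bracket built from this nullhomotopy has the expected filtration and leading term. Once this Massey-product-to-Toda-bracket matching is set up, the juggle and the identification of $z\eta^2$ with the detecting class $\eta^2 \lc_0$ are routine.
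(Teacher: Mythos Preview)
Your proposal addresses only the final extension claim, not the bulk of the theorem (parts (1)--(4) and (i)--(iii)), which in the paper occupies a multi-step computation of the $d_1$, $d_2$, and $d_3$ differentials in the TDSS using \fullref{E2*0-comp}, \fullref{eq:the-extra-stuff-hw-wanted}, and the homotopy input from \fullref{rem:hur}. If you intended only to sketch the extension argument, that is fine, but be aware that the $E_\infty$-page you are using as input is itself most of the content of the theorem.

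For the extension claim itself, your strategy and the paper's are the same at heart: both use the Toda bracket $\langle z,2,\eta\rangle$ and the juggle $2\langle z,2,\eta\rangle = z\eta^2$ from \fullref{rem:basic-toda}. The difference is the order of the argument. You propose to first show that $\langle z,2,\eta\rangle$ is detected by $\eta\lb_0$ via an explicit topological nullhomotopy coming from $\partial_2\equiv 2$, and you correctly identify this Moss-type step as the main obstacle. The paper sidesteps this entirely by running the logic backwards: since $z\eta^2$ is detected by $\eta^2\lc_0\ne 0$ at $E_\infty$, the bracket does not contain zero; it is Galois invariant; and inspection of the $E_\infty$-page in the $0$-stem shows that the only slot available for a Galois-invariant class not already accounted for by the unit or $\nu\ld_0$ is $E_\infty^{1,1}$, generated by $\eta\lb_0$. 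Thus the bracket is forced to be detected there, with no need to lift $\partial_2\equiv 2$ topologically. This gives the order-$4$ extension and the detection of $2x$ by $\eta^2\lc_0$ in one stroke. Your approach would work if the Moss argument goes through, but the paper's route is both shorter and avoids the technical point you flagged.
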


\begin{proof}
This requires a multi-part argument and we break it into steps.

{\bf Calculating $d_1$ with $q=0$.} The fundamental calculation is with the sequence
\[
\xymatrix{
E_1^{0,0} \ar[r]^{d_1} & E_1^{1,0} \ar[r]^{d_1}  & E_1^{2,0} \ar[r]^{d_1}  & E_1^{3,0}\ .
}
\]
Using the calculations of \Cref{rem:hur}, this can be fit into a diagram of chain complexes 
\begin{equation*}
\xymatrix{
\pi_0E^{hG_{24}} \ar[r]^{d_1} \ar[d]_\cong & \pi_0E^{hC_6} \ar[r]^{d_1} \ar[d]_\cong &
\pi_0E^{hC_6} \ar[r]^{d_1} \ar[d]_\cong & \pi_{-48}E^{hG_{24}} \ar[d]_\subseteq\\
H^0(G_{24},E_0) \ar[r]^{d_1} &H^0(C_6,E_0) \ar[r]^{d_1}&H^0(C_6,E_0) \ar[r]^{d_1}
&H^0(G_{24},E_0)\\
H^0(G_{24},\WW) \ar[r]^{d_1} \ar[u]&H^0(C_6,\WW) \ar[r]^{d_1}\ar[u]
&H^0(C_6,\WW) \ar[r]^{d_1}\ar[u]
&H^0(G_{24},\WW)\ar[u]\ .
}
\end{equation*} 
The  vertical maps down from  the top row are edge homomorphisms. 
All but the last of these maps are isomorphisms. The last map is injective with image 
the ideal $(4,j) \subseteq \WW[[j]]$. The map from the bottom
to the middle row is induced by the unit map $i \colon \WW \to E_0$; thus, by
\Cref{E2*0-comp}, the bottom two rows have the same cohomology. It then follows that in the 
topological duality spectral sequence we have
\[
E_2^{0,0} \cong \WW\qquad E_2^{1,0} = 0\qquad E_2^{2,0} \cong \FF_4\qquad
E_2^{3,0} \cong \WW.
\]
The generator for $E_2^{2,0}$ is $\lc_0$ and the generator for $E_2^{3,0}$ is $4\ld_0$.

{\bf Calculating $d_1$ with $q=1$.} The next calculation is with the sequence
\[
\xymatrix{
E_1^{0,1} \ar[r]^{d_1} & E_1^{1,1} \ar[r]^{d_1}  & E_1^{2,1} \ar[r]^{d_1}  & E_1^{3,1}\ .
}
\]
This can be fit into a diagram 
\[
\xymatrix{
\pi_1E^{hG_{24}} \ar[r]^{d_1} \ar[d]_{\cong} &\pi_1E^{hG_{6}} \ar[r]^{d_1}\ar[d]_{\cong}&
\pi_1E^{hG_{6}} \ar[r]^{d_1} \ar[d]_{\cong} &\pi_{-47}E^{hG_{24}}\ar[d]^{\subseteq}\\
H^1(G_{24},E_2) \ar[r]^{d_1} \ar[d]_{\cong}&H^1(C_6,E_2) \ar[r]^{d_1} \ar[d]_{\cong}
&H^1(C_6,E_2) \ar[r]^{d_1} \ar[d]_{\cong} &H^1(G_{24},E_2)\ar[d]_{\cong}\\
H^1(G_{24},E_2/2) \ar[r]^{d_1} &H^1(C_6,E_2/2) \ar[r]^{d_1}&H^1(C_6,E_2/2) \ar[r]^{d_1} 
&H^1(G_{24},E_2/2)\\
H^0(G_{24},E_2/2) \ar[r]^{d_1} \ar[u]_{\cong}^\eta&H^0(C_6,E_2/2) \ar[r]^{d_1} \ar[u]_{\cong}^\eta
&H^0(C_6,E_2/2) \ar[r]^{d_1} \ar[u]_{\cong}^\eta &H^0(G_{24},E_2/2) \ar[u]_{\cong}^\eta\ .
}
\]
Using the calculations of \Cref{rem:hur}, we have all vertical maps from the top row are isomorphisms
except the edge homomorphism on the right hand side. This is an inclusion onto the submodule
$\FF_4[[j]]\{j\eta\} \subseteq \FF_4[[j]]\{\eta\}$.
The maps from the second row to the third row are reduction modulo $2$ and the maps from 
the bottom row to the third row are multiplication by $\eta$. All are isomorphisms
by \Cref{eq:the-extra-stuff-hw-wanted}. It follows that in the topological duality spectral sequence we have
\[
E_2^{0,1} \cong \FF_4\qquad E_2^{1,1} = \FF_4\qquad E_2^{2,1} \cong \FF_4\qquad
E_2^{3,1} \cong 0.
\]
The generator for $E_2^{0,1}$ is $\eta\la_0$, that for $E_2^{1,1}$ is $\eta b_0 $ 
and the generator for $E_2^{2,1}$ is $\eta\lc_0$.

{\bf Calculating $d_1$ with $q=2$.} The exact same argument, with $\eta$ replaced by
$\eta^2$, shows that
\[
E_2^{0,2} \cong \FF_4\qquad E_2^{1,2} = \FF_4\qquad E_2^{2,2} \cong \FF_4\qquad
E_2^{3,2} \cong 0
\]
and that the generator for $E_2^{0,2}$ is $\eta^2\la_0$, that for
$E_2^{1,2}$ is $\eta^2\lb_0$ and the generator for $E_2^{2,2}$ is $\eta^2\lc_0$.

{\bf Calculating $d_1$ with $q=3$.} Finally, we calculate with the sequence
\[
\xymatrix{
E_1^{0,3} \ar[r]^{d_1} & E_1^{1,3} \ar[r]^{d_1}  & E_1^{2,3} \ar[r]^{d_1}  & E_1^{3,3}\ .
}
\]
In this case we need a different style of argument. By \Cref{rem:hur} we have
\[
E_1^{0,3} \cong \WW/8\qquad E_1^{1,3} = \FF_4\qquad E_1^{2,3} \cong \FF_4\qquad
E_1^{3,3} \cong \WW/8
\]
generated by $\nu\la_0$, $\nu\lb_0$, $\nu\lc_0$, and $\nu\ld_0$ respectively. Then
$d_1=0$ by the calculation for $q=0$ and the $\nu$-linearity of $d_1$. 

{\bf Calculating $d_2$ and $d_3$.} 
We first turn to $d_2$. By looking at \Cref{fig:topduality}, we see that in the range $0 \leq q-p \leq 1$, 
the only classes which could support a non-zero $d_2$  
are the unit, detected by $\la_0$ and the classes detected by $\eta\Delta_0$ and
$\eta^2\lb_0$ in $E_2^{0,1}$ and $E_2^{1,2}$.
The unit and $\eta\Delta_0$ are evidently permanent cycles and $\eta^2\lb_0$ is an $\eta$-multiple
of $\eta\lb_0$, which is permanent cycle. Thus $d_2=0$.

This leaves only $d_3$. But in the range $0 \leq q-p \leq 1$,
the only class which could support a  non-zero $d_3$ is $\eta$ times the unit,
so $d_3 = 0$ as well and we have shown
\[
E_2^{p,q} \cong E_\infty^{p,q},\qquad -3 \leq q-p \leq 0.
\]

{\bf Settling extensions.} At this point we have a filtration of the $\WW$-module $\pi_0E^{hG_{24}}$ 
\[
\xymatrix@C=15pt{
0 \ar[r]^\subseteq & F_3 \ar[d]_{=} \ar[r]^\subseteq & F_2 \ar[d] \ar[r]^\subseteq & F_1 \ar[d] \ar[r]^\subseteq &
\pi_0E^{hG_{24}}\ar[d]\\
&E_\infty^{3,3} \cong \W/8&E_\infty^{2,2} \cong \W/2&E_\infty^{1,1} \cong \W/2&E_\infty^{0,0} \cong \W
}
\]
The unit $S^0 \to E^{h\SS_2^1} $ induces a map $\Z_2 \to \pi_0 E^{h\SS_2^1}$ which extends to a Galois equivariant 
homomorphism $\W \to \pi_0 E^{h\SS_2^1}$. This splits off the torsion-free summand.
Thus we need only show that a generator of $E_\infty^{1,1}$ detects an element of order $4$.
This follows from \Cref{rem:basic-toda} and, in particular, from \eqref{order4one}. Specifically,  $\lc_0 \in E_\infty^{2,0}$
detects a unique Galois invariant homotopy class $z$ of order $2$ in $\pi_{-2}E^{h\SS_2^1}$. Since a generator
$E_\infty^{2,2}$ is detected by $\eta^2\lc_0$, we have $2\langle z,2,\eta \rangle = \eta^2 z \ne 0$ and, in particular
that $\langle z,2,\eta \rangle$ does not contain zero. Since $\langle z,2,\eta \rangle$ is Galois invariant, it can only be detected by $\eta\lb_0$.
\end{proof}

The next result describes how the generators 
of $\pi_\ast E^{h\SS_2^1}$ in our range are detected in the homotopy fixed point spectral sequence.
In reading the statement, a bit of care is needed. The class $e$ is not a
permanent cycle, so the class  $\nu e$ of part (iii) is not a product; it is 
named by the class that detects it in the spectral sequence.

\begin{thm}\label{thm:f4}
In the homotopy fixed point spectral sequence
\[
H_c^s(\SS_2^1,E_t) \Longrightarrow \pi_{t-s}E^{h\SS_2^1}
\]
\begin{enumerate}

\item a generator of $\pi_{-3} (E^{h\bS_{2}^1}) \cong \WW$ is detected
by $4e$ in $H_c^3(\SS_2^1,E_0)$;

\item a generator of $\pi_{-2}(E^{h\bS_{2}^1}) \cong \F_4$ is detected by $\mytwob$
in $H_c^2(\SS_2^1,E_0)$;

\item  a generator of $\pi_{-1}(E^{h\bS_{2}^1}) \cong \F_4$ is
detected by $\eta \mytwob$ in $H_c^3(\SS_2^1,E_2)$;

\item $\pi_0(E^{h\mathbb{S}_2^1}) \cong \W \oplus \W/4 \oplus \W/8$. 
\end{enumerate}
Furthermore, for $\pi_0E^{h\SS_2^1}$,

\begin{enumerate}

\item[(i)] a generator of the summand $\WW$ is detected by the unit in $H_c^0(\SS_2^1,E_0)$;

\item[(ii)] a generator of the summand $\W/4$ is detected by an element in the Massey product
\[
\masseyp \in H_c^2(\SS_2^1,E_1);
\]

\item[(iii)] a generator of the summand $\WW/8$ is detected by $\nu e \in H_c^4(\SS_2^1,E_4)$.
\end{enumerate} 
If a class in $\pi_0E^{h\SS_2^1}$ is detected by $\masseyp$ then twice that class is detected by
\[
\eta^2\mytwob \in H_c^4(\SS_2^1,E_4).
\]
\end{thm}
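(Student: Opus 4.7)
The plan is to transplant the Toda-bracket argument used in the proof of \fullref{thm:f4bis} into the homotopy fixed point spectral sequence. By part~(2), there is a Galois-invariant class $z \in \pi_{-2}E^{h\SS_2^1}$ of order~$2$ detected by $\mytwob$. Since $2z = 0$ and $2\eta = 0$, the Toda bracket $\langle z, 2, \eta \rangle \subseteq \pi_0 E^{h\SS_2^1}$ is defined. By the standard convergence of Toda brackets to Massey products in Adams-type spectral sequences, each element of $\langle z, 2, \eta \rangle$ is detected by some representative of $\masseyp$; conversely, given a class $w$ detected by an element of this Massey product, Moss-type convergence allows us to replace $w$ by a class $w_0 \in \langle z, 2, \eta \rangle$ differing from it only by something of strictly higher filtration, without altering its $E_\infty$-detector.

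Next, apply the juggling identity \eqref{order4one}:
\[
\langle z, 2, \eta \rangle \cdot 2 \;=\; z \cdot \langle 2, \eta, 2 \rangle \;=\; z\eta^2 .
\]
Each Toda bracket collapses to a genuine element in this identity, since the indeterminacy of $\langle z, 2, \eta \rangle \cdot 2$ is $2(z \pi_2 S^0) + 2(\pi_1 E^{h\SS_2^1})\eta$, both of whose summands vanish by $2z=0$ and $2\eta=0$; the indeterminacy of $\langle 2, \eta, 2\rangle$ in $\pi_2 S^0$ is handled the same way. Therefore $2w = z\eta^2$ in $\pi_0 E^{h\SS_2^1}$.

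Finally, since $z$ is detected by $\mytwob$ in filtration~$2$ and $\eta$ by itself in filtration~$1$, the product $z \eta^2$ is detected by $\eta^2 \mytwob \in H^4(\SS_2^1, E_4)$, provided this class is nonzero at $E_\infty$. Nonvanishing follows from part~(4) of the theorem: $w$ generates a $\WW/4$-summand, so $2w \neq 0$ has order exactly~$2$, and must therefore have some nonzero permanent cycle detector. To see this detector has filtration exactly~$4$, and in particular must be $\eta^2 \mytwob$, we compare with the topological duality filtration from \fullref{thm:f4bis}, where the corresponding element $2w$ is detected by $\eta^2\lc_0 \in E_\infty^{2,2}$, and transfer this filtration information across the comparison of the two spectral sequences via their common abutment. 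The main obstacle is making rigorous the passage between the Toda bracket in $\pi_0 E^{h\SS_2^1}$ and the Massey product in $H^*(\SS_2^1, E_*)$, which requires invoking convergence of Toda brackets to Massey products in the $K(n)$-local Adams-Novikov spectral sequence setting of \cite{DH}; once this correspondence is in place, the juggling identity and the filtration comparison assemble into the statement mechanically.
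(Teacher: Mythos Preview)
Your core argument for (ii) and the hidden extension is essentially the paper's: both use the Toda bracket $\langle z,2,\eta\rangle$ together with the juggling identity \eqref{order4one}. Where you explicitly invoke Moss-type convergence to pass between the Toda bracket and the Massey product, the paper simply records that the nonvanishing of the Toda bracket (established at the end of the proof of \fullref{thm:f4bis}) ``forces'' $\masseyp$ to be nonzero, and that the extension ``follows from'' juggling. Your version is more explicit about the bracket-to-Massey passage; the paper's is terser but relies on the same mechanism.

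What your proposal does not address is how parts (1)--(3), (i), and (iii) are obtained. These are not just the homotopy-group isomorphisms (which do follow from \fullref{thm:f4bis}) but statements about detection in the homotopy fixed point spectral sequence, whereas \fullref{thm:f4bis} gives detection in the topological duality spectral sequence. The paper bridges the two in one sentence via \fullref{whatisd0anyway}, which identifies the ADSS classes $\ld_0$ and $\lc_0$ with the cohomology classes $e$ and $\mytwob$ in $H^*(\SS_2^1,E_0)$; this converts the TDSS detectors $4\ld_0$, $\lc_0$, $\eta\lc_0$, $\nu\ld_0$ into the HFPSS classes $4e$, $\mytwob$, $\eta\mytwob$, $\nu e$. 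Your final paragraph gestures at a comparison of the two filtrations ``via their common abutment,'' but the actual mechanism is this identification of edge classes in the algebraic duality spectral sequence, and you should cite \fullref{whatisd0anyway} to make it precise.
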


\begin{proof} This follows from \Cref{thm:f4bis} and \Cref{whatisd0anyway}, with judicious uses of
\Cref{lem:chasing-detectors}.

For part (1) we know from \Cref{whatisd0anyway} that $4\ld_0$ detects $4e$ in the algebraic
duality spectral sequence, so \Cref{lem:chasing-detectors} applies directly.

For part (2), the same result shows that $\lc_0$ detects $\mytwob$, and again we can use \Cref{lem:chasing-detectors}. 
Part (3) then follows from Part (2).

It remains to discuss $\pi_0E^{h\SS_2^1}$.

At the very end of the proof of \Cref{thm:f4bis} we argued that if $z \in \pi_{-2}E^{h\SS_2^1}$ and is the 
unique non-zero Galois invariant class, then the Toda bracket $\langle z,2,\eta\rangle$ does not contain
zero and is detected in the topological duality spectral sequence by $\eta\lb_0$. This forces the Massey 
product $\masseyp$ to be non-zero. Since $z$ is detected by $\mytwob$ and $\eta^2z \ne 0$, the exotic
extension follows from
the standard juggling formula $2\langle z,2,\eta\rangle = \eta^2z$  of \eqref{order4one}.
\end{proof}

We next pass to $\GG^1_2$. The result is a corollary of \Cref{thm:f4} obtained by
taking Galois fixed points. 

\begin{cor}\label{thm:piG21} There are isomorphisms
\begin{enumerate}

\item $\pi_{-3}E^{h\GG_2^1} \cong \ZZ_2$ generated by a class detected by $4e
\in H_c^3(\GG_2^1,E_0)$;

\item $\pi_{-2}E^{h\G_2^1}\cong \Z/2$ generated by a class detected by $\mytwob \in H_c^2(\GG_2^1,E_0)$;

\item $\pi_{-1}E^{h\G_2^1}\cong \Z/2$ generated by a class detected by 
$\eta\mytwob\in H_c^3(\GG_2^1,E_2)$;

\item $\pi_0E^{h\G_2^1}\cong \Z_2 \oplus \Z/4 \oplus \Z/8$. 
\end{enumerate}

Furthermore, the summands of $\pi_0E^{h\G_2^1}$ are generated by the unit and classes detected by 
$\masseyp \in H_c^2(\GG_2^1,E_2)$ and $\nu e \in H_c^4(\GG_2^1,E_4)$. If a class
is detected by $\masseyp$, then twice that class is detected by $\eta^2\mytwob$.
\end{cor}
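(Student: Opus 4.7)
The plan is to deduce the corollary directly from \fullref{thm:f4} by Galois descent along the inclusion $\SS_2^1 \subseteq \GG_2^1$. Since $\GG_2^1/\SS_2^1$ maps isomorphically onto $\Gal = \Gal(\FF_4/\FF_2)$, \fullref{more-split} provides a $\Gal$-equivariant equivalence
\[
\Gal_+ \wedge E^{h\GG_2^1} \simeq E^{h\SS_2^1},
\]
and taking $\Gal$-homotopy fixed points recovers $E^{h\GG_2^1}$. On homotopy groups this identifies $\pi_\ast E^{h\SS_2^1}$ with $\WW \otimes_{\ZZ_2} \pi_\ast E^{h\GG_2^1}$, with $\Gal$ acting only on the $\WW$-factor. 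Using $\WW^{\Gal} = \ZZ_2$ and $\FF_4^{\Gal} = \FF_2$, the groups recorded in \fullref{thm:f4} immediately yield
\[
\pi_{-3} E^{h\GG_2^1} \cong \ZZ_2, \quad \pi_{-2} E^{h\GG_2^1} \cong \ZZ/2, \quad \pi_{-1} E^{h\GG_2^1} \cong \ZZ/2,
\]
together with $\pi_0 E^{h\GG_2^1} \cong \ZZ_2 \oplus \ZZ/4 \oplus \ZZ/8$.

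To identify the detecting classes I will use the compatible spectral-sequence isomorphism from \fullref{rem:ssisos} and \fullref{coh-decomp-galois}, namely $H^\ast(\SS_2^1, E_\ast) \cong \WW \otimes_{\ZZ_2} H^\ast(\GG_2^1, E_\ast)$, under which $H^\ast(\GG_2^1, E_\ast)$ is the $\Gal$-fixed subring. Each of the classes $e$, $\mytwob$, $\eta\mytwob$, $\nu e$, and $\eta^2 \mytwob$ featuring in \fullref{thm:f4} is pulled back from the cohomology of $\GG_2$ via \fullref{thm:const} and the unit map $\WW \to E_0$, and the homotopy unit is tautologically Galois invariant, so all of these lift canonically to classes in $H^\ast(\GG_2^1, E_\ast)$. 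The Massey product $\masseyp$ is formed from Galois invariant inputs, hence is itself Galois invariant and lifts. These lifts are the detecting classes in the statement, and they generate cyclic groups of the required orders.

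The exotic extension $2 \cdot \masseyp = \eta^2 \mytwob$ in $\pi_0 E^{h\GG_2^1}$ will follow by the same Toda bracket argument already used in the proof of \fullref{thm:f4}: if $z \in \pi_{-2} E^{h\GG_2^1}$ denotes the unique nonzero class, detected by $\mytwob$, then $\eta^2 z$ is detected by $\eta^2 \mytwob$ and is therefore nonzero, so the juggling identity $2\langle z, 2, \eta \rangle = \eta^2 z$ from \eqref{order4one} forces any element of the Toda bracket $\langle z, 2, \eta \rangle \subseteq \pi_0 E^{h\GG_2^1}$ to have order $4$ and to be detected by $\masseyp$. The only thing to check, and the only potential obstacle, is that each detecting class in \fullref{thm:f4} can indeed be chosen Galois invariant; but this is transparent in every case from the constructions and from \fullref{thm:const}, so the argument reduces to a mechanical transcription of \fullref{thm:f4} through the descent isomorphism.
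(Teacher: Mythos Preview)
Your proposal is correct and takes essentially the same approach as the paper, which simply states that the result is a corollary of \fullref{thm:f4} obtained by taking Galois fixed points. You have spelled out the details of that descent argument carefully, using \fullref{more-split}, \fullref{coh-decomp-galois}, and \fullref{rem:ssisos} exactly as intended.
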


For future reference we will also describe the multiplicative structure of $\pi_0E_2^{h\GG_2^1}$. 
\begin{cor}\label{cor:hw-new-mult} In the kernel of the edge homomorphism of the Adams-Novikov Spectral Sequence
\[
\pi_0E^{h\GG_2^1}\to H^0(\GG_2^1,E_0)
\]
we have the following multiplicative relations.
\begin{enumerate}

\item The product of any two classes detected by $\nu e$ is $0$.

\item The product of any class detected by $\langle \widetilde{\chi},2,\eta \rangle$ 
and any class detected in filtration $4$ is trivial.

\item The product of two classes detected by $\langle \widetilde{\chi},2,\eta\rangle$  is detected by $\eta^2\widetilde{\chi}$
modulo $2\nu e$.
\end{enumerate}
\end{cor}

\begin{proof} Parts (1) and (2) are clear because in the spectral sequence
\[
H^s_c(\GG_2^1,E_t) \Longrightarrow \pi_{t-s}E^{h\GG_2^1}
\]
we have $E_\infty^{s,s} = 0$ for $s > 4$. This follows from \Cref{thm:piG21}. 

For Part (3) we examine the reduction map map
\[
H^s_c(\GG_2^1,E_t) \longr H^s_c(\GG_2^1,E_t/2)
\]
This map kills $2\nu e$.  By definition is sends $\widetilde{\chi}$ to $\chi^2$; hence it sends the
Massey product to $\eta\chi $. The result follows. 
\end{proof}

\begin{rem}\label{rem:backfill-on-pi-3} Using the fiber sequence of \eqref{eq:an-easy-reduction}
\[
\xymatrix{
L_{K(2)}S^0 \ar[r] & E^{h\GG_2^1} \ar[r]^{\pi-1} & E^{h\GG_2^1}
}
\]
we can make analogous calculations for the sphere itself. For example
\[
\pi_{-3}L_{K(2)}S^0 \cong \ZZ_2 \oplus \ZZ/2 \subseteq H_c^3(\GG_2,E_0)
\]
generated by the classes $4e$ and $\zeta_2\mytwob$.
\end{rem}

We now pass to an analysis of $\pi_\ast (E^{h\GG_{2}^1}\wedge V(0))$. Recall from \Cref{sec:remonV0} that $q$
and $j$ are defined by the cofiber sequence
\[
\xymatrix{
E^{h\GG_{2}^1} \ar[r]^-{\times 2}& E^{h\GG_{2}^1} \ar[r]^-j & E^{h\GG_{2}^1} 
\wedge V(0) \ar[r]^-q & \Sigma E^{h\GG_{2}^1}.
}
\]
From \Cref{rem:betabock} we have $\beta = jq$.
Let $z \colon S^{-2} \to E^{h\GG_{2}^1}$ be the class detected by $\mytwob$. Since this
class has order two, it factors through 
\[
q\colon \Sigma^{-1} E^{h\GG_{2}^1}\wedge V(0) \longr E^{h\GG_{2}^1}
\]
and we get a map
\begin{equation}\label{definey}
y_1\colon S^{-1} \longr E^{h\GG_2^1} \wedge V(0).
\end{equation}
such that $q_*(y_1)=z$. We now have the following result.

\begin{prop}\label{prop:V0m2}
There are isomorphisms
\begin{enumerate}[(a)]

\item $\pi_{-1} (E^{h\GG_2^1} \wedge V(0)) \cong \ZZ/4$ generated by the class $y_1$. 
This class is detected by $\myoneb \in H_c^1(\GG_2^1,E_0/2)$.

\item $\pi_{-2}(E^{h\mathbb{G}_2^1}\smsh V(0)) \cong\ZZ/2$. The generator is detected by 
$\myoneb^2 \in H_c^2(\GG_2^1,E_0/2)$.

\end{enumerate}
Furthermore, the class $2y_1$ is detected by $\eta\myoneb^2 \in H_c^3(\GG_2^1,E_2/2)$.
\end{prop}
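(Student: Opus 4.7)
The plan is to read off the additive structure from the long exact sequence on $\pi_\ast$ induced by the cofiber sequence
\[
E^{h\GG_2^1} \xrightarrow{\times 2} E^{h\GG_2^1} \xrightarrow{j} E^{h\GG_2^1} \wedge V(0) \xrightarrow{q} \Sigma E^{h\GG_2^1},
\]
feeding in the calculation of $\pi_\ast E^{h\GG_2^1}$ from \fullref{thm:piG21}, and then identify the detecting cohomology classes via the Geometric Boundary Theorem of \fullref{rem:whats-up-GBT} applied to the Bockstein short exact sequence $0 \to E_0 \xrightarrow{\times 2} E_0 \to E_0/2 \to 0$.

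First I would handle the additive computation. Since $\pi_{-1}E^{h\GG_2^1} \cong \ZZ/2$ (detected by $\eta\mytwob$) and $\pi_{-2}E^{h\GG_2^1} \cong \ZZ/2$ (detected by $\mytwob$), multiplication by $2$ is zero on each, so the long exact sequence cuts into a short exact sequence
\[
0 \to \ZZ/2 \xrightarrow{j_\ast} \pi_{-1}(E^{h\GG_2^1}\wedge V(0)) \xrightarrow{q_\ast} \ZZ/2 \to 0.
\]
For $\pi_{-2}$ I would use that $\pi_{-3}E^{h\GG_2^1} \cong \ZZ_2$ is torsion-free, so $j_\ast$ is surjective and $\pi_{-2}(E^{h\GG_2^1}\wedge V(0)) \cong \ZZ/2$, generated by $j_\ast(z)$ where $z \in \pi_{-2}E^{h\GG_2^1}$ is the generator detected by $\mytwob$. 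This gives part (b) once I note that $j_\ast(z)$ is detected by the mod-$2$ reduction of $\mytwob$, which equals $\myoneb^2$ because the composite $\rho \circ \delta$ is the Bockstein $\FF_2 \to \ZZ/4 \to \FF_2$ and sends $\myoneb$ to $\myoneb^2$.

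For part (a), I would apply \fullref{lem:twoxtildes10} with $x = z$: since $z$ has order $2$ and $\eta z \ne 0$ in $\pi_{-1}E^{h\GG_2^1}$ (it is exactly the generator detected by $\eta\mytwob$), there exists a class $y_1$ with $q_\ast(y_1) = z$ and satisfying the exotic extension
\[
2 y_1 = j_\ast(z)\eta = j_\ast(z\eta) \in \pi_{-1}(E^{h\GG_2^1}\wedge V(0)).
\]
By the injectivity of $j_\ast$ on $\pi_{-1}$ in the short exact sequence above, $j_\ast(z\eta)$ is non-zero. Hence $y_1$ has order $4$, which forces the extension in the short exact sequence to be non-split and yields $\pi_{-1}(E^{h\GG_2^1}\wedge V(0)) \cong \ZZ/4$ generated by $y_1$. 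Moreover $j_\ast(z\eta)$ is detected by $\eta \cdot \rho(\mytwob) = \eta\myoneb^2$, giving the ``furthermore'' clause.

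Finally, I would identify the detecting class of $y_1$ itself. Since $y_1$ has non-zero image under $q_\ast$, it has Adams-Novikov filtration exactly $1$, so it is detected by some element of $H^1(\GG_2^1,E_0/2)$. By \fullref{thm:const} together with \fullref{cor:cohS21at2} and Galois invariance (compare \fullref{rem:z-vs-w} and \fullref{we-need-this-in-sec8}), this group is one-dimensional over $\FF_2$ with generator $\myoneb$. The Geometric Boundary Theorem then identifies the detecting class as $\myoneb$, since its image under the Bockstein $\delta \colon H^1(\GG_2^1,E_0/2) \to H^2(\GG_2^1,E_0)$ is $\mytwob$, which detects $q_\ast(y_1) = z$. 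The only step requiring any real care is the extension argument, but here \fullref{lem:twoxtildes10} delivers it essentially for free once $\eta z \ne 0$ has been verified.
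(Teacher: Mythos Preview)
Your proposal is correct and follows essentially the same route as the paper's proof: both feed the computation of $\pi_\ast E^{h\GG_2^1}$ from \fullref{thm:piG21} into the long exact sequence for $V(0)$, identify $j_\ast(z)$ as detected by the reduction $\myoneb^2$ of $\mytwob$, invoke \fullref{lem:twoxtildes10} to force the order-$4$ extension, and use the Geometric Boundary Theorem to pin down $\myoneb$ as the detecting class of $y_1$. One small wording issue: the sentence ``since $y_1$ has non-zero image under $q_\ast$, it has Adams-Novikov filtration exactly $1$'' is not quite the right justification on its own---the filtration claim really follows from the Geometric Boundary Theorem argument you give immediately after, together with the fact that $H^0(\GG_2^1,E_{-1}/2)=0$---but the substance is all there.
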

\begin{proof} From \Cref{thm:piG21} it follows that
\[
\pi_{-2} (E^{h\GG_2^1} \wedge V(0)) \cong \ZZ/2
\]
generated by $j_*(z)$. This class is detected by the image of $\mytwob$ in $H_c^2(\GG_2^1,E_0/2)$, which is exactly $\myoneb^2$. From the same result we have a short exact sequence
\[
0 \to \ZZ/2 \to \pi_{-1}(E^{h\GG_2^1} \wedge V(0)) \to \ZZ/2 \to 0.
\]
Since $j_*(z)$ is detected by the Bockstein on $\myoneb$ and $j_*(y_1)=j_*q_*(z) = \beta (z)$,
the Geometric Boundary Theorem (see \Cref{rem:whats-up-GBT})
implies that the class $y_1$ is detected by $\myoneb$. 
The generator of the kernel is $\eta j_*(z)$, detected by 
$\eta\chi^2$. It remains to show $y_1$ has order $4$; this follows from \Cref{lem:twoxtildes10}.
\end{proof}

\subsection{Producing the lifting}
We now show that the class $y_1 \in \pi_{-1}(E^{h\GG_2^1}\wedge V(0))$ of order $4$ 
constructed above in
\Cref{prop:V0m2} is in the image of the unit map $\pi_\ast L_{K(2)}V(0) \to 
\pi_\ast (E^{h\GG_2^1}\wedge V(0))$. Recall from \Cref{prop:chiperm} that the
class $\myoneb \in v_1^{-1}H_c^1(\GG_2,E_\ast V(0))$ is a non-zero permanent cycle in the
localized Adams-Novikov Spectral Sequence computing $\pi_\ast L_{K(1)}L_{K(2)}V(0)$.

\begin{thm}\label{thm:bzeroinvo} There is a class $y_0 \in \pi_{-1}L_{K(2)}V(0)$ detected
by $\myoneb \in H_c^1(\GG_2,E_0/2)$. Under the map 
\[
 \pi_{-1}L_{K(2)}V(0)\longr \pi_{-1}(E^{h\GG_2^1}\wedge V(0))
\]
the class $y_0$ maps to $y_1$ and under the map
\[
 \pi_{-1}L_{K(2)}V(0)\longr \pi_{-1}L_{K(1)}L_{K(2)}V(0)
\]
the class $y_0$ is non-zero and detected by the class $\myoneb \in v_1^{-1}H_c^\ast(\GG_2,E_\ast V(0))$. 
\end{thm}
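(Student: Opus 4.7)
The plan is to construct $y_0$ by first producing a class $z_0 \in \pi_{-2} L_{K(2)} S^0$ detected by $\mytwob$ (equivalently, showing $\mytwob$ is a permanent cycle in the Adams--Novikov spectral sequence), then verifying $2 z_0 = 0$, extending $z_0$ over $\Sigma^{-2} V(0)$, and taking a Spanier--Whitehead dual.

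By \fullref{thm:piG21}, $\pi_{-2} E^{h\GG_2^1} \cong \ZZ/2$ carries a trivial action of $\GG_2/\GG_2^1 \cong \ZZ_2$ (there being no non-trivial automorphisms of $\ZZ/2$), so the generator $z$ detected by $\mytwob$ satisfies $(\pi - 1)_\ast z = 0$. The fiber sequence
\[
L_{K(2)} S^0 \longrightarrow E^{h\GG_2^1} \xrightarrow{\pi-1} E^{h\GG_2^1}
\]
then lifts $z$ to $z_0 \in \pi_{-2} L_{K(2)} S^0$, detected by $\mytwob \in H^2(\GG_2, E_0)$ by naturality of the Adams--Novikov spectral sequence. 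Since $\pi_{-1} E^{h\GG_2^1} \cong \ZZ/2$ likewise has trivial $\pi$-action, $\pi_{-2} L_{K(2)} S^0$ is an extension of $\ZZ/2$ by $\ZZ/2$, and $2 z_0$ is either zero or the unique non-zero class $c_1$ in the image of the connecting map $\pi_{-1} E^{h\GG_2^1} \to \pi_{-2} L_{K(2)} S^0$, which is detected by $\zeta_2 \eta \mytwob \in H^4(\GG_2, E_2)$.

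The hard part will be to show $2 z_0 = 0$. The strategy is a compatibility check with the $v_1$-localized computation: if instead $2 z_0 = c_1$, then the only class of order $2$ detected in low filtration in $\pi_{-2} L_{K(2)} S^0$ would be $c_1$ itself, and following its lift through the cofiber $S^{-2} \to \Sigma^{-2} V(0) \to S^{-1}$ and dualizing via \fullref{what-ew-need-sw} would produce a non-zero class in $\pi_{-1} L_{K(2)} V(0)$ whose image in $\pi_{-1} L_{K(1)} L_{K(2)} V(0)$ would be forced by the Toda juggle of \fullref{rem:basic-toda} to have filtration $\geq 3$ and detecting class a $\zeta_2$-multiple -- contradicting the complete description in \fullref{prop:chiperm} where, together with the exotic relation $2y = \beta(y)\eta$, these classes are already exhausted by lifts of $\chi^2$ and $\zeta_2 \chi^2$ coming from the $\mytwob$-direction.

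Granted $2 z_0 = 0$, the map $z_0 \colon S^{-2} \to L_{K(2)} S^0$ extends to $\tilde z_0 \colon \Sigma^{-2} V(0) \to L_{K(2)} S^0$. Let $y_0 \colon S^{-1} \to L_{K(2)} V(0)$ be its Spanier--Whitehead dual; then by \fullref{what-ew-need-sw} we have $q \circ y_0 = z_0$. The Geometric Boundary Theorem (\fullref{rem:whats-up-GBT}), applied to the Bockstein relation $\mytwob = \beta(\chi)$, shows that $y_0$ is detected by $\chi \in H^1(\GG_2, E_0/2)$. Naturality of the fiber sequence under the map $L_{K(2)} S^0 \to E^{h\GG_2^1}$ identifies the image of $y_0$ in $\pi_{-1}(E^{h\GG_2^1} \wedge V(0))$ as a class with $q_\ast$-image equal to $z$, hence as $y_1$ (after possibly adjusting $\tilde z_0$ by a higher-filtration term, an ambiguity lying in $\{0, 2y_1\}$ which does not affect the detecting class of $y_0$). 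Finally, since $\chi$ is a non-zero permanent cycle in $v_1^{-1} H^1(\GG_2, E_0 V(0))$ by \fullref{prop:chiperm}, the image of $y_0$ in $\pi_{-1} L_{K(1)} L_{K(2)} V(0)$ is non-zero and detected by $\chi$, completing the theorem.
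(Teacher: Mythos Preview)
Your overall strategy—lift $z$ to $z_0\in\pi_{-2}L_{K(2)}S^0$, show $2z_0=0$, extend over $V(0)$, dualize—is a legitimate alternative to the paper's direct approach with $V(0)$. Indeed the two obstructions are equivalent: $2z_0=0$ in $\pi_{-2}L_{K(2)}S^0$ holds if and only if $(\pi-1)_\ast y_1=0$ in $\pi_{-1}(E^{h\GG_2^1}\wedge V(0))$, since anything in the image of $q_\ast$ has order at most $2$.

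The genuine gap is your argument for $2z_0=0$. You assert that if $2z_0=c_1$ then dualizing an extension of $c_1$ produces a class $w_0\in\pi_{-1}L_{K(2)}V(0)$ whose image in $\pi_{-1}L_{K(1)}L_{K(2)}V(0)$ contradicts \fullref{prop:chiperm}. But you never establish that this image is non-zero, never identify its detecting class, and never explain why \fullref{prop:chiperm} ``leaves no room'' for it; the paragraph is a gesture, not a proof. Invoking the Toda juggle of \fullref{rem:basic-toda} does not help here, since that produces relations among classes already present rather than excluding new ones.

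Here is what actually closes the gap. Since $2\iota=0\colon S^0\to V(0)$, one has $j_\ast(2z_0)=2\,j_\ast(z_0)=0$ in $\pi_{-2}L_{K(2)}V(0)$; so if $2z_0=c_1$ then $j_\ast(c_1)=0$. But $c_1$ is detected by $\zeta_2\eta\mytwob$, hence $j_\ast(c_1)$ is detected by $\zeta_2\eta\myoneb^2\in H^4(\GG_2,E_2/2)$. By sparseness the only differential that can kill this class is $d_3$ from $H^1(\GG_2,E_0/2)\cong\FF_2\{\myoneb,\zeta_2\}$ (see \fullref{we-need-this-in-sec8}); both generators are $d_3$-cycles by \fullref{prop:zeta-permanent} and, crucially, \fullref{prop:b0d3cycle}. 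Thus $\zeta_2\eta\myoneb^2$ survives, $j_\ast(c_1)\neq 0$, and we have the contradiction. This is exactly the content of the paper's \fullref{thm:boundbzero}, transported to your framing. The hard input—\fullref{prop:b0d3cycle}—cannot be bypassed, and your appeal to \fullref{prop:chiperm} does not avoid it, since that result itself rests on \fullref{prop:b0d3cycle}.
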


\begin{proof} There is a fiber sequence
\begin{equation}\label{eqn:piseq}
\xymatrix{
L_{K(2)}V(0) \ar[r]& E^{h\GG_2^1}\wedge V(0) \ar[r]^{\pi-1} & E^{h\GG_2^1}\wedge V(0)
}
\end{equation}
where $\pi \in \GG_2$ is an element that  generates $\GG_2/\GG_2^1
\cong \ZZ_2$. Since $\pi_{-1}(E^{h\GG_2^1}\wedge V(0))$ is isomorphic to $\ZZ/4$,
we have\footnote{We use $\pi_\ast$ for the action of a group
element $\pi \in \GG_n$ on the homotopy groups of $E^{h\\G_2^1} \wedge V(0)$. Our apologies.
Here at the prime $2$ there is a classical choice of $\pi \in \GG_2$. See \Cref{rem:g24prime}.}
that $\pi_\ast y_1 = \pm y_1$ so either $(\pi-1)_\ast y_1 = 0$  or $(\pi-1)_\ast y_1 =2y_1$.

If the first case applies, then we can choose a class $y_2 \in \pi_{-1}L_{K(2)}V(0)$ which maps
to $y_1$. By \Cref{we-need-this-in-sec8} we have an isomorphism
\[
H_c^1(\GG_2,E_0V(0)) \cong \ZZ/2 \times \ZZ/2
\]
with generators $\myoneb$ and $\zeta_2$. The class $\zeta_2$ maps to zero in
$H_c^1(\GG_2^1,E_0V(0))$; hence,
we have that $y_2$ is detected by $\myoneb + \epsilon \zeta_2$ where $\epsilon = 0$ or $1$.
Since $\zeta_2$ is a permanent cycle by \Cref{prop:zeta-permanent} 
detecting a homotopy class also called $\zeta_2$, we set $y_0 = y_2+ \epsilon \zeta_2$. 
Then $y_0$ is detected by $\myoneb$. Finally, by \Cref{thm:cohG2V0}, the map
\[
H_c^1(\GG_2,E_0V(0)) \to v_1^{-1}H_c^\ast(\GG_2,E_\ast V(0))
\]
is an injection in degree $1$, so the final statement follows.

If the second case applies we would have that $y_1$ maps to a class of order
$2$ in $\pi_{-2}L_{K(2)}V(0)$ under the boundary map in 
the long exact sequence in homotopy
obtained from the cofiber sequence of (\ref{eqn:piseq}). To rule this out, we use the following result, \Cref{thm:boundbzero}.
\end{proof}

\begin{thm}\label{thm:boundbzero} Under the boundary homomorphism
\[
\partial \colon \pi_{-1}(E^{h\GG_2^1}\wedge V(0)) \longr \pi_{-2}L_{K(2)}V(0)
\]
the class $x_1=\partial(y_1)$ is a class of exact order $4$ detected by
$\myzeta\myoneb \in H_c^2(\GG_2,E_0/2)$.
The class $2x_1$ is detected by $\eta\myzeta\myoneb^2 \in H_c^4(\GG_2,E_2/2)$.
\end{thm}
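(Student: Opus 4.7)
The plan is to apply the Geometric Boundary Theorem (\fullref{rem:whats-up-GBT}) to the cofiber sequence
\[
L_{K(2)}V(0) \longrightarrow E^{h\GG_2^1}\wedge V(0) \xrightarrow{\pi-1} E^{h\GG_2^1}\wedge V(0)
\]
of \eqref{eqn:piseq}, which encodes the extension $1 \to \GG_2^1 \to \GG_2 \to \ZZ_2 \to 1$ whose classifying class is $\myzeta_2 \in H^1(\GG_2,\ZZ_2)$. Since $\ZZ_2$ has cohomological dimension $1$, the Lyndon--Hochschild--Serre spectral sequence for this extension collapses and identifies the algebraic connecting homomorphism $\delta\colon H^s(\GG_2^1, E_\ast V(0)) \to H^{s+1}(\GG_2, E_\ast V(0))$ with cup product by $\myzeta_2$ on classes in the image of restriction from $\GG_2$. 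By \fullref{prop:V0m2}, the class $y_1$ is detected by $\myoneb \in H^1(\GG_2^1, E_0/2)$, and $\myoneb$ is the restriction of a class of the same name in $H^1(\GG_2, E_0/2)$ (see \fullref{rem:hone}); similarly the class $2y_1$ is detected by $\eta\myoneb^2$, which also lifts from $\GG_2$.

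With this setup, the Geometric Boundary Theorem will identify $x_1 = \partial(y_1)$ as detected by $\delta(\myoneb) = \myzeta_2\myoneb \in H^2(\GG_2, E_0/2)$ and $2x_1 = \partial(2y_1)$ as detected by $\delta(\eta\myoneb^2) = \eta\myzeta_2\myoneb^2 \in H^4(\GG_2, E_2/2)$, provided that each of these target classes is a non-zero permanent cycle. For non-triviality at $E_2$, I would use \fullref{thm:cohG2V0}: both $\myzeta_2\myoneb$ and $\eta\myzeta_2\myoneb^2$ are among the free $\F_2[v_1^{\pm 1},\eta]$-generators of $v_1^{-1}H^\ast(\GG_2, E_\ast V(0))$, so they are already non-zero before $v_1$-inversion. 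For the permanent-cycle statements, $\myzeta_2$ is a permanent cycle by \fullref{prop:zeta-permanent} and $\myoneb,\myoneb^2$ are $d_3$-cycles by \fullref{prop:b0d3cycle}, so the Leibniz rule gives $d_3(\myzeta_2\myoneb) = 0$ and $d_3(\eta\myzeta_2\myoneb^2) = 0$; any higher differential would have to be $v_1$-torsion because \fullref{prop:chiperm} shows these classes are permanent cycles in the $v_1$-localized spectral sequence. The plan is then to rule out such $v_1$-torsion differentials by examining the (very sparse) groups $H^{\ast}(\GG_2, E_\ast V(0))$ in which they could land, combining the algebraic duality spectral sequence computations of Section~\ref{sec:height2} with the horizontal vanishing line of \fullref{thm:en-vanishing-ex-1}.

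Once the detection of $x_1$ and $2x_1$ is established, the order claim follows immediately: non-triviality of $2x_1$ shows $\mathrm{ord}(x_1) \geq 4$, while $\pi_{-1}(E^{h\GG_2^1}\wedge V(0)) \cong \ZZ/4$ is generated by $y_1$ (\fullref{prop:V0m2}), so $4y_1 = 0$ and hence $4x_1 = \partial(4y_1) = 0$, yielding exact order $4$. The main obstacle will be verifying that $\myzeta_2\myoneb$ and $\eta\myzeta_2\myoneb^2$ really survive as permanent cycles in the unlocalized Adams--Novikov spectral sequence for $L_{K(2)}V(0)$: the $v_1$-localized information cleanly controls only the $v_1$-periodic summand, so a careful analysis of possible $v_1$-torsion higher differentials (which could a priori hit or originate from these classes) is the delicate point of the argument.
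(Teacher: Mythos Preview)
Your approach is essentially the paper's: identify the algebraic connecting map $\delta$ with cup product by $\myzeta_2$ (this is \fullref{lem:zetaboundary}), then apply the Geometric Boundary Theorem to conclude that $x_1$ is detected by $\myzeta_2\myoneb$ and $2x_1$ by $\eta\myzeta_2\myoneb^2$, and finally deduce order $4$ from $2x_1\neq 0$ together with $4y_1=0$.

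The one place where you make life harder than necessary is the ``permanent cycle'' verification. The Geometric Boundary Theorem already forces $\delta(a)$ to be a permanent cycle whenever $a$ detects a homotopy class; you do not need to argue this separately via the Leibniz rule, $v_1$-localization, and an analysis of $v_1$-torsion higher differentials. The only thing that actually needs checking is that $\eta\myzeta_2\myoneb^2 \in H^4(\GG_2,E_2/2)$ is not a \emph{boundary}, so that it is nonzero at $E_\infty$ and hence $2x_1 \neq 0$. The paper does this directly: by sparseness the only possible source is $d_3\colon H^1(\GG_2,E_0/2)\to H^4(\GG_2,E_2/2)$, and by \fullref{we-need-this-in-sec8} the group $H^1(\GG_2,E_0/2)\cong\ZZ/2\times\ZZ/2$ is generated by $\myzeta_2$ and $\myoneb$, both of which are $d_3$-cycles (\fullref{prop:zeta-permanent}, \fullref{prop:b0d3cycle}). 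That is the entire argument; no comparison with the $v_1$-localized spectral sequence is required. (For $\myzeta_2\myoneb\in H^2(\GG_2,E_0/2)$ there is nothing to check at all: any incoming $d_r$ would have source in negative $s$ or in odd internal degree.)
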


This is an application of the Geometric Boundary Theorem; see \Cref{rem:whats-up-GBT}. 
The proof will be below, after we have given some background.

Since $E_\ast E^{\GG_2^1} \cong \map_{cts}(\GG_2/\GG_2^1,E_\ast)$
we can apply $E_\ast$ to the cofiber sequence (\ref{eqn:piseq}) and obtain
a short exact sequence of Morava modules
\begin{equation}\label{eqn:piseqalg}
\xymatrix{
0 \to E_\ast V(0) \ar[r]& E_\ast (E^{h\GG_2^1}\wedge V(0)) \ar[r]^-{(\pi-1)_\ast} &
E_\ast (E^{h\GG_2^1}\wedge V(0)) \to 0
}
\end{equation}
and hence a diagram of spectral sequences
\begin{equation}\label{eqn:geomboundary}
\xymatrix{
H_c^s(\GG_2^1,E_t/2) \ar@{=>}[r] \ar[d]_\delta &
\pi_{t-s}(E^{h\GG_2^1} \wedge V(0)) \ar[d]^\partial\\
H_c^{s+1}(\GG_2,E_t/2) \ar@{=>}[r]  &
\pi_{t-s-1}L_{K(2)} V(0)
}
\end{equation}
where $\delta$ is the algebraic connecting map
\[
\xymatrix{
H_c^s(\GG_2^1,E_t/2) \cong H_c^s(\GG_2,E_t(E^{h\GG_2^1}\wedge V(0))) 
\ar[r]^-\delta&  H_c^{s+1}(\GG_2,E_t/2)
}
\]
in the long exact sequence induced by the short exact sequence (\ref{eqn:piseqalg}).

\begin{lem}\label{lem:zetaboundary} Suppose the class $a \in H_c^s(\GG_2^1,E_*/2)$ is the image
of an element $b \in H_c^s(\GG_2,E_*/2)$ under the restriction
\[
H_c^s(\GG_2,E_*/2) \to  H_c^s(\GG^1_2,E_*/2)\ .
\]
Then $\delta(a) = \myzeta_2 b$.
\end{lem}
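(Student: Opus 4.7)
The plan is to realize the short exact sequence \eqref{eqn:piseqalg} as obtained by tensoring a universal ``Shapiro'' sequence with $E_*V(0)/2$, and then invoke the standard compatibility of connecting homomorphisms with cup products. This will reduce the claim to the computation that the connecting map sends $1$ to $\zeta_2$, which is essentially the \emph{definition} of $\zeta_2$.

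First I would set up the universal sequence. The isomorphism $E_\ast(E^{h\GG_2^1}\wedge V(0)) \cong \mathrm{map}_{\mathrm{cts}}(\GG_2/\GG_2^1, E_\ast V(0))$ of \eqref{eq:EhomofF} identifies \eqref{eqn:piseqalg} with the $E_*V(0)/2$-tensor product of the continuous short exact sequence of $\GG_2$-modules
\[
0 \longrightarrow \ZZ_2 \longrightarrow \mathrm{map}_{\mathrm{cts}}(\GG_2/\GG_2^1, \ZZ_2) \xrightarrow{\pi - 1} \mathrm{map}_{\mathrm{cts}}(\GG_2/\GG_2^1, \ZZ_2) \longrightarrow 0,
\]
where $\ZZ_2$ carries the trivial action and $\pi \in \GG_2$ is any lift of a topological generator of $\GG_2/\GG_2^1 \cong \ZZ_2$.

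Next I would apply Shapiro's lemma: for any continuous $\GG_2$-module $M$ one has a natural isomorphism $H^s(\GG_2, \mathrm{map}_{\mathrm{cts}}(\GG_2/\GG_2^1, M)) \cong H^s(\GG_2^1, M)$, under which the map induced by the inclusion of constants $M \hookrightarrow \mathrm{map}_{\mathrm{cts}}(\GG_2/\GG_2^1, M)$ becomes the restriction homomorphism. Taking $M = \ZZ_2$, the connecting map applied to $1 \in H^0(\GG_2^1, \ZZ_2)$ is by construction the cohomology class of the extension $\GG_2^1 \to \GG_2 \to \ZZ_2$, which is exactly $\zeta_2$ by the definition of \fullref{rem:hone}.

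For the general case, I would invoke the Leibniz compatibility of $\delta$ with the cup product. Given $b \in H^s(\GG_2, E_\ast/2)$ with restriction $a \in H^s(\GG_2^1, E_\ast/2)$, the Shapiro identification writes $a$ as the image of $b$ under the map induced by $E_\ast/2 \hookrightarrow \mathrm{map}_{\mathrm{cts}}(\GG_2/\GG_2^1, E_\ast/2)$; equivalently, $a$ corresponds to $1 \cdot b$ where $1 \in H^0(\GG_2^1, \ZZ_2)$ under Shapiro. Since our short exact sequence is obtained by tensoring the universal one with $E_\ast/2$, the pairing compatibility yields
\[
\delta(a) \;=\; \delta(1\cdot b) \;=\; \delta(1)\cdot b \;=\; \zeta_2 \cdot b,
\]
as $b$ itself lies in the $\GG_2$-cohomology and hence is killed by $\delta$-type boundaries in its own factor.

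The main technical obstacle is verifying Shapiro's lemma and the cup-product compatibility of $\delta$ in the continuous/profinite setting, since the relevant ``induced module'' is $\mathrm{map}_{\mathrm{cts}}(\GG_2/\GG_2^1,-)$ rather than a purely algebraic induction. These checks are routine once one works at the level of continuous cochain complexes, where the inclusion of constants and the cup product pairing are defined in the usual way, and the Leibniz rule $\delta(u\cdot v) = \delta(u)\cdot v \pm u\cdot \delta(v)$ holds for any short exact sequence of coefficient modules obtained by tensoring.
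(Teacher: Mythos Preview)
Your proposal is correct and is essentially the same argument as the paper's, just unpacked in more detail. The paper condenses your Shapiro and Leibniz steps into the single observation that $\delta$ is a homomorphism of $H^*(\GG_2,E_*/2)$-modules, and then notes $\delta(1)=\zeta_2$ by definition; your explicit identification of the universal sequence and the cup-product compatibility is exactly what underlies that module-map statement.
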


\begin{proof}  The connecting homomorphism $\delta$ is a homomorphism of $H_c^*(\GG_2,E_*/2)$-modules.
By the definition of $\zeta_2$ (see \Cref{rem:hone}), we have $\delta(1)=\zeta_2$,
and the result follows.
\end{proof}

\begin{proof}[Proof of \Cref{thm:boundbzero}.]
Using 
\Cref{lem:zetaboundary} and the diagram of spectral sequences \eqref{eqn:geomboundary}
we have that $\partial (y_1) \in \pi_{-2}L_{K(2)}V(0)$ is detected by $\myzeta_2\myoneb \in
H_c^2(\GG_2,E_0/2)$.  Since $2y_1$ is detected by $\eta\myoneb^2$, we can again use
\Cref{lem:zetaboundary} and the diagram of spectral sequences
\eqref{eqn:geomboundary} to conclude that $\partial(2y_1) \in \pi_{-2}L_{K(2)}V(0)$ is
detected by
\[
\myzeta_2\eta\myoneb^2 \in H_c^4(\GG_2,E_2/2).
\]
This class is non-zero by \Cref{thm:cohG2V0}.
Thus we need to check that this class cannot be a boundary in the spectral
sequence. Since $E_\ast/2=0$
in odd degrees, the only possible differential is
\[
d_3\colon H_c^1(\GG_2,E_0/2) \to H_c^4(\GG_2,E_2/2).
\]
By \Cref{we-need-this-in-sec8} we have an isomorphism $H_c^1(\GG_2,E_0/2) \cong \ZZ/2 \times \ZZ/2$
with generators $\myzeta_2$ and
$\myoneb$. We know by \Cref{prop:zeta-permanent} that
$\myzeta_2$ is a permanent cycle and we know by  \Cref{prop:b0d3cycle} that $\myoneb$ is a $d_3$-cycle.
Thus $d_3 = 0$ on $H_c^1(\GG_2,E_0/2)$ and the result follows.
\end{proof}


\end{document}